\setlist[1]{leftmargin=2\parindent,itemsep=2pt}
\newcommand{\ind}{{\sf 1}}
\newcommand{\bP}{\mathbf{P}}
\newcommand{\bZ}{\mathbf{Z}}
\newcommand{\bE}{\mathbf{E}}
\newcommand{\bbP}{\mathbb{P}}
\newcommand{\bbE}{\mathbb{E}}
\newcommand{\bbR}{\mathbb{R}}
\newcommand{\R}{\mathbb{R}}
\newcommand{\bbN}{\mathbb{N}}
\newcommand{\N}{\mathbb{N}}
\newcommand{\bbZ}{\mathbb{Z}}
\newcommand{\Var}{\mathbb{V}\mathrm{ar}}
\newcommand{\bT}{\mathbf{T}}
\newcommand{\cA}{{\ensuremath{\mathcal A}} }
\newcommand{\cF}{{\ensuremath{\mathcal F}} }
\newcommand{\cP}{{\ensuremath{\mathcal P}} }
\newcommand{\cE}{{\ensuremath{\mathcal E}} }
\newcommand{\cC}{{\ensuremath{\mathcal C}} }
\newcommand{\cL}{{\ensuremath{\mathcal L}} }
\newcommand{\cJ}{{\ensuremath{\mathcal J}} }
\newcommand{\cD}{{\ensuremath{\mathcal D}} }
\newcommand{\cW}{{\ensuremath{\mathcal W}} }
\newcommand{\cI}{{\ensuremath{\mathcal I}} }
\newcommand{\cG}{{\ensuremath{\mathcal G}} }
\newcommand{\cM}{{\ensuremath{\mathcal M}} }
\newcommand{\cK}{{\ensuremath{\mathcal K}} }
\renewcommand{\epsilon}{\varepsilon}
\renewcommand{\phi}{\varphi}
\newcommand{\ga}{\alpha}
\newcommand{\gb}{\beta}
\newcommand{\gd}{\delta}
\newcommand{\gep}{\varepsilon}  
\newcommand{\gp}{\varphi}
\newcommand{\gz}{\zeta}
\newcommand{\gGa}{\Gamma}
\newcommand{\go}{\omega}
\newcommand{\gO}{\Omega}
\newcommand{\gl}{\lambda}
\newcommand{\gs}{\sigma}
\newcommand{\gh}{\eta}
\newcommand{\tf}{\mathtt{F}}
\newcommand{\btau}{\boldsymbol{\tau}}
\newcommand{\bsigma}{\boldsymbol{\sigma}}
\newcommand{\bnu}{\boldsymbol{\nu}}
\newcommand{\blamb}{\boldsymbol{\lambda}}
\newcommand{\biota}{\boldsymbol{\iota}}
\newcommand{\bi}{\textbf{\textit{i}}}
\newcommand{\bj}{\textbf{\textit{j}}}
\newcommand{\bn}{\textbf{\textit{n}}}
\newcommand{\bt}{\textbf{\textit{t}}}
\newcommand{\ba}{\textbf{\textit{a}}}
\newcommand{\bA}{\textbf{\textit{A}}}
\newcommand{\bb}{\textbf{\textit{b}}}
\newcommand{\bJ}{\textbf{\textit{J}}}
\newcommand{\bX}{\textbf{\textit{X}}}
\newcounter{cst}[section]		
\newcounter{svf}[section]		
\newcommand{\cntc}{{\stepcounter{cst}\arabic{cst}}}		
\newtheorem{theorem}{Theorem}[section]
\newtheorem{remark}[theorem]{Remark}
\newtheorem{definition}[theorem]{Definition}
\newtheorem*{definition*}{Definition}
\newtheorem*{notation*}{Notation}
\newtheorem{proposition}[theorem]{Proposition}
\newtheorem{corollary}[theorem]{Corollary}
\newtheorem{lemma}[theorem]{Lemma}
\newtheorem{claim}[theorem]{Claim}
\newtheorem{example}[theorem]{Example}
\numberwithin{equation}{section}			
\newcommand{\dd}{\mathrm{d}}		
\newcommand{\sumtwo}[2]{\sum_{\substack{#1 \\ #2}}}
\newcommand{\sumthree}[3]{\sum_{\substack{#1 \\ #2 \\ #3}}}
\newcommand{\limtwo}[2]{\lim_{\substack{#1 \\ #2}}}
\renewcommand{\preceq}{\preccurlyeq}		
\renewcommand{\succeq}{\succcurlyeq}		
\newcommand{\bone}{{\boldsymbol{1}}}
\newcommand{\bzero}{{\boldsymbol{0}}}
\renewcommand{\hat}{\widehat}
\renewcommand{\tilde}{\widetilde}
\newcommand{\ol}{\overline}
\newcommand{\free}{\ensuremath{\mathrm{free}}}
\newcommand{\quen}{\ensuremath{\mathrm{q}}}
\newcommand{\cond}{\ensuremath{\mathrm{cond}}}
\newcommand{\ann}{\ensuremath{\mathrm{a}}}
\newcommand{\Pfk}{\ensuremath{\mathfrak{P}}}
\newcommand{\pt}{\hspace{1pt}}
\newcommand{\bs}{\textbf{\textit{s}}}
\newcommand{\bu}{\textbf{\textit{u}}}
\newcommand{\bv}{\textbf{\textit{v}}}
\newcommand{\lrarw}{\leftrightarrow}
\newcommand{\aligne}{\leftrightarrow}
\newcommand{\nlrarw}{\nleftrightarrow}
\newcommand{\llb}{\llbracket}
\newcommand{\rrb}{\rrbracket}
\newcommand{\bI}{\textbf{\textit{I}}}
\newcommand{\cS}{{\ensuremath{\mathcal S}}}
\newcommand{\cR}{{\ensuremath{\mathcal R}}}
\newcommand{\la}{\langle}
\newcommand{\ra}{\rangle}
\renewcommand{\hat}{\widehat}
\renewcommand{\tilde}{\widetilde}
\definecolor{dgreen}{RGB}{30,140,60}
\newcommand{\Bor}{\mathrm{Bor}}
\newcommand{\bw}{{\boldsymbol w}}
\newcommand{\bbJ}{\mathbb{J}}
\title[Scaling limit of the disordered gPS model for DNA denaturation]{Scaling limit of the disordered\\
generalized Poland--Scheraga model for DNA denaturation}
\author[Q. Berger]{Quentin Berger}
\address{Sorbonne Universit\'e, LPSM, UMR 8001. Campus Pierre et Marie Curie, 4 pl.~Jussieu, case 158, 75252 Paris Cedex 5, France\\ 
and DMA, \'Ecole Normale Sup\'erieure, Universit\'e PSL, 75005
Paris, France\\
and Institut Universitaire de France}
\email{quentin.berger@sorbonne-universite.fr}
\author[A. Legrand]{Alexandre Legrand} 
\address{Institut de Math\'ematiques de Toulouse, Universit\'e Toulouse 3 Paul Sabatier,
118 Route de Narbonne, 31062 Toulouse Cedex 9, France}
\email{alegrand@math.univ-toulouse.fr}
\keywords{generalized Poland--Scheraga model,  DNA denaturation, bivariate renewals, disorder relevance/irrelevance, correlated disorder, critical phenomena, intermediate disorder, scaling limit}
\subjclass[2020]{Primary:  60K35, 82B44 ; Secondary: 60F05, 82D60, 92C05.}
\date{}
\begin{document}

\begin{abstract}
The Poland--Scheraga model, introduced in the 1970's, is a reference model to describe the denaturation transition of DNA. More recently, it has been generalized in order to allow for asymmetry in the strands lengths and in the formation of loops: the mathematical representation is based on a bivariate renewal process, that describes the pairs of bases that bond together.
In this paper, we consider a disordered version of the model, in which the two strands interact via a potential $\beta V(\hat \omega_i, \bar \omega_j) +h$ when the $i$-th monomer of the first strand and the $j$-th monomer of the second strand meet. Here, $h\in\mathbb R$ is a homogeneous pinning parameter, $(\hat\omega_i)_{i\geq 1}$ and $(\bar \omega_j)_{j\geq 1}$ are two sequences of i.i.d.\ random variables attached to each DNA strand, $V(\cdot,\cdot)$ is an interaction function and $\beta>0$ is the disorder intensity.
Our main result finds some condition on the underlying bivariate renewal so that, if one takes $\beta,h\downarrow 0$ at some appropriate (explicit) rate as the length of the strands go to infinity, the partition function of the model admits a non-trivial, \textit{i.e.}\ \textit{disordered}, scaling limit. This is known as an \emph{intermediate disorder} regime and is linked to the question of disorder relevance for the denaturation transition.
Interestingly, and unlike any other model of our knowledge, the rate at which one has to take $\beta\downarrow 0$ depends on the interaction function $V(\cdot,\cdot)$ and on the distribution of $(\hat\omega_i)_{i\geq 1}$, $(\bar \omega_j)_{j\geq 1}$. On the other hand, the intermediate disorder  limit of the partition function, when it exists, is universal: it is expressed as a chaos expansion of iterated integrals against a Gaussian process~$\mathcal M$, which arises as the scaling limit of the field $(e^{\beta V(\hat \omega_i, \bar \omega_j)})_{i,j\geq 0}$ and exhibits strong correlations on lines and columns.
\end{abstract}

\maketitle

\begin{figure}[htbp]
\begin{center}
\begin{tabular}{cc}
\begin{tabular}{c}
\includegraphics[scale=0.4]{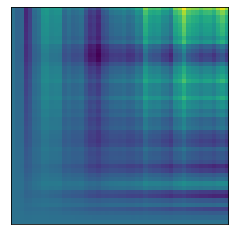}
\end{tabular}
\qquad
& 
\qquad
\begin{tabular}{c}
\includegraphics[scale=0.4]{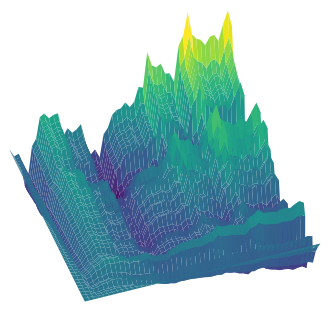}
\end{tabular}
\end{tabular}
\end{center}
\vspace{-0.7\baselineskip}
\captionsetup{width=.9\textwidth}
\caption{\footnotesize A realization of the (non-isotropic) Gaussian process $\cM$ appearing in the disordered scaling limit of the generalized Poland--Scheraga model. The field $(V(\hat \go_i, \bar \go_j))_{i,j\geq 1}$ presents correlations along rows and columns: these correlations appear in the limiting process $\cM$ and can be observed in the figure above.}
\label{fig:M}
\end{figure}

\setcounter{tocdepth}{1}
\tableofcontents


\section{Introduction}

\subsection{The Poland--Scheraga model and the question of disorder relevance}

The Poland--Scheraga  (PS) model~\cite{PS70} has
been introduced in the 1970's 
in order to describe the denaturation transition of DNA.
Since then, it has been widely studied in the bio-physics and mathematical literature,
both from a theoretical perspective, see~\cite{Fish84,Giac07,Giac10},
and an experimental one, see e.g.~\cite{BBBDDKMS99,BD98}.
The model is based on a renewal process that describes the pairs of bases that bind together and it can naturally embed the inhomogeneous character of the interactions between the bases
(see Figure~\ref{fig:PS}).
More generally, the model is known as the \emph{pinning model},
which is also used to describe the behaviour of one-dimensional interfaces (or polymers) interacting with a defect line.
The inhomogeneity of the interactions along the DNA strands is usually modelled thanks to a sequence of random variables, often dubbed as \textit{disorder},
that represent the different values of the binding potentials along the polymer; in the context of pinning models, this reduces to considering an inhomogeneous (disordered) defect line.
 
One remarkable feature of the pinning model is that its homogeneous version, 
that is when the binding potentials are all equals, is solvable.
One can show that the model exhibits a depinning (or denaturation) transition
and one can identify the critical temperature and the behaviour of the free energy when approaching the critical temperature, see~\cite[Ch.~4]{Giac07}.

\subsubsection*{Disorder relevance}

A natural question is then to know whether disorder changes the characteristics of the phase transition: in other words, can we determine if (and how) the critical temperature and the critical behaviour of the free energy is affected by the presence of inhomogeneities in the binding interactions?
This is the general question of \emph{disorder relevance} for physical systems: if an arbitrarily small amount of disorder changes the characteristics of the phase transition, then disorder is called \emph{relevant}; otherwise disorder is called \textit{irrelevant}.
In a celebrated paper, the physicist Harris~\cite{H74} proposed a general criterion,
based on the critical behaviour of the homogeneous (or pure) system---more specifically on the correlation length critical exponent $\nu$---,
to predict whether an i.i.d.\ disorder is relevant or not:
for a $d$-dimensional physical system, 
disorder should be irrelevant if $\nu >2/d$ and relevant if $\nu<2/d$; the case $\nu=2/d$,
called marginal, requires more investigation.

The pinning model has seen an intense activity over the past decades,
both in theoretical physics (see e.g.\ 
\cite{CH97,DHV92,DR14,FLNO86,KL12,TC01}
to cite a few) and in rigorous mathematical physics (see e.g.\ 
\cite{Alex08,AZ09,BCPSZ14,BL18,CdH13,DGLT09,
GLT10,GLT11,GT06,GT09,Lac10ECP,Ton08_AAP,Ton08_CMP}).
One reason for that activity comes from the fact that the homogeneous model is exactly solvable and displays a critical exponent $\nu$ that ranges from~$1$ to $+\infty$:
the disordered pinning model has therefore been an ideal framework to test the validity of Harris' predictions.
The Harris criterion has now been put on rigorous ground by a series of works (see \cite{Alex08,AZ09,CdH13,DGLT09,GT06,
GT09,Lac10ECP,Ton08_AAP,Ton08_CMP}), the marginal case being also completely settled (see \cite{GLT10,GLT11} and \cite{BL18}), after some contradictory predictions in the physics literature \cite{DHV92,FLNO86}.

\subsubsection*{Intermediate disorder regime}

A recent and complementary approach to the question of disorder relevance has been to consider scaling limits of the model, see \cite{CSZ16} for an overview.
In this context, disorder relevance can be understood
as the possibility of tuning down the intensity of disorder as the system size grows in such a way that disorder \emph{is still present} in the limit.
The idea is therefore to scale the different parameters of the model with the size of the system,
in such a way to obtain a non-trivial, \textit{i.e.}\ \emph{disordered}, scaling limit.
This is called the \emph{intermediate disorder}
regime, which corresponds to identifying a scaling window for the disorder intensity
in which one observes a transition from a ``weak disorder'' phase to a ``strong disorder'' phase.

This approach has first been implemented in the context of the directed polymer model in dimension~$1+1$, see~\cite{AKQ14}, and has been widened in \cite{CSZ13} to other models (including the pinning model); let us also mention~\cite{BL21_scaling,SSSX21} for other results in the same spirit.
In particular, let us stress that in~\cite{CSZ13}, the conditions
for having a non-trivial scaling limit of the model exactly matches that of Harris' condition for disorder relevance.

The intermediate disorder scaling limit
seems to have wide applications for understanding
relevant disorder systems. For instance:
it makes it possible to extract universal behaviours of 
quantities of interest such as the critical point shift
or the free energy of the model
see~\cite{CTT17,Nak19};
it is a way to construct continuum disordered systems
that arise as scaling limits of discrete models (and encapsulate their universal features),
see~\cite{AKQ14_cont,BL20_conti,BS21,CSZ16_conti}.
Let us also mention that, in the case of marginally relevant disordered systems,
understanding the intermediate disorder scaling limit is much more challenging, see~\cite{CSZ17}.
However, in the context of the directed polymer in dimension $2+1$ it provides a way to make sense of (and study) the ill-defined stochastic heat equation,  see the recent paper~\cite{CSZ21}.

\subsubsection*{Generalization of the Poland--Scheraga model}

The Poland--Scheraga model, thanks to its simplicity, plays a central role in the study of DNA denaturation. But some aspects of it are oversimplified and fail to capture important features of the model: in particular, the two DNA strands are assumed to be of equal length, and loops have to be symmetric, ruling out for instance the existence of mismatches (see Figure~\ref{fig:PS}). 
For these reasons, Garel and Orland~\cite{GO04} (see also~\cite{NG06}) introduced a generalization of the model that overcomes these two limitations: loops are allowed to be asymmetric and the two strands are allowed to be of different lengths (see Figure~\ref{fig:gPS}).

\begin{figure}[hbtp]
\vspace{-0.7\baselineskip}
	\centering
	\begin{subfigure}[t]{0.45\textwidth}
		\centering
		\includegraphics[width=1\textwidth]{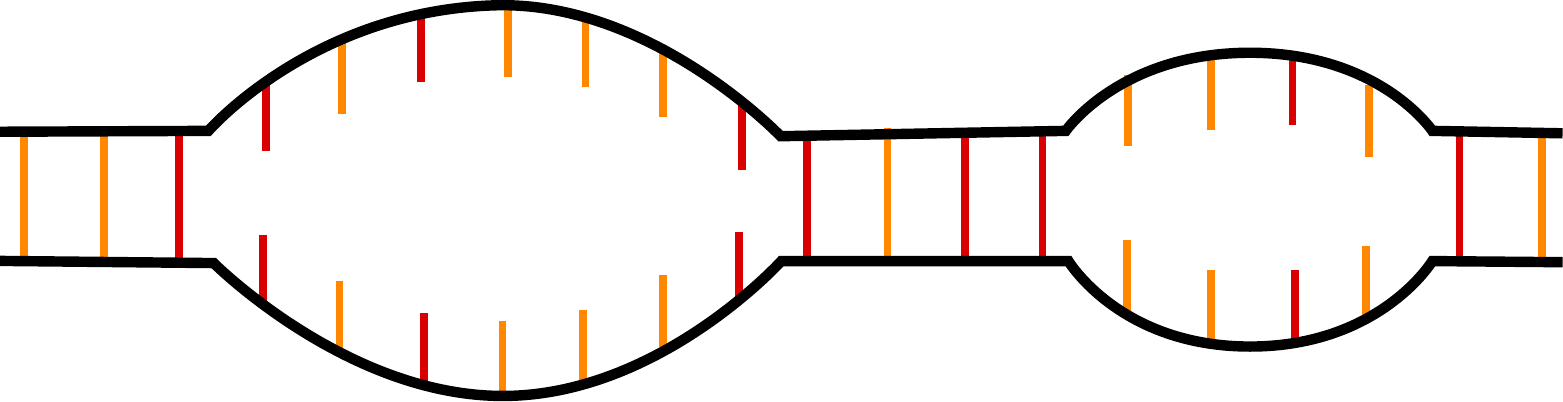}
		\caption{\footnotesize Standard PS model.}\label{fig:PS}		
	\end{subfigure}
	\qquad
	\begin{subfigure}[t]{0.45\textwidth}
		\centering
		\includegraphics[width=1\textwidth]{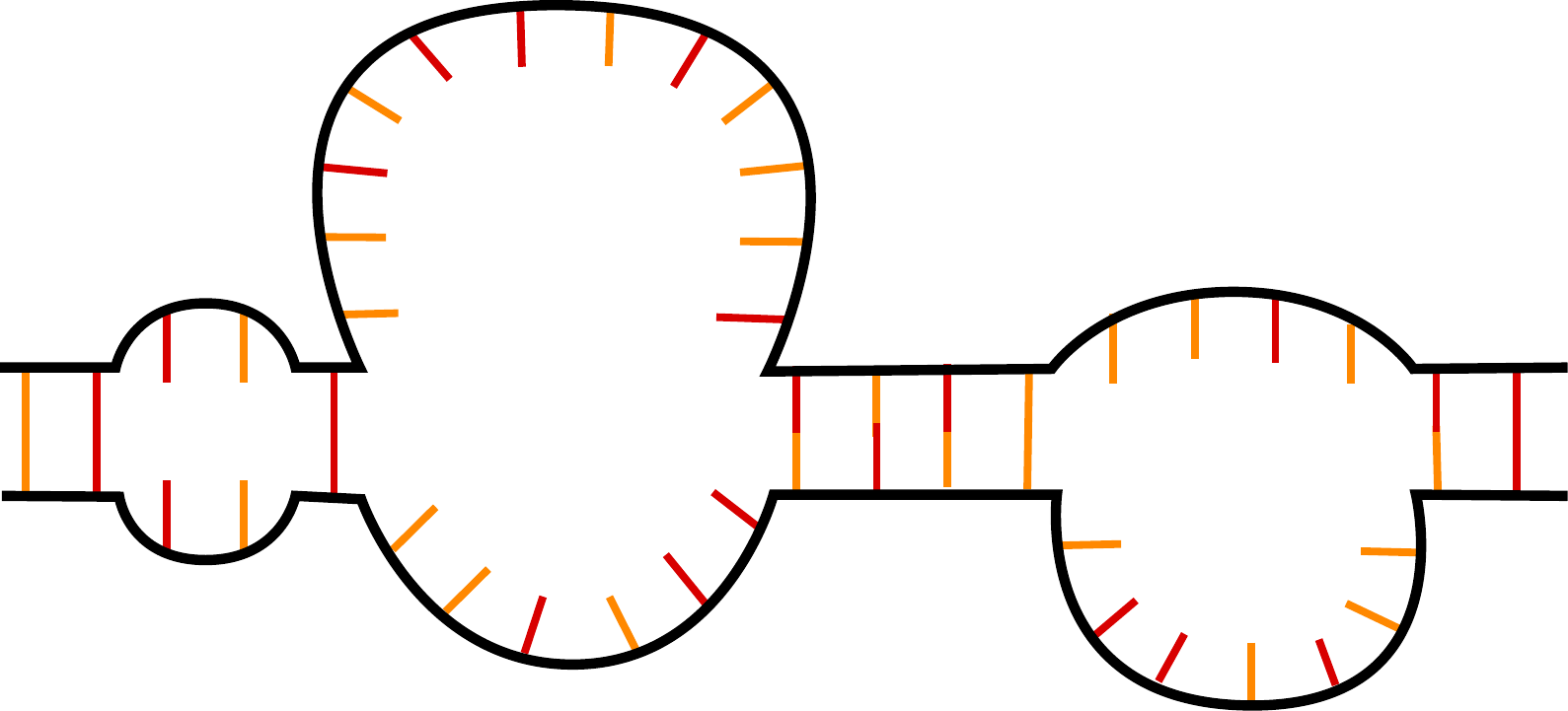}
		\caption{\footnotesize Generalized PS model.}\label{fig:gPS}
	\end{subfigure}
	\caption{\small Standard vs.\ generalized Poland-Scheraga models, with two types of monomers along the strands. The standard PS model is represented on the left: the two strands have the same length and loops are symmetric (there is no mismatch). The configuration is encoded by the sequence of lengths of the successive loops, from left to right (in the example~{\small(A)}: $1,1,8,1,1,1,5,1$). The generalized PS model 
is represented on the right: the two strands may have 
different lengths and loops are allowed to be asymmetric. (mismatches occur).
A loop is encoded by a pair $(k,\ell)$, with $k$ the length of the 'top' strand and $\ell$ the length of the 'bottom' strand in that loop:  a configuration is encoded by the sequence of pairs describing the successive loops, from left to right (in the example~{\small(B)}: $(1,1),(3,3),(13,7),(1,1),(1,1),(1,1),(5,8),(1,1)$).}\label{fig:1}
\end{figure}

The mathematical formulation of this generalized Poland--Scheraga (gPS) model has been developed by Giacomin and Khatib~\cite{GK17},
and is based on a bivariate renewal process, \textit{i.e.} a renewal process on $\N^2$,
whose increments describe the successive loops in the DNA 
(an increment $(k,\ell)$ describes a loop with length~$k$ in the first strand and length $\ell$ in the second strand, see Figure~\ref{fig:gPS}).
In \cite{GK17}, the authors consider only the homogeneous gPS model: somehow surprisingly they find that the model is also solvable, but with a much richer phenomenology than in the PS model.
In particular, in addition to the denaturation transition,
other phase transitions, called \textit{condensation} transitions, may occur;
this was first observed in~\cite{NG06}.
The critical points for the denaturation and condensation transitions can be identified (see~\cite{GK17}). Moreover, the critical behaviour of the denaturation transition
has been described in~\cite{GK17}
and the condensation transitions have been further investigated in~\cite{BGK18} (it corresponds to a \textit{big-jump} transition for the bivariate renewal process).

As far as the disordered version of the gPS model is concerned,
this has been investigated in the physics literature,
but only at a numerical level, see e.g.~\cite{GO04,TN08}.
In \cite{BGK20}, the authors consider a disordered version 
of the model, in which a pairing between the $i$-th monomer
of the first strand and the $j$-th monomer of the second strand
is associated with a disorder variable $\go_{i,j}$, where $(\go_{i,j})_{i,j\geq 0}$ are i.i.d.\ random variables. 
In that case, Harris' predictions for disorder relevance on the denaturation phase transition
have been confirmed in~\cite{BGK20}: if $\nu$ is the critical exponent for the free energy in the pure model, disorder is irrelevant as soon as $\nu>1$ (here, the dimension of the disorder field is $d=2$).

However, this choice of an i.i.d.\ disorder is not necessarily adapted to the modelling of DNA. A more faithful choice would be to consider two sequences $(\hat \go_i)_{i\geq 0}, (\bar \go_j)_{j\geq 0}$ of (i.i.d.) random variables which represent the nucleotides sequences along the two DNA strands; and to define the disorder field with some base pairing function, i.e.\ $\go_{i,j}\coloneqq V(\hat\go_i,\bar\go_j)$, which describes the chemical bonding interactions between the nucleotides of the two strands. In~\cite{Leg21}, the author considers the case where $\go_{i,j} \coloneqq \hat \go_i \bar \go_j$
as a natural toy model, for which computations are more explicit.
A striking finding of~\cite{Leg21} is that, in that case, disorder relevance
depends on the distribution of $\hat \go_i,\bar \go_j$:
for ``most'' distributions, disorder is irrelevant if $\nu>2$ and relevant if $\nu<2$ (the disorder is fundamentally one-dimensional);
on the other hand, there are distributions, namely $\frac12 (\delta_{-x} +\delta_x)$ for some $x>0$, such that
disorder is irrelevant as soon as $\nu>1$ (the disorder is essentially two-dimensional).

\subsubsection*{Intermediate disorder scaling limit for the gPS model}

One of the goal of the present paper is to complement 
the existing results on the influence of disorder on the denaturation transition for the gPS model, when the disorder comes from a base pairing interaction between inhomogeneous strands.
For this purpose, we investigate the intermediate disorder scaling limit of the model, in the spirit of~\cite{CSZ13, CSZ16}.
We extend the results of~\cite{Leg21} in several ways:
\begin{itemize}
\item We consider a more general disorder variable $\go_{i,j}$ associated to the pairing
of the $i$-th monomer
of the first strand and the $j$-th monomer of the second strand:
we take $\go_{i,j} = V(\hat \go_i,\bar \go_j)$
where $(\hat \go_i)_{i\geq 0}$, $(\bar \go_j)_{j\geq 0}$ are sequences of i.i.d.\ random variables and $V(\cdot,\cdot)$ is any (symmetric) interaction function.
\item We identify the correct intermediate disorder scaling and we prove the convergence of the partition function towards a non-trivial limit under that scaling.
Remarkably, the scaling depends finely on the
distribution of $\go_{i,j}$, \textit{i.e.}\ on the function $V(\cdot,\cdot)$ and on the distribution of $\hat \go_i,\bar\go_j$.
\item The identification of the intermediate disorder scaling
allows us to give a sufficient condition for disorder relevance (we determine whether the effective dimension of the disorder is one or two). It also enables us to obtain sharp bounds on the critical point shift, improving some results of~\cite{Leg21}.
\end{itemize}

One of the main novelties of the present paper
is that, to the best of our knowledge, it is the first instance where 
the intermediate disorder scaling depends on the distribution of disorder, therefore displaying some non-universality feature.
On the other hand, the limit of the partition function 
under the intermediate disorder scaling is universal, in the sense that it does not depend on the distribution of the disorder $\go_{i,j}$
or on the fine details of the underlying bivariate renewal.

One major difficulty of the present work is that the disorder field $(\go_{i,j})_{i,j\geq 0}$ presents some long-range correlations (along lines and columns): as a result, the limit that we obtain is based on a correlated Gaussian field $\cM$ (see~Figure~\ref{fig:M} for an illustration) that exhibits the same type of correlations.
Note that in the PS model, the question of the influence of long-range correlated disorder on the denaturation transition has been investigated, for instance in~\cite{B13,Ber14,BL12,BP15,CCP19,Poi13}.
However, to the best of our knowledge, intermediate disorder regimes have so far been considered only in the case of i.i.d.\ disorder fields (or at least time-independent for models in dimension $1+d$), with the exception of~\cite{SSSX21}.
One can therefore view our result as a new attempt to investigate the influence of a correlated disorder on physical systems.

\subsection*{Some notation}
Throughout the article, we write elements of $\N^2$, $\R^2$ with bold characters, and elements of $\N$, $\R$ with plain characters (in particular we note $\bzero:=(0,0)$ and $\bone:=(1,1)$); moreover for $\bt\in\R^2$, $\bt^{(a)}$ will denote its projection on its $a$-th coordinate, $a\in\{1,2\}$. When there is no risk of confusion, we may also write more simply $\bt=(t_1,t_2)$.
We also define orders on $\bbR^2$:
for $\bs,\bt\in\R^d$, write 
\[
\bs \prec \bt \quad \text{ if } \ s_1 < t_1,\, s_2 < t_2\,,
\qquad \text{ and } \qquad  
\bs \preceq \bt \quad \text{ if } \ s_1 \leq t_1, \, s_2 \leq  t_2 \,.
\]
For $ \bzero \preceq \bs \preceq \bt$, let $[\bs,\bt]$ denote the rectangle $[s_1,t_1]\times [s_2,t_2]$ (and similarly $[\bs,\bt) := [s_1,t_1)\times [s_2,t_2)$, etc.) and $\llb  \bs,\bt\rrb:= [\bs,\bt] \cap \bbZ^2$. 
For $s,t\in\R$, we write $s\wedge t=\min(s,t)$ and $s\vee t=\max(s,t)$; and for $\bs,\bt\in\R^2$, 
\[
\bs\wedge\bt\;:=\;(s_1\wedge t_1,s_2\wedge t_2) \qquad
\text{and}\qquad 
\bs\vee\bt\;:=\;(s_1\vee t_1,s_2\vee t_2)\;.
\]
For $\bs \in \bbR^2$, let $\lfloor \bs \rfloor := (\lfloor s_1 \rfloor , \lfloor s_2\rfloor)$. Finally, we will say that $\bs,\bt\in\R^2$ are \emph{aligned} if they are on the same line or column,
that is if $s_1=t_1$ or $s_2=t_2$, and we then write $\bs \lrarw \bt$; otherwise we write $\bs\nlrarw\bt$.

\subsection{The generalized Poland-Scheraga model: definition and first properties}
\label{sec:defgPS}

Let $\btau = (\btau_k)_{k\geq 0}$ be a bivariate renewal process, with $\btau_0=\bzero$ and inter-arrival distribution
\begin{equation}\label{def:tau}
\bP\big( \btau_1= (\ell_1,\ell_2) \big) := K(\ell_1+\ell_2) = \frac{L(\ell_1+\ell_2)}{(\ell_1+\ell_2)^{2+\ga}},
\qquad \forall \, {\boldsymbol\ell}=(\ell_1,\ell_2) \in \N^2 \,,
\end{equation}
with $\bP(|\btau_1|<+\infty)=1$, $\ga>0$, and where $L(\cdot)$ is a slowly varying function (see~\cite{BGT87}). 
This specific choice of distribution is motivated by the natural assumption that the energy of a loop depends only on its total length (see~\cite[Sections~1.2,~1.3]{GK17} for more details). With a slight abuse of notation, we also interpret $\btau$ as a set $\{\btau_1, \btau_2,\ldots\}$ (we will always omit $\btau_0$).

Let $\hat \go = (\hat \go_{i})_{i\geq 1}$ and $\bar \go = (\bar \go_{i})_{i\ge1}$ be two independent sequences of i.i.d.\ random variables, with the same law. 
For $\bi\in \bbN^2$, we denote $\go_{\bi}= \go_{i_1,i_2}:= V(\hat \go_{i_1} , \bar \go_{i_2})$, where $V(\cdot, \cdot)$ is a \emph{symmetric} function describing the interactions between the monomers; we naturally assume that $V(\cdot,\cdot)$ is not constant.
Let us stress that $\go :=(\go_{\bi})_{\bi\in \bbN^2}$ is a strongly correlated field. 
Throughout the paper, the laws of $\hat\go$, $\bar\go$ and $\go$ will be denoted by~$\bbP$ (we use the same symbol, and it will always be clear from context which one is considered). Moreover we assume that there is some $\gb_0>0$ such that
\[
\lambda(\gb):= \log \bbE[ e^{\gb \go_{\bone}}]<+\infty\qquad  \text{ for } \gb\in[0,\gb_0) \,.
\]

\begin{example}
A first, natural example, is to take $V$ in a product form,
that is $ V(x,y) = f(x)f(y)$ for some function $f$: this is the choice made in~\cite{Leg21}, with $f(x)=x$.
Another natural example would be to take $V(x,y) = g(x+y)$, for some function $g$. Finally, in order to model DNA, one could define $V$ on the set $\{A,T,C,G\}^2$ of possible nucleotide pairings.
\end{example}

\begin{remark}
\label{rem:singleomega}
Recalling that the gPS model was introduced to model DNA denaturation, and that DNA strands are complementary, it would also be natural to consider a field $(\go_{\bi})_{\bi\in \bbN^2}$
defined as a function of a unique sequence $(\tilde \go_i)_{i\geq 1}$ of i.i.d.\ random variables, 
by $\go_{\bi} = V( \tilde \go_{i_1} , \tilde \go_{i_2})$, with $V$ a symmetric function.
Our approach would actually provide results very similar to the one we obtain in the setting described above.
We comment in Section~\ref{sec:uniquesequence} below what is expected when constructing $\go$ with a unique sequence $\tilde\go$,
but we do not develop that case any further since it becomes more technical and should not bring much different results.
\end{remark}

For a fixed realization of $\go$ (\emph{quenched} disorder),  we define, for $\gb\geq 0$ (the disorder strength) and $h\in \bbR$ (the pinning potential), the following \emph{polymer measures}: for any $\bn\in \bbN^2$, representing the respective lengths of the strands, let
\begin{equation}
\label{def:Pnh}
\frac{\dd \bP_{\bn,h}^{\gb,\go,\quen}}{\dd \bP} (\tau) := \frac{1}{Z_{\bn, h}^{\gb, \go,\quen}}  \exp\Big( \sum_{\bi \in \llbracket \bone,\bn \rrbracket  } \big( \gb \go_{\bi} -\lambda(\gb)+h\big) \ind_{\{ \bi \in \btau\}} \Big)  \ind_{\{ \bn \in \btau \}} ,
\end{equation}
where
\begin{equation}\label{def:Z}
Z_{\bn, h}^{\gb, \go,\quen}:= \bE\Big[   \exp\Big( \sum_{\bi \in \llbracket  \bone ,\bn \rrbracket } \big( \gb \go_{\bi} -\lambda(\gb)+h\big) \ind_{\{ \bi \in \btau\}} \Big)   \ind_{\{ \bn \in \btau \}} \Big]
\end{equation}
is the partition function of the system.  This corresponds to giving a reward (or penalty if it is negative) $\gb \go_{\bi}+h$ if $\bi=(i_1,i_2)\in\btau$, that is if monomer $i_1$ of the first strand is paired with monomer $i_2$ of the second strand. The term $-\lambda(\gb)$ is only present for renormalization purposes, and even though $Z_{\bn, h}^{\gb, \go,\quen}$ depends on the realization of $\go$, we will drop it in the notation for conciseness. 

Let us mention that it is also natural consider a \emph{conditioned} or \emph{free} version of the model, either by replacing $\ind_{\{\bn\in \btau\}}$ with a conditioning or simply by removing it: the partition functions are then
\begin{align}
\label{def:Zcond}
Z_{\bn, h}^{\gb,\cond} & := \bE\Big[  \exp\Big( \sum_{\bi \in  \llbracket  \bone,\bn \rrbracket } \big( \gb \go_{\bi} -\lambda(\gb)+h\big) \ind_{\{ \bi \in \btau\}} \Big)  \, \Big | \,  \bn\in \btau \Big] \, ,\\
\text{ and } \qquad 
Z_{\bn, h}^{\gb,\free}& := \bE\Big[  \exp\Big( \sum_{\bi \in  \llbracket  \bone,\bn \rrbracket  } \big( \gb \go_{\bi} -\lambda(\gb)+h\big) \ind_{\{ \bi \in \btau\}} \Big)  \Big] \, .
\label{def:Zfree}
\end{align}

\subsubsection*{Free energy and denaturation phase transition}

In~\cite{BGK20,Leg21}, it is shown that for $\gamma>0$ (the asymptotic strand lengths ratio),
the following limit, called \emph{free energy}, exists a.s.\ and in $L^1(\bbP)$:
\begin{equation}
\label{def:freeenergy}
\tf_{\gamma}(\gb,h) = \limtwo{n_1,n_2\to\infty}{ n_1/n_2 \to \gamma} \frac{1}{n_1} \log Z_{(n_1,n_2), h}^{\gb,\quen}   
 = \limtwo{n_1,n_2\to\infty}{ n_1/n_2 \to \gamma}  \frac{1}{n_1} \bbE\log Z_{(n_1,n_2), h}^{\gb,\quen}   \,.
\end{equation}
Also, the limit is unchanged if one replaces the partition function by its conditioned 
or its free counterparts.
The function $(\gb,h)\mapsto \tf_{\gamma}(\gb,h+\lambda(\gb))$ is non-negative, convex
and non-decreasing in each coordinate.
Additionally, the free energy encodes localization properties of the model: indeed, one can exploit the convexity of $\tf_{\gamma}$ to show that, if $\partial_h \tf_{\gamma}(\gb,h)$ exists, which is for all but at most countably many $(\gb,h)$, then 
\[
\partial_h \tf_{\gamma}(\gb,h) = \limtwo{n_1,n_2\to\infty}{ n_1/n_2 \to \gamma} \bE_{\bn,h}^{\gb,\quen}\Big[ \frac{1}{n_1} \sum_{\bi\in \llbracket \bone,\bn \rrbracket } \ind_{\{\bi\in \btau\}}\Big]\,, \qquad \bbP\text{-a.s.}
\]
In other words, $\partial_h \tf_{\gamma}(\gb,h)$ is the asymptotic fraction of contacts between the two strands.
This leads to the definition of the critical point:
\begin{equation}
\label{def:hcritic}
h_c^{\quen}(\gb) := \inf \{ h  \colon \tf_{\gamma}(\gb,h) > 0\} \,,
\end{equation}
which marks the transition between a \textit{delocalized} phase ($h<h_c^{\quen}(\gb)$, zero density of contacts) 
and a \textit{localized} phase ($h<h_c^{\quen}(\gb)$, positive density of contacts).
Let us stress that $h_c^{\quen}(\gb)$ does not depend on $\gamma>0$, since we have the following bounds
$\tf_{\gamma}(\gb,h) \leq \tf_{\gamma'}(\gb,h) \leq \frac{\gamma'}{\gamma}\tf_{\gamma}(\gb,h)$ for $0<\gamma\leq \gamma'$, see~\cite[Prop.~2.1]{BGK20}.

\subsubsection*{Homogeneous gPS model and Harris' predictions for disorder relevance} 

As mentioned above, the homogeneous version of the model, \textit{i.e.}\ when $\gb=0$, is solvable, see~\cite{GK17}. More precisely, under the assumption~\eqref{def:tau}, we have $h_c=h_c(0)=0$ and we can identify the critical behaviour
\begin{equation}
\label{def:homcritic}
\tf_{\gamma}(0,h) \sim c_{\alpha,\gamma} \psi(1/h) h^{\nu} \,,\qquad \text{as } h\downarrow 0\,, \quad \text{ with }  \nu = \frac1\ga \vee 1 \,,
\end{equation}
for some slowly varying function $\psi$ (that depends on $\alpha$ and $L(\cdot)$) and some constant $c_{\alpha,\gamma}$ (it is the only quantity on the r.h.s.\ of~\eqref{def:homcritic} that depends on $\gamma$). This determines the critical behaviour of the homogeneous denaturation transition, identified by the critical point $h_c(0)=0$.

Simply by applying Jensen's inequality, we get that 
\[
\tf_{\gamma}(\gb,h) =  \limtwo{n_1,n_2\to\infty}{ n_1/n_2 \to \gamma}  \frac{1}{n_1} \bbE\log Z_{(n_1,n_2), h}^{\gb,\quen}  
\leq \limtwo{n_1,n_2\to\infty}{ n_1/n_2 \to \gamma}  \frac{1}{n_1} \log \bbE[Z_{(n_1,n_2), h}^{\gb,\quen}]  = \tf_{\gamma}(0,h) \,,
\]
where we have used that $\bbE[Z_{\bn, h}^{\gb,\quen}] = Z_{\bn,h}^{\gb=0,\quen}$, see~\cite[Eq.~(1.11)]{Leg21}.
Hence, we get that $h_c^{\quen}(\gb)\geq 0$ for any~$\gb \geq 0$.

In view of~\eqref{def:homcritic}, Harris' criterion for disorder relevance becomes:
if $d$ is the ``dimension of disorder'', then
disorder should be irrelevant if $\alpha < \frac d2$ and relevant if $\alpha > \frac d2$.
It would be natural to assume that in our setting where $\go_{\bi} =  V( \hat\go_{i_1} , \bar\go_{i_2})$, disorder is one-dimensional (since two strands of length $n$ involve~$2n$ independent random variables). Yet,~\cite{Leg21} studies the question of the influence of disorder in the case $V(x,y)=xy$, and shows the following:
(i) if the law of $\hat\go_1$ is not of the form $\frac12 (\delta_{-x}+\delta_x)$ for some $x>0$; then disorder is ``one-dimensional'': it is relevant if $\alpha >\frac12$ and irrelevant if $\alpha<\frac12$;
(ii) if the law of $\hat\go_1$ is given by $\frac12 (\delta_{-x}+\delta_x)$ for some $x>0$, then disorder is ``two-dimensional'': it is relevant if $\alpha >1$ and irrelevant if $\alpha<1$.

\section{Main results: intermediate disorder for the gPS model}

Our aim is to complete those results on disorder (ir)-relevance for the gPS model, by taking inspiration from \cite{AKQ14, CSZ13}. In those papers the authors proved for some disordered systems (notably the disordered pinning model in \cite{CSZ13}) that, by choosing a disorder intensity $\gb_n$ decaying to 0 as $n\to\infty$, it was possible to exhibit an \emph{intermediate disorder} regime, laying in-between the homogeneous ($\gb=0$) and disordered ($\gb>0$ constant) ones. In~\cite{CSZ16}, it is argued that the fact that such a scaling gives rise to a non-trivial, random limit is a new notion of disorder relevance, and that it should coincide with the usual meaning introduced by Harris \cite{H74}.

Our main result consists in proving an intermediate disorder scaling limit of the disordered gPS model defined in Section~\ref{sec:defgPS}:
we focus on the scaling limit of the partition function in the case $\alpha \in (0,1)$, since then the bivariate renewal admits a non-trivial scaling limit, see Proposition~\ref{thm:renouv} below. We then derive some consequences of this scaling limit in terms of disorder relevance, more precisely regarding the critical point shift.
One of the main difficulties we have to overcome is the fact that the disorder field $(\go_{\bi})_{\bi \in \bbN^2}$
has long-range (in fact, infinite-range) correlations along lines and columns.

\subsection{Heuristics of the chaos expansion}
As in \cite{AKQ14, CSZ13}, we look for scaling limits of the partition functions by computing polynomial expansions, starting with the free version \eqref{def:Zfree} for simplicity. Let us define 
\[
\gz_{\bi} = \gz_{\bi}(\gb) := e^{\gb \go_{\bi} -\lambda(\gb)}-1 \,,
\]
so that $\exp\big( (\gb \go_{\bi} -\lambda(\gb) +h)\ind_{\{\bi \in \btau\}} \big) = 1+(e^h\zeta_{\bi}  +e^h-1 )\ind_{\{\bi \in \btau\}}$. Then, for $\bt\in(\R_+^*)^2$ and $n\in\N$, expanding the product
in 
$Z_{n\bt,h}^{\gb, \free} = \bE\big[ \prod_{\bi \in \llbracket\bone,n\bt\rrbracket} \big( 1+(e^h\zeta_{\bi}  +e^h-1 )\ind_{\{\bi \in \btau\}}\big) \big]$
and using the renewal structure, we have
\begin{equation}
\label{expansion}
\begin{split}
Z_{n\bt,h}^{\gb, \free} 
&= 1 + \sum_{k=1}^{(nt_1)\wedge (n t_2)} \sum_{ \bzero=\bi_0 \prec \bi_1 \prec \ldots \prec \bi_k \preceq  n\bt } \prod_{l=1}^k \Big( (e^h\zeta_{\bi_l}  +e^h-1)\,u(\bi_l-\bi_{l-1})\Big) \,,
\end{split}
\end{equation}
where we denoted $u(\bi):= \bP(\bi\in \btau)$ the \emph{renewal mass function}. 
In order to understand the correct scaling for the parameters $h$ and $\gb$, let us
focus on the convergence of the term $k=1$. As $h \to 0$, it is
equal to (up to smaller order terms in $h$) 
\begin{equation}
\label{chaosk=1}
\sum_{\bi \in \llbracket \bone , n\bt  \rrbracket  } \zeta_{\bi} \pt  u(\bi)   + h \sum_{\bi \in \llbracket \bone , n\bt  \rrbracket  } \pt u(\bi)  \, .
\end{equation}

\subsubsection{The homogeneous term}
Looking at the homogeneous term in~\eqref{chaosk=1} (\textit{i.e.}\ the second one), we need to estimate the renewal mass function $u(\bi)$. When $\ga \in (0,1)$, this is provided by \cite{Will68}.
\begin{proposition}[\cite{Will68}, main result]\label{thm:renouv}
Assume $\ga\in (0,1)$ in \eqref{def:tau}. Then for $\bs\in(\R_+^*)^2$, we have
\begin{equation}
\label{def:phi}
\lim_{n\to+\infty} n^{2-\ga} L(n) \bP \big(  \lfloor n \bs \rfloor \in \btau \big) \;=\; \gp(\bs)\,,
\end{equation}
for some continuous function $\phi:(\R_+^*)^2\to\R_+$. Writing $\bs$ in the polar form $\bs= r e^{i\theta}$, we get that $\gp(\bs) = r^{\ga-2} a(\theta)$, for some continuous function $a:[0,\pi/2] \to \bbR_+$, which is equal to $0$ at $\theta=0$ and $\theta=\pi/2$. 
\end{proposition}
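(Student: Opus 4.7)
The statement is the classical bivariate heavy-tail renewal theorem attributed to Williamson~\cite{Will68}; I would reproduce its Fourier-analytic proof. The first step is to establish that $\btau$ lies in the domain of attraction of a $2$D $\ga$-stable subordinator. Summing~\eqref{def:tau} over antidiagonals yields $\bP(|\btau_1| = k) = (k-1) L(k)/k^{2+\ga} \sim L(k)/k^{1+\ga}$, so $|\btau_1|$ is attracted to a one-dimensional $\ga$-stable law; moreover, conditionally on $|\btau_1| = k$, the projection $\btau_1^{(1)}/|\btau_1|$ is uniform on $\{j/k : 1 \leq j \leq k-1\}$. Together these imply that $(a_n^{-1} \btau_{\lfloor nt\rfloor})_{t \geq 0}$ converges in law to a $2$D $\ga$-stable subordinator $\bY$ with Lévy measure $c\, r^{-1-\ga}\, dr \otimes d\gth$ on $(0,\infty) \times (0,\pi/2)$, with $a_n = n^{1/\ga}$ times a slowly varying function.

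The core is a local limit theorem for the renewal mass function $u(\bi) := \bP(\bi \in \btau)$. Fourier inversion on $\bbZ^2$ gives
\[
u(\bi) \;=\; \frac{1}{(2\pi)^2} \int_{[-\pi,\pi]^2} \frac{e^{-i\bi\cdot\boldsymbol{\xi}}}{1 - \hat K(\boldsymbol{\xi})} \, d\boldsymbol{\xi},
\]
where $\hat K$ denotes the characteristic function of $\btau_1$. The stable-law convergence above yields, as $\boldsymbol{\xi} \to \bzero$,
\[
1 - \hat K(\boldsymbol{\xi}) \;\sim\; |\boldsymbol{\xi}|^{\ga}\, L(1/|\boldsymbol{\xi}|)\, \Psi(\boldsymbol{\xi}/|\boldsymbol{\xi}|),
\]
with $\Psi$ continuous and nonvanishing on the unit circle (its form is explicit, coming from the characteristic exponent of $\bY_1$). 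Changing variables $\boldsymbol{\xi} = \boldsymbol{\eta}/n$, applying Karamata's theorem to the slowly varying factor, and invoking dominated convergence---legitimate once aperiodicity of $K$ guarantees that $|1-\hat K|$ stays bounded away from $0$ outside any neighborhood of $\bzero$---produces the limit
\[
n^{2-\ga} L(n) \, u(\lfloor n \bs \rfloor) \;\longrightarrow\; \gp(\bs),
\]
where $\gp$ is the inverse (tempered-distributional) Fourier transform of $|\boldsymbol{\eta}|^{-\ga}\Psi(\boldsymbol{\eta}/|\boldsymbol{\eta}|)^{-1}$. Continuity of $\gp$ on $(\R_+^*)^2$ is transparent from the equivalent probabilistic representation $\gp(\bs) = \int_0^\infty g_t(\bs)\, dt$, where $g_t$ is the density of $\bY_t$.

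The polar decomposition follows from self-similarity. Either rescaling $\boldsymbol{\eta} \mapsto \boldsymbol{\eta}/\gl$ in the Fourier representation, or directly from $\bY_{\gl^\ga t} \stackrel{d}{=} \gl\, \bY_t$ (so that $g_{\gl^\ga t}(\gl \bs) = \gl^{-2} g_t(\bs)$), gives $\gp(\gl \bs) = \gl^{\ga - 2} \gp(\bs)$ for every $\gl > 0$, hence $\gp(\bs) = r^{\ga-2} a(\gth)$ with $a(\gth) := \gp((\cos\gth, \sin\gth))$ continuous on $(0,\pi/2)$. The vanishing at the endpoints $a(0) = a(\pi/2) = 0$ follows from the occupation-density representation: the joint density $g_t(s_1, s_2)$ of $\bY_t$ vanishes continuously as either coordinate tends to $0$ (the uniform angular Lévy measure prevents mass from concentrating against either axis), and this property is preserved by integration in $t$. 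The main technical obstacle in this scheme is the Fourier inversion step, which requires (i) aperiodicity of $K$ and (ii) a uniform version of Karamata's theorem so that the limit may be interchanged with the Fourier integral; the boundary regularity of $a$ is also delicate and forms a non-trivial part of Williamson's original analysis.
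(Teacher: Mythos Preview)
The paper does not prove this proposition at all: it is stated as a quotation of Williamson's main result~\cite{Will68}, with the attribution explicit in the proposition's title, and is used as a black box throughout (for instance in the Riemann-sum arguments of Section~\ref{sec:homogeneous} and in the definition~\eqref{defphin} of $\phi_m$). So there is no ``paper's own proof'' to compare your proposal against.

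Your sketch is a reasonable outline of the Fourier-analytic route to such bivariate renewal theorems, and is broadly in the spirit of Williamson's original argument. A few points worth flagging if you were to flesh this out: the dominated-convergence step in the Fourier inversion is genuinely delicate, since $|\boldsymbol{\xi}|^{-\ga}$ is not integrable near the origin in $\bbR^2$ for $\ga<1$ and one must exploit oscillation of $e^{-i\bi\cdot\boldsymbol{\xi}}$ rather than absolute integrability; and the boundary behavior $a(0)=a(\pi/2)=0$ is, as you acknowledge, the most subtle part and is not quite settled by the heuristic you give (the occupation-density integral $\int_0^\infty g_t(\bs)\,dt$ needs a quantitative decay estimate in $t$ to justify passing the limit inside). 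But since the paper treats this as an imported result, none of this is required here.
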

This theorem and a Riemann sum approximation
imply that $\sum_{\bi \in \llbracket \bone, n \bt\rrbracket } u(\bi) \sim c_{\bt}  L(n)^{-1} n^{\alpha} $ as $n\to\infty$,
with $c_{\bt}:=\int_{[\bzero,\bt]} \gp(\bs) \dd \bs <+\infty$.
Hence, in order to make the second term converge in \eqref{chaosk=1}, we have to take $h= h_n$ proportional to $L(n) n^{-\ga}$.

\begin{remark}
\label{rem:limitingset}
One could show that for any $\alpha>0$,
the random set $\frac1n \btau = \{\frac{\btau_i}{n}\}_{i\geq 0}\subseteq (\bbR_+)^2$ converges in distribution towards 
a random closed set $\cS_{\alpha}\subseteq (\bbR_+)^2$
(for the Fell--Matheron topology, we refer to~\cite[App.~A]{CSZ16_conti} for an overview of such convergence for univariate renewals). 
When $\alpha\in (0,1)$, Proposition~\ref{thm:renouv}
shows that~$\cS_{\alpha}$ is random (and $\gp$ characterizes its finite-dimensional distributions).
On the other hand, 
when $\alpha \geq 1$, $\cS_{\alpha}$ is easily seen to be simply the diagonal $\Delta=\{(x,x), x\in \bbR_+\}$: 
this justifies to focus on the case $\alpha\in (0,1)$.
\end{remark}

In fact, let us state right away the scaling limit of the homogeneous constrained  partition function, \textit{i.e.}\ when $\gb_n \equiv 0$, in the scaling window $h_n \asymp L(n) n^{-\ga}$.
Similar results hold for the free and conditioned partition function (see Remark~\ref{rem:freecond} in the non-homogeneous case).

\begin{proposition}
\label{prop:scalinghom}
Assume that $\lim_{n\to\infty} n^{\alpha} L(n)^{-1} h_n  = \hat h \in \mathbb R$.
Then, for any $\alpha \in (0,1)$ and any $\bt \succ 0$, writing $Z_{\bn,h}:=Z_{\bn,h}^{\gb=0, \quen}$, we have
\begin{equation}
\label{def:Zhom}
\lim_{n\to\infty} n^{2-\alpha} L(n)  Z_{\lfloor n \bt\rfloor, h_n}  = \bZ_{\bt, \hat h}  := \gp(\bt)+\sum_{k=1}^{+\infty}  \hat h^k 
\idotsint \limits_{ \bzero \prec \bs_1 \prec \cdots \prec \bs_k \prec \bt }  \prod_{i=1}^{k+1} \gp( \bs_i- \bs_{i-1})   \dd \bs_1 \cdots \dd \bs_k \,, 
\end{equation}
In~\eqref{def:Zhom} we use the convention $\bs_0:=\bzero$ and $\bs_{k+1}:=\bt$ for the $k$-th term of the sum.
\end{proposition}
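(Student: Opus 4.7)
The plan is to combine a polynomial chaos expansion of $Z_{\lfloor n \bt\rfloor, h_n}$ at $\gb = 0$ with Riemann-sum asymptotics term by term, and then dominate the tail of the series uniformly in $n$ so as to pass the limit inside the infinite sum.

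First, I would expand the constrained partition function using the identity $e^{h_n \ind_{\bi \in \btau}} = 1 + (e^{h_n} - 1) \ind_{\bi \in \btau}$ at each site. Pulling out the factor $e^{h_n}$ coming from the forced visit at $\bn = \lfloor n \bt \rfloor$ and using the renewal property of $\btau$, one obtains
\begin{equation*}
Z_{\bn, h_n} \;=\; e^{h_n} \sum_{k=0}^{n_1 \wedge n_2 - 1} (e^{h_n} - 1)^k \sum_{\bzero = \bi_0 \prec \bi_1 \prec \cdots \prec \bi_k \prec \bi_{k+1} = \bn} \prod_{l=1}^{k+1} u(\bi_l - \bi_{l-1}),
\end{equation*}
with the $k = 0$ term equal to $u(\bn)$. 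This is the exact discrete analogue of the claimed series \eqref{def:Zhom}.

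Second, I would identify the pointwise limit of each term after multiplying by $n^{2-\alpha} L(n)$. Fix $k \geq 0$ and rescale $\bi_l = \lfloor n \bs_l \rfloor$. By Proposition~\ref{thm:renouv}, the convergence $n^{2-\alpha} L(n)\, u(\lfloor n \cdot \rfloor) \to \gp$ holds locally uniformly on compacts of $(\R_+^*)^2$. The sum over $k$ intermediate points approximates a $k$-fold integral with Jacobian $n^{2k}$, which combined with the $k+1$ factors $u \sim n^{-(2-\alpha)} L(n)^{-1} \gp$ and with $(e^{h_n} - 1)^k \sim h_n^k \sim (\hat h\, L(n)\, n^{-\alpha})^k$ cancels all powers of $n$ and $L(n)$ exactly, yielding $\hat h^k \idotsint \prod_{l=1}^{k+1} \gp(\bs_l - \bs_{l-1})\, \dd \bs$ as the limit of the rescaled $k$-th term. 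Some care is needed near the diagonal of the simplex $\{\bzero \prec \bs_1 \prec \cdots \prec \bs_k \prec \bt\}$, where $\gp(\bs) = r^{\alpha - 2} a(\theta)$ diverges; but since $\alpha \in (0,1)$ this singularity is integrable against Lebesgue measure, and the Riemann sum argument can be carried out by first restricting to points $\epsilon$-separated from the diagonal (where local uniform convergence applies) and then letting $\epsilon \downarrow 0$.

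Third, the main obstacle is to justify interchanging the limit with the infinite sum by finding a dominating bound uniform in $n$. The target estimate is
\begin{equation*}
(e^{h_n} - 1)^k \sum_{\bzero \prec \bi_1 \prec \cdots \prec \bi_k \prec \bn} \prod_{l=1}^{k+1} u(\bi_l - \bi_{l-1}) \;\leq\; \frac{(C \hat h)^k}{\Gamma(\alpha (k+1) + 1)} \cdot \frac{1}{n^{2-\alpha} L(n)},
\end{equation*}
which is summable in $k$. The idea is to use Potter's bounds for the slowly varying function $L$ in order to majorise $u$ uniformly by a constant multiple of $n^{-(2-\alpha)} L(n)^{-1} \gp(\cdot/n)$ (plus a negligible correction), transferring the discrete convolution into an iterated integral of $\gp$. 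For the continuous convolution, an induction on $k$ using the scaling $\gp(\lambda \bs) = \lambda^{\alpha - 2} \gp(\bs)$ yields a Mittag-Leffler-type control on $\gp^{*(k+1)}$, in analogy with the classical bound for iterated convolutions of one-sided $\alpha$-stable densities. This tail estimate is the technical crux of the proof; once it is in place, dominated convergence combined with the termwise limit of the previous step yields the claim.
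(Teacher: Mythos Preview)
Your three-step strategy---expand, take termwise Riemann-sum limits via Proposition~\ref{thm:renouv}, and dominate the tail by a summable convolution bound---is exactly the paper's proof. The paper organizes things slightly differently, sandwiching $(e^{h_n}-1)$ between $(1\pm\varepsilon)\hat h\, L(n) n^{-\alpha}$, proving convergence of an auxiliary series $\check Z_{n\bt,\check h}$ for a \emph{fixed} parameter $\check h$, and then invoking continuity in $\check h$; but this is a cosmetic difference.

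One point to correct in your third step: you propose to majorise $u$ by a constant multiple of $n^{-(2-\alpha)}L(n)^{-1}\gp(\cdot/n)$, but this fails because $\gp(\bs)=r^{\alpha-2}a(\theta)$ vanishes on the coordinate axes (Proposition~\ref{thm:renouv}) while $u(\bi)$ does not, so no such uniform bound exists. The paper uses instead the angularly nondegenerate majorant $u(\bi)\le c\,L(\|\bi\|_1)^{-1}\|\bi\|_1^{\alpha-2}$ from \cite[Thm.~4.1]{B18} (your Potter-bound remark is relevant precisely here, to pass from $L(\|\bi\|_1)$ to $L(n)$). The iterated two-dimensional integral of $\prod\|\bs_i-\bs_{i-1}\|_1^{\alpha-2}$ is then reduced to the one-dimensional Dirichlet integral
\[
\idotsint_{0<t_1<\cdots<t_k<\|\bt\|_1}\prod_{i=1}^{k+1}(t_i-t_{i-1})^{\alpha-1}\,\dd t_1\cdots\dd t_k \;=\; \frac{\Gamma(\alpha)^{k+1}}{\Gamma((k+1)\alpha)}\,\|\bt\|_1^{(k+1)\alpha-1},
\]
which gives the summable tail directly, without an induction on $\gp^{*(k+1)}$.
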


\noindent
In view of Remark~\ref{rem:limitingset}, in the same way as the function $\gp$ characterizes the finite-dimensional distribution of the set $\mathcal S_{\alpha}$,
one can interpret the quantity $\bZ_{\bt,\hat h}$ as characterizing the finite-dimensional distribution of
a (weakly) pinned set $\mathcal S_{\alpha}$, much as in~\cite{Soh09} for the usual pinning model. In particular, we have $\bZ_{\bt,\hat h=0} = \gp(\bt)$.

\subsubsection{The disordered term}
Considering the disorder term in \eqref{chaosk=1} (\textit{i.e.}\ the first one), notice that $\bbE[\gz_{\bi}] =0$ and $\Var(\gz_i) \sim \Var(\go_{\bone}) \gb^2$ as $\gb\downarrow 0$ (see Lemma~\ref{lem:correl} below).
If the variables $(\gz_\bi)_{\bi\in\N^2}$ were independent, then, properly rescaled,
the sum would converge to an integral of $\phi$ against a white noise, as is the case for an i.i.d.\ disorder, see \cite{AKQ14,CSZ13}.
However, in our case, the field  $(\gz_\bi)_{\bi\in\N^2}$ displays strong correlations on each line and column of $\N^2$. 
Therefore, the first result we prove is that the partial sums of $(\gz_\bi)_{\bi\in\N^2}$,
properly normalized by $\gb^r n^{3/2}$ for some $r\in \bbN$, converge towards a Gaussian random field~$\cM$ which encapsulates the correlation structure of~$\gz$, see Theorem~\ref{thm:cvgcM} below.
A (non-trivial) consequence of Theorem~\ref{thm:cvgcM} (and Proposition~\ref{thm:renouv}) is the following convergence in distribution and in~$L^2(\bbP)$,
as $n\to\infty$ and $\gb\downarrow 0$,
\begin{equation}\label{eq:convergencek=1}
  \frac{1}{\sigma_r n^{3/2}\gb^r } \sum_{\bi \in \llbracket\bone, n\bt\rrbracket} \zeta_{\bi} \; \frac{u(\bi) L(n)}{n^{\ga-2} }  \;\longrightarrow\; \int_{[\bzero,\bt]} \gp(\bs) \dd\cM(\bs) \,,
\end{equation}
where $r\in \bbN$ and $\sigma_r$ are constants that depend on the distribution of $\go_{\bi}$ (see Lemma~\ref{lem:correl} below).
The definition of the integral with respect to $\cM$, together with the fact that the integral on the right-hand side of~\eqref{eq:convergencek=1} is well-defined, is part of the statement, and is discussed in detail in Section~\ref{sec:integration} below.

\subsubsection{Scaling window}
All together, the analysis of the first term~\eqref{chaosk=1} in the chaos expansion~\eqref{expansion}
suggests that one should take the following scaling for the parameters $\gb_n,h_n$:
\begin{equation}
\label{def:scalings}
\lim_{n\to\infty} \frac{h_n}{ L(n) n^{-\ga}} := \hat h \in \bbR \qquad \text{ and } \qquad \lim_{n\to\infty} \frac{\gb_n}{(n^{\frac12-\ga} L(n))^{\frac1r}} :=\hat\gb  \in [0,+\infty)  \,.
\end{equation}
Note in particular that in order to be able to have $\lim_{n\to\infty} \gb_n =0$ (which is required to obtain an intermediate disorder regime), we require $\alpha >\frac12$.

\subsection{Convergence of the field $(\zeta_{\bi})_{\bi\in \bbN^2}$}

Recall that $(\hat\go_{i_1})_{i_1\geq1}$, $(\bar\go_{i_2})_{i_2\geq1}$ are i.i.d.\ sequences with the same law, and that $\go_{\bi} = V(\hat\go_{i_1},\bar \go_{i_2})$ for some symmetric function $V(\cdot,\cdot)$.
Let us denote $\Pfk$ the set of disorder distributions $\bbP$ 
such that $\lambda(\gb) := \log \bbE[ e^{\gb \go_{\bone}} ]<+\infty$ for $\gb\in [0,\gb_0)$.
Recall also that  $\zeta_{\bi} = \zeta_{\bi} (\gb_n):= e^{\gb_n \go_{\bi} - \lambda(\gb_n)} -1$
and that it has mean $0$.
Before we prove the convergence of the field $(\gz_{\bi})_{\bi\in \bbN^2}$, let us state a central lemma
that provides the asymptotic behaviour of the two-point correlations $\bbE[\gz_{\bi} \gz_{\bj}]$:
a key fact is that the correlations on lines and columns actually depend on the interaction function $V(\cdot,\cdot)$
and on the distribution $\bbP$.

\begin{lemma}\label{lem:correl}
Let $\bi, \bj \in \bbN^2$. If $\bi\nlrarw\bj$ then 
$\bbE\left[\zeta_{\bi}\zeta_{\bj}\right] =0$.
Additionally,
as $\gb_n \to 0$,
\begin{equation}\label{eq:correl:zeta}
\bbE\left[\zeta_{\bi}\zeta_{\bj}\right] = 
\begin{cases}
\sigma^2 \gb_n^2  + o(\gb_n^2)\quad & \text{if } \bi=\bj ,
\\
\sigma_r^2\gb_n^{2r} + o(\gb_n^{2r})\quad &\text{if }\bi \lrarw \bj,\, \bi \neq \bj \text{ and }\bbP\in\Pfk_r  \,,
\end{cases}
\end{equation}
with $\sigma^2 := \mathrm{Var}(\go_{\bone})$, $\sigma_r^2 = \frac{1}{(r!)^2}  \Var\big( \bbE[ \go_{\bone}^r \,|\, \hat \go_{1} ] \big)$ and
\begin{equation}\label{eq:partPfk}
\Pfk_r = \Big\{  \bbP \colon\   \min\big\{ k\geq 1\,,\, \Var\big( \bbE[ \go_{\bone}^k \,|\, \hat \go_{1} ] \big) >0 \big\} = r \Big\} \,.
\end{equation}
If $\sigma_k^2=0$ for all $k\in \bbN$, \textit{i.e.}\ if
$\bbP\in \Pfk_{\infty}$, then $\bbE\left[\zeta_{\bi}\zeta_{\bj}\right]=0$ for all $\bi\neq \bj$ (in fact, $\bbE\left[\zeta_{\bi}\mid \hat \go_{i_1}\right]=0$ a.s.).
\end{lemma}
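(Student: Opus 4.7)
The plan is to reduce the three sub-cases to computations involving the (conditional) moment generating function of $\go_{\bone}$. If $\bi\nlrarw\bj$, then $\{\hat\go_{i_1},\bar\go_{i_2}\}\cap\{\hat\go_{j_1},\bar\go_{j_2}\}=\emptyset$, so $\zeta_\bi$ and $\zeta_\bj$ are independent; since by definition of $\lambda$ one has $\bbE[\zeta_\bi]=e^{-\lambda(\gb_n)}\bbE[e^{\gb_n\go_\bi}]-1=0$, the product has zero expectation. For $\bi=\bj$, a direct computation gives $\bbE[\zeta_\bi^2]=e^{\lambda(2\gb_n)-2\lambda(\gb_n)}-1$, and the Taylor expansion $\lambda(\gb)=\bbE[\go_\bone]\gb+\tfrac12\sigma^2\gb^2+O(\gb^3)$ (legitimate since $\lambda$ is analytic on $[0,\gb_0)$) yields $\lambda(2\gb_n)-2\lambda(\gb_n)=\sigma^2\gb_n^2+O(\gb_n^3)$, giving the first line of~\eqref{eq:correl:zeta}.

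The main case is $\bi\lrarw\bj$ with $\bi\neq\bj$. By the symmetry of $V$ and of the marginals, I may take $i_1=j_1$ and $i_2\neq j_2$. Conditionally on $\hat\go_{i_1}$, the variables $\zeta_\bi$ and $\zeta_\bj$ become i.i.d.\ (functions of $\hat\go_{i_1}$ and of two independent copies of $\bar\go$), so using $\bbE[\zeta_\bi]=0$,
\[
\bbE[\zeta_\bi\zeta_\bj]=\bbE\!\bigl[\bbE[\zeta_\bi|\hat\go_{i_1}]^2\bigr]=\mathrm{Var}\!\bigl(\bbE[\zeta_\bi|\hat\go_{i_1}]\bigr),
\]
and the task reduces to expanding $\bbE[\zeta_\bi|\hat\go_{i_1}]$ in powers of $\gb_n$.

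Setting $f_k(x):=\bbE[\go_\bone^k|\hat\go_1=x]$, the conditional MGF expands as $\bbE[e^{\gb\go_\bi}|\hat\go_{i_1}=x]=\sum_{k\geq 0}\tfrac{\gb^k}{k!}f_k(x)$; taking expectations gives $e^{\lambda(\gb)}=\sum_{k\geq 0}\tfrac{\gb^k}{k!}\bbE[f_k(\hat\go_1)]$, and subtracting yields
\[
\bbE[\zeta_\bi|\hat\go_{i_1}]=e^{-\lambda(\gb_n)}\sum_{k\geq 1}\frac{\gb_n^k}{k!}\bigl(f_k(\hat\go_{i_1})-\bbE[f_k(\hat\go_1)]\bigr).
\]
By the definition of $r$ in~\eqref{eq:partPfk}, $f_k(\hat\go_1)$ is a.s.\ constant for $k<r$ (so its centered version vanishes) while $f_r(\hat\go_1)-\bbE[f_r(\hat\go_1)]$ has variance $(r!)^2\sigma_r^2>0$. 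Taking variances and keeping only the leading-order term in $\gb_n$ then gives $\mathrm{Var}(\bbE[\zeta_\bi|\hat\go_{i_1}])=\sigma_r^2\gb_n^{2r}+o(\gb_n^{2r})$, which is the second line of~\eqref{eq:correl:zeta}. If $\bbP\in\Pfk_\infty$, every $f_k(\hat\go_1)$ is a.s.\ constant and the series collapses to $0$ termwise, so $\bbE[\zeta_\bi|\hat\go_{i_1}]=0$ a.s.

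The only non-trivial technical point is controlling the remainder in the Taylor expansion in $L^2(\bbP)$: one must show $\bbE\bigl[\bigl(\bbE[\zeta_\bi|\hat\go_{i_1}]-\tfrac{\gb_n^r}{r!}(f_r(\hat\go_{i_1})-\bbE[f_r(\hat\go_1)])\bigr)^2\bigr]=o(\gb_n^{2r})$. This is where the exponential moment hypothesis enters in a stronger form: for $\gb_n\leq \gb_0'<\gb_0$ the conditional MGF and its partial sums are uniformly bounded in $L^2(\bbP)$ by Jensen's inequality applied to $e^{2\gb_0'\go_\bone}$, which justifies the termwise expansion and produces the remainder estimate. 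No further serious obstacle is expected.
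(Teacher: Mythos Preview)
Your proof is correct and close in spirit to the paper's, but you organize the computation more cleanly. The paper expands $e^{2\lambda(\gb_n)}\bbE[\zeta_\bi\zeta_\bj]$ directly as a Cauchy product of power series, using the binomial expansion of $(V(\varpi_1,\varpi_2)+V(\varpi_2,\varpi_3))^k$; this produces a double-indexed family of coefficients $a_{j,k}-b_{j,k}$, and one then argues term by term that these vanish whenever $j<r$ or $k-j<r$, by conditioning on the shared variable at the level of each coefficient. You instead condition on the shared variable $\hat\go_{i_1}$ \emph{before} expanding, which immediately collapses the two-index sum into a single series $\bbE[\zeta_\bi\mid\hat\go_{i_1}]=e^{-\lambda(\gb_n)}\sum_{k\geq 1}\tfrac{\gb_n^k}{k!}(f_k(\hat\go_{i_1})-\bbE f_k)$ whose first $r-1$ terms vanish by the very definition of $\Pfk_r$. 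This is a genuine simplification: it makes the role of $\Pfk_r$ transparent and avoids the binomial bookkeeping, at the cost of requiring the $L^2$ remainder estimate you flag (which the paper's approach also needs implicitly, and which follows from the exponential-moment assumption exactly as you indicate).
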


Therefore, $\Pfk$ is partitioned into sets $\Pfk_r$ for $r\in \bbN \cup\{+\infty\}$ and the decay of the correlations
on lines and columns depend on which $\Pfk_r$ contains the distribution $\bbP$.
Let us stress that in general, the $\Pfk_r$
(and notably~$\Pfk_{\infty}$) 
are non-empty: let us give a few examples.

\begin{example}\label{example1}
In the case $V(x,y)=xy$
we find that (see~\cite{Leg21}):
$\Pfk_1$ is the set of distributions such that $\bbE[\hat \go_1] \neq 0$ (since $\bbE[ \go_{\bone} \,|\, \hat \go_{1} ]=\hat\go_1 \bbE[\bar \go_1]=\hat\go_1 \bbE[\hat \go_1]$ a.s.); $\Pfk_2$ is the set of distributions such that $\bbE[\hat \go_1] = 0$ and $\Var(\hat \go_1^2) >0$;
$\Pfk_r$ is empty for any $3\leq r <\infty$; and
$\Pfk_{\infty}$ contains the remaining distributions,
\textit{i.e.}\ when $\hat\go_1$ has law $\frac12 (\delta_{-x}+\delta_x)$ for some $x>0$.
\end{example}


\begin{example}\label{example3}
In Appendix~\ref{app:example},
we tailor an example to obtain an instance where $\Pfk_4 \neq \emptyset$ and even $\Pfk_8 \neq \emptyset$.
The example is based on an interaction function of the form $V(x,y) =xf(y) +yf(x)$,
for some well chosen distribution $\bbP$ and function $f$.
It is reasonable to expect that such an example could be adapted to
construct cases where $\Pfk_r$ is non-empty for some arbitrarily large values of $r$.
\end{example}

\begin{remark}\label{rem:general:r}
In spite of Example~\ref{example3}, we believe that most ``natural'' cases verify $r\in\{1,2,+\infty\}$, the others emerging only from specifically tailored distributions. 
Let us stress however that the model does not appear simpler to study when restricted to $r\in \{1,2\}$ (there is no significant simplification), so in the remainder of the paper we treat the case of a generic $r$.
\end{remark}

Let us now state the convergence of the field $(\zeta_{\bi})_{i\in \bbN^2}$.
For $\bs = (s_1,s_2)\in(\R_+)^2$ and $n\in\N$, let us define
\begin{equation}
M_n(\bs)\;:=\; \sum_{\bi \in \llbracket \bone, n\bs \rrbracket}  \zeta_{\bi}(\gb_n)\,,
\end{equation}
with the convention $M_n(\bs)=0$ if $s_1<1/n$ or $s_2<1/n$.

\begin{theorem}
\label{thm:cvgcM}
Let $\bt\in(\R_+)^2$ and recall that $\zeta_{\bi}= \zeta_{\bi}(\gb_n) = e^{\gb_n\go_{\bi}-\gl(\gb_n)}-1$, $\bi\in\N^2$.
Assume that $\bbP \in \Pfk_r$ for some $r\in \bbN$.
If $\lim_{n\to \infty} \gb_n=0$ and $\lim_{n\to\infty} n\gb_n^{2r} = +\infty$,
then
\begin{equation}
\bigg( \frac{1}{\sigma_r n^{3/2} \gb_n^{r}} M_n(\bs)\bigg)_{\bs\in [\bzero,\bt]} \;\xrightarrow{(d)}\; \big( \cM(\bs) \big)_{\bs\in[\bzero,\bt]}\,,
\end{equation}
where $\cM$ is a Gaussian field on $(\R_+)^2$ with zero-mean and covariance matrix given by
\begin{equation}\label{def:KcovcM}
K(\bu,\bv):= \big(u_1\wedge v_1\big) \big(u_2\wedge v_2\big) \big( u_1\vee v_1 + u_2\vee v_2 \big)\,, \quad \bu=(u_1,u_2),\bv=(v_1,v_2)\in(\R_+)^2\,.
\end{equation}
The convergence holds for the topology of the uniform convergence on $[\bzero,\bt]$.
\end{theorem}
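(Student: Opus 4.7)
The plan is to decompose each $\zeta_{\bi}$ into a \emph{line} contribution, a \emph{column} contribution and a genuinely two-dimensional centered \emph{residual}, then to show that after dividing by $\sigma_r n^{3/2}\gb_n^{r}$ the first two pieces converge to the prescribed Gaussian field $\cM$ while the residual is $o_{L^2}(1)$. Concretely, setting $g_n(x):=e^{-\lambda(\gb_n)} \bbE_Y[e^{\gb_n V(x,Y)}] - 1$ (with $Y$ distributed as $\bar\go_1$), the symmetry of $V$ and the independence of $\hat\go$ and $\bar\go$ give $\bbE[\zeta_{\bi}\mid \hat\go_{i_1}] = g_n(\hat\go_{i_1})$ and $\bbE[\zeta_{\bi}\mid \bar\go_{i_2}] = g_n(\bar\go_{i_2})$, so that
\begin{equation*}
\zeta_{\bi} \;=\; g_n(\hat\go_{i_1}) + g_n(\bar\go_{i_2}) + \eta_{\bi}
\end{equation*}
defines a residual $\eta_{\bi}$ satisfying $\bbE[\eta_{\bi}\mid \hat\go_{i_1}] = \bbE[\eta_{\bi}\mid \bar\go_{i_2}] = 0$. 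A Taylor expansion of $\gb_n\mapsto g_n$, together with the definition of $\Pfk_r$, yields $g_n(x) = \frac{\gb_n^{r}}{r!}\bigl(m_r(x)-\bbE[m_r(\hat\go_1)]\bigr) + o(\gb_n^{r})$ in $L^2(\bbP)$, with $m_r(x):=\bbE[V(x,Y)^r]$; hence $\Var(g_n(\hat\go_1))=\sigma_r^2 \gb_n^{2r}(1+o(1))$, whereas $\Var(\eta_{\bi})$ stays $O(\gb_n^2)$.

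Summing the decomposition over $\bi\in\llb\bone,n\bs\rrb$, one obtains
\begin{equation*}
M_n(\bs) \;=\; \lfloor ns_2\rfloor\sum_{i_1=1}^{\lfloor ns_1\rfloor} g_n(\hat\go_{i_1})  \;+\;  \lfloor ns_1\rfloor\sum_{i_2=1}^{\lfloor ns_2\rfloor} g_n(\bar\go_{i_2})  \;+\;  \sum_{\bi\in\llb\bone,n\bs\rrb} \eta_{\bi} \,.
\end{equation*}
For the first two sums I would apply a triangular-array version of Donsker's invariance principle to $\gb_n^{-r} g_n(\hat\go_{i_1})$ (i.i.d.\ for fixed $n$, with $L^2$-limit $\tilde g(x)=\frac{1}{r!}(m_r(x)-\bbE[m_r(\hat\go_1)])$ and variance $\sigma_r^2$); Lyapunov's condition is ensured by the exponential-moment hypothesis $\lambda(\gb_0)<+\infty$. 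Divided by $\sigma_r n^{3/2}\gb_n^{r}$ and using that $\lfloor ns_k\rfloor/n\to s_k$ uniformly, these two terms jointly converge in $C([\bzero,\bt])$ to
\begin{equation*}
s_2 W(s_1) + s_1 W'(s_2) \,,
\end{equation*}
where $W$ and $W'$ are independent standard Brownian motions, coming respectively from the independent families $\hat\go$ and $\bar\go$. A short direct computation verifies that this Gaussian field has mean zero and covariance $(u_1\wedge v_1) u_2 v_2 + (u_2\wedge v_2) u_1 v_1 = K(\bu,\bv)$, identifying its distribution with that of $\cM$.

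The residual part requires the crucial estimate $\bbE[\eta_{\bi}\eta_{\bj}] = o(\gb_n^{2r})$ for all $\bi\neq\bj$, even in the aligned case $\bi\lrarw\bj$: by construction the contributions of $g_n(\hat\go_{i_1})$ and $g_n(\bar\go_{i_2})$ precisely absorb the leading $\sigma_r^2\gb_n^{2r}$ covariance provided by Lemma~\ref{lem:correl}. Together with $\bbE[\eta_{\bi}^2]=O(\gb_n^2)$ and the vanishing of cross-terms for non-aligned $\bi,\bj$, this gives
\begin{equation*}
\bbE\Bigl[\Bigl(\sum_{\bi\in\llb\bone,n\bs\rrb}\eta_{\bi}\Bigr)^2\Bigr] \;\leq\; C n^2 \gb_n^2 + o(n^3 \gb_n^{2r}) \;=\; o\bigl((n^{3/2}\gb_n^{r})^2\bigr),
\end{equation*}
the diagonal term being absorbed thanks to $n\gb_n^{2r-2}=n\gb_n^{2r}/\gb_n^{2}\to +\infty$ (a consequence of the assumptions $n\gb_n^{2r}\to +\infty$ and $\gb_n\to 0$). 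Tightness in the uniform topology on $[\bzero,\bt]$ is then straightforward: the first two summands inherit it from Donsker's theorem applied to the univariate partial-sum processes, while tightness of the residual follows from the bound above, applied to rectangular increments of $\bs\mapsto \sum_{\bi\in\llb\bone,n\bs\rrb}\eta_{\bi}$, via a Kolmogorov-type criterion.

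I expect the delicate step to be the residual estimate $\bbE[\eta_{\bi}\eta_{\bj}]=o(\gb_n^{2r})$: there are $\asymp n^3$ aligned off-diagonal pairs, so the decay must genuinely be $o(\gb_n^{2r})$ rather than $O(\gb_n^{2r})$. This calls for control of the next-to-leading term in the $\gb_n$-expansion of $g_n$ and for verifying that the $\sigma_r^2\gb_n^{2r}$-contributions to $\bbE[\zeta_{\bi}\zeta_{\bj}]$ and to the projection terms cancel exactly (not merely at the same order). The invariance principle itself is comparatively standard, modulo a careful verification of Lyapunov's condition for the triangular array $\gb_n^{-r} g_n(\hat\go_i)$ using the exponential moments of the disorder.
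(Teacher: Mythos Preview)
Your decomposition is correct and gives a genuinely different proof from the paper's. The paper proves finite-dimensional convergence by a direct \emph{moment method}: it expands $\bbE[(\overline M_n(\bs))^\ell]$, partitions the index set into maximal ``aligned'' clusters, and shows combinatorially that only pairings survive in the limit (Lemma~3.2 and Proposition~3.3). Tightness is handled separately by comparing $\overline M_n$ to a continuous interpolation and invoking a two-parameter Donsker argument. Your route instead exploits the Hoeffding (ANOVA) decomposition $\zeta_{\bi}=g_n(\hat\go_{i_1})+g_n(\bar\go_{i_2})+\eta_{\bi}$, reduces the problem to two independent one-dimensional invariance principles, and shows the residual is negligible. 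A bonus of your approach is the explicit representation $\cM(\bs)=s_2W(s_1)+s_1W'(s_2)$ for independent Brownian motions $W,W'$, which the paper never states; the paper's moment method is in turn more robust to the variant of Section~2.5.4 (a single sequence $\tilde\go$, where the Hoeffding splitting is less clean) and directly yields the $L^p$ bounds used in Claim~3.5.

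Two points deserve tightening. First, you are being too cautious on the residual covariance: for $\bi\lrarw\bj$ with $\bi\neq\bj$ (say $i_1=j_1$, $i_2\neq j_2$), conditioning on $(\hat\go_{i_1},\bar\go_{j_2})$ gives $\bbE[\eta_{\bi}\mid\hat\go_{i_1},\bar\go_{j_2}]=\bbE[\eta_{\bi}\mid\hat\go_{i_1}]=0$ while $\eta_{\bj}$ is measurable, so $\bbE[\eta_{\bi}\eta_{\bj}]=0$ \emph{exactly}, not merely $o(\gb_n^{2r})$. This makes the ``delicate step'' you flag disappear. Second, your tightness claim for the residual is underspecified: the $L^2$ bound on rectangular increments gives only $\bbE[R_n(A)^2]\leq C|A|/(n\gb_n^{2r-2})$, which is the borderline exponent for the Centsov criterion in two parameters. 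You need at least a fourth-moment bound. This does work, but requires an additional observation: by the Hoeffding orthogonality, $\bbE[\eta_{\bi_1}\eta_{\bi_2}\eta_{\bi_3}\eta_{\bi_4}]\neq 0$ only when every first coordinate and every second coordinate among $\bi_1,\dots,\bi_4$ is repeated, i.e.\ the four points lie on a $2\times 2$ sub-grid. There are $O(n^4|A|^2)$ such configurations in a rectangle $A$, each contributing $O(\gb_n^4)$, whence $\bbE[R_n(A)^4]\leq C|A|^2/(n\gb_n^{2r-2})^2$ and Centsov applies with room to spare.
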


Let us also mention that when $\bbP \in \Pfk_{\infty}$, then using Lemma~\ref{lem:correl} and the central limit theorem, 
we can show that 
\[
\bigg( \frac{1}{\sigma n \gb_n} M_n(\bs)\bigg)_{\bs\in [\bzero,\bt]} \;\xrightarrow{(d)}\; \big( \cW(\bs) \big)_{\bs\in[\bzero,\bt]}\,,
\]
where $\cW$ is a Gaussian field with covariance $(u_1\wedge v_1) (u_2\wedge v_2)$,
\textit{i.e.}\ $\cW$ is a Brownian sheet.
In other words, the rescaled field $ (\frac{1}{\sigma n\gb_n} \zeta_{\bi})_{\bi\in \mathbb N^2}$ converges to a Gaussian two-dimensional white noise. We do not prove this statement since it is not needed below.

\subsection{Intermediate disorder: statement of the main result}
We now have the tools to state our main result.
The only missing piece is that our statement involves iterated integrals against the field $\cM$.
We refer to Section~\ref{sec:integration} and Appendix~\ref{app:intstoch} below for a construction of the integrals  against the field $\cM$, and in particular for the proof that the chaos expansion series in~\eqref{eq:conjchaosexpansion} is well-defined in $L^2(\bbP)$.

\begin{theorem}\label{conj:scalinggPS}
Let $\btau$ satisfy \eqref{def:tau} with $\ga \in (\frac12, 1)$. Let $\bbP\in\Pfk_r$ for some $r\in \bbN$, and let $(\gb_n)_{n\geq 1}$, $(h_n)_{n\geq 1}$ satisfy the scaling relation~\eqref{def:scalings}.
Then for any $ \bt \succ \bzero$, we have the convergence in distribution
\begin{equation}\label{eq:convZZcont}
n^{2-\alpha} L(n) Z_{\lfloor n \bt\rfloor, h_n}^{ \gb_n,\go, \quen} \quad \xrightarrow[n\to+\infty]{(d)} \quad \mathbf{Z}_{\bt, \hat h}^{\hat \gb, \cM, \quen} \,,
\end{equation}
where the random variable $\mathbf{Z}_{\bt, \hat h}^{\hat\gb, \cM, \quen}$ is given by the chaos expansion (in~$L^2(\bbP)$)
\begin{equation}
\label{eq:conjchaosexpansion}
\mathbf{Z}_{\bt, \hat h}^{\hat\gb, \cM, \quen} := \gp(\bt) + \sum_{k=1}^{+\infty} \ \
\idotsint \limits_{ \bzero \prec \bs_1 \prec \cdots \prec \bs_k \prec \bt }  \psi_{\bt} \big( \bs_1,\ldots, \bs_k  \big) \prod_{j=1}^k \Big( \sigma_r \hat\gb^r \,\dd \cM(\bs_j) + \hat h \, \dd \bs_j \Big)\,.
\end{equation}
In \eqref{eq:conjchaosexpansion}, we have $\sigma_r^2:= \frac{1}{(r!)^2}  \Var\big( \bbE[ \go_{\bi}^r \,|\, \hat \go_{i_1} ] \big)$ and $\psi_{\bt}^{}$ is defined by 
\begin{equation}
\label{def:psit}
\psi_{\bt} ( \bs_1,\ldots, \bs_k ) \;:=\;  \ind_{\{\bzero =: \bs_0 \prec \bs_1 \prec \cdots \prec \bs_k \prec s_{k+1}:=\bt \}} \, \prod_{i=1}^{k+1} \gp( \bs_i- \bs_{i-1})   \,.
\end{equation}
\end{theorem}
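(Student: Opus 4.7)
The plan is to follow the polynomial-chaos strategy of Caravenna--Sun--Zygouras~\cite{CSZ13}, adapted to the present setting where the field $(\gz_\bi)$ has long-range correlations along rows and columns and where the correct scaling of $\gb_n$ depends finely on the type $r$ of the disorder distribution. The first step is a chaos expansion: using the identity $e^{(\gb_n\go_\bi-\gl(\gb_n)+h_n)\ind_{\bi\in\btau}}=1+(e^{h_n}\gz_\bi+e^{h_n}-1)\ind_{\bi\in\btau}$, expanding the product in~\eqref{def:Z}, and exploiting the bivariate renewal structure (only totally $\prec$-ordered subsets have nonzero $\bP$-probability), one writes
\begin{equation*}
Z_{\bn,h_n}^{\gb_n,\go,\quen}=\sum_{k\geq 0}\sum_{\bzero\prec\bs_1\prec\cdots\prec\bs_k\prec\bn}\psi_{\bn}(\bs_1,\ldots,\bs_k)\prod_{l=1}^k\bigl(e^{h_n}\gz_{\bs_l}+e^{h_n}-1\bigr) \; + \; (\text{boundary}),
\end{equation*}
with $\psi_\bn(\bs_1,\ldots,\bs_k):=\prod_{l=1}^{k+1}u(\bs_l-\bs_{l-1})$, $\bs_0:=\bzero$, $\bs_{k+1}:=\bn$, and where the boundary terms (some $\bs_l$ landing on $\partial\llb\bone,\bn\rrb$, in particular $\bs_k=\bn$) are of lower order in $n$ and can be discarded.

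For each fixed $k$, the next task is a term-by-term convergence. Distribute further $\prod_{l=1}^k(e^{h_n}\gz_{\bs_l}+e^{h_n}-1)=\sum_{J\subseteq\{1,\ldots,k\}}e^{kh_n}\prod_{l\in J}\gz_{\bs_l}\prod_{l\notin J}(1-e^{-h_n})$. The deterministic factors, combined with the Riemann-sum approximation provided by Proposition~\ref{thm:renouv} (namely $\psi_{\lfloor n\bt\rfloor}(\lfloor n\bx_1\rfloor,\ldots,\lfloor n\bx_k\rfloor)\sim n^{(k+1)(\alpha-2)}L(n)^{-(k+1)}\psi_{\bt}(\bx_1,\ldots,\bx_k)$) and with the scaling $h_n\sim \hat h L(n)n^{-\alpha}$ from~\eqref{def:scalings}, each index $l\notin J$ contributes a factor $\hat h\,d\bs_l$. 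For the disorder factors, one needs to upgrade the convergence~\eqref{eq:convergencek=1} to a joint convergence of multifold sums $\sum_{\bs_1\prec\cdots\prec\bs_{|J|}}\prod\gz_{\bs_l}\,F_n(\bs_1,\ldots,\bs_{|J|})\to\int F\,\prod_l d\cM(\bs_l)$ against continuous test functions; normalized by $(\sigma_r n^{3/2}\gb_n^r)^{|J|}$ via~\eqref{def:scalings}, each disorder position contributes $\sigma_r\hat\gb^r\,d\cM(\bs_l)$. Summing over $J$ reconstitutes the mixed product $\prod_{l=1}^k(\sigma_r\hat\gb^r\,d\cM(\bs_l)+\hat h\,d\bs_l)$ and yields precisely the $k$-th term of~\eqref{eq:conjchaosexpansion}.

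To upgrade from term-by-term convergence to convergence of the full series, I would bound the $L^2(\bbP)$-norm of the $k$-th term uniformly in $n$ and show that the resulting bounds are summable in $k$: expanding the square and invoking Lemma~\ref{lem:correl}, only pairings matching equal or aligned indices contribute (the latter at order $\gb_n^{2r}$, matched exactly by the chosen scaling), and combined with the renewal asymptotics these bounds agree, up to a constant, with the $L^2$-norm of the $k$-th chaos coefficient in~\eqref{eq:conjchaosexpansion}, whose summability is proven in Section~\ref{sec:integration} and Appendix~\ref{app:intstoch}. \emph{The main obstacle} is the multifold disorder convergence in the second step: because $(\gz_\bi)$ carries infinite-range correlations on every row and column, one cannot invoke a Wiener-chaos CLT against a white noise as in~\cite{CSZ13}. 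A delicate combinatorial analysis is needed to show that pairings involving three or more aligned $\bs_l$'s, or contributions from higher-order cumulants of $\go_\bi$, are negligible at the chosen scale $\gb_n^{2r|J|}$---this is precisely what pins down the exponent $1/r$ in~\eqref{def:scalings} and isolates the covariance~\eqref{def:KcovcM} of the limiting Gaussian field $\cM$.
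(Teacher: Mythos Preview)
Your outline follows the same architecture as the paper's proof in Section~\ref{sec:proofthm}: chaos expansion, term-by-term convergence of each $k$, and uniform-in-$n$ summable $L^2$ bounds to pass to the full series. One cosmetic difference: rather than carrying your $J$-expansion throughout, the paper first reduces to $h_n\equiv 0$ by absorbing the homogeneous weight into a modified kernel $u^{(h_n)}(\bi):=Z_{\bi,h_n}$ (Section~\ref{sec:reduc_k=0}), so that only pure disorder integrals remain.

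Two technical points are understated in your sketch and are where the paper spends most of its effort. First, $\psi_\bt$ is singular on the diagonals ($\gp(\bs)\asymp\|\bs\|^{\alpha-2}$), so ``continuous test functions'' is not sufficient for the multifold convergence; the paper handles this via a truncation $\psi^\delta$ and a two-scale $(m,n)$ five-term splitting $I_1+\cdots+I_5$ (Section~\ref{sec:proofthm_k}), with the uniformity of $I_1$ (Proposition~\ref{lemma:tronck>1}(ii)) already nontrivial. Second, your uniform $L^2$ bound does \emph{not} reduce to pairings via Lemma~\ref{lem:correl}: expanding $\bbE[(\tilde Z_{n,k})^2]$ over two replicas $\btau,\btau'$ produces contributions from \emph{chains} of arbitrary length (aligned points alternating between the replicas), and the multi-point estimate Lemma~\ref{lem:multicorrel} is needed, not just two-point correlations. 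Showing that chain contributions are negligible \emph{with a bound summable in $k$}---so as to extract the $\Gamma(k(\alpha-\tfrac12))^{-1}$ decay---is the technical core (Section~\ref{sec:prooftronck>1}, Lemmas~\ref{lemma:L2main}--\ref{lemma:L2rest}): it requires recasting the sum via replica renewals, decomposing by chain-length profiles, and controlling long chains with renewal tail estimates (Lemma~\ref{lem:tailandmoments}) combined with a Poisson-variable device for the combinatorics. You flag this obstacle in your last paragraph but locate it in the term-by-term step; its real bite is in obtaining the $k$-uniform summable $L^2$ control.
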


\begin{remark}
\label{rem:freecond}
As far as the  conditioned $~^\cond$ and free $~^\free$ partition functions are concerned, the same result holds, without the scaling factor $n^{2-\alpha} L(n)$: one has to replace $\psi_{\bt}^{}$ respectively by $\psi_\bt^{\cond}$ and $\psi_{\bt}^{\free}$,
defined by
\begin{align}
\psi_\bt^{\cond} ( \bs_1,\ldots, \bs_k  ) &:=  \frac{\psi_{\bt}^{} ( \bs_1,\ldots, \bs_k) }{\phi(\bt)} \,, \\
\psi_\bt^{\free} ( \bs_1,\ldots, \bs_k ) & := \ind_{\{\bzero =: \bs_0 \prec \bs_1 \prec \cdots \prec \bs_k \prec \bt \}}  \, \prod_{i=1}^{k} \gp( \bs_i- \bs_{i-1})   \,.
\label{eq:defpsifree}
\end{align}
In this paper we focus on the proof for the constrained partition function, and comment in Remark~\ref{rem:freecond2} below how to deduce the statement for the other two cases.
\end{remark}

Let us conclude this section by showing that 
when $\alpha\in(0,\frac12)$ or when $\alpha\in (0,1)$ and $\bbP\in \Pfk_{\infty}$,
then one cannot obtain a disordered scaling limit by
taking $\gb_n\to 0$.
This shows that disorder is irrelevant in these case, in the sense put forward in~\cite{CSZ16}.
We state the result in the free case for future use (similar statements hold in the constrained and conditioned case).

\begin{proposition}
\label{prop:degenerate}
Assume that $\alpha\in(0,\frac12)$
or that $\alpha\in (0,1)$ and $\bbP\in \Pfk_{\infty}$.
Then, if $\lim_{n\to\infty} n^{\alpha} L(n)^{-1} h_n = \hat h \in \mathbb R$,
for any vanishing sequence $(\gb_n)_{n\geq 1}$
we have that $\lim_{n\to\infty}  Z_{\lfloor n \bt\rfloor, h_n}^{ \gb_n, \free } =\bZ_{\bt,\hat h}^{\free}$ in $L^2(\bbP)$.
\end{proposition}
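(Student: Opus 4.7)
The plan is to establish $L^2$ convergence by showing (i) $\bbE[Z_{\lfloor n\bt\rfloor, h_n}^{\gb_n,\free}] \to \bZ_{\bt,\hat h}^{\free}$ and (ii) $\Var(Z_{\lfloor n\bt\rfloor, h_n}^{\gb_n,\free}) \to 0$. Since $\bbE[\zeta_\bi] = 0$ gives $\bbE[Z_{\lfloor n\bt\rfloor, h_n}^{\gb_n,\free}] = Z_{\lfloor n\bt\rfloor, h_n}^{0,\free}$, part~(i) is immediate from the free analog of Proposition~\ref{prop:scalinghom}. The core of the proof lies in bounding the variance. My entry point is the algebraic identity $1+\mu_n+\eta_\bi = e^{h_n+\gb_n\go_\bi - \lambda(\gb_n)}$ (with $\mu_n := e^{h_n}-1$ and $\eta_\bi := e^{h_n}\zeta_\bi$), which, using two independent renewal copies $\btau,\btau'$ and $m_\bi := \ind_{\bi\in\btau}+\ind_{\bi\in\btau'}$, yields
\[
\bbE\big[(Z_{\lfloor n\bt\rfloor, h_n}^{\gb_n,\free})^2\big] \;=\; \bE\otimes\bE'\Big[e^{(h_n-\lambda(\gb_n))\sum_\bi m_\bi}\,\bbE\big[e^{\gb_n\sum_\bi m_\bi\go_\bi}\big]\Big].
\]

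For $\bbP\in\Pfk_\infty$, the characterizing identity $\bbE_{\bar\go}[e^{\gb V(\hat\go,\bar\go)}\mid\hat\go]=e^{\lambda(\gb)}$ (constant in $\hat\go$), combined with the conditional independence of the $\bar\go_{i_2}$'s across columns given $\hat\go$, propagates to $\bbE[e^{\gb_n\sum m_\bi\go_\bi}] = \prod_\bi e^{\lambda(m_\bi\gb_n)}$. Substituting and subtracting $\bbE[Z]^2$ reduces the variance to
\[
\Var(Z) \;=\; \bE\otimes\bE'\Big[e^{h_n\sum_\bi m_\bi}\big(e^{\Delta\lambda_n|S|}-1\big)\Big],
\]
with $\Delta\lambda_n := \lambda(2\gb_n)-2\lambda(\gb_n) \sim \sigma^2\gb_n^2$ and $|S| := |\btau\cap\btau'\cap\llbracket\bone,\lfloor n\bt\rfloor\rrbracket|$. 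The key input is that $\bE\otimes\bE'[|S|] = \sum_{\bi\in\llbracket\bone,\lfloor n\bt\rfloor\rrbracket} u(\bi)^2$ is uniformly bounded in $n$ for $\alpha\in(0,1)$, which follows from Proposition~\ref{thm:renouv} and the convergence of $\int_1^{+\infty} r^{2\alpha-3}\,dr$ for $\alpha<1$. A truncation at $\{|S|\leq M\}$, with the tail $\{|S|>M\}$ controlled via Cauchy--Schwarz and the uniform boundedness of $\bE\otimes\bE'[e^{2h_n\sum m_\bi + 2\Delta\lambda_n|S|}]$ in the scaling window, then yields the schematic bound $\Var(Z)\leq C(\gb_n^2 M + M^{-1/2})$. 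Choosing $M = \gb_n^{-4/3}$ gives $\Var(Z) = O(\gb_n^{2/3}) \to 0$.

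For the case $\alpha\in(0,\tfrac12)$ with $\bbP\in\Pfk_r$ finite, the factorization above fails, so I would switch to the polynomial chaos expansion $Z-\bbE[Z] = \sum_{A\neq\emptyset,\ A\text{ chain}} c_A\prod_{\bi\in A}\eta_\bi$, with positive homogeneous weights
\[
c_A = \prod_{l=1}^{|A|} Z^{0,\quen}_{\bi_l-\bi_{l-1},h_n}\,\cdot\, Z^{0,\free}_{\lfloor n\bt\rfloor-\bi_{|A|},h_n}.
\]
Using Lemma~\ref{lem:correl} to bound $|\bbE[\prod_A\eta\prod_{A'}\eta]|$ chain-by-chain (intra-chain $\zeta$'s are independent since they involve disjoint $\hat\go$-indices and disjoint $\bar\go$-indices, while each cross-chain aligned pair contributes $O(\gb_n^{2r})\leq O(\gb_n^2)$), and re-summing via the generating-function identity $\sum_A c_A y^{|A|} = Z^{0,\free}_{\lfloor n\bt\rfloor,\tilde h_n(y)}$ (with $e^{\tilde h_n(y)} = 1+\mu_n+y$), each chaos level contributes a vanishing term controlled by a negative power of $n$ obtained from the condition $\alpha<\tfrac12$.

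The main obstacle is this second case: one must combinatorially account for all pairing patterns between the chains $A,A'$, and verify that the divergent near-origin behavior of the continuum renewal density $\phi$ (which makes $\int\phi^2 = +\infty$ when $\alpha<1$) is offset discretely by the local structure of $u(\bi)$ near $\bi=\bzero$. I expect the cleanest route to go through uniform $L^2$-bounds for each individual chaos level $Z_n^{(k)} := \sum_{|A|=k} c_A\prod_{\bi\in A}\eta_\bi$, in the spirit of the proof of Theorem~\ref{conj:scalinggPS} but with $\hat\gb$ effectively set to $0$ in the limiting scaling.
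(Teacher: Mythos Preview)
Your overall strategy (show $\bbE[Z]\to\bZ^{\free}_{\bt,\hat h}$ and $\Var(Z)\to 0$) matches the paper's. The two cases deserve separate comments.

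\textbf{Case $\bbP\in\Pfk_\infty$.} Your factorization $\bbE[e^{\gb_n\sum_\bi m_\bi\go_\bi}]=\prod_\bi e^{\lambda(m_\bi\gb_n)}$ is correct, but your one-line justification is incomplete. Conditioning on $\hat\go$ and factoring over columns gives $\prod_c F_c(\hat\go)$; however, when a column $c$ carries two \emph{distinct} points (one from $\btau$, one from $\btau'$), the factor $F_c$ depends on two row-variables $\hat\go_{a_c},\hat\go_{b_c}$ and is \emph{not} constant (e.g.\ for $V(x,y)=xy$ with $\hat\go_1\sim\tfrac12(\delta_{-1}+\delta_1)$ one gets $\cosh(\gb_n(\hat\go_{a_c}+\hat\go_{b_c}))\neq\cosh^2(\gb_n)$). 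To finish you must integrate over $\hat\go$ and use $\Pfk_\infty$ a second time: the key is that $\bbE_{\hat\go_{a_c}}[F_c]$ is constant in $\hat\go_{b_c}$, and the dependency graph among the $F_c$'s (edges when they share a $\hat\go$-variable) has no cycles because $\btau,\btau'$ are strictly increasing, so one can peel from path endpoints. The paper reaches the same formula by a different route: it expands in the $\zeta_\bi$'s and observes that for two increasing chains $I\neq I'$ there is always a row or column of $I\cup I'$ containing a single point, whence $\bbE[\prod_I\zeta\prod_{I'}\zeta]=0$, so only diagonal terms survive. After obtaining $\bbE[(Z)^2]=\bE^{\otimes 2}[e^{\Delta\lambda_n|\btau\cap\btau'\cap\llb\bone,n\bt\rrb|}]$ (for $h_n=0$), the paper simply notes that $\btau\cap\btau'$ is a terminating bivariate renewal when $\alpha<1$, so $|\btau\cap\btau'|$ is geometric and the bound tends to $1$---shorter than your truncation.

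\textbf{Case $\alpha\in(0,\tfrac12)$, $\bbP\in\Pfk_r$.} This is where your proposal is genuinely incomplete, and the paper's route is entirely different and much simpler. Instead of the chaos combinatorics you sketch, the paper invokes a second-moment bound already established in~\cite{Leg21} (Prop.~3.3 there), valid for any disorder correlated only along rows and columns:
\[
\bbE\big[(Z)^2\big]\ \le\ \bE^{\otimes 2}\Big[\exp\Big(\tfrac32\big(\lambda(2\gb_n)-2\lambda(\gb_n)\big)\big(|\btau^{(1)}\cap\btau'^{(1)}|+|\btau^{(2)}\cap\btau'^{(2)}|\big)\Big)\Big]\,.
\]
The crucial input specific to $\alpha<\tfrac12$ is that the \emph{projected} intersections $\btau^{(a)}\cap\btau'^{(a)}$ are terminating one-dimensional renewals (because $\sum_n\bP(n\in\btau^{(a)})^2\asymp\sum_n n^{2\alpha-2}L(n)^{-2}<\infty$), hence $|\btau^{(a)}\cap\btau'^{(a)}|$ is geometric and the bound goes to $1$ as $\gb_n\to0$. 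Your chain-by-chain pairing approach might be salvageable, but it misses this shortcut entirely and the ``main obstacle'' you identify is simply absent from the paper's argument.
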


\subsection{Consequence on the critical point shift for the gPS model}
As a consequence of the scaling limit obtained above,
we are able to obtain upper bounds on the critical point shift.
Indeed, the following general statement allows to relate the second moment of the partition function at the annealed critical point $h_c^{\ann}(\gb) = 0$ to the critical point shift $h_c^{\quen}(\gb)$.
It is extracted from~\cite[Prop.~3.1]{Leg21}, and its proof is inspired by the approach in~\cite{Lac10ECP}.

\begin{proposition}[Proposition~3.1 in~\cite{Leg21}]
\label{prop:secondmomentcriticalpoint}
Fix some constant $C>1$ and define
\[
n_{\gb} := \sup\big\{  n \in \bbN,\, \bbE[(Z_{n \bone ,h=0}^{\gb,\free})^2] \leq C\big\} \,.
\]
Then there is some (explicit) slowly varying function $\tilde L$ such that the critical point satisfies
\begin{equation*}
0\leq h_c^{\quen}(\gb) \leq \tilde L(n_{\gb}) n_{\gb}^{-\alpha} \,.
\end{equation*}
If $Z_{n,h=0}^{\gb,\free}$ is bounded in $L^2(\bbP)$, then $n_{\gb} = +\infty$ (provided that $C$ had been fixed large enough), so $h_c^{\quen}(\gb)=0$; moreover, there exists a slowly varying function $\hat L$ such that for all $h \in (0,1)$
we have $\tf_{\gamma}(\gb,h) \geq \hat L(1/h) h^{1/\alpha}$.
\end{proposition}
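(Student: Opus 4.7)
The proof, following~\cite{Lac10ECP,Leg21}, combines a second moment estimate at the critical scale $n_\gb$ with a super-additive coarse-graining argument. Fix a large constant $\hat h > 0$ and set $h_\gb := \hat h L(n_\gb) n_\gb^{-\alpha}$; this will give the claim with $\tilde L := \hat h L$. The key structural observation is that, since $\go_\bi = V(\hat \go_{i_1}, \bar \go_{i_2})$, the disorder on the diagonal blocks $B_j := \llb \bm_{j-1} + \bone, \bm_j \rrb$ (with $\bm_j := j n_\gb \bone$) is mutually independent across $j$, as each depends only on a disjoint portion of the i.i.d.\ sequences $\hat \go$ and $\bar \go$.

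At scale $n_\gb$, by definition the second moment $\bbE[(Z_{n_\gb \bone, 0}^{\gb, \free})^2] \leq C$, and a truncation argument on $\{|\btau \cap \llb \bone, n_\gb \rrb| \leq A n_\gb^\alpha/L(n_\gb)\}$ (exploiting $h_\gb \cdot n_\gb^\alpha/L(n_\gb) = O(\hat h)$) transfers this to $\bbE[(Z_{n_\gb \bone, h_\gb}^{\gb, \free})^2] \leq C'(\hat h)$. Simultaneously, the annealed first moment $\bbE[Z_{n_\gb \bone, h_\gb}^{\gb, \free}] = Z_{n_\gb \bone, h_\gb}^{0, \free}$ converges, by the free analogue of Proposition~\ref{prop:scalinghom} (cf.~Remark~\ref{rem:freecond}), to $\bZ^{\free}_{\bone, \hat h}$, a quantity that grows unboundedly as $\hat h \to \infty$. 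Choosing $\hat h$ large enough, Paley-Zygmund then provides $\bbP(Z_{n_\gb \bone, h_\gb}^{\gb, \free} \geq A) \geq p$ for constants $A$ large and $p > 0$, uniformly in $n_\gb$.

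To upgrade this one-block estimate to a lower bound on the free energy, I would coarse-grain $\llb \bone, N\bone \rrb$ (with $N = k n_\gb$) into the blocks $B_j$ and restrict $\btau$ to visit a window of size $\Theta(n_\gb)$ around each $\bm_j$. This fuzzy grid is the key refinement: a naive exact restriction to $\bm_j \in \btau$ would cost a factor $u(\bm_j - \bm_{j-1}) \sim n_\gb^{\alpha-2}/L(n_\gb)$ per block (a penalty of order $(2-\alpha)\log n_\gb$ in the log, larger than the Paley-Zygmund gain), whereas the window restriction only pays a factor of order $n_\gb^{\alpha}/L(n_\gb)$ per window. By the renewal property and the block-independence of disorder, this yields a lower bound of the form $Z_{N\bone, h_\gb}^{\gb, \free} \geq \prod_{j=1}^k \zeta_j$ with $(\zeta_j)$ i.i.d.\ copies of a windowed block partition function satisfying $\bbP(\zeta_j \geq 2) \geq p'$. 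Taking logarithms, dividing by $N$ and applying the strong law of large numbers---while controlling the lower tail of $\log \zeta_j$ on the complementary event via the exponential-moment assumption on $\go$ (e.g.\ by the crude bound using a single-jump trajectory)---gives $\tf_\gamma(\gb, h_\gb) > 0$, hence $h_c^{\quen}(\gb) \leq h_\gb = \tilde L(n_\gb) n_\gb^{-\alpha}$. The main obstacle is precisely the calibration of this fuzzy grid so that the per-block boundary cost is dominated by the Paley-Zygmund gain $p' \log 2$.

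For the second part, if $Z_{n, 0}^{\gb, \free}$ is $L^2(\bbP)$-bounded and $C$ is fixed above its supremum, then $n_\gb = +\infty$ and the first part forces $h_c^{\quen}(\gb) = 0$. Running the same argument at an arbitrary scale $n$, with $h = \hat h L(n) n^{-\alpha}$, gives the quantitative bound $\tf_\gamma(\gb, h) \geq c/n$; inverting $h \asymp L(n) n^{-\alpha}$ yields $n \asymp L(1/h)^{1/\alpha} h^{-1/\alpha}$, hence $\tf_\gamma(\gb, h) \geq c L(1/h)^{-1/\alpha} h^{1/\alpha} =: \hat L(1/h) h^{1/\alpha}$, as claimed.
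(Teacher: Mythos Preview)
The paper does not itself prove this proposition; it is quoted from \cite[Prop.~3.1]{Leg21}, whose argument adapts the second-moment/coarse-graining method of \cite{Lac10ECP} to the bivariate setting. Your sketch follows that same strategy and correctly isolates the structural points specific to the gPS model: the independence of the disorder across diagonal blocks (a genuine consequence of the product form $\go_\bi=V(\hat\go_{i_1},\bar\go_{i_2})$, exploited in \cite{Leg21}), the Paley--Zygmund step at scale $n_\gb$, and the fuzzy-grid coarse-graining whose calibration you rightly flag as the technical crux.

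The one step I would tighten is the transfer of the $L^2$ bound from $h=0$ to $h=h_\gb$ via truncation on the contact count. Your truncation controls the contribution of $\{|\btau\cap\llb\bone,n_\gb\bone\rrb|\le A n_\gb^\alpha/L(n_\gb)\}$, but the contribution of the complementary event to $\bbE\big[(Z^{\gb,\free}_{n_\gb\bone,h_\gb})^2\big]$ is not obviously negligible: under the replicated (annealed-squared) measure the number of contacts may have a heavier tail than under $\bP$, and a square sits outside. A cleaner organisation---and, as far as one can infer, closer to \cite{Lac10ECP,Leg21}---is to apply Paley--Zygmund directly at $h=0$ (so that the second-moment hypothesis is used as stated) and to let the $h$-gain enter only through the homogeneous factor of each coarse-grained block, where the fuzzy-grid constraint gives a deterministic lower bound on the reward; this sidesteps the truncation entirely. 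Your treatment of the second part (running the argument at an arbitrary scale when $n_\gb=+\infty$ and inverting $h\asymp L(n)n^{-\alpha}$) is correct.
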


Together with Theorem~\ref{conj:scalinggPS}, this allows us to obtain an upper bound on the critical point shift.
\begin{corollary}
\label{cor:boundcriticalpoint}
Assume that $\alpha\in (\frac12,1)$ and that $\bbP\in \Pfk_r$
for some $r \in \bbN$.
Then, we have the following upper bound on the critical point:
there is some $\gb_1>0$ such that for all $\gb \in (0,\gb_1)$
we have
\[
0\leq h_c^{\quen}(\gb)  \leq L_2(1/\gb) \gb^{\frac{2\alpha r}{2\alpha-1}} \,,
\]
for some slowly varying function $L_2$.
\end{corollary}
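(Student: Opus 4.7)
The plan is to apply Proposition~\ref{prop:secondmomentcriticalpoint}: it suffices to prove the lower bound $n_\gb \geq \tilde L_1(1/\gb)\, \gb^{-2r/(2\alpha-1)}$ for some slowly varying function $\tilde L_1$ and $\gb$ small. Indeed, inserting such a bound into $h_c^{\quen}(\gb) \leq \tilde L(n_\gb)\, n_\gb^{-\alpha}$ and absorbing slowly varying factors via Potter's theorem immediately produces the announced exponent $2\alpha r/(2\alpha - 1)$. The lower bound on $n_\gb$ will be read off from the intermediate-disorder scaling limit of the free partition function at $h = 0$ (Theorem~\ref{conj:scalinggPS} combined with Remark~\ref{rem:freecond}).

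The key step is to argue that the convergence in Theorem~\ref{conj:scalinggPS} actually takes place in $L^2(\bbP)$ along the intermediate scaling $\gb_n(\hat\gb) := \hat\gb\, (n^{1/2-\alpha} L(n))^{1/r}$, so that
\[
\lim_{n\to\infty} \bbE\Big[ \big( Z_{n\bone,\, 0}^{\gb_n(\hat\gb),\, \free} \big)^2 \Big] \;=\; F(\hat\gb) \;:=\; \bbE\Big[ \big( \mathbf{Z}_{\bone,\, 0}^{\hat\gb,\, \cM,\, \free} \big)^2 \Big]\,.
\]
Reading off the explicit chaos expansion of $\mathbf{Z}_{\bone,\, 0}^{\hat\gb,\, \cM,\, \free}$ and using the orthogonality of multiple integrals of different orders against the Gaussian process $\cM$, one sees that $F$ has a non-negative power-series representation in $\hat\gb^{2r}$; hence $F$ is continuous and non-decreasing on a neighbourhood of $0$ with $F(0) = 1$. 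Fixing the constant $C > 1$ of Proposition~\ref{prop:secondmomentcriticalpoint} and choosing $\hat\gb_0 > 0$ small enough that $F(\hat\gb_0) < C$, monotonicity of $F$ in $\hat\gb$ upgrades this pointwise limit to a uniform statement: there exists $N_0$ such that $\bbE[(Z_{n\bone, 0}^{\gb, \free})^2] \leq C$ for every $n \geq N_0$ and every $\gb \leq \hat\gb_0\, (n^{1/2-\alpha} L(n))^{1/r}$.

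For $\gb$ small, define $n_\gb^\star$ to be the largest integer with $\gb \leq \hat\gb_0 \big( (n_\gb^\star)^{1/2-\alpha} L(n_\gb^\star) \big)^{1/r}$; this integer is well-defined and diverges as $\gb \downarrow 0$ because $\alpha > 1/2$ forces $n \mapsto (n^{1/2-\alpha} L(n))^{1/r}$ to decrease to $0$. Inverting the defining inequality via Potter's bounds yields $n_\gb^\star \geq \tilde L_1(1/\gb)\, \gb^{-2r/(2\alpha-1)}$ for some slowly varying $\tilde L_1$, and the previous step forces $n_\gb \geq n_\gb^\star$ (for $\gb$ small enough that $n_\gb^\star \geq N_0$). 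Plugging into Proposition~\ref{prop:secondmomentcriticalpoint} then produces the desired bound on $h_c^{\quen}(\gb)$.

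The main obstacle is upgrading Theorem~\ref{conj:scalinggPS} from distributional to $L^2(\bbP)$ convergence and extracting uniformity in $\hat\gb \in [0, \hat\gb_0]$ of the second moment convergence. This should nonetheless be a byproduct of the proof of Theorem~\ref{conj:scalinggPS} itself: the chaos-expansion strategy of Caravenna--Sun--Zygouras provides uniform $L^2(\bbP)$ bounds on each term of the expansion together with a summable bound on the tail, which are exactly what is needed to produce the required uniform second moment control.
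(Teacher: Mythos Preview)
Your overall strategy matches the paper's: use the $L^2$ convergence of the (free) partition function from Theorem~\ref{conj:scalinggPS} to lower bound $n_\gb$, then apply Proposition~\ref{prop:secondmomentcriticalpoint}. However, there is a gap in your uniformity step, and the paper's execution avoids it entirely.

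You fix $C>1$ first, then try to find $\hat\gb_0$ so that $\bbE[(Z_{n\bone,0}^{\gb,\free})^2]\le C$ for all $n\ge N_0$ and all $\gb\le \gb_n(\hat\gb_0)$. You justify this by ``monotonicity of $F$ in $\hat\gb$'', but monotonicity of the \emph{limit} function says nothing about the prelimit; what you would actually need is monotonicity of $\gb\mapsto\bbE[(Z_{n\bone,0}^{\gb,\free})^2]$ for each fixed $n$, which is not obvious here because the disorder $(\go_{\bi})$ is correlated. Your side claim that the $k$-fold integrals against $\cM^{\otimes k}$ are orthogonal across different $k$ (so that $F$ is a power series in $\hat\gb^{2r}$ with nonnegative coefficients) is also not justified: $\cM$ is not a white noise, and Wick's formula produces nonzero cross-moments whenever $k+j$ is even.

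The paper sidesteps all of this by reversing the order: instead of fixing $C$ and searching for $\hat\gb_0$, it takes $\hat\gb=1$, sets $\bar n(\gb)$ to be the asymptotic inverse of $n\mapsto (n^{1/2-\alpha}L(n))^{1/r}$, and observes that $Z_{\bar n(\gb)\bone,0}^{\gb,\free}\to\bZ_{\bone,0}^{1,\cM,\free}$ in $L^2$ as $\gb\downarrow 0$. Then one simply \emph{chooses} $C:=2\,\bbE[(\bZ_{\bone,0}^{1,\cM,\free})^2]$ in Proposition~\ref{prop:secondmomentcriticalpoint}, so that $\bbE[(Z_{\bar n(\gb)\bone,0}^{\gb,\free})^2]\le C$ for all small $\gb$, giving $n_\gb\ge\bar n(\gb)\sim L^*(\gb)\gb^{-2r/(2\alpha-1)}$ directly. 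No uniformity, no monotonicity, no orthogonality needed. You are right that the $L^2$ convergence is a byproduct of the chaos-expansion proof (this is exactly what Sections~\ref{sec:proofthm_k}--\ref{sec:proofthm_mainproof} establish).
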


Note that this sharpens the bound found in~\cite[Prop.~2.4]{Leg21}, which treats the case of a product interaction $V(x,y) =xy$: when $\bbP\in \mathfrak{P}_2$, it was obtained that $h_c^{\quen}(\gb) \leq   L_2(1/\gb) \gb^{\frac{2\alpha }{2\alpha-1}}$. 
We believe that the upper bound in Corollary~\ref{cor:boundcriticalpoint} is sharp in general, up to slowly varying functions: indeed,  it matches the lower bound on the critical point shift obtained in~\cite[Thm.~2.3]{Leg21} (in the case of a product interaction).
Obtaining a lower bound on the critical point shift in the case of a general interaction seems reachable but technically involved. For this, one would need to adapt the ideas developed in~\cite{Leg21}, with extra technical difficulties coming from the general interaction $V(x,y)$. We leave this problem for future work.

\begin{proof}[Proof of Corollary~\ref{cor:boundcriticalpoint}]
Let $\gb(n) := (n^{\frac{1}{2} -\alpha} L(n))^{1/r}$ and  let $\bar n (\cdot)$ be the asymptotic inverse of $\gb(\cdot)$, 
\textit{i.e.}\ such that $\gb(\bar n(u)) \sim \bar n ( \gb(1/u))^{-1} \sim u$ as $u\downarrow 0$. One can show that $\bar n(u) \sim L^{*}(u)  u^{- \frac{2r}{2\alpha-1}}$ as $u\downarrow 0$ for some (semi-explicit) slowly varying function $L^*$, see~\cite[Thm.~1.5.13]{BGT87}.

Now, by Theorem~\ref{conj:scalinggPS} and thanks to the definition of $\bar n(\cdot)$, we have that $Z_{\bar n(\gb)\bone, 0}^{\gb,\free}$ converges to $\bZ_{\bone,\hat h=0}^{\hat \gb =1,\free}$ as $\gb\downarrow 0$ in $L^2$.
Hence, letting $C:= 2 \bbE[( \bZ_{\bone,0}^{1,\free})^2]$, we get that there exists $\gb_1>0$ such that 
$\bbE[(Z_{\bar n(\gb) \bone ,h=0}^{\gb,\free})^2] \leq C$ for all $\gb\leq \gb_1$.
Put otherwise, we get that $\bar n(\gb) \leq  n_{\gb}$ for any $\gb\leq \gb_1$, with $n_{\gb}$ defined in Proposition~\ref{prop:secondmomentcriticalpoint} with the constant $C$ above.
Applying Proposition~\ref{prop:secondmomentcriticalpoint}, we therefore end up with
\[
0\leq h_c^{\quen}(\gb) \leq c \tilde L(\bar n(\gb) ) \bar n (\gb)^{-\alpha} \,, 
\]
for all $\gb\leq \gb_1$. Since  $\bar n(u) \sim L^{*}(u)  u^{- \frac{2r}{2\alpha-1}}$ as $u\downarrow 0$, this concludes the proof.
\end{proof}

Let us stress that another corollary of Proposition~\ref{prop:secondmomentcriticalpoint} comes as a consequence of the proof of Proposition~\ref{prop:degenerate}. The following result shows that when $\alpha\in(0,\frac12)$ or when $\alpha\in (0,1)$ and $\bbP\in \Pfk_{\infty}$,
then disorder is irrelevant in the sense that, for small $\gb>0$, there is no critical point shift and no modification of the homogeneous critical behaviour (recall~\eqref{def:homcritic} and the fact that $ \tf_{\gamma}(\gb ,h)\leq \tf_{\gamma}(0,h)$).

\begin{corollary}
\label{cor:boundcriticalpoint2}
Assume that $\alpha\in (0,\frac12)$
or that $\alpha\in(0,1)$ and $\bbP\in \Pfk_{\infty}$.
Then,
there is some $\gb_1>0$ such that for all $\gb \in (0,\gb_1)$ 
we have $h_c^{\quen}(\gb) =0$ and $\tf_{\gamma}(0,h) \geq \tf_{\gamma}(\gb,h) \geq \hat L(1/h) h^{1/\alpha}$ for all $h\in (0,1)$.
\end{corollary}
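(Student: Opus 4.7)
The plan is to combine Proposition~\ref{prop:degenerate} with the general criterion of Proposition~\ref{prop:secondmomentcriticalpoint}. The upper bound $\tf_{\gamma}(\gb,h)\leq \tf_{\gamma}(0,h)$ is already noted in the paper as a consequence of Jensen's inequality, so the heart of the proof is to obtain $h_c^{\quen}(\gb)=0$ and $\tf_{\gamma}(\gb,h)\geq \hat L(1/h)h^{1/\alpha}$. By Proposition~\ref{prop:secondmomentcriticalpoint}, these will follow once I show that for some $\gb_1>0$,
\[
M \;:=\; \sup_{\gb\in(0,\gb_1)}\ \sup_{n\geq 1}\ \bbE\big[(Z^{\gb,\free}_{n\bone,0})^2\big]\;<\;+\infty\,,
\]
since it then suffices to apply that proposition with any constant $C>M$.

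I would first observe that, at $\hat h=0$, only the $k=0$ term of the free chaos expansion~\eqref{eq:defpsifree} survives, so the limit $\bZ^{\free}_{\bt,0}\equiv 1$ is deterministic. Thus Proposition~\ref{prop:degenerate} applied with $h_n\equiv 0$ asserts that $\bbE[(Z^{\gb_n,\free}_{n\bone,0})^2]\to 1$ for \emph{every} vanishing sequence $(\gb_n)_{n\geq 1}$. This pointwise-along-sequences statement is what needs to be converted into the uniform bound above.

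The main step is therefore a compactness/diagonal argument. Assume by contradiction that one can extract $\gb_k\downarrow 0$ and integers $n_k$ with $\bbE[(Z^{\gb_k,\free}_{n_k\bone,0})^2]>2$. If $(n_k)$ admits a bounded subsequence one may assume $n_k\equiv n$ is constant; since $Z^{0,\free}_{n\bone,0}=1$ and, for fixed $n$, the map $\gb\mapsto \bbE[(Z^{\gb,\free}_{n\bone,0})^2]$ is continuous at $\gb=0$ (the uniform integrability in $\gb$ small being immediate from $\lambda(\gb)<\infty$ on $[0,\gb_0)$), the left-hand side would converge to $1$, a contradiction. Otherwise one may extract $n_k\to\infty$ strictly increasing, and then define a vanishing sequence $(\tilde\gb_m)_{m\geq 1}$ by $\tilde\gb_{n_k}:=\gb_k$ and $\tilde\gb_m:=1/m$ for $m\notin\{n_k\colon k\geq 1\}$. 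Proposition~\ref{prop:degenerate} applied to $(\tilde\gb_m)$ yields $\bbE[(Z^{\tilde\gb_m,\free}_{m\bone,0})^2]\to 1$, which along the subsequence $m=n_k$ contradicts the assumed lower bound $>2$. This establishes the uniform second-moment bound, and Proposition~\ref{prop:secondmomentcriticalpoint} then delivers $n_\gb=+\infty$, hence $h_c^{\quen}(\gb)=0$ and $\tf_{\gamma}(\gb,h)\geq \hat L(1/h)h^{1/\alpha}$, for all $\gb\in(0,\gb_1)$.

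The essentially only obstacle is this diagonal embedding: without it one only has control of the partition function along a single prescribed vanishing sequence $(\gb_n)$, whereas Proposition~\ref{prop:secondmomentcriticalpoint} requires an $L^2$-bound that is uniform in $\gb$ on a whole neighbourhood of $0$.
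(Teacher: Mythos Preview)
Your argument is correct and reaches the same conclusion, but by a genuinely different route. The paper does not invoke Proposition~\ref{prop:degenerate} as a black box; instead it points to the explicit, $n$-independent upper bounds~\eqref{upperboundsecondmoment1}--\eqref{upperboundsecondmoment2} obtained inside the proof of Lemma~\ref{lem:scalinghom} (see Remark~\ref{rem:boundedL2}). Those bound $\bbE[(Z^{\gb,\free}_{n\bt,0})^2]$ by exponential moments of the geometric variables $|\btau^{(a)}\cap\btau'^{(a)}|$ (case $\alpha<\tfrac12$) or $|\btau\cap\btau'|$ (case $\bbP\in\Pfk_\infty$), with no $n$ on the right-hand side, so for each fixed small $\gb$ the $L^2$-boundedness is immediate and constructive.

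Your approach instead extracts the uniform second-moment bound from the \emph{statement} of Proposition~\ref{prop:degenerate} via a diagonal argument, plus continuity of $\gb\mapsto\bbE[(Z^{\gb,\free}_{n\bone,0})^2]$ at $\gb=0$ for fixed $n$. This is sound; the advantage is that you never look inside the proof of Proposition~\ref{prop:degenerate}, while the paper's route is shorter and gives an explicit $\gb_1$. One remark: the continuity step you label ``immediate'' deserves a line --- a crude domination such as $(Z^{\gb,\free}_{n\bone,0})^2\leq \exp\big(2n\gb\max_{\bi\in\llb\bone,n\bone\rrb}|\go_\bi|+2n|\lambda(\gb)|\big)$, together with the exponential moments of $\go_\bone$, yields uniform integrability for $\gb$ in a neighbourhood of $0$ that may depend on $n$, which is all you need.
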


\begin{proof}
Thanks to Proposition~\ref{prop:secondmomentcriticalpoint},
one simply need to show that for $\gb$ small enough  $Z_{n,h=0}^{\gb,\free}$ is bounded in $L^2(\bbP)$.
The estimate on $\bbE[(Z_{n \bone ,h=0}^{\gb,\free})^2]$ is obtained in the proof of Proposition~\ref{prop:degenerate}, see Section~\ref{sec:secondmomentbound}, more precisely Remark~\ref{rem:boundedL2}.
\end{proof}

\subsection{Some Comments}

\subsubsection{About disorder relevance} 
Theorem~\ref{conj:scalinggPS} and Proposition~\ref{prop:degenerate} provide a complete characterization for the existence of a non-trivial scaling limit for the gPS model with disorder $\go_\bi=V(\hat\go_{i_1},\bar\go_{i_2})$ and $\ga<1$. For the toy model $V(x,y)=xy$ of \cite{Leg21}, they confirm the prediction of \cite{CSZ16} claiming that this matches Harris' criterion for disorder relevance \cite{H74}, assuming that the \emph{dimension} of the disorder is described by the correlations of the field $(\gz_\bi)_{\bi\in\N^2}$ as in Lemma~\ref{lem:correl} (\emph{i.e.}, disorder is one-dimensional if and only if $\bbE[\gz_\bi\gz_\bj]\neq0$ for $\bi\aligne\bj$). Let us stress that we also proved that this limit is \emph{(partially) universal}, in the sense that the limiting continuous random field $\cM$ which defines $\bZ_{\bt,\hat h}^{\hat \gb, \cM, \quen}$ does not depend on $V(\cdot,\cdot)$ or $\bbP$, but only on the line-and-column correlation structure we chose for $\go$; however, the scaling at which the non-trivial limit holds depends strongly on the chosen disorder distribution, and ranges in a wide (countable) amount of possible values indexed by $r\in\N$ (where we provided explicit examples for $r=1,2,4$ and $8$ in Examples~\ref{example1}, \ref{example3}).

Let us mention that all this work is concerned with the denaturation transition. Regarding the condensation transitions in the gPS model,
the question of the influence of disorder has not yet been investigated.
However, the findings of~\cite{GH20} suggest that these (big-jump)
transitions are actually absent from the disordered version of the model.

\subsubsection{Relation to a stochastic (fractional) equation}

Let us write $\bZ_{\bt}:=\mathbf{Z}_{\bt, \hat h}^{\hat\gb, \cM, \quen} $ for the limiting (point-to-point) partition function in Theorem~\ref{conj:scalinggPS}. Then the chaos expansion~\eqref{eq:conjchaosexpansion} suggests that $\bZ_{\bt}$ satisfies the following stochastic equation on $\bbR_+^2$:
\begin{equation}
    \label{eq:Volterra}
  \bZ_\bt =  \delta_{\bzero} (\bt) + \int_{\bzero  \prec \bs \prec \bt}  \gp(\bt-\bs)  \,\bZ_{\bs} \, \xi (\dd \bs) \,,
\end{equation}
where $\xi = \sigma_r \hat \gb^r \cM + \hat h \,\mathrm{Leb}$ is the noise. 
The stochastic equation~\eqref{eq:Volterra} is deemed \textit{rough}, due to the presence of the singular kernel $\gp(\bt-\bs)$, which diverges as $\|\bt-\bs\|^{\alpha-2}$ when $\bs$ is close to $\bt$.

Note also that~\eqref{eq:Volterra} can be understood as a \textit{disordered} renewal equation, appearing as a continuum version of a renewal equation verified by the discrete partition function.
Let us stress that showing that $\mathbf{Z}_{\bt, \hat h}^{\hat\gb, \cM, \quen}$ defined in~\eqref{eq:conjchaosexpansion} indeed verifies~\eqref{eq:Volterra} would require some exchange of an infinite sum with an integral, which is not obvious; but we do not dwell further on this issue.

We mention that the one-dimensional version of~\eqref{eq:Volterra} is known as the \textit{stochastic Volterra equation} (SVE), which has attracted a lot of attention recently by the mathematical finance community.
The equation reads
\begin{equation}
    \label{eq:Volterra-d=1}
  Y_t = \eta  +  \sigma \int_{0}^t   (t-s)^{H-\frac12}\, Y_s\, \dd W_s \,,
\end{equation}
with $\eta$ an initial condition and $(W_s)_{s\geq 0}$ a standard Brownian motion, and it has been proved useful as a model for rough volatility; we refer to \cite{BFGMS20} and references therein for further information.
In~\eqref{eq:Volterra-d=1}, the case $H \in(0,\frac12)$ is known as \textit{subcritical}, and the solution is defined using either rough paths \cite{PT21}, paracontrolled calculus \cite{Com19} or regularity structures \cite{BFGMS20}; the critical case $H=0$ has recently been studied in~\cite{WY24}.

Let us mention that, as for~\eqref{eq:Volterra}, the SVE~\eqref{eq:Volterra-d=1} can be interpreted as a \textit{disordered} renewal equation, which appears in the scaling limit of the disordered pinning model; we refer to~\cite[\S1.3]{WY24} for further comments (see also~\cite[Thm.~3.1]{CSZ13} for the chaos expansion of the limiting partition function). 
In our context, we do not believe that~\eqref{eq:Volterra},  \textit{i.e.}\ a ``two-dimensional stochastic Volterra equation'', has ever appeared in the literature; we also do not know whether it could have applications in other domains.

\subsubsection{About the construction of a continuum gPS model}
 
In the relevant disorder case, a natural next step would be to construct (and study) a continuous version of the model, 
\textit{i.e.}\ a scaling limit of the Gibbs measure $\bP_{\bn,h}^{\gb,\go,\quen}$, see~\eqref{def:Pnh}.

For this, and in analogy of what is done for the standard pinning model~\cite{CSZ16_conti}, one needs to interpret $\btau$ as a closed subset of $\bbR_+^2$ which is increasing (for $\prec$).
Then, with $(\gb_n)_{n\geq 1}$ $(h_n)_{n\geq 1}$ satisfying~\eqref{def:scalings} as in Theorem~\ref{conj:scalinggPS}, one should be able to obtain a convergence of the measures, as $n\to\infty$
\begin{equation}
    \label{conv:Gibbs}
    \bP_{\bn,h_n}^{\gb_n,\go,\quen} \Big( \frac1n \btau \in \cdot \Big) \Longrightarrow \mu_{\hat \gb,\hat h}^{\cM} \,,
\end{equation}
where $\mu_{\hat \gb,\hat h}^{\cM}$ is a measure on increasing closed subsets of $\bbR_+^2$, dubbed as \textit{continuum gPS model} (as the universal scaling limit of the gPS model). Its corresponding continuum partition function is $\bZ_{\bt,\hat h}^{\hat \gb, \cM, \quen}$ defined in~\eqref{eq:conjchaosexpansion}.

Let us stress here that in~\eqref{conv:Gibbs}, the limit is \textit{disordered} and depends on the field $\cM$. As can be seen in the continuum partition function, it also carries a dependence on~$r$, but only through~$\sigma_r$ and the power of $\hat \gb$: this is why we call the scaling limit \textit{universal}, in the sense that the disorder field $\cM$ does not depend on the details of the disorder at the discrete level.

We do not prove~\eqref{conv:Gibbs} in the present paper, and in particular we do not define the continuum gPS model, but only its corresponding continuum partition function $\bZ_{\bt,\hat h}^{\hat \gb, \cM, \quen}$.
Let us however mention two possible lines of proof for obtaining~\eqref{conv:Gibbs} --- they could be implemented by adapting and extending our scheme of proof. 
A first strategy follows the ideas of~\cite{CSZ16_conti}: one should prove the convergence of a two-parameter family of point-to-point partition functions towards a process of continuum partition functions, see e.g.\ \cite[Thm.~16]{CSZ16_conti}, from which one should then able to construct the continuum model $\mu_{\hat \gb,\hat h}^{\cM}$.
Another strategy follows the ideas of~\cite{BL21_scaling,BL20_conti}: one should define partition functions restricted to functionals of $\btau$ and show their convergence to a continuum version (in the spirit of~\cite[Prop.~3.1]{BL21_scaling}), which then enables one to define the continuum model $\mu_{\hat \gb,\hat h}^{\cM}$ (in the spirit of~\cite[Thm.~2.3]{BL20_conti}).

\subsubsection{Extracting information on the critical point shift}
Similarly to what is argued in~\cite[Sec~1.3, \S2]{CSZ13},  one could be able to use the continuum partition to extract information on the free energy
$\tf_{\gamma}(\gb,h)$ in the weak-disorder limit, \textit{i.e.}\ when $\gb,h \to 0$. In particular, one can define the \textit{continuum free energy} as
\begin{equation}
\mathbf{F}_{\gamma}(\hat\gb,\hat h) := \lim_{\bt \to \infty, \frac{t_1}{t_2} \to \gamma} \frac{1}{t_1} \mathbb E\Big[ \log \mathbf{Z}_{\bt, \hat h}^{\hat\gb, \cM, \quen} \Big] \,.
\end{equation}
The fact that the limit exists and is finite is not immediate, but should follow from super-additivity and concentration arguments.
One is then led to conjecture that,  setting $\gb_{\epsilon} =  \hat \gb (\epsilon^{\alpha-\frac12} L(1/\epsilon)^{-1})^{\frac 1r}$ and $h_{\epsilon} = \hat h \epsilon^{\alpha} L(1/\epsilon)$ (see~\eqref{def:scalings}), we have that 
\begin{equation}
\tag{Conj.~1}
\label{conj:limits}
\lim_{\epsilon \downarrow 0}\epsilon^{-1} \tf_{\gamma} (\gb_{\epsilon},h_{\epsilon}) = \mathbf{F}_{\gamma}(\hat\gb,\hat h) \,.
\end{equation}
As argued in~\cite{CSZ13}, this amounts to exchanging the limits of \textit{infinite volume}, \textit{i.e.}\ letting the size of the system to infinity, and of \textit{weak disorder}, \textit{i.e.}\ letting the inverse temperature $\beta$ and the external field $h$ go to $0$.
This exchange of limits is in fact a delicate issue: it has been shown for instance in the context of the copolymer model with tail exponent $\alpha\in (0,1)$ in~\cite{BdH97,CG10} and for the pinning model with tail exponent $\alpha\in (\frac12,1)$ in~\cite{CTT17}; but is known not to hold for $\alpha>1$, see~\cite[Sec~1.3, \S3]{CSZ13} and~\cite{BCPSZ14}.

Analogously to what is done in~\cite{CTT17}, the exchange of limits~\eqref{conj:limits} would provide information on the behaviour of the critical point $h_c^{\quen}(\gb)$ defined in~\eqref{def:hcritic} in the weak-disorder limit $\gb\downarrow 0$. 
One should first prove that the critical point for the continuous model, defined by
\[
\mathbf{h}_c^{\quen}(\hat \gb) := \sup\big\{ \hat h \in \mathbb R ,  \mathbf{F}_{\gamma}(\hat\gb,\hat h)=0\big\} \,,
\] 
is positive and finite.
Then, using the scaling properties $\cM([0, c \bt]) \stackrel{\text{(d)}}{=} c^{3/2} \cM([0,\bt])$ and $\gp(c\bt) = c^{\alpha-2}\gp(\bt)$ for $\bt\succ 0$ and $c>0$, we get that $\bZ_{c \bt, \hat h}^{\hat \gb ,\cM, \quen} \stackrel{\text{(d)}}{=} \bZ_{ \bt, c^{\alpha} \hat h}^{c^{\frac1r (\alpha-\frac12)}\hat \gb ,\cM, \quen} $, recalling also that $\hat \gb$ appears with an exponent $r$ in the chaos expansion~\eqref{eq:conjchaosexpansion}. This in turns implies that
\[
\mathbf{F}_{\gamma} \big(c^{\frac1r (\alpha-\frac12)} \hat\gb, c^{\alpha}\hat h \big) = c\, \mathbf{F}_{\gamma} (\hat \gb, \hat h)\,,
\qquad \text{and} \qquad \mathbf{h}_c^{\quen}(\hat \gb) = \mathbf{h}_c^{\quen}(1)  \hat \gb^{\frac{2\alpha r}{2\alpha-1}} \,.
\]
Then, similarly to \cite[Thm.~2.4]{CTT17}, one could expect that a slightly stronger version of~\eqref{conj:limits} would yield the following \textit{universal} weak-disorder asymptotics 
\begin{equation}
\tag{Conj.~2}
 \lim_{\gb\downarrow 0} \frac{h_c^{\quen}(\gb)}{ \tilde L_{\alpha}(\frac{1}{\gb^r}) \gb^{\frac{2\alpha r}{2\alpha-1}} } = \mathbf{h}_c^{\quen}(1)  \,,
\end{equation}
where $\tilde L_{\alpha}$ is a slowly varying function, which is obtained by inverting the relation $\gb^r\sim n^{\frac12 -\alpha} L(n)$ as $n\sim \tilde L_{\alpha} (\gb^{-r} )^2 \gb^{ \frac{2r}{2\alpha-1}}$ as $\gb\downarrow0$, $n\uparrow\infty$ (so $h\sim \hat h L(n)n^{-\alpha}$ translates into $h \sim \hat h \tilde L_{\alpha} (\gb^{-r} ) \gb^{\frac{2\alpha r}{2\alpha-1}}$);
 see~\cite[Rem.~2.2]{CTT17} or \cite[Sec.~3.1]{CSZ13} for details in the context of the pinning model.
Let us stress that the constant $\mathbf{h}_c^{\quen}(1)$ depends only on $\alpha$
and not on the fine details either of the bivariate renewal (in particular not on the slowly varying function $L$) or of the disorder distribution $\go$, except trough the constant~$\sigma_r$.

\subsubsection{About an even more faithful modelling of DNA}
\label{sec:uniquesequence}
Recall that the aim of this paper is to study the gPS model with a disorder that represents the interaction between DNA strands. There are two directions in which this model could be extended to describe DNA even more faithfully.

First, one can consider the case of two complementary strands, that is where there is a single sequence of i.i.d.\ random variables $(\tilde \go_{i})_{i\geq 1}$
and where the disorder field is given by $\go_{\bi} := V(\tilde \go_{i_1},\tilde \go_{i_2})$. 
Define $\lambda(\gb) = \log \bbE[e^{\gb \go_{\bi}}]$ for $\bi$ not on the diagonal, \textit{i.e.}\ with $i_1\neq i_2$; 
note that in general we have $\log \bbE[e^{\gb \go_{\bi}}] \neq \lambda(\gb)$ for $\bi$ on the diagonal.
Define again $\zeta_{\bi} := e^{\gb\go_{\bi} -\lambda(\gb)} -1$ for any $\bi \in \bbN^2$; in particular, we have $\bbE[\zeta_{\bi}]=0$ if $\bi$ is not on the diagonal
and $\bbE[\zeta_{\bi}] \neq 0$ if $\bi$ is on the diagonal.

Now, note that $\go_{\bi}$ is independent of $\go_{\bj}$ except if $i_1=j_1$, $i_1=j_2$, $i_2=j_1$ or $i_2=j_2$;
in that case, we say that $\bi$ and $\bj$ are \emph{linked} and we write $\bi \leftrightsquigarrow \bj$.
We also separate the cases where $(i_1,i_2)=(j_1,j_2)$ or $(i_1,i_2)=(j_2,j_1)$, that is $\bi=\bj$ or $\bi$ is the symmetric of $\bj$ with respect to the diagonal, that  we denote as $\bi \rightleftharpoons\bj$. We provide a figure below: the lines represent the points that are \textit{linked} to $(i,j)$.

\begin{center}
\begin{tikzpicture}
\begin{tikzpicture}[scale=0.8]
\draw[dashed] (0,0) -- (4,4);
\draw[->] (0,0) -- (0,4);
\draw[->] (0,0) -- (4,0);
\draw (3,2.2) node[below right]{$(i,j)$};
\filldraw (3,2.2) circle (0.1);
\draw (2.2,3) circle (0.1);
\draw (2.2,3) node[above left]{$(j,i)$};
\draw[very thick] (2.2,0) -- (2.2,4);
\draw[very thick] (0,2.2) -- (4,2.2);
\draw[very thick] (3,0) -- (3,4);
\draw[very thick] (0,3) -- (4, 3);
\end{tikzpicture}
\end{tikzpicture}
\end{center}

Now, we can perform the same calculations as for Lemma~\ref{lem:correl}. One gets that
$\bbE\left[\zeta_{\bi}\zeta_{\bj}\right]=0$ 
if $\bi\not\leftrightsquigarrow \bj$ and that, 
for $\bi$ and $\bj$ not on the diagonal and $\bi \not \rightleftharpoons \bj$, as $\gb_n\to 0$ we have
\begin{equation}
\label{eq:correl:zeta3}
\bbE\left[\zeta_{\bi}\zeta_{\bj}\right] = 
\begin{cases}
\sigma_r^2\gb_n^{2r} + o(\gb_n^{2r})\quad &\text{if }\bi \leftrightsquigarrow \bj \text{ and }\bbP\in\Pfk_r ,\\
0 \quad &\text{if }\bi \leftrightsquigarrow \bj \text{ and }\bbP\in\Pfk_{\infty} , \\
\end{cases}
\end{equation}
One can also obtain estimates on $\bbE\left[\zeta_{\bi}\zeta_{\bj}\right]$ when $\bi$ and $\bj$
are on the diagonal or $\bi  \rightleftharpoons \bj$.
One should be able to adapt the proof of the convergence in Theorem~\ref{thm:cvgcM}, 
with a different covariance structure due to
the fact that
the correlations occur in a more intricate way (using the relation $\bi \leftrightsquigarrow \bj$ instead of $\bi\leftrightarrow \bj$).
After some calculations, we expect that the (rescaled) field $(\zeta_{\bi})_{\bi\in \N^2}$ converges to a Gaussian field with covariance function given by
\begin{equation}
\label{def:QcovcM}
Q(\bs,\bt) =  2 x^{(1)} x^{(2)} x^{(3)} + x^{(1)} x^{(4)} (x^{(2)} + x^{(3)} ) \,,
\end{equation}
with $x^{(1)}<x^{(2)}<x^{(3)}<x^{(4)}$ the ordered points of $t_1,t_2,s_1,s_2$.
A realization of such a Gaussian field is presented in Figure~\ref{fig:M2}.  Finally, a reasonable conjecture is that the statement of Theorem~\ref{conj:scalinggPS} also holds in this setting when replacing the field $\cM$ with the one described above. However, proving this result should involve even more technicalities than in our  setting (see in particular Sections~\ref{sec:convM},~\ref{sec:higherrank} and~\ref{sec:prooftronck>1} below) because of the more complex combinatorics appearing in the correlations. This is the reason why we do not develop on this further.

\begin{figure}[thbp]
\vspace{-0.7\baselineskip}
\begin{center}
\begin{tabular}{cc}
\begin{tabular}{c}
\includegraphics[scale=0.4]{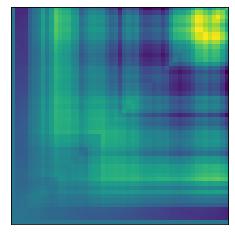}
\end{tabular}
\qquad
& 
\qquad
\begin{tabular}{c}
\includegraphics[scale=0.4]{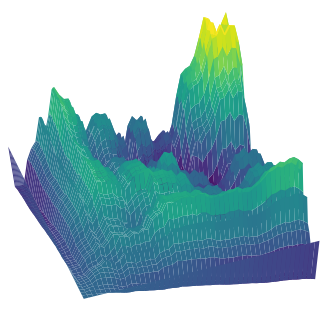}
\end{tabular}
\end{tabular}
\end{center}
\vspace{-0.7\baselineskip}
\caption{\small A realization of a Gaussian field with covariance $Q(\cdot,\cdot)$ defined in \eqref{def:QcovcM}. 
The covariance structure is different from the Gaussian field $\cM$,
to be compared with Figure~\ref{fig:M}.
}
\label{fig:M2}
\end{figure}
Second, one can replace the base pairing interaction with \emph{stacking energies} (see e.g.~\cite[App. D]{GO04}), which depend on pairs of adjacent bases in the two strands (i.e. when $i$ is pinned with $j$ and $i+1$ with $j+1$). These are known to describe nucleotides interactions more precisely than base pairing, and they can be modelled mathematically by assuming that the sequences $\hat\go$, $\bar\go$ are 1-dependent. We do not expect this assumption to yield results any different to the independent case, however proving it should also involve much more technicalities than in our setting; which is why we also refrain from it in this paper.

\subsubsection{About other models with long-range correlated disorder}
The question of the influence of disorder with long-range correlations on physical systems has been addressed widely in the physical literature, starting with the seminal paper by Weinrib and Halperin~\cite{WH83}.
In \cite{WH83}, the authors propose a modification of Harris' predictions on disorder relevance, depending on the rate of decay of the two-point correlation function: namely, if the disorder verifies $\bbE[\go_x \go_y] \asymp \|x-y\|^{-a}$ for some $a>0$, then disorder should be irrelevant if $\nu> 2/\min(d,a)$ and relevant if $\nu<2/\min(d,a)$, with $\nu$ the critical exponent of the homogeneous model.
In other words, Harris' criterion is modified if $a<d$.

As far as the standard (one-dimensional) pinning model is concerned, this question has been investigated in the mathematical literature, for instance  in~\cite{B13,Ber14,BL12,BP15,BT13,CCP19,P13}.
In particular, it has been proven in \cite{Ber14,BL12} that Weinrib--Halperin's prediction fail: disorder becomes always relevant as soon as $a<d=1$. The main idea is that the two-point correlations do not encapsulate the important features of the environment; instead one has to study the rare appearance of large regions of favourable disorder.
However, one could still hope to recover Weinrib--Halperin's predictions for the existence of a non-trivial intermediate disorder scaling limit of the model (at least for Gaussian disorder), tuning down the inverse temperature $\gb_n \downarrow 0$ at the correct scale.

As a first step, one would need to make sense of the following continuum partition function, which is the natural candidate for the limit of the (free) partition function:
\begin{equation}
\label{pinningcorrel}
\bZ_{\gb}(t) = 1 + \sum_{k=1}^{\infty} \gb^k \int_{0<s_1<\cdots<s_k<t}  \prod_{i=1}^{k} \vartheta(s_i-s_{i-1}) \prod_{i=1}^{k} W(\dd s_i) \,.
\end{equation}
In the above expression, $\vartheta(s) = s^{\alpha-1}$ corresponds to the scaled renewal mass function $\bP(i\in \tau)$ (with $\alpha \in (0,1)$ the tail exponent of $\bP(\tau_1>i)$, verifying $\alpha =1/\nu$) and $W$ a fractional Brownian Motion with Hurst index $H \in (\frac12,1)$, \textit{i.e.}\ a Gaussian field with covariance function $\bbE[W_sW_t] = \frac12 (|s|^{2H}+|t|^{2H}-|t-s|^{2H})$, which corresponds to the scaling limit of a Gaussian field $(\go_n)_{n\in \mathbb Z}$ with correlations $\mathbb E[\go_n,\go_{n+k}] \sim c |k|^{-a}$, $a=2(1-H) \in (0,1)$.
Then, one is able to compute (or at least estimate) the $L^2$ norm of each term in the sum. In particular, for $k=1$, one gets
\[
\left\| \int_{0<s<t} \vartheta(s) W(\dd s)\right\|_{L^2}^2  = c_{H} \int_{0<s'<s<1} \!\!\!s'^{\alpha-1}s^{\alpha-1} (s-s')^{2H-2} \dd s \dd s' 
=  c_H \frac{\Gamma(\alpha) \Gamma(2H-1)}{\Gamma(\alpha+ 2H-1)}  \int_{0}^1  s^{2\alpha+2H-3}  \dd s \,,
\]
which is finite if and only if $2\alpha +2H-2 =2/\nu - a  >0$, that is if and only if $\nu<2/a$, recovering Weinrib--Halperin's condition for disorder relevance.
This suggests that the expansion~\eqref{pinningcorrel} makes sense when $\nu<2/a$. However, when controlling the $L^2$ norm of the $k$-th term, we obtain a bound that is not summable in $k$.
Therefore new ideas are needed in order to decide whether it is possible to make sense of~\eqref{pinningcorrel} when $\nu<2/a$; if so, this would confirm Weinrib--Halperin's predictions, in the sense put forward in~\cite{CSZ16}.

Let us also mention that the effect of long-range correlations in the disorder has been studied in the directed polymer model, for instance in
\cite{Lac11,Rang20}. However, in these references, the disorder displays correlations only in the spatial dimension and not in the time dimension; it remains an open problem to study the model with correlations in time.

\subsection{Organisation of the rest of the paper}

Henceforth, the paper is organized as follows. In Section~\ref{sec:M} we prove Lemma~\ref{lem:correl}, from which we deduce the convergence of arbitrary moments of the rescaled field $M_n$ to those of $\cM$. Theorem~\ref{thm:cvgcM} then follows from standard arguments of finite-dimensional convergence and tightness. Let us also mention that Claim~\ref{claim:cvgps} below ensures us that in the remainder of the paper, we may reduce all questions of convergence, notably the main theorem, to $L^2$-convergences on a convenient $L^2(\bbP)$ space. 

In Section~\ref{sec:integration}, we discuss the integration against the random field $\cM$. We first provide general results for defining the integral against a random field by using its \emph{covariance measure}, notably Theorem~\ref{thm:stocint}. In Section~\ref{sec:covarM} we apply these to prove the well-posedness of $\int \phi \,\dd \cM$ in~\eqref{eq:convergencek=1}, and in Section~\ref{sec:higherrank} we proceed similarly for iterated integrals. In particular we prove that the series $\mathbf{Z}$ in~\eqref{eq:conjchaosexpansion} is a well-defined $L^2(\bbP)$ random variable.

Section~\ref{sec:proofthm} contains the most technical parts of the paper, which are required to prove Theorem~\ref{conj:scalinggPS}. Section~\ref{sec:reduc_k=0} shows that we may reduce the statement to the case $h_n\equiv0$, Sections~\ref{sec:proofthm_discrint}--\ref{sec:proofthm_k} prove the convergence of any term of the polynomial expansion~\eqref{expansion} to its continuous counterpart in~\eqref{eq:conjchaosexpansion}, and Section~\ref{sec:proofthm_mainproof} concludes with the convergence of the whole partition function to the series $\mathbf{Z}$.

Finally, Section~\ref{sec:homogeneous} displays the proofs of statements regarding the homogeneous gPS model (Proposition~\ref{prop:scalinghom}), and when the limit is trivial (Proposition~\ref{prop:degenerate}). Those results are postponed to the end of the paper since they rely on standard techniques, namely Riemann-sum convergences and estimates on bi-variate renewal processes.

Some estimates on bi-variate renewal processes and bivariate homogeneous pinning models are recalled in Appendix~\ref{app:renewal}.
In Appendix~\ref{app:intstoch} we prove Theorem~\ref{thm:stocint}: similar results can already be found in the literature (see \emph{e.g.} \cite[Theorem~2.5]{W86}), but for the sake of completeness we provide a full construction of the integral with covariance measures. In Appendix~\ref{app:example} we eventually provide examples of disorder distributions in $\Pfk_4$, $\Pfk_8$, as claimed in Example~\ref{example3}.

\section{Convergence of the field \texorpdfstring{$M_n$}{Mn} to \texorpdfstring{$\cM$}{M}: proof of Theorem~\ref{thm:cvgcM}}
\label{sec:M}

Let us comment on the meaning of the convergence in Theorem~\ref{thm:cvgcM}.
Define
\begin{equation}
L^\infty([\bzero,\bt])\,:=\, \big\{f:[\bzero,\bt]\to\R \ ;\ \text{$f$ is measurable and bounded}\big\}\,,
\end{equation}
and equip it with the norm $\|\cdot\|_\infty$ (and ensuing Borel sigma-algebra). Then, for any random variables $(W_n)_{n\geq 1}$ and $\cW$ in $L^\infty([\bzero,\bt])$, we have that $W_n$ converges in distribution to $\cW$ if for any bounded function $h:L^\infty([\bzero,\bt])\to\R$  that is continuous (for the aforementioned topology), $\lim_{n\to\infty}\bbE[h(W_n)]=\bbE[h(\cW)]$. Notice that the fields $M_n$ and $\cM$ defined above are a.s.\ bounded so this convergence is well-posed. 
In this section, we prove the convergence in Theorem~\ref{thm:cvgcM} and we also provide useful estimates on $(M_n(\bs))_{\bs \succeq \bzero}$.

To be able to distinguish the different notation, the $L^p$ norms on spaces of functions from $[\bzero,\bt]$ to $\R$, \textit{i.e.}\ $L^p([\bzero,\bt])$, will be noted $\|\cdot\|_p$, whereas on spaces of real random variables, \textit{i.e.}\ $L^p(\bbP)$, they will be noted $\|\cdot\|_{L^p(\bbP)}$ or $\|\cdot\|_{L^p}$, $p\in[1,\infty]$.

\begin{remark}
In this section, we prove the convergence of $M_n$ (once rescaled) towards a Gaussian field $\cM$, by showing the convergence of its moments. Let us mention that CLT-like results such as Theorem~\ref{thm:cvgcM} can usually be proven via Stein's method. However, the versions of Stein's method that we are aware of are not completely sufficient in the specific case of the gPS model (for instance, \cite[Thm.~3.5]{Ros11} yields the convergence only when $r=1$, or when $\ga<5/6$ under the scaling~\eqref{def:scalings}). It would be interesting to obtain an alternate proof of Theorem~\ref{thm:cvgcM} for a general $r\geq1$ and $\ga<1$, that uses a refinement of Stein's method.
\end{remark}

\subsection{Preliminary results: the covariance structure}

We start with some preliminaries, controlling the covariances of the
field $(\zeta_{\bi})_{\bi\in \N^2}$:
we prove Lemma~\ref{lem:correl}, then we show how the covariance function $K(\bu,\bv)$ appears and prove some other useful estimates. Recall that $\zeta_{\bi} = \zeta_{\bi} (\gb_n):= e^{\gb_n \go_{\bi} - \lambda(\gb_n)} -1$ has mean $0$, that $\go_{\bi} = V(\hat\go_{i_1}, \bar\go_{i_2})$.
Recall also the definition~\eqref{eq:partPfk} of the sets $(\Pfk_r)_{r\geq 1}$ and $\Pfk_{\infty}$ partitioning the set $\Pfk$ of all distributions.

\begin{proof}[Proof of Lemma~\ref{lem:correl}]
First of all, if  $i_1\neq j_1$ and $i_2 \neq j_2$, then $\go_\bi$ and $\go_{\bj}$ are independent, so we clearly have that $\bbE[\zeta_{\bi}\zeta_{\bj}]=0$. 

When $\bi=\bj$, by a simple (and classical) Taylor expansion, we find 
\[
\bbE[\zeta_{\bi}\zeta_{\bj}] 
= e^{\lambda(2\gb_n) - 2\lambda(\gb_n)} -1 \sim  \Var(\go_{\bone})  \gb_n^2
\qquad \text{ as } \gb_n \downarrow 0\,.
\]

It remains to treat the case $\bi \leftrightarrow \bj$ but $\bi\neq \bj$.
Let us assume that $i_1 \neq  j_1$ but $ i_2= j_2$
and write for simplicity $\varpi_1:=\hat \go_{i_1}$, $\varpi_2:=\bar \go_{i_2}$
and $\varpi_3= \hat \go_{j_1}$: this way we have $\go_{\bi} = V(\varpi_1,\varpi_2)$
and $\go_{\bj} =V(\varpi_2,\varpi_3)$.
Then, we write
\begin{equation}
\label{eq:developexpo}
\begin{split}
e^{2\lambda(\gb_n)}  \bbE[\zeta_{\bi} \zeta_{\bj}] &=\bbE[e^{\gb_n (V(\varpi_1,\varpi_2)+V(\varpi_2,\varpi_3))}] - \bbE[e^{\gb_n V(\varpi_1,\varpi_2)} ] \bbE[e^{\gb_n V(\varpi_1,\varpi_2)}]  \\
& = \sum_{k=0}^{+\infty} \frac{\gb_n^k}{k!} \sum_{j=0}^{k} \binom{k}{j} \big( a_{j,k}-b_{j,k} \big) \,,
\end{split}
\end{equation}
where for the last equality we have expanded the exponentials,
developed $(V(\varpi_1,\varpi_2)+V(\varpi_2,\varpi_3))^k$ 
and set
\[
a_{j,k} := \bbE[V(\varpi_1,\varpi_2)^jV(\varpi_2,\varpi_3)^{k-j}] \,,\quad 
b_{j,k} :=\bbE[V(\varpi_1,\varpi_2)^j] \bbE[V(\varpi_2,\varpi_3)^{k-j}]  \,.
\]

Now, we show that for $\bbP \in \Pfk_r$,
if $j<r$ then $a_{j,k}=b_{j,k}$ (and similarly for $k-j<r$, by symmetry).
Indeed, by definition of  $\Pfk_r$ the random variable $\bbE[V(\varpi_1,\varpi_2)^j \,| \, \varpi_1]$ is constant a.s., equal to $\bbE[V(\varpi_1,\varpi_2)^j]$.
Therefore, if $j<r$, conditioning with respect to $\varpi_1$ we get
\[
\begin{split}
a_{j,k} = \bbE[ V(\varpi_1,\varpi_2)^jV(\varpi_2,\varpi_3)^{k-j}] & = 
\bbE[ \bbE[V(\varpi_1,\varpi_2)^j \,| \, \varpi_1] \, V(\varpi_2,\varpi_3)^{k-j}]  \\
& = \bbE[V(\varpi_1,\varpi_2)^j] \bbE[V(\varpi_2,\varpi_3)^k] =b_{j,k}\,.
\end{split}
\]
Note that this also holds if $r=+\infty$.

Therefore, if $\bbP \in \Pfk_{\infty}$, we have $a_{j,k}=b_{j,k}$ for all $j,k\geq 0$, so $\bbE[\zeta_{\bi} \zeta_{\bj}]=0$, as announced.
If $\bbP\in \Pfk_r$ with $r \in \bbN$, 
then we have $\sum_{j=0}^k \binom{k}{j} (a_{j,k}-b_{j,k}) =0$
for all $k<2r$, and for $k=2r$,
$\sum_{j=0}^{2r} \binom{k}{j} (a_{j,k}-b_{j,k}) = \binom{2r}{r} (a_{r,r}-b_{r,r})$,
with 
\[
\begin{split}
a_{r,r}-b_{r,r} &= \bbE[ V(\varpi_1,\varpi_2)^rV(\varpi_2,\varpi_3)^{r}] - \bbE[ V(\varpi_1,\varpi_2)^r]\bbE[V(\varpi_2,\varpi_3)^{r}] \\
&= \bbE\big[ \bbE[ V(\varpi_1,\varpi_2)^r \,|\, \varpi_2]^2 \big] -
 \bbE[ V(\varpi_1,\varpi_2)^r]^2 \\
 &= \Var\big( \bbE[ V(\varpi_1,\varpi_2)^r \,|\, \varpi_2]\big) =:\tilde \sigma_r^2 \,,
\end{split}
\]
where we have used that conditionally on $\varpi_2$,
$V(\varpi_1,\varpi_2)$ and $V(\varpi_2,\varpi_3)$ are independent, and by symmetry of $V$ we have $\bbE[ V(\varpi_1,\varpi_2)^r \,|\, \varpi_2]= \bbE[ V(\varpi_2,\varpi_3)^r \,|\, \varpi_2]$.
Going back to~\eqref{eq:developexpo}, for $\bbP\in \Pfk_r$, we get that as $\gb_n\downarrow 0$,
\[
e^{2\lambda(\gb_n)}  \bbE[\zeta_{\bi} \zeta_{\bj}]
 = (1+o(1)) \frac{\gb_n^{2r}}{(2r)!} \binom{2r}{r} \tilde\sigma_r^2 
 = (1+o(1)) \frac{\tilde\sigma_r^2 }{(r!)^2} \gb_n^{2r} \,.
\]
Since $e^{2\lambda(\gb_n)} \to 1$, this concludes the proof 
of Lemma~\ref{lem:correl} with $\sigma^2_r:=\tilde\sigma^2_r/(r!)^2$.
%
\end{proof}

Let us define, for $\bs \in \bbR_+\times \bbR_+$,
\begin{equation}
\label{def:Mbar}
\overline M_n (\bs) := \frac{1}{\sigma_r n^{3/2} \gb_n^r} M_n(\bs) =   \frac{1}{\sigma_r n^{3/2} \gb_n^r} \sum_{\bi \in \llb \bone ,n\bs \rrb } \zeta_\bi \, ,
\end{equation}
where $\frac{1}{\sigma_r n^{3/2} \gb_n^r}$ is the scaling advertised in Theorem~\ref{thm:cvgcM}. Thanks to Lemma~\ref{lem:correl}, we easily identify the covariance structure of $(\overline M_n(\bs) )_{\bs \succeq \bzero}$. Let $\bs=(s_1,s_2), \bt=(t_1,t_2) \in \bbR_+^2$, and let us compute 
\begin{equation}\label{eq:olMn:development} 
\bbE\left[\overline M_n( \bs) \overline M_{n}(\bt) \right] = \frac{1}{\sigma_r^2 n^3 \gb_n^{2r}} \sum_{\bi \in \llb \bone ,n\bs \rrb } \sum_{\bj \in \llb  \bone,n\bt \rrb} \bbE \left[\zeta_{\bi}\zeta_{\bj}\right].
\end{equation}
Then, in view of Lemma~\ref{lem:correl} (or~\eqref{eq:correl:zeta}), we distinguish in the sum indices $(\bi,\bj)$ that are equal (there are $(1+o(1))(s_1\wedge t_1)(s_2 \wedge t_2)\,  n^2$ of them), aligned indices that are not equal (there are $(1+o(1)) (s_1\wedge t_1)(s_2 \wedge t_2) (s_1\vee t_1 + s_2 \vee t_2) \, n^3$ of them, see Figure~\ref{fig:alignment}), and other indices (which do not contribute to the sum).

\begin{figure}[htbp]\begin{center}
\includegraphics[scale=0.6]{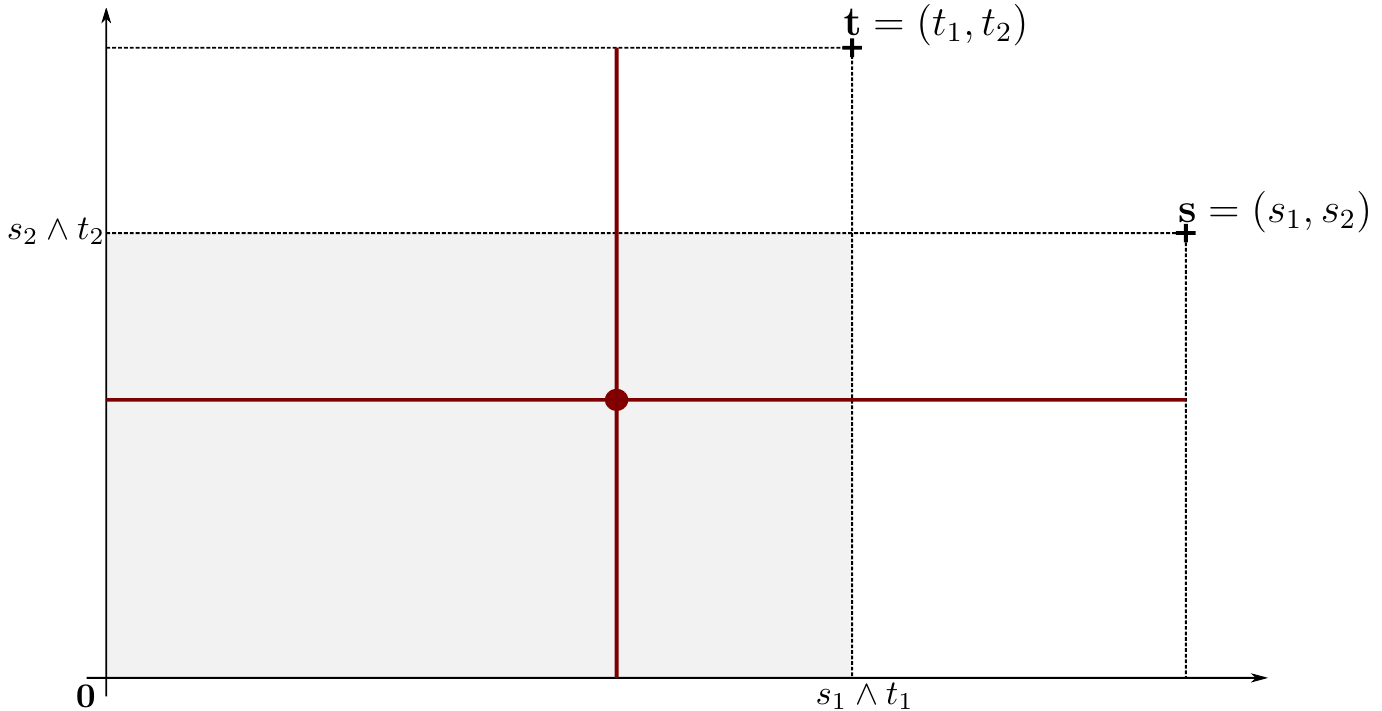}
\caption{ \footnotesize Graphical representation of indices $\bi \in \llb \bone, n\bs \rrb$, $\bj \in \llb \bone ,n \bt \rrb$ with $\bi \leftrightarrow \bj$ and $\bi \neq \bj$. One of $\bi,\bj$ must be in $\llb \bone, n\bs \rrb \cap \llb \bone, n\bt \rrb$ (there are $(s_1\wedge t_1) (s_2\wedge t_2) n^2$ possible locations, represented as the red dot), and the other one has to be aligned with it (there are $(s_1\vee t_1 + s_2 \vee t_2)n -1$ possibilities, represented by the red lines).}
\label{fig:alignment}
\end{center}\end{figure}

Therefore, in view of~\eqref{eq:correl:zeta}, as $n\to+\infty$ the covariance $\bbE\big[\overline M_n( \bs) \overline M_{n}(\bt) \big]$ is asymptotic to
\[
(s_1\wedge t_1)(s_2 \wedge t_2)\frac{\sigma^2}{\sigma_r^2n \gb_n^{2r}}  \pt+\pt (s_1\wedge t_1)(s_2 \wedge t_2) (s_1\vee t_1 + s_2 \vee t_2) \,.
\]
If $\lim_{n\to\infty}n\gb_n^{2r} = +\infty$, which is one assumption of Theorem~\ref{thm:cvgcM}, then we end up with
\begin{equation}
\label{conv:covariance}
 \lim_{n\to+\infty}  \bbE\left[ \overline M_n( \bs) \overline M_{n}(\bt) \right]  =  K(\bs, \bt) := (s_1\wedge t_1)(s_2 \wedge t_2) (s_1\vee t_1 + s_2 \vee t_2) ,
\end{equation}
which is the correlation function in Theorem~\ref{thm:cvgcM}.

\smallskip
Let us conclude this section by giving a lemma
that gives estimates on multi-point correlations---this will appear useful in the rest of the paper.
Let us define the classes of sets of ``$m$-aligned'' indices (with possible repetitions of the indices) as
\begin{equation}
\label{def:Cm}
\mathcal{A}_{m}  := \left\{  (\bi_1, \ldots, \bi_m)  \in  (\bbN^{2})^m\ ;\ \begin{aligned} &\forall\ 1\leq k,k'\leq m, \exists k_0=k,k_1,\ldots,k_p=k'\\& \text{such that } \forall 1\leq a\leq p,\  \bi_{k_a}\aligne \bi_{k_{a-1}} \end{aligned}
\right\}\, ,
\end{equation}
in other words, $(\bi_1, \ldots, \bi_m)\in \mathcal{A}_m$ if for all pair $(\bi_k,\bi_{k'})$, $1\leq k,k'\leq m$, there is a path from $\bi_k$ to $\bi_{k'}$ of subsequently aligned indices in $\{\bi_1, \ldots, \bi_m\}$. 
We refer to Figure~\ref{fig:m-aligned} below for an illustration of sets that are $m$-aligned.

 Moreover, a specific type of $m$-aligned sets will play an important role in the computation of the scaling limit of the partition function below, which may be formed by the union of two renewal trajectories $\btau,\btau'\subset\N^2$. A $m$-aligned set $(\bi_1,\ldots,\bi_m)\in\cA_m$ is called a \emph{$m$-chain of points} if there is no repetition of the indices and if we may reorder it into a non-decreasing sequence $\bi_1\preceq\bi_2\preceq\bi_3\preceq\ldots$ such that $\bi_1\prec\bi_3\prec\cdots$ and $\bi_2\prec\bi_4\prec\cdots$; an example is provided in Figure~\ref{fig:m-aligned}. For such sets, we improve our estimate on multi-point correlations by additionally controlling the dependence of the pre-factor in $m\in\N$.

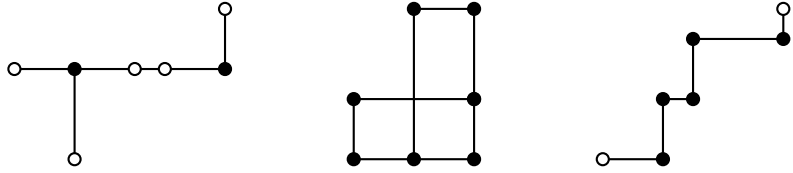
\begin{figure}[h!]
	\begin{center}
		\begin{tikzpicture}[scale=0.4]
			\draw[thick]  (1,0) -- (3,0) -- (5,0) -- (6,0) -- (8,0) -- (8,2) ;
			\draw[thick]  (3,0) -- (3,-3) ;
			\draw[thick, fill=white] (1,0) circle (0.2);
			\draw[thick, fill=black] (3,0) circle (0.2);
			\draw[thick, fill=white] (5,0) circle (0.2);
			\draw[thick, fill=white] (6,0) circle (0.2);
			\draw[thick, fill=black] (8,0) circle (0.2);
			\draw[thick, fill=white] (8,2) circle (0.2);
			\draw[thick, fill=white] (3,-3) circle (0.2);
		\end{tikzpicture}
		\qquad\qquad
		\begin{tikzpicture}[scale=0.4]
			\draw[thick]  (1,0) -- (5,0) -- (5,5) -- (3,5) -- (3,0);
			\draw[thick]  (1,0) -- (1,2) -- (5,2) ;
			\draw[thick, fill=black] (1,0) circle (0.2);
			\draw[thick, fill=black] (5,2) circle (0.2);
			\draw[thick, fill=black] (5,5) circle (0.2);
			\draw[thick, fill=black] (3,5) circle (0.2);
			\draw[thick, fill=black] (3,0) circle (0.2);
			\draw[thick, fill=black] (1,2) circle (0.2);
			\draw[thick, fill=black] (5,2) circle (0.2);
			\draw[thick, fill=black] (5,0) circle (0.2);
		\end{tikzpicture}
		\qquad\qquad
		\begin{tikzpicture}[scale=0.4]
			\draw[thick]  (0,0) -- (2,0) -- (2,2) -- (3,2) -- (3,4) -- (6,4) -- (6,5); 
			\draw[thick, fill=white] (0,0) circle (0.2);
			\draw[thick, fill=black] (2,0) circle (0.2);
			\draw[thick, fill=black] (2,2) circle (0.2);
			\draw[thick, fill=black] (3,2) circle (0.2);
			\draw[thick, fill=black] (3,4) circle (0.2);
			\draw[thick, fill=black] (6,4) circle (0.2);
			\draw[thick, fill=white] (6,5) circle (0.2);
		\end{tikzpicture}
	\end{center}
	\caption{\footnotesize Examples of sets that are $m$-aligned with $m=7$. 
	The indices that are alone on their column or on their line are represented with (empty) circles, the ones that are aligned with another index both vertically and horizontally are represented by filled dots. If we denote $b$ the number of points alone on their line or column then we have from left to right $b=5$, $b=0$, $b=2$.   More specifically, the third example is a $m$-chain of points. }
\label{fig:m-aligned}
\end{figure}

\begin{lemma}
\label{lem:multicorrel}
Assume that $\bbP\in \Pfk_r$ for some $r\in \bbN$.
For any $m\geq 2$ and $q_1,\ldots, q_{m} \in \bbN$, there is a constant $C=C_{m,q}$ such that
for any set $\bI=(\bi_1,\bi_2,\ldots, \bi_m) \in \cA_m $ of \emph{distinct} $m$-aligned points
we have
\begin{equation}
\label{eq:multicorrel}
0\leq \bbE\Big[ \prod_{p=1}^{m}  \zeta_{\bi_p}^{q_p} \Big]  \leq C\, \gb_n^{R}  \qquad \text{with }R :=  \sum_{p=1}^{m} q_{p} \vee r_p\,,
\end{equation}
where $r_p:=r$ if $\bi_p$ is alone on its line or on its column (in other words if $\bi_p^{(2)} \neq \bi_j^{(2)}$ for all $j\neq p$
or if $\bi_p^{(1)} \neq \bi_j^{(1)}$ for all $j\neq p$),
and $r_p = \lceil \frac r2 \rceil$ otherwise.

In addition, there exists a constant $C'>0$ such that for any $m\geq 2$, for any $m$-chain of points $\bI=(\bi_1,\bi_2,\ldots, \bi_m) \in \cA_m $, 
 we have
\begin{equation}
\label{chaincorrel}
0\leq \bbE\Big[ \prod_{p=1}^{m}  \zeta_{\bi_p} \Big] \leq (C')^m \gb_n^{2r + (m-2) \lceil \frac r2 \rceil} \,.
\end{equation}
\end{lemma}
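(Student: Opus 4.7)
The plan is to decompose each $\zeta_{\bi}$ into ``Hoeffding pieces'' that we will control separately, then to multinomially expand the product and carefully identify which configurations vanish by centering. We will first set
\[
\hat\eta_{\bi}:=\bbE[\zeta_{\bi}\mid\hat\go_{\bi^{(1)}}],\qquad \bar\eta_{\bi}:=\bbE[\zeta_{\bi}\mid\bar\go_{\bi^{(2)}}],\qquad \epsilon_{\bi}:=\zeta_{\bi}-\hat\eta_{\bi}-\bar\eta_{\bi},
\]
so that $\bbE[\hat\eta_{\bi}]=\bbE[\bar\eta_{\bi}]=0$ (using $\bbE[\zeta_{\bi}]=0$) and $\epsilon_{\bi}$ is \emph{doubly centered}, i.e.\ $\bbE[\epsilon_{\bi}\mid\hat\go]=\bbE[\epsilon_{\bi}\mid\bar\go]=0$. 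Expanding $e^{\gb_n\go-\lambda(\gb_n)}$ in powers of $\gb_n$ and using the definition of $\Pfk_r$ (namely, that $\bbE[\go^k\mid\hat\go]$ is a.s.\ constant for $k<r$ and non-constant for $k=r$) along the same lines as the proof of Lemma~\ref{lem:correl}, we will obtain
\[
\hat\eta_{\bi}\;=\;\tfrac{\gb_n^r}{r!}\bigl(\bbE[\go^r\mid\hat\go_{\bi^{(1)}}]-\bbE[\go^r]\bigr)+O(\gb_n^{r+1}),
\]
and symmetrically for $\bar\eta_{\bi}$; in particular $\|\hat\eta_{\bi}\|_{L^p}=\|\bar\eta_{\bi}\|_{L^p}=O(\gb_n^r)$ for every $p\ge 1$. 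Since $\zeta_{\bi}=\gb_n(\go_{\bi}-\bbE[\go])+O(\gb_n^2)$, we will also get $\|\epsilon_{\bi}\|_{L^p}=O(\gb_n)$.

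Next, applying the multinomial theorem to each $\zeta_{\bi_p}^{q_p}$ will give
\[
\prod_{p=1}^m\zeta_{\bi_p}^{q_p}\;=\;\sum_{\vec{\jmath}}\,\prod_{p=1}^m\binom{q_p}{j_1^p,j_2^p,j_3^p}\hat\eta_{\bi_p}^{j_1^p}\bar\eta_{\bi_p}^{j_2^p}\epsilon_{\bi_p}^{j_3^p},
\]
where the sum ranges over tuples $\vec\jmath=(j_1^p,j_2^p,j_3^p)_{1\le p\le m}$ with $j_1^p+j_2^p+j_3^p=q_p$. By H\"older's inequality combined with the orders from the first step, each term in the right-hand side has absolute expectation at most $C\,\gb_n^{\sum_p[r(j_1^p+j_2^p)+j_3^p]}$. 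Since the total number of terms is bounded in $m$ and $\vec q$, the proof of~\eqref{eq:multicorrel} will then be reduced to showing that every configuration $\vec\jmath$ with total order $\sum_p[rq_p-(r-1)j_3^p]$ strictly less than $R$ actually yields a vanishing expectation.

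The hard step, and the main obstacle of the proof, will be the ensuing case analysis. Using the mutual independence of $(\hat\go_i)_i$ and $(\bar\go_j)_j$, conditioning on $(\bar\go_j)_j$ will make the expectation factorize over the distinct first coordinates $i\in\{\bi_p^{(1)}\}_p$, and a further conditioning on $(\hat\go_i)_i$ will make it split over the distinct second coordinates. For each \emph{private} first coordinate $i$ (i.e.\ $|\{p:\bi_p^{(1)}=i\}|=1$, corresponding to the unique $\bi_p$ in that row being alone on its column), the integration over $\hat\go_i$ will kill the configuration unless either $j_1^p\ge 1$ (paying $\gb_n^r$ via $\hat\eta_{\bi_p}$), or the pattern $(j_2^p,j_3^p)$ is rich enough to avoid the double-centering of $\epsilon$; a symmetric statement will hold for private second coordinates. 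This will force any point alone on its line or column to contribute at least $\gb_n^{q_p\vee r}$. For doubly-paired points, the fact that both $\hat\go_{\bi_p^{(1)}}$ and $\bar\go_{\bi_p^{(2)}}$ are shared with a neighbour on each side halves the $r$-centering cost, so that the minimal contribution becomes $\gb_n^{q_p\vee\lceil r/2\rceil}$ (the ceiling reflecting the integrality of the $\Pfk_r$ threshold). Summing these per-point lower bounds over $p$ will yield~\eqref{eq:multicorrel}.

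For the chain estimate~\eqref{chaincorrel}, note that by definition an $m$-chain of points has exactly two extremities (the first and last in the ordering, cf.\ Figure~\ref{fig:m-aligned}), each of which is alone on either its line or its column and hence carries $r_p=r$, while the $m-2$ interior points are paired both on a line and on a column and hence carry $r_p=\lceil r/2\rceil$; this directly gives $R=2r+(m-2)\lceil r/2\rceil$ as claimed. Moreover, along a chain each $\hat\go_i$ or $\bar\go_j$ is shared by at most two consecutive points, so the iterated conditioning will produce a product of local factors, each bounded by some absolute constant $C'>0$ depending only on the distribution of $\go$ (and not on $m$ or the detailed geometry of the chain); multiplying along the chain yields the claimed bound $(C')^m$.
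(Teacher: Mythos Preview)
Your Hoeffding decomposition $\zeta=\hat\eta+\bar\eta+\epsilon$ is a natural first move, but the scheme ``bound each multinomial term by H\"older, then show the small-order ones vanish'' has a genuine gap once $q_p\ge 2$. Take two points on the same row (each alone on its column), $q_1=q_2=2$, and $r\ge 3$. The all-$\epsilon$ term $\epsilon_{\bi_1}^2\epsilon_{\bi_2}^2$ has H\"older order $4<R=2r$, so by your plan it must vanish; but integrating out the private $\hat\go_{\bi_1^{(1)}}$ gives $\bbE[\epsilon_{\bi_1}^2\mid \bar\go_{\bi_1^{(2)}}]$, which is \emph{not} zero (double centering only kills the first power). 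This term is in fact $O(\gb_n^{2r})$, but that is a cancellation phenomenon invisible to H\"older: the constant part of $\bbE[\epsilon^2\mid\bar\go]$ is $O(\gb_n^2)$ and must cancel against other multinomial terms you have already separated. The same issue makes your ``halves the $r$-centering cost'' explanation for $\lceil r/2\rceil$ at doubly-paired points purely heuristic; nothing in the Hoeffding pieces produces a half-exponent. Finally, you never address the lower bound $0\le\bbE[\prod\zeta_{\bi_p}^{q_p}]$, which the paper obtains from the FKG inequality.

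The paper avoids all of this by expanding directly in $\gb_n$: writing $e^{\lambda(\gb_n)}\zeta_{\bi_p}=\sum_{k\ge 1}\tfrac{\gb_n^k}{k!}Y_{p,k}$ with $Y_{p,k}=V^k-\bbE[V^k]$, one gets $(e^{\lambda}\zeta_{\bi_p})^{q_p}=\sum_\ell \gb_n^\ell W_{p,\ell}$ with $W_{p,\ell}=0$ for $\ell<q_p$, and the $\Pfk_r$ hypothesis gives $\bbE[W_{p,\ell}\mid \varpi_{p,2}]=0$ for $\ell<r$ whenever the other coordinate is private (so the first surviving $\ell_p$ is $q_p\vee r$). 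For doubly-shared points one cannot condition, so the paper applies Cauchy--Schwarz to isolate $W_{p,\ell_p}$ and uses $\bbE[W_{p,\ell}^2\mid\varpi_{p,2}]=0$ for $2\ell<r$, which is exactly where $\lceil r/2\rceil$ comes from. For the chain bound~\eqref{chaincorrel}, the uniform $(C')^m$ is obtained by a single Cauchy--Schwarz splitting the product over even and odd indices (each subfamily being genuinely independent by the chain geometry), then bounding $\bbE[Y_{p,k}^2]^{1/2}\le (C')^k$ via the exponential-moment assumption; your ``iterated conditioning producing local factors'' does not supply this independence structure.
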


\begin{proof}
The fact that correlations are non-negative simply follows from the observation that $\go\mapsto e^{\gb_n\go-\gl(\gb_n)}-1$ is non-decreasing and from the FKG inequality (see e.g. \cite{P74}), so we only have to prove the upper bound.

Let us
write for simplicity
$\varpi_{p,1} = \hat \go_{i_p^{(1)}}$,
$\varpi_{p,2} = \bar \go_{i_p^{(2)}}$:
this way, we have
 $\go_{\bi_p} = V(\varpi_{p,1},\varpi_{p,2})$.
Therefore, we can write
\begin{equation}
\label{expandzeta}
e^{\lambda(\gb_n) } \zeta_{\bi_p}  = e^{\gb_n V(\varpi_{p,1},\varpi_{p,2})} - \bbE[e^{\gb_n V(\varpi_{p,1},\varpi_{p,2})} ] = \sum_{k=0}^{+\infty} \frac{\gb_n^k}{k!} \Big(  V(\varpi_{p,1},\varpi_{p,2})^k  - \bbE\big[ V(\varpi_{p,1},\varpi_{p,2})^k\big] \Big)\,.
\end{equation}
Hence, expanding the power $q_p$ we have
\[
\Big(e^{\lambda(\gb_n) } \zeta_{\bi_p} \Big)^{q_p}
 = \sum_{\ell=0}^{+\infty} \gb_n^\ell W_{p,\ell} 
\]
where we have set
\[
W_{p,\ell} := \sum_{k_1 + \cdots + k_{q_p} =\ell} \frac{1}{\prod_{j=1}^{q_p} k_j!} \prod_{j=1}^{q_p} 
 \Big( V(\varpi_{p,1},\varpi_{p,2})^{k_j}  - \bbE\big[ V(\varpi_{p,1},\varpi_{p,2})^{k_j}\big]\Big)\,,
\]
and notice that $W_{p,\ell} =0$ for any $\ell<q_p$.
We can now expand the product of series and take the expectation: we get that
\begin{align}
\label{expansionofexpansion}
e^{\sum_{p=1}^{m} q_p\lambda(\gb_n) }
\bbE\Big[ \prod_{p=1}^{m}  \zeta_{\bi_p}^{q_p} \Big] 
= \sum_{\ell=0}^{+\infty} \gb_n^\ell  \sum_{\ell_1+\cdots +\ell_{m} = \ell} \bbE\Big[ \prod_{j=1}^{m} W_{j,\ell_j} \Big] \,.
\end{align}

Now, 
using that for any $k<r$ $\bbE[V(\varpi_{p,1},\varpi_{p,2})^{k}\,|\, \varpi_{p,2}] =\bbE[ V(\varpi_{p,1},\varpi_{p,2})^{k}]$ a.s.\ by definition of $\Pfk_r$,
one can easily check that $\bbE[W_{p,\ell} \,|\, \varpi_{p,2}] =0$ a.s.\ for any $\ell<r$.
With this in mind, if $\bi_p$ is alone on its line (\textit{i.e.}\ $\bi_p^{(2)} \neq  \bi_j^{(2)}$ for all $j\neq p$), 
since $\varpi_{p,1}$ appears only in $W_{p,\ell_p}$,
 conditioning with respect to $(\varpi_{j,1})_{j\neq p}$ and $(\varpi_{j,2})_{1\leq j\leq m}$,
 we get
\[
\bbE\Big[ \prod_{j=1}^{m} W_{j,\ell_j} \Big]  =
\bbE\Big[ \bbE[W_{p,\ell_p} \,|\, \varpi_{p,2}] \prod_{ j=1 ,j\neq p}^{m} W_{j,\ell_j} \Big]  = 0 \quad \text{ if } \ell_p <r \,.
\]
This obviously holds also in the case where $\bi_p$ is alone on its column
since then $\varpi_{p,2}$ appears only in $W_{p,\ell_p}$.
However, we cannot use the same trick if both
$\varpi_{p,1}$ and $\varpi_{p,2}$ appear in other terms $W_{j,\ell_j}$ with $j\neq p$:
in that case, 
we use Cauchy--Schwarz inequality to get
\[
\Big|\bbE\Big[ \prod_{j=1}^{m} W_{j,\ell_j} \Big] \Big|
\leq \bbE\Big[  W_{p,\ell_p}^2\Big]^{1/2} \bbE\Big[ \prod_{j \neq p} W_{j,\ell_j}^2 \Big]^{1/2}
=0 \quad \text{ if } 2\ell_p<r \,.
\]
where we have used that, analogously as above, $\bbE[(W_{p,\ell})^2 \,|\, \varpi_{p,2}] =0$ a.s.\ for any $2\ell<r$.

Overall, we get that $\bbE[ \prod_{j=1}^{m} W_{j,\ell_j} ]=0$
if there is some $j$ such that $\ell_{j}<q_j \vee r_j$, 
with $r_j$ as defined in the statement of the lemma.
Therefore, the first non-zero term in the series~\eqref{expansionofexpansion} is (possibly) for $\ell_{j} =q_j \vee r_j$:
and since $\exp(-\lambda(\gb_n))$ is bounded by~$1$, this concludes the proof of~\eqref{eq:multicorrel}.

\smallskip
For the second part of the lemma (in which $q_p=1$ for all $p$), note that starting from~\eqref{expandzeta}, we have similarly to~\eqref{expansionofexpansion}
\begin{equation}
\label{expandcorrel1}
e^{m\lambda(\gb_n) }
\bbE\Big[ \prod_{p=1}^{m}  \zeta_{\bi_p}\Big] 
= \sum_{k_1, \cdots , k_{m} \geq 0} \frac{(\beta_n)^{k_1+\cdots +k_m}}{\prod_{p=1}^{m} k_p!} \bbE\Big[ \prod_{p=1}^{m} Y_{p,k_p} \Big]\,,
\end{equation}
with $Y_{p,k}:=V(\varpi_{p,1},\varpi_{p,2})^{k}  - \bbE\big[ V(\varpi_{p,1},\varpi_{p,2})^{k}\big]$.
Then, exactly as above, the sum can be restricted to $k_p\geq r_p$ for all $p$, with here $r_1=r_m=r$ (the first and last index of the chain are both aligned with only one other index), and otherwise $r_p=\lceil \frac r2 \rceil$ for $2\leq p\leq m-1$.
Now, note that, using Cauchy--Schwarz inequality, we get that
\[
\bbE\Big[ \prod_{p=1}^{m} Y_{p,k_p} \Big] \leq \bbE\Big[ \prod_{p \text{ even}} (Y_{p,k_p})^2 \Big]^{1/2} \bbE\Big[ \prod_{p \text{ odd}} (Y_{p,k_p})^2 \Big]^{1/2} \leq  \prod_{p=1}^m\bbE\big[ (Y_{p,k_p})^2 \big]^{1/2}\,,
\]
where we used that $(Y_{p,k_p})_{p \text{ even}}$, resp.\ $(Y_{p,k_p})_{p \text{ odd}}$,  are independent, thanks to the structure of $\bI$ (the indices $\bi_p$ for $p$ even, resp.\ $p$ odd, are strictly increasing).
Since by assumption $V(\varpi_{p,1},\varpi_{p,2})$ admits a finite $\beta_0/2$ exponential moment, we get that there is a constant $C>0$ such that $\bbE[ V(\varpi_{p,1},\varpi_{p,2})^k]\leq C^k$ for all $k\geq 1$, hence $\bbE[(Y_{p,k_p})^2]^{1/2}\leq (C')^{k_p}$.
We therefore end up with 
\[
\bbE\Big[ \prod_{p=1}^{m}  \zeta_{\bi_p}\Big] 
= \sum_{k_1\geq r_1, \cdots , k_{m} \geq r_m} \frac{(C' \beta_n)^{k_1+\cdots +k_m}}{\prod_{p=1}^{m} k_p!}
\leq (C'\gb_n)^{\sum_{i=1}^m r_i} e^{m C' \gb_n}\,,
\]
 where we used that $\sum_{k\geq r}\frac{a^k}{k!}\leq a^r\sum_{k\geq0}\frac{a^k}{k!}=a^re^a$ for $a\geq0$.  Recalling that we already observed above that $\sum_{i=1}^m r_i =  2r + (m-2) \lceil \frac r2 \rceil$, this concludes the proof of~\eqref{chaincorrel}.
\end{proof}

\subsection{Finite-dimensional convergence}
\label{sec:convM}

In this section, we prove the convergence in distribution of  $(\overline M_n(\bs_1), \ldots , \overline M_n(\bs_m))$, for any $m\in \bbN$ and $\bs_1,\ldots ,\bs_m \in \bbR_+^2$.
Let $\Sigma_{\bs_1,\ldots ,\bs_m} (i,j):=  K(\bs_{i},\bs_{j})$ be the covariance matrix of $(\cM(\bs_1), \ldots , \cM(\bs_m))$, where we recall that $K(\cdot, \cdot)$ is defined in \eqref{def:KcovcM}.
In view of \eqref{conv:covariance}, $\Sigma$ is the limit of the sequence of the covariance matrices of $(\overline M_{n}(\bs_1), \ldots, \overline M_n(\bs_m))$; in particular, it is positive semi-definite.


\begin{proposition}
\label{prop:lim:M}
Let $m\in\N$ and $\bs_1,\ldots ,\bs_m \in \bbR_+^2$. As $n\to\infty$, if $lim_{n\to\infty}\gb_n = 0$ and $\lim_{n\to\infty}n\gb_n^{2r} = +\infty$, then
$( \overline M_{n}(\bs_1), \ldots, \overline M_n(\bs_m))$ converges in distribution toward a Gaussian vector, centered and with covariance matrix $\Sigma_{\bs_1,\ldots ,\bs_m}$.
\end{proposition}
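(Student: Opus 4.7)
The strategy is to combine the Cramér--Wold device with the method of moments. Fix coefficients $a_1,\ldots,a_m\in\bbR$, set $\bs^* := \bs_1\vee\cdots\vee\bs_m$, and consider
\[
Z_n \;:=\; \sum_{k=1}^m a_k\,\overline{M}_n(\bs_k) \;=\; \frac{1}{\sigma_r n^{3/2}\gb_n^r}\sum_{\bi\in\llb\bone,\lfloor n\bs^*\rfloor\rrb} c_{\bi}^{(n)}\,\zeta_{\bi}\,,
\]
where $c_{\bi}^{(n)} := \sum_{k\,:\,\bi\preceq\lfloor n\bs_k\rfloor} a_k$ is uniformly bounded in $\bi,n$. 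Set $\sigma_Z^2 := \sum_{k,j} a_k a_j\, K(\bs_k,\bs_j)$; by~\eqref{conv:covariance} and linearity, $\lim_n \bbE[Z_n^2] = \sigma_Z^2$. By the Cramér--Wold device it suffices to show $Z_n\xrightarrow{(d)}\mathcal{N}(0,\sigma_Z^2)$, which I plan to obtain by proving $\lim_n\bbE[Z_n^q]=\mu_q$ for every $q\in\bbN$, where $\mu_q$ is the $q$-th moment of $\mathcal{N}(0,\sigma_Z^2)$.

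To this end I would expand
\[
\bbE[Z_n^q] \;=\; \frac{1}{(\sigma_r n^{3/2}\gb_n^r)^q}\sum_{\bi_1,\ldots,\bi_q} c^{(n)}_{\bi_1}\cdots c^{(n)}_{\bi_q}\,\bbE\!\Big[\prod_{p=1}^q\zeta_{\bi_p}\Big],
\]
and organise the inner sum according to the equivalence relation on $\{1,\ldots,q\}$ generated by ``$p\sim p'$ whenever $\bi_p\lrarw\bi_{p'}$''. Thanks to the independence of $(\hat\go_i)$ and $(\bar\go_j)$ across distinct rows and columns, the expectation factorises over the blocks of this partition; any singleton block contributes $0$ since $\bbE[\zeta_\bi]=0$, so only partitions whose blocks all have size $\geq 2$ survive, and in particular $\lim_n\bbE[Z_n^q]=0$ for $q$ odd. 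For even $q$, the candidate leading contribution comes from pair partitions: a non-diagonal aligned pair contributes, by Lemma~\ref{lem:correl}, a factor $\sigma_r^2\gb_n^{2r}$ summed against $\sim n^3$ positions, and a Riemann-sum argument analogous to~\eqref{conv:covariance} shows each such pair produces exactly the factor $\sigma_Z^2$ in the limit. Summing over the $(q-1)!!$ pair partitions of $\{1,\ldots,q\}$ then yields $(q-1)!!\,\sigma_Z^q=\mu_q$. Diagonal pairs $\bi_p=\bi_{p'}$ give a contribution of order $O(n^{-1}\gb_n^{2-2r})$ per pair, which vanishes since $\gb_n\to 0$ and $n\gb_n^{2r}\to\infty$ jointly imply $n\gb_n^{2r-2}\to\infty$.

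The technical heart of the argument is showing that every partition containing a block $B$ of size $\ell\geq 3$ gives a vanishing contribution. For such a block, Lemma~\ref{lem:multicorrel} yields $\big|\bbE[\prod_{\bi\in B}\zeta_\bi]\big|\leq C_\ell\,\gb_n^{R_\ell}$ with $R_\ell = b\,r+(\ell-b)\lceil r/2\rceil$, where $b$ is the number of indices of~$B$ alone on their row or column. On the other hand, encoding~$B$ as the edge set of a connected bipartite graph (rows and columns as vertices, indices as edges), the number of $\ell$-aligned configurations of this combinatorial type inside $\llb\bone,\lfloor n\bs^*\rfloor\rrb$ is at most $O(n^{V(B)})$ where $V(B)\leq \ell+1$ denotes the number of distinct rows and columns used. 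After rescaling by $(n^{3/2}\gb_n^r)^\ell$, the contribution of the block is at most $n^{V(B)-3\ell/2}\,\gb_n^{R_\ell-r\ell}$. The extremal cases are the $\ell$-chain (with $V(B)=\ell+1$ and $R_\ell=2r+(\ell-2)\lceil r/2\rceil$) and denser cycle-type configurations (where $V(B)$ is smaller but $R_\ell$ is correspondingly larger); a short case analysis using $\gb_n\to 0$ together with $\gb_n^{-2r}=o(n)$ shows this quantity vanishes whenever $\ell\geq 3$. The same bounds furnish a uniform-in-$n$ control of $\bbE[Z_n^q]$ (the number of partitions of $\{1,\ldots,q\}$ being finite), which combined with the moment convergence upgrades to convergence in distribution---the Gaussian being determined by its moments---and the Cramér--Wold device concludes.
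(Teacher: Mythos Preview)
Your overall strategy coincides with the paper's: Cramér--Wold plus the method of moments, expanding $\bbE[Z_n^q]$, grouping indices by the alignment equivalence relation, identifying pair partitions as the sole surviving contribution, and showing that any block of size $\ell\geq 3$ is negligible. The treatment of diagonal pairs and odd $q$ is also the same.

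There is, however, a genuine slip in your handling of the large blocks. You assert that ``denser cycle-type configurations'' have smaller $V(B)$ but \emph{correspondingly larger} $R_\ell$. This is false: for the $4$-cycle one has $V(B)=4$, $b=0$, and $R_4=4\lceil r/2\rceil$, which for even $r$ equals $2r$ --- strictly \emph{smaller} than the $4$-chain's $R_4=2r+2\lceil r/2\rceil=3r$. So the tradeoff you invoke does not hold, and the ``short case analysis'' you promise does not go through as described. The quantity $n^{V(B)-3\ell/2}\gb_n^{R_\ell-r\ell}$ does vanish for every connected type with $\ell\geq 3$, but establishing this by enumerating combinatorial types is more delicate than you suggest.

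The paper avoids this entirely by not splitting by type. It uses the crude uniform count $|\cA_\ell(n\bs)|\leq C n^{\ell+1}$ for \emph{all} $\ell$-aligned configurations of distinct points, together with the lower bound $R_\ell\geq (b+\ell)\,r/2$ coming from $\lceil r/2\rceil\geq r/2$. The block contribution is then at most
\[
\frac{n^{\ell+1}\,\gb_n^{(b+\ell)r/2}}{(n^{3/2}\gb_n^r)^{\ell}} \;=\; \frac{\gb_n^{(b+\ell-4)r/2}}{(n^{1/2}\gb_n^r)^{\ell-2}}\,,
\]
and the right-hand side vanishes for $\ell\geq 3$ once one checks the single combinatorial fact $b+\ell\geq 4$: for $\ell\geq 4$ this is trivial, and for $\ell=3$ one cannot have $b=0$ because three distinct points cannot each share both their row and their column with another point of the triple. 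This replaces your case analysis with a one-line observation and makes the argument uniform over all alignment structures.
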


Before we prove this proposition, let us start with the case $m=1$, which already encapsulates the combinatorial difficulty and will ease the understanding of the general case $m\in \bbN$.
We show the convergence of the moments of $\overline M_n(\bs)$ to the moments of a Gaussian variable, which implies the convergence in distribution.

\begin{lemma}\label{lem:moment:Mn}
Let $\ell\in\N$ and $\bs \in \bbR_+^2$. Then $\bbE\big[(\overline M_n(\bs))^\ell\big]$ is well defined if $n$ is large enough, and if $lim_{n\to\infty}\gb_n = 0$, $\lim_{n\to\infty}n\gb_n^{2r} = +\infty$, then we have
\begin{equation}\label{eq:moment:M_z}
\lim_{n\to+\infty}\bbE \Big[\big(\overline M_n(\bs)\big)^\ell \Big] \:=\:
\begin{cases}
0 & \quad \text{ if } \ell \text{ is odd},\\
\big(K(\bs,\bs) \big)^{\ell/2} \, \frac{\ell!}{ 2^{\ell/2} (\ell/2)!} & \quad \text{ if } \ell \text{ is even.}
\end{cases}
\end{equation}
where $K(\bs,\bt)$ is defined in \eqref{def:KcovcM}, so $K(\bs,\bs) = s_1 s_2 (s_1+s_2)$.
\end{lemma}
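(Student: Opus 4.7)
The plan is to evaluate the $\ell$-th moment by expanding
\begin{equation*}
\bbE\big[(\overline M_n(\bs))^\ell\big] \;=\; \frac{1}{\sigma_r^\ell\, n^{3\ell/2}\gb_n^{r\ell}} \sum_{\bi_1,\ldots,\bi_\ell \in \llb \bone, n\bs\rrb} \bbE\Big[\prod_{p=1}^\ell \zeta_{\bi_p}\Big]\,,
\end{equation*}
and classifying each tuple $(\bi_1,\ldots,\bi_\ell)$ according to its \emph{alignment partition} $\pi$ of $\{1,\ldots,\ell\}$: declare $p\sim q$ whenever $\bi_p$ and $\bi_q$ lie in the same connected component of the graph with vertex set $\{\bi_1,\ldots,\bi_\ell\}$ whose edges link equal or aligned points. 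The first observation is that any partition containing a singleton $\{p\}$ contributes $0$: indeed, $\zeta_{\bi_p}$ then depends on the disorder variables $\hat\go_{\bi_p^{(1)}}, \bar\go_{\bi_p^{(2)}}$ that are independent of all the other $\zeta_{\bi_q}$'s, so by independence and $\bbE[\zeta_{\bi_p}]=0$ the joint expectation factorizes and vanishes. In particular, odd $\ell$ forces the presence of a class of size at least~$3$.

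The leading contribution comes from the partitions into $\ell/2$ pairs, which requires $\ell$ even. For a fixed such perfect matching $\pi$, different pairs involve disjoint $\hat\go,\bar\go$ variables, so the sum over tuples whose actual alignment partition equals $\pi$ factorizes (up to negligible boundary terms in which two supposedly distinct pairs share a row or column and thus merge into a larger class) as
\begin{equation*}
\Big(\sum_{\bi,\bj\in\llb\bone,n\bs\rrb\, :\, \bi\lrarw\bj \text{ or } \bi=\bj} \bbE[\zeta_\bi\zeta_\bj]\Big)^{\ell/2}(1+o(1))\,.
\end{equation*}
By the very computation carried out between~\eqref{eq:olMn:development} and~\eqref{conv:covariance}, combined with the fact that the diagonal contribution $\sigma^2 n^2 s_1 s_2 \gb_n^2$ is negligible next to the off-diagonal one $\sigma_r^2 n^3 s_1 s_2 (s_1+s_2)\gb_n^{2r}$ under the assumption $n\gb_n^{2r}\to\infty$, each pair contributes $\sigma_r^2 n^3 \gb_n^{2r} K(\bs,\bs)(1+o(1))$. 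Multiplying by the number $\ell!/(2^{\ell/2}(\ell/2)!)$ of perfect matchings of $\{1,\ldots,\ell\}$ and dividing by the normalization $\sigma_r^\ell n^{3\ell/2}\gb_n^{r\ell}$ produces the announced Gaussian moment $K(\bs,\bs)^{\ell/2}\cdot\ell!/(2^{\ell/2}(\ell/2)!)$.

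The final and most delicate step is to show that every partition containing a class $C$ of size $m\geq 3$ yields a vanishing contribution (which is then all that remains for odd $\ell$). For such a class, I would proceed in two steps: (i)~Lemma~\ref{lem:multicorrel} bounds $\bbE[\prod_{p\in C}\zeta_{\bi_p}]$ by $C\,\gb_n^R$ with $R\geq r(a+m)/2$, where $a$ denotes the number of points of $C$ that are alone on their row or on their column; and (ii)~the positions of $C$ are encoded by edges of a connected bipartite graph with $\rho$ rows and $\gamma$ columns, so the number of configurations is at most $O(n^{\rho+\gamma})$, with $\rho+\gamma\leq m+1$ in the tree case and $\rho+\gamma\leq m$ in the non-tree case, while trees with $m\geq 3$ edges automatically have $a\geq 2$ (at least two leaves). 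A short case analysis then shows that the normalized contribution of the class is controlled by $n^{1-a/2}(n\gb_n^r)^{-(m-a)/2}$ in the tree case and by $n^{-a/2}(n\gb_n^r)^{-(m-a)/2}$ in the non-tree case; in both situations these expressions tend to $0$ since $n\gb_n^r=\sqrt{n\cdot n\gb_n^{2r}}\to\infty$ and $m\geq 3$. Classes with repeated indices are handled analogously via the multiplicity version of Lemma~\ref{lem:multicorrel}. The main obstacle is precisely this combinatorial bookkeeping; once settled, the moments in~\eqref{eq:moment:M_z} are obtained by summing the contributions of all partitions.
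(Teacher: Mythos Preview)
Your proposal is correct and follows the same overall strategy as the paper: expand the $\ell$-th moment, decompose tuples by their alignment partition, observe that singletons kill the expectation, identify pair partitions as the leading contribution via the second-moment computation~\eqref{conv:covariance}, and show that any class of size $m\ge 3$ is negligible using Lemma~\ref{lem:multicorrel}.

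The only genuine difference lies in the combinatorics for the $m\ge 3$ case. The paper uses the crude count $|\cA_m(n\bs)|\le C\|\bs\|_1 n^{m+1}$ uniformly in the shape of the class, combines it with the bound $\gb_n^{(b+m)r/2}$ from Lemma~\ref{lem:multicorrel}, and then argues separately that $b+m\ge 4$ always holds (because $b=0$ is impossible when $m=3$). You instead encode the class as a connected bipartite graph on $\rho$ rows and $\gamma$ columns, get the sharper count $O(n^{\rho+\gamma})$ with $\rho+\gamma\le m+1$ (equality iff tree), and split tree vs.\ non-tree; the tree case uses that a tree with $m\ge 3$ edges has at least two leaves, hence $a\ge 2$. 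Both routes yield the same vanishing estimate, and your bipartite encoding is arguably cleaner. For repeated indices the paper takes a shortcut (bounding the correlation by a constant and using that $|\cA_m\setminus\tilde\cA_m|\le m|\cA_{m-1}|\le Cn^{m}$), which is slightly simpler than invoking the multiplicity version of Lemma~\ref{lem:multicorrel} as you suggest, but either works.
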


\begin{proof}
We write 
\begin{equation}
\label{develop:ell}
\bbE \Big[\big(\overline M_n(\bs)\big)^\ell \Big] =  \Big( \frac{1}{\sigma_r n^{3/2} \gb_n^r}\Big)^{\ell} \sum_{\bi_1 \in \llb \bone ,n\bs \rrb } \cdots  \sum_{\bi_\ell \in \llb \bone ,n\bs \rrb } \bbE\left[\zeta_{\bi_1}\cdots\zeta_{\bi_{\ell}}\right].
\end{equation}
Now, notice that $\bbE\left[\zeta_{\bi_1}\cdots\zeta_{\bi_{\ell}}\right]$ depends only on the relative positions of the indices $(\bi_k)_{k=1}^\ell$. For instance, if one of the $\bi_{k}$ is isolated (\textit{i.e.}\ not aligned with any other index $\bi$) then the expectation is equal to $0$.

Recall the definition~\eqref{def:Cm} of classes of sets of ``$m$-aligned'' indices (with possible repetitions of the indices).
Then, for any $\bI=(\bi_1 ,\ldots, \bi_{\ell})\in  \llb \bone ,n\bs \rrb ^\ell$, there is a unique partition $\mathbb J = \{J_1,\ldots, J_k\}$ of $\{1,\ldots,\ell\}$ 
 such that for $1\leq a\leq k$, $\{\bi_j\}_{j\in J_a}$ is a
 maximal set of ``$m$-aligned'' indices of $\bI$;
in particular $\{\bi_j \}_{j\in J_a} \in \cA_{|J_{a}|}$ for all $1\leq a\leq k$, and $\bi_j \not \leftrightarrow \bi_{j'}$ for any $j \in J_a$, $j' \in J_{b}$ with $a\neq b$.
One can view $(J_a)_{1\leq a \leq k}$ as  equivalence classes,
for the following equivalence relation (defined for $\bI$ fixed): 
$j  \rightleftarrows j'$ if and only if there exists a path $j_0=j,j_1,\ldots,j_q=j'$ in $\{1,\ldots,\ell\}$ satisfying $\bi_{j_p}\aligne\bi_{j_{p+1}}$ for all $0\leq p< q$.
For $\bI=(\bi_1 ,\ldots, \bi_{\ell})\in  \llb \bone ,n\bs \rrb ^\ell$, we denote $\Phi(\bI)=\bbJ$ this partition.
For $J\subset\{1,\ldots,\ell\}$ we let $\Phi_\bI^{-1}(J)\subset\bI$ be the set $\{\bi_{j}, j\in J\}$, with possible repetition of the indices: this way,
any partition $\mathbb J = \{J_1,\ldots, J_k\}$ of $\{1,\ldots,\ell\}$ induces a partition $\{\Phi_\bI^{-1}(J_1),\ldots,\Phi_\bI^{-1}(J_s)\}$ of $\bI$;
if $\mathbb J =\Phi(\bI)$, this corresponds to the partitioning of $\bI$ into maximal sets of ``$m$-aligned'' indices.

%
%
Therefore, if $\Phi(\bI) = \mathbb J =\{J_1,\ldots, J_k\}$, we may factorize
\begin{equation}
\label{eq:correlaligned}
\bbE\left[\zeta_{\bi_1}\cdots\zeta_{\bi_{\ell}}\right] = \prod_{a=1}^k \bbE\Big[ \prod_{j \in J_a} \zeta_{\bi_j} \Big] = \prod_{J\in \mathbb J} \bbE\Big[ \prod_{\bi \in \Phi_\bI^{-1}(J)} \zeta_{\bi} \Big] \, .
\end{equation}

As a consequence, denoting $\cJ_{\ell}$ the set of partitions of $\{1,\ldots, \ell\}$, we write
\begin{align}
\bbE \big[(\overline M_n(\bs))^\ell \big] =  \Big( \frac{1}{\sigma_r n^{3/2} \gb_n^r}\Big)^{\ell} 
  \sum_{ \mathbb J \in \cJ_{\ell} } \sumtwo{ \bI \in \llb \bone  , n \bs\rrb^{\ell} }{ \Phi(\bI) = \mathbb J}    \prod_{J\in \mathbb J} \bbE \Big[ \prod_{\bi \in \Phi_\bI^{-1}(J)} \zeta_{\bi} \Big]\, .
\label{eq:decomp-config}
\end{align}

First of all, in \eqref{eq:decomp-config}, we can restrict the sum to having only $|J| \geq 2$:  indeed if $|J|=1$ we obviously have $\bbE[\prod_{\bi \in \Phi_{\bI}^{-1}(J)}\zeta_{\bi} ]=\bbE[\zeta_\bone]=0$ (note that it also restricts the sum to $|\mathbb J|\leq \ell/2$).
Now, we claim that the main contribution in the sum comes from having $|J| =2$ for all $J\in \mathbb J$, or in other words from the term $|\mathbb J|= \ell/2$.

Indeed let us show that, for any $m \ge 3$,
\begin{equation}
\label{eq:m>3}
 \frac{1}{(\sigma_r n^{3/2}\gb_n^r)^m} \sum_{I\in \cA_m (n \bs)} \bbE\Big[   \prod_{\bi\in I}\zeta_{\bi}\Big]  \xrightarrow{n\to+\infty} 0 \, ,
\end{equation}
where we introduced the notation $\cA_m(n\bs) = \{I \in \cA_m , I \subset \llb \bone, n \bs \rrb\}$ (note that we still allow repetitions of the indices).
To that end, we perform a first simplification:
denote $\tilde \cA_m(n\bs)$ the set of $I\in \cA_m(n \bs)$ such that
all indices in $I$ are distinct.
Then, we clearly have that $|\cA_m \setminus \tilde \cA_m| \leq m |\cA_{m-1}|\leq C m \|\bs\|_1 n^{m}$ and hence
\[
 \frac{1}{(\sigma_r n^{3/2}\gb_n^r)^m} \sum_{I\in \cA_m (n \bs) \setminus \tilde \cA_m(n\bs)} \bbE\Big[  \prod_{\bi\in I}\zeta_{\bi}\Big]  \leq C_{r,m} \frac{1}{(n^{1/2}\gb_n^r)^m} \xrightarrow{n\to+\infty} 0 \, ,
\]
where we simply bounded $\bbE[   \prod_{\bi\in I}\zeta_{\bi}]$ by a constant and used that $n\gb_n^{2r}\to+\infty$.
We therefore only need to prove~\eqref{eq:m>3}
with $\cA_m(n\bs)$ replaced by $\tilde \cA_m(n\bs)$.
Now, with the idea of using Lemma~\ref{lem:multicorrel}, 
let us define (see Figure~\ref{fig:m-aligned} for an illustration)
\[
\tilde \cA_{m}^{(b)} (n\bs) := \Big\{ \{\bi_1,\ldots, \bi_m\} \in \tilde \cA_m(n\bs) \,, \text{ there are exactly $b$ indices alone on a line or a column} \Big\} \,.
\]
Now we just have to show~\eqref{eq:m>3} with $\tilde\cA_m^{(b)}(n\bs)$ in place of $\cA_m(n\bs)$, for any $b\in \{0,\ldots, m\}$.
Using Lemma~\ref{lem:multicorrel}-\eqref{eq:multicorrel}, we then have that for an $I\in \tilde \cA_{m}^{(b)} (n\bs)$ (recall all indices are distinct)
\[
\bbE \Big[ \prod_{\bi\in I}\zeta_{\bi} \Big] \leq  C\gb_n^{b r +(m-b)\lceil r/2 \rceil}  \leq  C \gb_n^{(b+m) r/2}  \,.
\]
Hence, using that $|\tilde \cA_{m}^{(b)} (n\bs)|\leq |\cA_{m}(n\bs)|\leq C \|\bs\|_1 n^{m+1}$, we have
\begin{equation}
\label{eq:sumA'}
 \frac{1}{(\sigma_r n^{3/2}\gb_n^r)^m} \sum_{I\in \tilde \cA_{m}^{(b)} (n\bs)} \bbE\Big[  \prod_{\bi\in I}\zeta_{\bi}\Big]  \leq  
\frac{C_{m,r}}{( n^{1/2}\gb_n^r)^m}  n \gb_n^{(b+m)  r/2}
 =  \frac{C_{m,r}}{( n^{1/2}\gb_n^r)^{m-2}}  \gb_n^{(b+m-4)  r/2}\,.
\end{equation}
Now, this goes to $0$ since $m\geq 3$ and  $n^{1/2}\gb_n^r \to \infty$, due also to the fact that 
$b+m \geq 4$: indeed, we cannot have $b=0$ if $m=3$ so we either have $b\geq 1$ or $m\geq 4$.

In addition to \eqref{eq:m>3}, recall that  when $m=2$ then~\eqref{conv:covariance} shows that 
\[
(\sigma_r n^{3/2}\gb_n^r)^{-m} \sum_{I\in \cA_m (n\bs)} \bbE \Big[   \prod_{\bi\in I}\zeta_{\bi} \Big]
\xrightarrow{n\to+\infty} K(\bs,\bs)\;,
\]  
in particular these terms are bounded. 
All together, for any fixed partition $\mathbb J\in \cJ_{\ell}$ with at least one $|J|\geq 3$, we have
\begin{equation}
\label{eq:m>3end}
\begin{split}
\frac{1}{(\sigma_rn^{3/2} \gb_n^r)^{\ell}} & \sumtwo{ \bI \in \llb \bone , n \bs \rrb^{\ell}}{\Phi(\bI)=\mathbb J} \prod_{J\in \mathbb J} \pt \bbE \Big[ \prod_{\bi \in \Phi_{\bI}^{-1}(J)} \zeta_{\bi} \Big]
 \leq\; \prod_{J\in \mathbb J} \Big( \frac{1}{(\sigma_rn^{3/2} \gb_n^r)^{|J|}} \sum_{ I \in \cA_{|J|}(n\bs) } \bbE \Big[ \prod_{\bi\in I}\zeta_{\bi} \Big] \Big)  \;\stackrel{n\to+\infty}{\longrightarrow}\; 0,
 \end{split}
\end{equation}
where we simply dropped the condition that $\bi_j \not \leftrightarrow \bi_{j'}$ if $j,j'$ are in different $J$'s.

\smallskip
Recall \eqref{eq:decomp-config}, where we have already said that we can restrict the sum to $\mathbb J \in \cJ_{\ell}$ having all $|J|\geq 2$. 

(i) If $\ell$ is odd, then it imposes that one $|J|$ is larger or equal than $3$. Hence $\bbE[(\overline M_n(\bs))^{\ell}]$ goes to $0$, which proves the first part of \eqref{eq:moment:M_z}.

(ii) If $\ell$ is even, then the only part contributing to the sum in \eqref{eq:decomp-config} comes from
$\mathbb J \in \cJ_{\ell}$ having all $|J|=2$:
we denote $\mathscr{P}_{\ell}$ the set of pairings of $\{1,\ldots, \ell\}$, i.e.\ the sets of partitions $\mathbb J\in \cP_{\ell} $ with $|J|=2$ for all $J\in \mathbb J$. We end up with
\begin{align}
\label{eq:elleven-0}
\bbE \big[(\overline M_n(\bs))^{\ell} \big] &=o(1)+  \frac{1}{n^{3\ell/2}} \sum_{ \mathbb J \in \mathscr{P}_{\ell} } \sumtwo{\bI \in \llb \bone, n\bs \rrb^{\ell} }{ \Phi(\bI) = \mathbb J}  \prod_{J\in \mathbb J} \frac{1}{\sigma_r^2 \gb_n^{2r}} \bbE \Big[ \prod_{\bi \in \Phi_{\bI}^{-1}(J)}\zeta_{\bi} \Big] \, .
\end{align}

Now, denote $\Upsilon_{\ell} = \{ (\bi_1,\ldots, \bi_\ell) \in (\bbN^2)^{\ell}, \exists \, j, j' \text{ s.t.} \bi_j =\bi_{j'} \}$. 
Analogously to \eqref{eq:m>3end}, we get that for a fixed $\mathbb J \in \mathscr{P}_{\ell}$,
the sum over $\bI \in \llb \bone , n \bs \rrb^\ell \cap \Upsilon_{\ell}$ with $\Phi(\bI) = \mathbb J$ goes to $0$:
indeed, there must be some $J =\{j,j'\}$ with $\bi_{j} =\bi_{j'}$, 
and Lemma~\ref{lem:correl} gives that $\bbE[ \zeta_{\bi_{j}}^2 ] = O(\gb_n^2)$, so that
$ (n^3 \gb_n^{2r})^{-1}\sum_{ \bi_{j} \in \llb \bone, n \bs \rrb} \bbE[ \zeta_{\bi_{j}}^2 ] = O( n^{-2} \gb_n^{2(1-r)})$ goes to $0$ (and all the other terms are bounded).

As a consequence, the restriction of the sum to $\bI \in \llb \bone , n \bs \rrb^\ell \cap \Upsilon_{\ell}$ in \eqref{eq:elleven-0} goes to $0$, and we have
\begin{equation}
\label{eq:elleven}
\bbE \big[(\overline M_n(\bs))^{\ell} \big]  = o(1) + \frac{1}{n^{3\ell/2}}  \sum_{ \mathbb J \in \mathscr{P}_{\ell} } \sumtwo{\bI \in \llb \bone, n\bs \rrb^{\ell} \cap \Upsilon_{\ell}^c }{ \Phi(\bI) =\mathbb J}  \prod_{J\in \mathbb J}  \frac{1}{\sigma_r^2\gb_n^{2r}} \bbE \Big[ \prod_{ \bi \in \Phi_{\bI}^{-1}(J)}\zeta_{\bi} \Big]\, .
\end{equation}
Then, Lemma~\ref{lem:correl} (or~\eqref{eq:correl:zeta3}) gives that $\bbE [\prod_{ \bi \in \Phi_{\bI}^{-1}(J)}\zeta_{\bi} ] =(1+o(1))  \sigma_r^2 \gb_n^{2r}$ for all $J$ in the product above.
Moreover, there are $((1+o(1)) s_1 s_2 (s_1+s_2) n^3)^{\ell/2}$ terms in the sum (recall Figure~\ref{fig:alignment}).
All together, we get the second part of \eqref{eq:moment:M_z}, using that the number of pairings $\mathrm{Card} (\mathscr{P}_{\ell})$ is the correct combinatorial factor
\end{proof}

\begin{proof}[Proof of Proposition~\ref{prop:lim:M}] We now prove the finite-dimensional convergence. 
 Let $m\in\N$, $\bs_1,\ldots,\bs_m\in \bbR_+^2$ and let $(\cM(\bs_1),\ldots, \cM(\bs_m))$ be a Gaussian vector of covariance matrix $\Sigma_{\bs_1,\ldots,\bs_m}$. 
We show that for any $u_1,\ldots, u_m \in \bbR$, $\big(\sum_{k=1}^m u_k \overline M_n(\bs_k) \big)_{n\geq1}$ converges in distribution to $\sum_{k=1}^m u_k \cM(\bs_k)$, by showing the convergence of its moments.
Let $\ell \in \bbN$, and let us compute
\begin{align}
\bbE\bigg[ \Big( \sum_{k=1}^m u_k \overline M_{n}(\bs_k) \Big)^{\ell} \bigg] = \sum_{k_1 =1}^m  \cdots \sum_{k_\ell =1}^m \bbE\Big[ \prod_{j=1}^{\ell}  u_{k_j} \overline M_{n}(\bs_{k_j}) \Big]\, .
\label{eq:powerell}
\end{align}

We fix $k_1,\ldots, k_{\ell} \in \{1,\ldots,m\}$ and we consider
\begin{align}
\label{develop:prodM}
\bbE\Big[ \prod_{j=1}^{\ell}  \overline M_{n}(\bs_{k_j}) \Big]
& = \Big(\frac{1}{\sigma_r n^{3/2}\gb_n^r} \Big)^{\ell}  \sum_{\bi_1 \in \llb \bone,\bs_{k_1} \rrb } \cdots \sum_{ \bi_{\ell} \in \llb \bone,\bs_{k_\ell} \rrb} \bbE\Big[  \prod_{j=1}^{\ell} \zeta_{ \bi_j} \Big] \, .
\end{align}
Then, we proceed as for the proof of Lemma~\ref{lem:moment:Mn}: to each $\ell$-uple $\bI = (\bi_1,\ldots , \bi_{\ell})$, we associate a partition $\Phi(\bI) = \mathbb J =\{J_1,\ldots , J_k\}$ 
 by decomposing $\bI$ into disjoint maximal ``$|J|$-aligned'' indices. As we showed above, cf.~\eqref{eq:m>3end}, the contribution of the terms with some $|J|\neq 2$ goes to $0$.
First of all, this implies that if $\ell$ is odd, then  $\bbE[ \prod_{j=1}^{\ell}  \overline M_{n}(\bs_{k_j})]$ goes to $0$ as $n\to +\infty$.
If $\ell$ is even, then analogously to \eqref{eq:elleven-0}-\eqref{eq:elleven}, the main contribution to \eqref{develop:prodM} comes from pairings $\mathbb J \in \mathscr{P}_{\ell}$, and from $\bI \in \llb \bone, n \bs_{k_1} \rrb \times \cdots \times \llb \bone , n \bs_{k_{\ell}}\rrb$ with distinct entries.
We therefore have that
\begin{align*}
\bbE\Big[ \prod_{j=1}^{\ell}  \overline M_{n}(\bs_{k_j}) \Big]& = o(1) + \frac{1}{n^{3\ell/2}}  \sum_{\mathbb J \in \mathscr{P}_{\ell} } \sumtwo{\bI \in  \llb \bone, n \bs_{k_1} \rrb \times \cdots \times \llb \bone , n \bs_{k_{\ell}}\rrb,  }{ \bI \notin \Upsilon_{\ell}, \,\Phi(\bI) = \mathbb J }  \prod_{J\in \mathbb J} \frac{1}{\sigma_r^2\gb_n^{2r}} \bbE \Big[ \prod_{ \bi \in \Phi_{\bI}^{-1}(J)} \zeta_{\bi} \Big] \\
& = (1+o(1))
\sum_{ \mathbb J \in \mathscr{P}_{\ell} } \prod_{J=\{j,j'\} \in \mathbb J}  K(\bs_{k_{j}} , \bs_{k_{j'}})\, .
\end{align*}
Here, we used again Lemma~\ref{lem:correl}, and that for any fixed $\mathbb J$, the number of terms in the sum over $\bI$ with $\Phi(\bI) = \mathbb J$ has $(1+o(1)) \prod_{J=\{j,j'\} \in \mathbb J}  K(\bs_{k_{j}} , \bs_{k_{j'}}) n^3 $ terms, in analogy with Figure~\ref{fig:alignment}.
 
Going back to~\eqref{eq:powerell}, we get that if $\ell$ is odd, then the $\ell$-th moment goes to $0$. If $\ell$ is even, we have that it is $(1+o(1)) $ times
 \begin{equation*}
\sum_{\mathbb J \in \mathscr{P}_{\ell} }  \sum_{k_1, \ldots, k_{\ell} =1}^m    \prod_{J=\{j,j'\} \in \mathbb J}  u_{k_{j}} u_{k_{j'}} K(\bs_{k_{j}} , \bs_{k_{j'}})
 = \sum_{\mathbb J \in \mathscr{P}_{\ell} }  \Big( \sum_{k,k' =1}^{m} u_k u_{k'}  K(\bs_{k} , \bs_{k'}) \Big)^{\ell/2}\!.
 \end{equation*}
 All together, we have shown that for any $u_1, \ldots, u_m \in \bbR$ and any even $\ell\in \bbN$
\begin{equation}\label{eq:lim:moments}
\bbE\bigg[ \Big( \sum_{k=1}^m u_k \overline M_{n}(\bs_k) \Big)^{\ell} \bigg] \xrightarrow{n\to+\infty} \frac{\ell!}{2^{\ell/2} (\ell/2)!}  \bigg(\sum_{k,k'=1}^m u_k u_{k'} K(\bs_k,\bs_{k'}) \bigg)^{\ell/2} \,.
\end{equation} 
(The limit is $0$ if $\ell$ is odd.) Note that the term raised to the power $\ell/2$ is the variance of $\sum_{k=1}^{m} u_k \cM(\bs_k)$. 
This shows that for any $\ell\in \bbN$,  the $\ell$-th moment of $\sum_{k=1}^m u_k \overline M_{n}(\bs_k)$ converges as $n\to \infty$ to the $\ell$-th moment of $\sum_{k=1}^{m} u_k \cM(\bs_k)$. Since $(\cM(\bs_k))_{1\leq k \leq m}$ is a Gaussian vector, this implies the convergence in distribution of $(\overline M_n(\bs_k))_{1\leq k \leq m}$ to $(\cM(\bs_k))_{1\leq k \leq m}$.
\end{proof}

\subsection{Convergence of $(\overline{M}_n(\mathbf{s}))_{ \mathbf{s}\in [\mathbf{0},\mathbf{t}]}$}

In this section we prove that the sequence $(\overline{M}_n(\bs) )_{ \bs\in [\bzero,\bt]}$ converges in distribution to $(\cM(\bs))_{ \bs\in [\bzero,\bt]}$. 
Let us first introduce a continuous interpolation of $\ol M_n$. For any $n\in\N$ and $\bs\in \bbR_+^2$, let $\bs^{[n]} := \tfrac1n \lfloor n\pt\bs\rfloor$ (not to be confused with the projection $\bs^{(a)}\in\R$, $a\in\{1,2\}$); notice that $\bs^{[n]}\in\frac1n\bbZ^2$, and $\|\bs^{[n]}-\bs\|_1\leq \frac2n$. Define for any $\bs\in[\bzero,\bt]$,
\begin{equation}
\label{tildeM}
\begin{split}
 \tilde M_n (\bs) =  &(1-\gamma_1)(1-\gamma_2) \overline M_n(\bs^{[n]} ) + \gamma_1 (1-\gamma_2) \overline M_n(\bs^{[n]} + \tfrac1n (1,0) ) \\
  &\qquad \qquad +  \gamma_2 (1-\gamma_1)\overline M_n(\bs^{[n]}+ \tfrac1n (0,1) )  +  \gamma_2 \gamma_1 \overline M_n(\bs^{[n]} + \tfrac1n (1,1) ) \;,
  \end{split}
\end{equation}
where $(\gamma_1, \gamma_2) := n (\bs-\bs^{[n]}) \in [0,1)^2 $, and where we use the convention $\overline M_n(\bs):=\overline M_n(\bs\wedge\bt)$ if $\bs\in\R_+^2\setminus [\bzero,\bt]$. Note that $\tilde M_n$ is a continuous random field, which satisfies $\tilde M_n(\bs^{[n]})=\overline M_n(\bs^{[n]})$ for all $\bs^{[n]}\in[\bzero,\bt]\cap\frac1n\bbZ$.

We prove the following Lemma.
\begin{lemma}\label{lem:controlM}
Under the assumptions of Theorem~\ref{thm:cvgcM}, one has for any $p<+\infty$,
\begin{equation}
\|\tilde M_n-\overline M_n\|_{\infty} \;\overset{L^p}{\underset{n\to\infty}{\longrightarrow}}\; 0\;.
\end{equation}
\end{lemma}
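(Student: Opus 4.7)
The plan is to reduce $\|\tilde M_n - \overline M_n\|_\infty$ to a maximum of local oscillations of $\overline M_n$ on the grid cells $[\tfrac{\bk}{n},\tfrac{\bk+\bone}{n})$, and then to control the moments of these oscillations by exploiting the column/row correlation estimate of Lemma~\ref{lem:multicorrel}. First, since $\sum_{\bi\in\llb\bone,n\bs\rrb}\zeta_\bi$ depends only on $\lfloor n\bs\rfloor=\bk$, $\overline M_n$ is constant on each cell, equal to $\overline M_n(\bk/n)$; expanding~\eqref{tildeM} around that value and using that its four coefficients sum to $1$,
\[
\tilde M_n(\bs)-\overline M_n(\bs) \;=\; \gamma_1(1-\gamma_2)\Delta^{(1,0)} M(\bk)+\gamma_2(1-\gamma_1)\Delta^{(0,1)} M(\bk)+\gamma_1\gamma_2\Delta^{(1,1)} M(\bk)\,,
\]
where $\Delta^{\boldsymbol{\eta}} M(\bk):=\overline M_n(\tfrac{\bk+\boldsymbol{\eta}}{n})-\overline M_n(\tfrac{\bk}{n})$ for $\boldsymbol{\eta}\in\{(1,0),(0,1),(1,1)\}$. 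Since all coefficients lie in $[0,1]$,
\[
\|\tilde M_n-\overline M_n\|_\infty \;\leq\; \max_{\bk\in\llb\bzero,\lfloor n\bt\rfloor\rrb}\Bigl(|\Delta^{(1,0)} M(\bk)|+|\Delta^{(0,1)} M(\bk)|+|\Delta^{(1,1)} M(\bk)|\Bigr)\,.
\]

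The next, main step is to bound the $(2q)$-th moment of each increment. Focus on $\Delta^{(1,0)} M(\bk)=(\sigma_r n^{3/2}\gb_n^r)^{-1}\sum_{j=1}^{k_2}\zeta_{(k_1+1,j)}$: the direction $(0,1)$ is symmetric, and $\Delta^{(1,1)} M(\bk)$ differs from $\Delta^{(1,0)} M(\bk)+\Delta^{(0,1)} M(\bk)$ by the single term $(\sigma_r n^{3/2}\gb_n^r)^{-1}\zeta_{(k_1+1,k_2+1)}$, whose $L^{2q}$-norm is of order $n^{-3/2}\gb_n^{1-r}=o(n^{-1/2})$ thanks to $\gb_n^{-2r}=o(n)$. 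Expanding the $(2q)$-th power and grouping tuples $(j_1,\dots,j_{2q})\in\{1,\ldots,k_2\}^{2q}$ by their induced partition (with $m'$ distinct values of multiplicities $(q'_p)$ summing to $2q$), the $m'$ distinct aligned indices $(k_1+1,j'_p)$ share the first coordinate but have pairwise distinct second coordinates, so each is alone on its row and Lemma~\ref{lem:multicorrel} applies with $r_p=r$ for all $p$:
\[
\bbE\Big[\prod_{p=1}^{m'}\zeta_{(k_1+1,j'_p)}^{q'_p}\Big]\;\leq\; C_q\,\gb_n^R\,,\qquad R\geq\max(2q,m'r)\quad(m'\geq 2)\,,
\]
while for $m'=1$ a direct Taylor expansion yields $\bbE[\zeta^{2q}]\leq C_q\gb_n^{2q}$. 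Counting at most $k_2^{m'}\leq(nt_2)^{m'}$ tuples per partition, the normalized contribution for each $m'$ is at most $C_q\, n^{m'-3q}\gb_n^{R-2qr}$; a short case analysis on $m'\in\{1,\ldots,2q\}$, combined with $\gb_n^{-2r}=o(n)$, shows that each such term is $o(n^{-q})$, the dominant one (reached at $m'=2q$ where $R=2qr$) being exactly of order $n^{-q}$. Hence
\[
\bbE\big[|\Delta^{\boldsymbol{\eta}} M(\bk)|^{2q}\big]\;\leq\; C_q\,n^{-q}\,,\qquad \text{uniformly in }\bk\text{ and }\boldsymbol{\eta}\,.
\]

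Combining the first two steps via $(a_1+a_2+a_3)^{2q}\leq 3^{2q-1}(a_1^{2q}+a_2^{2q}+a_3^{2q})$ and summing over $\bk$,
\[
\bbE\big[\|\tilde M_n-\overline M_n\|_\infty^{2q}\big]\;\leq\; 3^{2q-1}\!\!\sum_{\bk\in\llb\bzero,\lfloor n\bt\rfloor\rrb}\sum_{\boldsymbol{\eta}}\bbE\big[|\Delta^{\boldsymbol{\eta}} M(\bk)|^{2q}\big]\;\leq\; C'_q\,n^{2-q}\,,
\]
which vanishes as soon as $q\geq 3$. For arbitrary $p\in[1,+\infty)$, fixing $q\in\bbN$ with $2q\geq p$ and applying Jensen's inequality to the concave map $x\mapsto x^{p/(2q)}$ gives $\bbE[\|\tilde M_n-\overline M_n\|_\infty^p]\leq(C'_q n^{2-q})^{p/(2q)}\to 0$. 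The main technical obstacle is the combinatorial bookkeeping in the moment estimate: one must carefully track how the partition-based bound $\gb_n^R$ with $R\geq\max(2q,m'r)$ balances against the $k_2^{m'}$ configurations across all partition types, and verify that the regime where $\gb_n^{R-2qr}$ blows up (namely $R<2qr$) is absorbed by the hypothesis $\gb_n^{-2r}=o(n)$.
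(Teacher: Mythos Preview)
Your proof is correct and follows the same overall architecture as the paper's: reduce $\|\tilde M_n-\overline M_n\|_\infty$ to the one-cell increments $\Delta^{\boldsymbol{\eta}}M(\bk)$, bound a high moment of each increment, and conclude by summing over the $O(n^2)$ grid cells. The difference lies entirely in the moment step. The paper bypasses any combinatorics by the crude H\"older bound $\bbE[\zeta_{\bi_1}\cdots\zeta_{\bi_p}]\leq\bbE[\zeta_{\bone}^p]\leq c_p\gb_n^p$, giving $\bbE[(\sum_j\zeta_{(k_1+1,j)})^p]\leq c_p\,k_2^p\,\gb_n^p$; after normalization and summation this yields $\bbE[\|\tilde M_n-\overline M_n\|_\infty^p]\leq C(n^{1/2}\gb_n^r)^{-(p-4)}$, which forces the choice $p>4r$. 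You instead carry out the full partition analysis with Lemma~\ref{lem:multicorrel} (exploiting that points on a single column are each alone on their row, so $r_p=r$), obtaining the sharper $\bbE[|\Delta^{\boldsymbol{\eta}}M(\bk)|^{2q}]\leq C_q n^{-q}$ valid for \emph{every} $q$, so that $q\geq 3$ suffices regardless of $r$. Your route buys a cleaner, $r$-independent bound at the cost of the extra bookkeeping you flagged; the paper's route is a two-line shortcut at the cost of a large exponent. Both are valid.
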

Recall that $\|\tilde M_n-\ol M_n\|_{\infty}:=\sup_{\bzero\preceq\bs\preceq\bt} |\tilde M_n(\bs)-\ol M_n(\bs)|$, which is a real-valued random variable. Recall also that for $p\in\N$, $\gz_\bone\in L^p(\bbP)$ as soon as $n$ is sufficiently large.

\begin{proof}
We prove the result for $p\in 2\N$ (we take $p>4r$) and $n$ sufficiently large, so that  in particular $\gz_\bone\in L^p(\bbP)$. First, notice that for $n\in\N$ and $\bs\in[\bzero,\bt]$, one has
\begin{equation}
\big|\tilde M_n(\bs)-\overline M_n(\bs)\big| \;\leq\; \max\left\{ \begin{aligned} &\big|\overline M_n(\bs^{[n]})-\overline M_n(\bs^{[n]}+(\tfrac1n,0))\big|\,,\\
&\big|\overline M_n(\bs^{[n]})-\overline M_n(\bs^{[n]}+(0,\tfrac1n))\big|\,,\\
&\big|\overline M_n(\bs^{[n]})-\overline M_n(\bs^{[n]}+(\tfrac1n,\tfrac1n))\big| \end{aligned} \right\} \,.
\end{equation}
Let us rewrite the last term, for $\bi\in\N_0^2$,
\[
\overline M_n(\tfrac1n\bi)-\overline M_n(\tfrac1n\bi+(\tfrac1n,\tfrac1n)) \;=\; \frac1{n^{3/2}\gb_n^r}\bigg(\sum_{j=1}^{i_1+1} \gz_{j,i_2+1} + \sum_{j=1}^{i_2} \gz_{i_1+1,j}\bigg)\,.
\]
Doing similarly with the two other terms, and using the inequality $(a+b)^p \leq 2^p(a^p+b^p)$ (recall $p\in 2\N$),
 we obtain
\begin{equation}\label{lem:controlM:eq1}
\begin{split}
(n^{3/2}\gb_n^r \pt \|\tilde M_n-\ol M_n\|_\infty)^p  & \;\leq\; C_\cntc\sup_{\bzero\preceq\bi\preceq n\bt} \Big(\sum_{j=1}^{i_1}\gz_{(j,i_2+1)}\Big)^p + C_{\arabic{cst}}\sup_{\bzero\preceq\bi\preceq n\bt} \Big(\sum_{j=1}^{i_2}\gz_{(i_1+1,j)}\Big)^p \\
&   \;\leq\;  C_{\arabic{cst}}  \sum_{\bzero\preceq\bi\preceq n\bt} \bigg(  \Big(\sum_{j=1}^{i_1}\gz_{(j,i_2+1)}\Big)^p + 
 \Big(\sum_{j=1}^{i_2}\gz_{(i_1+1,j)}\Big)^p \bigg)
\end{split}
\end{equation}
for some $C_{\arabic{cst}}>0$. 
H\"older's inequality and Lemma~\ref{lem:multicorrel}-\eqref{eq:multicorrel} give $\bbE[\gz_{\bi_1}\cdots\gz_{\bi_p}]\leq\bbE[\gz_\bone^p]\leq c_p (\gb_n)^p$, uniformly in $n\in\N$ and $\bi_1,\ldots,\bi_p\in(\N^2)^p$:
this implies
$\bbE\big[(\sum_{j=1}^{i_1}\gz_{(j,i_2+1)})^p\big]\leq  c_p (i_1)^p \gb_n^p$.
Therefore, \eqref{lem:controlM:eq1} gives
\begin{equation}
\bbE\big[( \|\tilde M_n-\ol M_n\|_\infty)^p\big]\;\leq\;  \frac{C_\cntc t_1 t_2 \|\bt\|_{p} }{(n^{3/2} \gb_n^r)^p}  \pt n^{p+2} \pt \gb_n^p \leq  \frac{C_{\arabic{cst}} t_1t_2 \|\bt\|_p }{(n^{1/2} \gb_n^r)^{p-4}}   \;,
\end{equation}
where we have used that $\gb_n^{p} \leq \gb_n^{4r}$ for the last inequality (recall we took $p>4r$).
This goes to $0$ since $p>4$,
recalling that $n^{1/2} \gb_n^r\to+\infty$.
\end{proof}

We now have all the required estimates to finish the proof of Theorem~\ref{thm:cvgcM}.

\begin{proof}[Proof of Theorem~\ref{thm:cvgcM}] First, let us prove that $(\tilde M_n)_{n\in\N}$ converges to $\cM$ in distribution. Lemma~\ref{lem:controlM} ensures us that for $m\in\N$ and $\bs_1,\ldots,\bs_m\preceq \bt$, the vector $( \tilde M_n(\bs_1)-\overline M_{n}(\bs_1), \ldots, \tilde M_n(\bs_m)-\overline M_n(\bs_m))$ converges to $(0,\ldots,0)\in\R^m$ in probability. Recalling Proposition~\ref{prop:lim:M} and applying Slutsky's theorem, this implies that $(\tilde M_n(\bs_k))_{k=1}^m$ converges to $(\cM(\bs_k))_{k=1}^m$ in distribution, yielding the finite-dimensional convergence. As for the tightness of $(\tilde M_n)_{n\geq1}$, it can be proven with a direct adaptation of Donsker's theorem (see \cite[Theorem~4.1.1 and Proposition~4.3.1, Chapter 6]{Kho02} for the multidimensional variant). In order not to overburden the presentation of this paper, we do not write the details here.

Finally, let $h: L^\infty([\bzero,\bt])\to\R$ be a bounded Lipschitz function, and let us prove that $\lim_{n\to+\infty} \bbE[h(\ol M_n)]=\bbE[h(\cM)]$. We write
\begin{equation}
\big|\bbE[h(\ol M_n)] - \bbE[h(\cM)]\big| \,\leq\, \big|\bbE[h(\tilde M_n)] - \bbE[h(\cM)]\big| + C\pt \bbE\big[\|\tilde M_n-\ol M_n\|_\infty\big]\,,
\end{equation}
for some $C>0$. The convergence in distribution of $(\tilde M_n)_{n\in\N}$ implies that the first term goes to 0 as $n\to\infty$, and Lemma~\ref{lem:controlM} shows the same for the second term. We conclude with Portmanteau's theorem \cite[Theorem~2.1]{Bill68}, which proves the convergence in distribution of $(\ol M_n)_{n\in\N}$ to $\cM$.
\end{proof}

\subsection{Convergence of the field in $L^p(\bbP)$}
Before moving on to the definition of the integral against $\cM$, let us state a last Claim 
which strengthens Theorem~\ref{thm:cvgcM} on a convenient probability space.

\begin{claim}\label{claim:cvgps}
There exist a probability space $(\widehat \gO, \widehat\cF, \widehat \bbP)$ and copies $\widehat M_n$, $n\geq1$ (resp.~$\widehat\cM$) of $\ol M_n$, $n\geq1$ (resp.\ of~$\cM$) on that space, such that for $\bs\in[\bzero,\bt]$ and $p\in[1,\infty)$, $\widehat M_n(\bs)\to\widehat\cM(\bs)$ in $L^p(\widehat\bbP)$ as $n\to\infty$.
\end{claim}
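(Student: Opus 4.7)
The strategy is to combine Skorokhod's representation theorem, applied to the convergence already established in Theorem~\ref{thm:cvgcM} via the continuous interpolants $\tilde M_n$ of~\eqref{tildeM}, with a uniform integrability argument deduced from the moment bounds in Lemma~\ref{lem:moment:Mn}. First I would verify that $\cM$ admits a continuous version. A direct computation from~\eqref{def:KcovcM} gives $\bbE[(\cM(\bs)-\cM(\bt))^{2}] = O(\|\bs-\bt\|_\infty)$ on compact sets, and Gaussianity boosts this to $\bbE[(\cM(\bs)-\cM(\bt))^{2p}] \leq C_p \|\bs-\bt\|_\infty^{p}$ for every $p\geq 1$, so Kolmogorov's continuity criterion in dimension $2$ applies. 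Hence both $\tilde M_n$ and $\cM$ take values in the Polish space $C([\bzero,\bt])$, and the proof of Theorem~\ref{thm:cvgcM} already shows $\tilde M_n \Rightarrow \cM$ in this space.

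I would then invoke Skorokhod's representation theorem to obtain a probability space $(\widehat\gO,\widehat\cF,\widehat\bbP)$ carrying copies $\widehat{\tilde M}_n$, $\widehat\cM$ of $\tilde M_n$, $\cM$ with $\|\widehat{\tilde M}_n-\widehat\cM\|_\infty\to 0$ almost surely. Next I would enlarge $\widehat\gO$ (e.g.\ by an independent copy of $[0,1]^{\bbN}$) and, for each $n$, define $\widehat M_n$ by sampling from the regular conditional distribution of $\ol M_n$ given $\tilde M_n$; this conditional law is well defined because $\ol M_n$ depends on only finitely many real-valued variables. By construction $(\widehat M_n,\widehat{\tilde M}_n) \stackrel{d}{=} (\ol M_n,\tilde M_n)$, so in particular $\widehat M_n \stackrel{d}{=}\ol M_n$, and Lemma~\ref{lem:controlM} transfers to the hatted versions, yielding $\|\widehat M_n-\widehat{\tilde M}_n\|_\infty\to 0$ in $L^p(\widehat\bbP)$ for every $p<\infty$.

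Fix now $\bs\in[\bzero,\bt]$. From the triangle inequality
\[
|\widehat M_n(\bs)-\widehat\cM(\bs)| \leq \|\widehat M_n-\widehat{\tilde M}_n\|_\infty + |\widehat{\tilde M}_n(\bs)-\widehat\cM(\bs)| \, ,
\]
the $L^p$ control of the first term combined with the a.s.\ convergence of the second already gives convergence in probability. To upgrade to $L^p$ convergence it suffices to verify uniform integrability of $|\widehat M_n(\bs)|^p$. This follows from Lemma~\ref{lem:moment:Mn}: all even moments of $\ol M_n(\bs)$ converge to finite (Gaussian) limits, so $\sup_n\bbE[|\ol M_n(\bs)|^q]<\infty$ for every $q<\infty$, and the same bound transfers to $\widehat M_n(\bs)$. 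Picking any $q>p$ yields uniform integrability, and convergence in probability then upgrades to convergence in $L^p(\widehat\bbP)$.

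The main (mild) obstacle is the coupling step used to produce $\widehat M_n$ alongside $\widehat{\tilde M}_n$: one cannot directly apply Skorokhod to $\ol M_n$ since the natural target space $L^\infty([\bzero,\bt])$ is non-separable, so one must go through the continuous interpolant $\tilde M_n$ and recover $\widehat M_n$ by disintegration. Once this coupling is in place the remainder is standard; no further probabilistic input beyond Theorem~\ref{thm:cvgcM}, Lemma~\ref{lem:controlM} and Lemma~\ref{lem:moment:Mn} is required.
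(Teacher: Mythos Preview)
Your proposal is correct and follows essentially the same route as the paper: apply Skorokhod to the continuous interpolants $\tilde M_n$ in the Polish space $C([\bzero,\bt])$, use the moment bounds of Lemma~\ref{lem:moment:Mn} for uniform integrability, and Lemma~\ref{lem:controlM} to bridge between $\tilde M_n$ and $\ol M_n$. The paper streamlines your coupling step by observing that $\ol M_n$ is actually a \emph{deterministic} function of $\tilde M_n$ (namely $\ol M_n(\bs)=\tilde M_n(\bs^{[n]})$, since the interpolant agrees with $\ol M_n$ at grid points and $\ol M_n$ is piecewise constant), so one can simply set $\widehat M_n(\bs):=\widehat{\tilde M}_n(\bs^{[n]})$ without enlarging the probability space or invoking regular conditional distributions.
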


\begin{proof}
This is a consequence of Skorokhod's representation theorem. Since the set of continuous functions on $[\bzero,\bt]$ with the $\|\cdot\|_{\infty}$-topology is separable, there exist a probability space $(\widehat \gO, \widehat\cF, \widehat \bbP)$ and copies $\widehat M_n'$, $n\geq1$ (resp. $\widehat\cM$) of $\tilde M_n$, $n\geq1$ (resp. $\cM$) on that space, such that $\widehat M_n'\to \widehat\cM$ $\widehat \bbP$-almost surely. Using the moment estimates from Lemma~\ref{lem:moment:Mn} and dominated convergence theorem, it follows that for $\bs\in[\bzero,\bt]$, $\widehat M_n'(\bs)\to \widehat\cM(\bs)$ in $L^p(\widehat \bbP)$. Finally, let $\widehat M_n$ be a piecewise constant modification of $\widehat M_n'$, i.e. $\widehat M_n(\bs):=\widehat M_n'(\bs^{[n]})$ for $\bs\in[\bzero,\bt]$, $n\in\N$; then $\widehat M_n$ has the same law as $\ol M_n$, $n\in\N$, and we conclude the proof with Lemma~\ref{lem:controlM}.
\end{proof}

\section{Covariance measure of \texorpdfstring{$\cM$}{M} and stochastic integral}\label{sec:integration}
In this section we prove the well-posedness of $k$-iterated integrals against the field $\cM$, of any order $k\geq1$. In particular we prove that the series $\bZ:= \sum_{k=1}^{\infty} \int \psi_{\bt} \,\dd \cM$ defines a well-posed $L^2(\bbP)$-random variable.

\subsection{Integration with a covariance measure}
We first present a general theorem on the integration of a (deterministic) function against a $L^2(\bbP)$-random field $X$ on $\mathbb R^d$ which has a well-defined covariance measure. This theory is already well known in the literature (see e.g.~\cite[Ch.~2]{W86}), but to our knowledge its applications were so far mostly limited to orthogonal fields, that is when $\bbE[X(A)X(B)]=0$ for $A\cap B=\emptyset$. In our setting, it is applied to the non-orthogonal, non-martingale random field $\cM$, and it allows for the construction of the limiting random variable in Theorem~\ref{conj:scalinggPS}. More details on the general theory are presented in Appendix~\ref{app:intstoch}.

Let us introduce some definitions. With analogous notation to what is done in $\bbR^2$, for $\bu,\bv\in \bbR^d$ we denote $\bu\preceq \bv$ if all coordinates of $\bu$ are smaller or equal than those of $\bv$; we also denote $u^{(a)}$ the $a$-th coordinate of $\bu$. We let
\begin{equation}\label{eq:stocint:cS}
\cS_d\;:=\; \big\{[\bu,\bv)\,;\, \bu,\bv\in \R^d\,,\, \bu\preceq\bv\big\}\cup\{\emptyset\}
\end{equation}
be the semi-ring of sub-rectangles of $\bbR^d$, closed at the bottom-left and open at the top-right.

For any application $X: \bbR^d\to L^2(\bbP)$, 
we can define a random field on $\cS_d$ (which we also denote $X$),
by setting, for any  rectangle $A= [\bu_0,\bu_1) \in\cS_d$, $\bu_0\preceq\bu_1$, 
\begin{equation}\label{eq:stocint:defvariation}
X\big([\bu_0,\bu_1)\big) \;\coloneqq\;  \sum_{\gep \in \{0,1\}^d} (-1)^{d-\sum_{i=1}^d \gep_i} X\big( \bu_{\gep} \big)   \,,
\end{equation}
where for $\gep = (\gep_1, \ldots, \gep_d)\in \{0,1\}^d$ we have set $\bu_{\gep} = (u_{\gep_1}^{(1)}, \ldots, u_{\gep_{d}}^{(d)})$,
and $X(\emptyset):=0$. 
For $A\in \cS_d$, $X(A)$ is called the \emph{increment} of $X$ on $A$ (notice that it is coherent with the dimension $d=1$). 
Then, the ``integral'' of the function $\ind_A$ with respect to $X$ can be defined as $X(A)$;
and our goal is to extend this definition to more general measurable functions.

\begin{definition}
Let $X:\cS_d\to L^2(\bbP)$ be a random field.
For $A,B\in\cS_d$, define
\begin{equation}\label{eq:thmstocint:covar}
\nu(A\times B) = \bbE\big[X(A)X(B)\big] \,.
\end{equation}
If $\nu$ can be extended to a $\sigma$-finite
measure on $\Bor(\bbR^d\times \bbR^d)$,
 we call $\nu$ the \emph{covariance measure} of $X$ and we write $\nu_X:=\nu$.
\end{definition}

Assume $X$ is a random field on $\cS_d$ which admits some covariance measure $\nu=\nu_X$. 
Define
\begin{equation}\label{eq:defL2X}
L^2_\nu \;\coloneqq\; \left\{ g: \R^d\to\R \text{ measurable}\;;\; \int_{\R^d \times \R^d} |g(\bu)g(\bv)|\pt\dd \nu(\bu,\bv)<+\infty\right\},
\end{equation}
and for $g,h \in L^2_\nu$, 
\begin{equation}\label{eq:deflaranu}
\la g,h\ra_\nu \;\coloneqq\; \int_{\bbR^d\times \bbR^d}g(\bu)h(\bv)\pt\dd \nu(\bu,\bv) \qquad \text{and}\qquad \|g\|_\nu \;\coloneqq\;\sqrt{\la g,g\ra_\nu} \;.
\end{equation}
Then, $L^2_\nu$ is a vector space and $\la\cdot,\cdot\ra_{\nu}$ is (almost) a scalar product (see Appendix~\ref{app:intstoch} for more details). 
%
%
Let us now state the main theorem of this subsection, which defines an integral with respect to $X$ using its covariance measure~$\nu$ (the proof is discussed in Appendix~\ref{app:intstoch}).

\begin{theorem}\label{thm:stocint}
Let $X:\cS_d\to L^2(\bbP)$ be a random field which admits a $\sigma$-finite, non-negative covariance measure $\nu$ on $\Bor(\bbR^d\times \bbR^d)$. For $A\in\cS_d$, we define
\[\ind_A \diamond X\;:=\; X(A)\;\in\, L^2(\bbP)\;.\]
Then the application $g\mapsto g\diamond X$ can be extended into an isometry from $L^2_\nu$ to $L^2(\bbP)$: more precisely, for $g\in L^2_\nu$, there exists a random variable $g\diamond X$ defined almost everywhere on $(\gO,\cF,\bbP)$ such that
\begin{enumerate}
\item[(i)] $(\cdot)\diamond X$ is linear: for $g,h\in L^2_\nu$, $\gl\in\R$, $(g+\gl h)\diamond X = g\diamond X + \gl(h\diamond X)$ $\bbP$-a.s.;
\item[(ii)] for $g,h\in L^2_\nu$,
\begin{equation}\label{eq:thmstocint:covarbis}
\bbE\big[(g\diamond X)(h\diamond X)\big]\;=\; \la g,h\ra_\nu\;.
\end{equation}
\end{enumerate}
The random variable $g\,\diamond X$ is called the \emph{integral of $g$ against $X$} and will be denoted
$\int g\, \dd X := g\,\diamond X$.
\end{theorem}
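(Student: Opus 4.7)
I would follow the classical isometric extension scheme: first define $g \diamond X$ by hand on the vector space $\cE$ of step functions $g = \sum_{i=1}^n a_i \ind_{A_i}$ with $A_i \in \cS_d$, verify that this gives a linear isometry $(\cE, \|\cdot\|_\nu) \to L^2(\bbP)$, and then extend to $L^2_\nu$ by density and continuity. The non-negativity of $\nu$ is what guarantees that $\|\cdot\|_\nu$ is a genuine seminorm on $\cE$ (a norm on the quotient by $\nu$-null functions), which is what makes an extension by continuity meaningful.

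\textbf{The integral on step functions.} For $g = \sum_i a_i \ind_{A_i} \in \cE$, set $g \diamond X := \sum_i a_i X(A_i)$. Since $\cS_d$ is a semi-ring, any two representations of a step function admit a common refinement into finitely many disjoint rectangles $(B_j) \subset \cS_d$, and a direct inclusion-exclusion based on \eqref{eq:stocint:defvariation} gives the finite additivity $X(A) = \sum_{j : B_j \subset A} X(B_j)$ a.s.\ for any $A \in \cS_d$ partitioned by such a refinement; hence $g \diamond X$ is well-defined and linear on $\cE$. Writing $g = \sum_j a_j \ind_{B_j}$, $h = \sum_j b_j \ind_{B_j}$ on a common refinement then yields
\[
\bbE\big[(g \diamond X)(h \diamond X)\big] \,=\, \sum_{j,k} a_j b_k\, \nu(B_j \times B_k) \,=\, \la g, h \ra_\nu\,,
\]
which is simultaneously property (ii) on $\cE$ and the isometry $\|g \diamond X\|_{L^2(\bbP)} = \|g\|_\nu$.

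\textbf{Density of $\cE$ in $L^2_\nu$.} This is the main technical step. By $\sigma$-finiteness, one exhausts $\bbR^d \times \bbR^d$ by an increasing family of finite-$\nu$-measure rectangles $R_N = U_N \times V_N$, which reduces the problem to approximating bounded $g \in L^2_\nu$ supported in some $U_N$. For such targets, introduce the class $\cD$ of Borel sets $B \subset U_N$ such that $\ind_B$ is a $\|\cdot\|_\nu$-limit of elements of $\cE$. Using the identity $\|\ind_B - \ind_{B'}\|_\nu^2 = \nu((B \triangle B') \times (B \triangle B'))$ and countable additivity of $\nu$ on the product space, one verifies that $\cD$ is stable under complements within $U_N$ and under countable increasing unions, hence is a $\lambda$-system; since $\cD$ contains the $\pi$-system $\cS_d \cap U_N$, which generates $\Bor(U_N)$, Dynkin's theorem yields $\cD = \Bor(U_N)$. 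Approximating bounded measurable functions by simple functions and then general $g \in L^2_\nu$ by truncation (with dominated convergence in $L^2_\nu$) concludes the density argument.

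\textbf{Extension and main obstacle.} The map $(\cdot)\diamond X: \cE \to L^2(\bbP)$ being a linear isometry on the dense subspace $\cE \subset L^2_\nu$, it extends uniquely to a linear isometry $L^2_\nu \to L^2(\bbP)$, and linearity (i) passes to the limit trivially. Property (ii) is obtained by applying Cauchy--Schwarz in both $L^2_\nu$ and $L^2(\bbP)$ along approximating sequences. The genuine obstacle is the density step: unlike in a standard $L^2(\mu)$-space on $\bbR^d$, convergence in $L^2_\nu$ is governed by the joint measure $\nu$ on $\bbR^d \times \bbR^d$, so a one-variable approximation $g_n \to g$ is not sufficient, and one must directly control $\int |(g-g_n)(u)(g-g_n)(v)|\,\dd\nu(u,v)$; this is precisely what forces the monotone class argument on $\Bor(U_N)$ rather than a simpler mollification or pointwise-truncation scheme.
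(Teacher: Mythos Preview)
Your approach is essentially the paper's: define the integral on step functions, verify the isometry there, approximate indicators of Borel sets by rectangles via a Dynkin/monotone-class argument, then extend by density after quotienting out the $\|\cdot\|_\nu$-null functions. The paper organizes the same ingredients slightly differently---it first extends $X$ itself to bounded Borel sets (using an approximation lemma whose proof is exactly your $\lambda$-system step, controlling symmetric differences via the finite measure $A\mapsto\nu(A\times\Lambda_m)$), then works with simple Borel functions and passes to the quotient $L^2_\nu/\mathrm{Ker}(\nu)$ explicitly---but the substance is identical.

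One small correction: the claimed identity $\|\ind_B-\ind_{B'}\|_\nu^2=\nu\big((B\triangle B')\times(B\triangle B')\big)$ is false in general, because $\ind_B-\ind_{B'}$ takes the value $+1$ on $B\setminus B'$ and $-1$ on $B'\setminus B$, so the cross-term $-2\,\nu\big((B\setminus B')\times(B'\setminus B)\big)$ appears and need not vanish (it does for a white noise, not for $\cM$). You only have the inequality $\|\ind_B-\ind_{B'}\|_\nu^2\le\nu\big((B\triangle B')\times(B\triangle B')\big)$, from $|(\ind_B-\ind_{B'})(u)(\ind_B-\ind_{B'})(v)|\le\ind_{B\triangle B'}(u)\ind_{B\triangle B'}(v)$; this inequality is exactly what the paper uses, and it is all your density argument needs.
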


\subsection{Application to the field $\cM$}\label{sec:covarM}
Recall that we identify an $L^2(\bbP)$-random function $X: \bbR^2\to L^2(\bbP)$
with the  field $(X(A))_{A\in\cS_2}$,
by setting $X(\emptyset):=0$ and for any rectangle $A = [\bu,\bv)$, $\bu\preceq \bv$,
\begin{equation}\label{eq:rectangleadditivity}
X(A) \;:=\; X(v_1,v_2) - X(u_1,v_2) - X(v_1,u_2) + X(u_1,u_2)\,,
\end{equation}
which is obtained by rewriting~\eqref{eq:stocint:defvariation} in dimension $d=2$. 
Let $\cM$ be a Gaussian field on $(\bbR_+)^2$ with covariance $K$ defined in \eqref{def:KcovcM}.
The goal of this section is twofold: first to determine a covariance measure for the random field $\cM$, \emph{i.e.} to define a measure~$\nu_\cM$ on $\Bor(\bbR_+^2 \times \bbR_+^2)$ such that for $A,B\in \cS_2$,
$\nu_\cM(A,B)\;:=\; \nu_\cM(A\times B)\;=\; \bbE\big[\cM(A)\cM(B)\big]$; and second to prove that the limiting renewal mass function $\phi$ is integrable against $\cM$.

\subsubsection*{Computation of the covariance measure of $\cM$} We have the following.
\begin{proposition}
\label{prop:covariancemeasure}
Let $\cM$ be a Gaussian field on $(\R_+)^2$ with zero-mean and covariance function $K(\bu,\bv)$ given in \eqref{def:KcovcM}.
There is a unique $\sigma$-finite measure $\nu_\cM$ on $\Bor(\bbR_+^2 \times \bbR_+^2)$ such that for any $A,B\in\cS_2\,$, $\nu_\cM(A\times B)=\bbE[\cM(A)\cM(B)]$. Moreover, for any non-negative measurable functions $g,h:\bbR_+^2\to\R$, one has
\begin{equation}\label{eq:stocint:exprint2}
\int_{\bbR_+^2\times \bbR_+^2}g(\bu)h(\bv) \pt\dd\nu_\cM(\bu,\bv) \;=\; \int_{\bbR_+^2}  g(\bu)  \Big( \int_{\bbR_+} h(x,u_2) \dd x + \int_{\bbR_+} h(u_1,y) \dd y \Big) \dd \bu \,.
\end{equation}
\end{proposition}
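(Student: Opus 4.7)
The plan is to decompose the covariance kernel $K$ into two simpler positive semi-definite kernels with product structure, realize $\cM$ as the sum of two independent Gaussian fields of "Brownian type", and compute the rectangular increments explicitly. Using the elementary identity $(u_i\wedge v_i)(u_i\vee v_i)=u_iv_i$, rewrite
\[
K(\bu,\bv) \;=\; u_1v_1(u_2\wedge v_2)\;+\;u_2v_2(u_1\wedge v_1) \;=:\; K_1(\bu,\bv)+K_2(\bu,\bv).
\]
Each $K_i$ is the covariance of an elementary Gaussian field: enlarging the probability space if needed, one may represent $\cM \stackrel{(d)}{=} \cM_1+\cM_2$ with $\cM_1(\bu)=u_1W(u_2)$, $\cM_2(\bu)=u_2\widetilde W(u_1)$, where $W,\widetilde W$ are independent standard Brownian motions.

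The multiplicative form of $\cM_1,\cM_2$ makes rectangular increments telescope: for any $A=[\bu_0,\bu_1)\in\cS_2$, one has
\[
\cM_1(A) \;=\; (u_1^{(1)}-u_0^{(1)})\bigl(W(u_1^{(2)})-W(u_0^{(2)})\bigr),
\]
and an analogous formula holds for $\cM_2$. Writing $\pi_a(A)$ for the projection of $A$ on the $a$-th coordinate and $|\pi_a(A)|$ for its length, the independence of $\cM_1,\cM_2$ and the elementary covariance of Brownian increments give
\[
\bbE[\cM(A)\cM(B)] \;=\; |\pi_1(A)||\pi_1(B)|\,|\pi_2(A)\cap\pi_2(B)| \;+\; |\pi_2(A)||\pi_2(B)|\,|\pi_1(A)\cap\pi_1(B)|.
\]
A direct application of Fubini's theorem shows that each of these two terms matches precisely the corresponding integral on the right-hand side of \eqref{eq:stocint:exprint2} when taking $g=\ind_A$, $h=\ind_B$.

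To construct $\nu_\cM$ explicitly, let $\nu_1$ be the image of Lebesgue measure on $\R_+^3$ under the map $(s,t,s')\mapsto((s,t),(s',t))$, a $\sigma$-finite Borel measure supported on $\{u_2=v_2\}$, and define $\nu_2$ analogously with coordinates exchanged (supported on $\{u_1=v_1\}$). Set $\nu_\cM:=\nu_1+\nu_2$. The computation above yields $\nu_\cM(A\times B)=\bbE[\cM(A)\cM(B)]$ for all $A,B\in\cS_2$. Uniqueness is then immediate: $\cS_2\times\cS_2$ is a semi-ring generating $\Bor(\R_+^2\times\R_+^2)$, and $\nu_\cM$ is $\sigma$-finite on it since every compact box is a single element of the semi-ring, so Carathéodory's extension/uniqueness theorem applies. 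Finally, \eqref{eq:stocint:exprint2} for arbitrary non-negative measurable $g,h$ follows from the rectangle case by the standard simple-function approximation and the monotone convergence theorem.

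There is no genuine obstacle: the only non-routine step is identifying the decomposition $K=K_1+K_2$ that turns $\cM$ into a sum of two Brownian-like sheets aligned along the two coordinate axes; once this decomposition is in hand, everything reduces to Brownian-increment identities, Fubini's theorem and Carathéodory's extension. The form of $\nu_\cM$ as a sum of two "line measures", concentrated respectively on $\{u_1=v_1\}$ and $\{u_2=v_2\}$, is then exactly what one should expect from the correlation structure of $\cM$ along lines and columns highlighted in Theorem~\ref{thm:cvgcM} and Figure~\ref{fig:M}.
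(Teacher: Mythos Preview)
Your proof is correct, but it takes a genuinely different route from the paper's.

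The paper never decomposes $K$. Instead it expands $\bbE[\cM(A)\cM(B)]$ directly from the increment formula~\eqref{eq:stocint:defvariation} and the covariance $K$, after reducing by additivity to three geometric configurations of the pair $(A,B)$ (equal, aligned with disjoint projections on one axis, or fully disjoint projections); this is the content of Lemma~\ref{lem:stocint:covcM}. From this case analysis it obtains the formula~\eqref{eq:stocint:exprint1} for rectangles and then invokes the general domination criterion of Proposition~\ref{prop:dominationmeasure} to extend to a $\sigma$-finite Borel measure.

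Your argument is more structural: the identity $(u\wedge v)(u\vee v)=uv$ splits $K$ into two product kernels, and the realization $\cM\stackrel{(d)}{=}u_1W(u_2)+u_2\widetilde W(u_1)$ makes rectangular increments factor, so that the whole case analysis collapses to a single Brownian-increment identity. You also construct $\nu_\cM$ explicitly as a pushforward of three-dimensional Lebesgue measure (which is exactly the formula~\eqref{eq:stocint:nuleb} the paper records as a remark), whereas the paper establishes existence abstractly via the domination criterion. What your approach buys is conceptual transparency: the ``line-and-column'' correlation structure of $\cM$ is visible directly in the decomposition, and no appendix machinery is needed. What the paper's approach buys is reusability: Lemma~\ref{lem:stocint:covcM} and Proposition~\ref{prop:dominationmeasure} are stated so as to apply to other fields (in particular the analogous computation for $\cM^{\otimes k}$ in Section~\ref{sec:higherrank}), and do not rely on the fortuitous algebraic identity that makes your decomposition work.
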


\begin{remark} 
\label{rem:whitenoise}
$(i)$ For the sake of comparison, let $W$ be a Gaussian white noise on $\R^d$. 
One has $\nu_W(A\times B):=\bbE[W(A)W(B)]\;=\; \gl_d(A\cap B)$ for $A,B \in \cS_{d}$, where $\gl_d$ denotes the $d$-dimensional Lebesgue measure. This can be extended to all $E\in\Bor(\bbR^d \times \bbR^d)$
by $\nu_W(E) \;=\; \gl_d\big(\pi_d(E)\big)$, where $\pi_d(E):=\{\bu\in \bbR^d; (\bu,\bu)\in E\}$. 

$(ii)$ More generally, a Gaussian field $(X_\bs)_{\bs\in \mathbb R^d}$ with covariance function $K(\bs,\bt)$ admits a covariance measure~$\nu_X$ which is formally given by $\dd \nu_X(\bs, \bt) = \frac{\partial^2}{\partial \bs \, \partial \bt} K(\bs,\bt)$ (\emph{i.e.} in the sense of distributional derivatives).
\end{remark}
%

\begin{proof}[Proof of Proposition~\ref{prop:covariancemeasure}]
Let us first compute $\bbE[\cM(A)\cM(B)]$ for any $A,B\in\cS_2$. 
Notice that, by~\eqref{eq:rectangleadditivity}, $\cM$ is an additive field on $\cS_2$: for any real numbers $x\leq y\leq z$ and $u\leq v$, one has
\begin{equation}\label{eq:stocint:rectdecomp}
\cM([u,v)\times[x,z)) \;=\; \cM([u,v)\times[x,y)) + \cM([u,v)\times[y,z))\,.
\end{equation}
Moreover for any rectangles $A,B\in\cS_2$, we can decompose them into finite unions of rectangles $A=\cup_{i=1}^p A_i$ and $B=\cup_{j=1}^q B_j$ such that for $1\leq i\leq p$, $1\leq j\leq q$ (we can take $p,q\leq 9$), one of the following holds:
\begin{enumerate}[label=(\alph*)]
\item $A_i=B_j$.
\item There exist $u_0\leq u_1$ and $s_0\leq s_1\leq t_0\leq t_1$ such that either $A_i=[u_0,u_1)\times [s_0,s_1)$ and $B_j=[u_0,u_1)\times [t_0,t_1)$ or $A_i=[s_0,s_1)\times[u_0,u_1)$ and $B_j=[t_0,t_1)\times [u_0,u_1)$, or the other way around.
\item For $a\in\{1,2\}$, the projections of $A_i$, $B_j$ on the $a$-th coordinate are disjoint.
\end{enumerate}
This implies that we only have to compute the covariances of increments $\cM(A)$, $\cM(B)$ for couples of rectangles $(A,B)$ satisfying one of the above: this will give us covariances of all rectangles thanks to \eqref{eq:stocint:rectdecomp} and the bilinearity of $(X,Y)\mapsto\bbE[XY]$. We do so in the following Lemma.

\begin{lemma}\label{lem:stocint:covcM}
Let $\cM$ be a Gaussian field on $\bbR_+^2$ with covariance $K$ defined in \eqref{def:KcovcM}. 
Let $u_0\leq u_1$ and $s_0\leq s_1\leq t_0\leq t_1$. 
\begin{enumerate}[label=(\alph*)]
\item If $A=B=[u_0,u_1)\times[s_0,s_1)$, then
\[\bbE\big[\cM(A)^2\big]\;=\;(u_1-u_0)(s_1-s_0)(u_1-u_0+s_1-s_0)\,.\]
\item If $A=[u_0,u_1)\times[s_0,s_1)$ and $B=[u_0,u_1)\times[t_0,t_1)$, then
\[\bbE\big[\cM(A)\cM(B)\big]\;=\;(u_1-u_0)(s_1-s_0)(t_1-t_0)\,.\]
\item If the projections of $A,B$ on the $a$-th coordinate are disjoint for $a\in\{1,2\}$, then $\bbE[\cM(A)\cM(B)]=0$.
\end{enumerate}
\end{lemma}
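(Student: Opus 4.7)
The plan is to reduce all three cases to direct computations using the inclusion-exclusion representation $\cM([\bu_0,\bu_1)) = \sum_{\gep\in\{0,1\}^2}(-1)^{2-\gep_1-\gep_2}\cM(\bu_\gep)$, combined with the explicit form of $K$. The key algebraic observation that organizes the computation is the decomposition
\[
K(\bs,\bt) \;=\; s_1 t_1 (s_2\wedge t_2) \;+\; (s_1\wedge t_1) s_2 t_2\,,
\]
which follows from the identity $(x\wedge y)(x\vee y) = xy$ applied to the factor $s_1\vee t_1 + s_2\vee t_2$. With this decomposition, the bilinear expansion $\bbE[\cM(A)\cM(B)] = \sum_{\gep,\gep'} \pm K(\bu_\gep,\bv_{\gep'})$ splits into two pieces, each of which has variables coupled in only one coordinate and hence factorizes over the $4+4$ corners of $A$ and $B$ into products of signed one-dimensional sums.

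The elementary sums that arise are $\sum_{a\in\{u_0,u_1\}}\gep(a) = 0$, $\sum_{a}\gep(a)\pt a = u_1 - u_0$, and $\sum_{a,c\in\{u_0,u_1\}}\gep(a)\gep(c)(a\wedge c) = u_1 - u_0$, together with their analogues for the other coordinate. Case $(c)$ is the most transparent: when the projections are disjoint on both coordinates the relative ordering of corners is forced, so $K((a,b),(c,d)) = acb + abd$ by the decomposition above, and both pieces vanish because each contains a factor of the form $\sum \gep(\cdot) = 0$. Case $(b)$ proceeds in the same spirit: the forced ordering on the second coordinate reduces $K$ to $acb + (a\wedge c)bd$, the first piece vanishes for the same reason, and the second piece factors as $\bigl(\sum_{a,c}\gep(a)\gep(c)(a\wedge c)\bigr)\bigl(\sum_b\gep(b)b\bigr)\bigl(\sum_d\gep(d)d\bigr)$, yielding the announced product of interval lengths.

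Case $(a)$, where $A=B$ and neither coordinate has forced ordering among the corners, is the one for which the decomposition of $K$ really pays off: each of the two pieces now contributes, giving respectively $(u_1-u_0)^2(s_1-s_0)$ and $(u_1-u_0)(s_1-s_0)^2$, whose sum is the announced $(u_1-u_0)(s_1-s_0)(u_1-u_0+s_1-s_0)$. The only real obstacle is the bookkeeping of the $16$ signed terms in each case; once the decomposition of $K$ above is written down, everything reduces to a handful of elementary one-dimensional signed sums. As a sanity check, the values obtained match $\nu_\cM(A\times B)$ for the measure $\nu_\cM$ that appears formally as $\partial_{\bs}\partial_{\bt} K$ via the heuristic of Remark~\ref{rem:whitenoise}$(ii)$, which is precisely what this lemma will allow one to make rigorous in Proposition~\ref{prop:covariancemeasure}.
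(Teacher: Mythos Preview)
Your proof is correct and follows essentially the same approach as the paper: expand $\cM(A)$ and $\cM(B)$ as signed sums over the four corners, plug in $K$, and observe that the resulting $16$-term sum factorizes into products of one-dimensional signed sums. The paper only writes out case~(b) in detail, expanding the factor $u_i\vee u_k + t_l$ and noting that the $(u_i\vee u_k)$ piece vanishes because it carries a free sum $\sum_l(-1)^l=0$; cases~(a) and~(c) are left to the reader as ``very similar''.

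Your preliminary rewriting $K(\bs,\bt)=s_1t_1(s_2\wedge t_2)+(s_1\wedge t_1)s_2t_2$, obtained from $(x\wedge y)(x\vee y)=xy$, is a small but genuine improvement over the paper's organization: it eliminates the $\vee$ from the outset, so that each of the two pieces carries a $\wedge$ in only one coordinate and the factorization over corners is immediate and uniform across all three cases. The paper reaches the same endpoint but more ad hoc, by expanding the $\vee$-factor case by case. Your version treats (a), (b), (c) on exactly the same footing and makes clear why (a) picks up two nonzero contributions while (b) and (c) pick up one and zero respectively.
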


We postpone the proof of this lemma for now. Note that, for any $A,B\in\cS_2$, this yields that,
\begin{equation}\label{eq:stocint:exprint1}
\nu_\cM(A,B)\coloneqq \bbE\big[\cM(A)\cM(B)\big] = \int_{[\bzero,\bt)}  \ind_{A}(\bs)  \Big( \int_{0}^{t_1} \ind_{B}(x,s_2) \dd x + \int_0^{t_2} \ind_{B}(s_1,y) \dd y \Big) \dd \bs \,.
\end{equation}
where Lemma~\ref{lem:stocint:covcM} states this identity for $A,B$ satisfying (a), (b) or (c), and generic couples of rectangles are handled by bilinearity of the r.h.s. and the additivity of $\cM$, recall~\eqref{eq:stocint:rectdecomp}. Then, Proposition~\ref{prop:covariancemeasure} is a direct consequence of Proposition~\ref{prop:dominationmeasure} which provides a criterion to extend a function on $(\cS^2)^2$ into a measure on $\Bor(\R^2\times\R^2)$, and~\eqref{eq:stocint:exprint1} which allows us to identify its expression. Eventually, $\cM$ admits a well-defined covariance measure $\nu_\cM$ on $\Bor(\R^2_+\times\R^2_+)$ which verifies~\eqref{eq:stocint:exprint2}.
\end{proof}

Let us mention that~\eqref{eq:stocint:exprint1} can also be written as
\begin{equation}\label{eq:stocint:nuleb}
\begin{aligned}
\nu_\cM(A,B)\;&=\; \gl_3\big((x,y,z)\in\R^3;\ (x,y)\in A \text{ and }(x,z)\in B\big) \\
&\qquad \qquad + \gl_3\big((x,y,z)\in\R^3;\ (x,y)\in A \text{ and }(z,y)\in B\big)\;,
\end{aligned}
\end{equation}
where $\gl_3$ denotes the 3-dimensional Lebesgue measure.

\begin{proof}[Proof of Lemma~\ref{lem:stocint:covcM}]
We only detail the proof in the second case $A=[u_0,u_1)\times[s_0,s_1)$ and $B=[u_0,u_1)\times[t_0,t_1)$, since the other two are very similar. Let us first rewrite \eqref{eq:stocint:defvariation} into
\[\cM(A) \;=\; \sum_{i,j\in\{0,1\}} (-1)^{i+j} \cM(u_i,s_j)\;,\]
since $d=2$ is even here. Thus, 
\[\begin{aligned}\bbE\big[\cM(A)\cM(B)\big] \;&=\; \sum_{i,j,k,l\in\{0,1\}} (-1)^{i+j+k+l}\,\bbE\big[\cM(u_i,s_j)\cM(u_k,t_l)\big]\\
&=\; \sum_{i,j,k,l\in\{0,1\}} (-1)^{i+j+k+l} (u_i\wedge u_k)(s_j\wedge t_l)(u_i\vee u_k + s_j\vee t_l)\\
&=\; \sum_{i,j,k,l\in\{0,1\}} (-1)^{i+j+k+l} (u_i\wedge u_k)(s_j)(u_i\vee u_k + t_l)\,, \end{aligned}\]
where we used $s_0\leq s_1\leq t_0\leq t_1$. Let us develop the last factor to rewrite $\bbE[\cM(A)\cM(B)]$ as a sum of two terms: in the first one, we can factorize $\sum_{l=0}^1(-1)^l=0$, 
so it remains
\[\bbE\big[\cM(A)\cM(B)\big] \;=\; \bigg(\sum_{j=0}^1 (-1)^j \pt s_j\bigg)\bigg(\sum_{l=0}^1 (-1)^l \pt t_l\bigg)
\bigg(\sum_{i,k\in\{0,1\}} (-1)^{i+k} (u_i\wedge u_k)\bigg)\,,\]
and a straightforward computation gives the result.
\end{proof}

\subsubsection*{Integrability of $\phi$ and $\psi$ against $\cM$}

Recall that Theorem~\ref{thm:stocint} defines the \emph{integrals} $g\diamond \cM$ of functions $g\in L^2_{\nu_\cM}$,
where the measure $\nu_\cM$ is given explicitly in \eqref{eq:stocint:exprint2}. Let us now prove that the term $k=1$ in the expansion~\eqref{eq:conjchaosexpansion} is well-defined (at least for $\hat h=0$). To lighten notation, from now on we will write $\|\cdot\|:=\|\cdot\|_1$ for the $L^1$ norm on $\R^2$.
\begin{proposition}
\label{prop:integregp}
Fix $\bt\succ 0$ and let $g: \R_+^2 \to \R_+$ be the function defined by $g(\bs):= \|\bs\|^{\ga-2} \ind_{(\bzero,\bt)}(\bs)$ for $\bs\in \bbR_+^2\setminus\{\bzero\}$ and $\alpha\in (0,1)$. 
Then 
 $g 
 \in L^2_{\nu_\cM}$ if and only if $\alpha\in(\frac{1}{2},1)$.
As a consequence, $\int_{[0,\bt)} \gp(\bs) \gp(\bt-\bs) \dd \cM(\bs)$ is well-defined
if and only if $\alpha\in(\frac{1}{2},1)$.
\end{proposition}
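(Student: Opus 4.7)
The plan is to compute $\|g\|^2_{\nu_\cM}$ explicitly using the expression of the covariance measure given in Proposition~\ref{prop:covariancemeasure}, reducing the question to a one-dimensional integrability criterion. I will then transfer the conclusion to $\phi(\cdot)\phi(\bt-\cdot)$ using the asymptotic form of $\phi$ from Proposition~\ref{thm:renouv}.

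\textbf{Step 1: Reduction to an explicit integral.} Applying formula~\eqref{eq:stocint:exprint2} with $h=g$, and noting that $g(\bu)=(u_1+u_2)^{\ga-2}\ind_{(\bzero,\bt)}(\bu)$, I get
\begin{equation*}
\|g\|_{\nu_\cM}^2 \;=\; \int_0^{t_1}\!\!\int_0^{t_2}(u_1+u_2)^{\ga-2}\bigg[\int_0^{t_1}(x+u_2)^{\ga-2}\dd x + \int_0^{t_2}(u_1+y)^{\ga-2}\dd y\bigg]\dd u_1\dd u_2 \,.
\end{equation*}
The two terms are symmetric under $(t_1,u_1)\leftrightarrow(t_2,u_2)$, so it suffices to analyze the first one. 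Since $\ga\in(0,1)$, the inner integral admits the explicit primitive $\int_0^{t_1}(x+u_2)^{\ga-2}\dd x = (1-\ga)^{-1}\bigl[u_2^{\ga-1}-(t_1+u_2)^{\ga-1}\bigr]$, and the same for $\int_0^{t_1}(u_1+u_2)^{\ga-2}\dd u_1$. Applying Fubini's theorem to integrate $u_1$ first, the first term becomes $(1-\ga)^{-2}\int_0^{t_2}\Phi(u_2)^2 \dd u_2$, with $\Phi(u_2):=u_2^{\ga-1}-(t_1+u_2)^{\ga-1}$.

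\textbf{Step 2: One-dimensional integrability.} Since $\Phi(u_2)\sim u_2^{\ga-1}$ as $u_2\downarrow 0$ (with both matching upper and lower bounds, say on $(0,t_1/2)$), and $\Phi$ is bounded on $[t_1/2, t_2]$, the integral $\int_0^{t_2}\Phi(u_2)^2\dd u_2$ is finite if and only if $\int_0^{1} u_2^{2\ga-2}\dd u_2<+\infty$, that is $2\ga-2>-1$ or equivalently $\ga\in(\tfrac12,1)$. Combined with the symmetric analysis of the second term, this gives $\|g\|^2_{\nu_\cM}<+\infty \iff \ga\in(\tfrac12,1)$, and Theorem~\ref{thm:stocint} then identifies $L^2_{\nu_\cM}$ as the set of functions admitting a well-defined stochastic integral.

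\textbf{Step 3: Consequence for $\phi$.} By Proposition~\ref{thm:renouv}, $\phi(\bs)=\|\bs\|^{\ga-2}a(\theta_\bs)$ with $a$ continuous on $[0,\pi/2]$ and thus bounded by some constant $\|a\|_\infty$. Writing $f(\bs):=\phi(\bs)\phi(\bt-\bs)\ind_{(\bzero,\bt)}(\bs)$, the upper bound $|f(\bs)|\leq \|a\|_\infty^2\|\bs\|^{\ga-2}\|\bt-\bs\|^{\ga-2}$ splits the analysis into two regions: on $\{\|\bs\|\leq\|\bt\|/2\}$, the factor $\|\bt-\bs\|^{\ga-2}$ is bounded, so $f$ is dominated by a multiple of $g$; on $\{\|\bt-\bs\|\leq\|\bt\|/2\}$, a symmetric argument applies. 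Hence $\|f\|^2_{\nu_\cM}<+\infty$ when $\ga\in(\tfrac12,1)$. For the converse, a matching lower bound uses that $a(\theta)\geq c>0$ on some compact $K\subset(0,\pi/2)$: inside the cone $\{\bs\in(\bzero,\bt/2)\colon \theta_\bs\in K\}$, one has $\phi(\bs)\geq c\|\bs\|^{\ga-2}$ and $\phi(\bt-\bs)$ bounded below by a positive constant, so $\|f\|^2_{\nu_\cM}$ is bounded below by the same divergent integral as in Step~2 when $\ga\leq\tfrac12$.

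\textbf{Expected main obstacle.} The computation itself is elementary once the expression~\eqref{eq:stocint:exprint2} is used. The only delicate point is the converse direction for the $\phi$-statement: because $a$ vanishes at $\theta=0,\pi/2$, a lower bound $\phi(\bs)\geq c\|\bs\|^{\ga-2}$ holds only away from the axes. I expect this to be handled by restricting the integral to a suitable conical neighborhood of the origin where $a$ is bounded below, and checking that such a cone contributes the same $\int u_2^{2\ga-2}\dd u_2$ divergence.
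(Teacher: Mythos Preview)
Your proposal is correct and follows essentially the same approach as the paper: both compute $\|g\|_{\nu_\cM}^2$ via~\eqref{eq:stocint:exprint2}, evaluate the inner line integrals explicitly, and reduce the question to the integrability of $u^{2\alpha-2}$ near the origin, then transfer to $\phi(\cdot)\phi(\bt-\cdot)$ by bounding $a(\theta)$ above globally and below on a cone. Your extra Fubini step (integrating out $u_1$ to write the first term as $(1-\alpha)^{-2}\int_0^{t_2}\Phi(u_2)^2\,\dd u_2$) is a mild simplification over the paper, which instead bounds the remaining two-dimensional integral directly; otherwise the arguments coincide.
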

 
\begin{proof}
Recalling~\eqref{eq:deflaranu} and~\eqref{eq:stocint:exprint2}, we have that
\begin{equation}\label{nunorm}
\begin{split}
\|g\|^2_{\nu} &\;=\; \int_{(\bzero,\bt)} g (\bs) \Big( \int_{0}^{t_1} g(x,s_2) \dd x + \int_{0}^{t_2} g(s_1,y) \dd y \Big) \dd \bs \\
&\; =\;  \frac{1}{1-\alpha} \int_{(\bzero,\bt)} (s_2^{\alpha-1} - (t_1+s_2)^{\alpha-1} +s_1^{\alpha-1} - (t_2+s_1)^{\alpha-1}) (s_1+s_2)^{\ga-2} \dd s_1 \dd s_2 \,.
\end{split}
\end{equation}
For $\ga\in(\frac12,1)$, we bound this from above by $\frac{1}{1-\alpha} \int_{(\bzero,\bt)} (s_2^{\alpha-1}+s_1^{\alpha-1}) (s_1+s_2)^{\ga-2} \dd s_1 \dd s_2 $. Then we have that
\begin{equation*}
\begin{split}
\int_{(\bzero,\bt)} s_1^{\alpha-1} (s_1+s_2)^{\ga-2} \dd s_1 \dd s_2  &\;=\; \int_0^{t_1} \frac{s_1^{\ga-1}}{\ga-1}(s_1^{\ga-1}-(s_1+t_2)^{\ga-1}) \dd s_1 \\
&\;\leq \; \int_0^{t_1} \frac{s_1^{2\ga-2}}{\ga-1}\dd s_1 \;=\; \frac{t_1^{2\ga-1}}{(1-\ga)(2\ga-1)},
\end{split}
\end{equation*}
so we obtain $\|g\|^2_{\nu}\leq \frac{t_1^{2\ga-1}+t_2^{2\ga-1}}{(1-\ga)^2(2\ga-1)}<+\infty$.

On the other hand, for $\ga\in(0,\frac12]$, we write 
\begin{equation*}
\|g\|^2_{\nu} \;\geq\; \frac{1}{1-\alpha} \int_{(\bzero,\bt)} (s_2^{\alpha-1}-t_1^{\ga-1}+s_1^{\alpha-1}-t_2^{\ga-1}) (s_1+s_2)^{\ga-2} \dd s_1 \dd s_2  \; .
\end{equation*}
then we use that $(t_1^{\alpha-1}+t_2^{\alpha-1})\int_{(\bzero,\bt)} (s_1+s_2)^{\ga-2} \dd s_1 \dd s_2 <+\infty$, and
\begin{equation*}
\int_{(\bzero,\bt)} s_1^{\ga-1}(s_1+s_2)^{\ga-2} \dd s_1 \dd s_2 \;\geq\; \int_0^{t_1}\bigg(\frac{s_1^{2\ga-2} - s_1^{\ga-1}t_2^{\ga-1}}{1-\ga}\bigg)\dd s_1 = +\infty\;.
\end{equation*}
This proves that $\|g\|^2_{\nu}=+\infty$ for $\ga\leq\frac12$.
%

If we define the function $\psi_{\bt} (\bs) = \gp(\bs) \gp(\bt-\bs) \ind_{\{\bzero \prec \bs \prec \bt\}}$,
we can bound $\psi_{\bt} (\bs) \leq C g(\bs) \gp(\bt) \ind_{\{\|\bs\|\leq\frac12\|\bt\|\}} + C \gp(\bt) g(\bt-\bs) \ind_{\{\|\bs\|>\frac12\|\bt\|\}}$.
Hence, $\|\psi_{\bt}\|_{\nu_{\cM}} <+\infty$ if $\ga\in (\frac12,1)$. 
If on the other hand we have $\ga\in (0,\frac12]$, using that $\psi_{\bt}(\bs) \geq C \gp(\bs) \gp(\bt) \ind_{(0,\frac12\bt)}(\bs)$,
we get that $\|\psi_{\bt}\|_{\nu_{\cM}} =+\infty$, since $\gp(\bs) \geq c g(\bs)$ uniformly for $\bs = r e^{i\theta}$ with $\theta \in (\frac16\pi, \frac13 \pi)$ (which is enough to conclude, with the same computation as above).
\end{proof}

\subsection{Integrals of higher rank against $\cM$}
\label{sec:higherrank}

The goal of this section is to define integrals of higher rank in the expansion of $\mathbf{Z}$ in~\eqref{eq:conjchaosexpansion} (at least when $\hat h=0$) and to prove that the series defines a well-posed random variable in $L^2(\bbP)$.

Recall that $\cS_d$ denotes the semi-ring of bounded sub-rectangles of $\R^d$ 
 and that $(\cS_d)^k\simeq\cS_{kd}$ is also a semi-ring: for $X:\cS_d\to L^2(\bbP)$ a random field, we define
the product field $X^{\otimes k}$ on $\cS_{kd} $ by 
$X^{\otimes k}(A) \;:=\; \prod_{i=1}^k X(A_i)$ for 
$A=A_1\times\ldots\times A_k \in \cS_{kd}$.
If $X$ is a random function, \textit{i.e.}\ $X:\bbR^d\to L^2(\bbP)$, then we may define $X^{\otimes k}(\bs_1,\ldots,\bs_k):=\prod_{i=1}^k X(\bs_i)$ a random function on $(\bbR^d)^k\simeq\R^{dk}$, and the above definition of the field $X^{\otimes k}(A)$ matches exactly the $dk$-dimensional increment of the function $X^{\otimes k}$ on $A\in \cS_{kd}$, see~\eqref{eq:stocint:defvariation}. With those notation, if $X^{\otimes k}$ admits some covariance measure $\nu_{X^{\otimes k}}$ on $\Bor(\R^{dk}\times\R^{dk})$, then Theorem~\ref{thm:stocint} may be applied as it is to define the stochastic integral against $X^{\otimes k}$. For $g:\R^{dk}\to\R$, $g\in L^2_{X^{\otimes k}}$, we will write
\[
g \overset{k}{\diamond} X := g \diamond (X^{\otimes k}) = \int g\, \dd (X^{\otimes k})\,.
\]

Henceforth, this section is analogous to the previous one: first, we prove that the field $\cM^{\otimes k}$ admits a well-defined, explicit covariance measure $\nu_{\cM^{\otimes k}}$ on $\Bor(\R_+^{2k}\times\R_+^{2k})$; then, we prove that the function $\psi_\bt$ in~\eqref{def:psit} is integrable with respect to $\nu_{\cM^{\otimes k}}$. Therefore the integral of $\psi_\bt$ against $\cM^{\otimes k}$ is well-posed, and we additionally prove that the series of integrals in~\eqref{eq:conjchaosexpansion}, \emph{i.e.} $\mathbf{Z}$, is well-defined in $L^2(\bbP)$.

\subsubsection*{Covariance measure of $\cM^{\otimes k}$}
We have the following result.
\begin{proposition}
\label{propbis:covariancemeasure:k>1}
Let $\cM$ be a Gaussian field on $\R_+^2$ with zero-mean and covariance matrix $K(\bu,\bv)$ given in \eqref{def:KcovcM} and let $k\geq1$. Then $\cM^{\otimes k}$ admits a unique non-negative  $\sigma$-finite covariance measure $\nu_{\cM^{\otimes k}}$ on $\Bor( (\bbR_+^2)^k)$ such that for any $A, B \in \cS_2^k$
we have $\nu_{\cM^{\otimes k}}(A,B)\;=\;\nu_{X^{\otimes k}}(A\times B)= \bbE\big[\cM^{\otimes k}(A)\pt \cM^{\otimes k}(B)\big]$.
The measure $\nu_{\cM^{\otimes k}}$ is characterized by the following: for any measurable non-negative function $g:(\bbR_+^2)^{k}\to\R$,
we have
\begin{equation}
\label{eq:stocint:exprint:k>1}
\begin{aligned}
&\int_{\bbR_+^{2k}}g(\bu_1,\ldots,\bu_{2k}) \pt\dd\nu_{\cM^{\otimes k}}(\bu_1,\ldots,\bu_{2k}) \\
&\qquad \;=\; \sum_{\mathcal{J} \in \mathscr{P}_{2k}} \int_{\bbR_+^{2k}}  \bigg(
\int_{\cA_{\bu_{i_1}} \times \cdots \times \cA_{\bu_{i_k}} }  
g(\bu_1,\ldots,\bu_{2k}) 
\dd \gl_{\bu_{i_1}}(\bu_{j_1})\ldots \dd \gl_{\bu_{i_k}}(\bu_{j_k})
\bigg) \dd\bu_{i_1}\ldots\dd\bu_{i_k},
\end{aligned}
\end{equation}
where  the sum is over all partitions of $\{1,\ldots,2k\}$ into pairs $\mathcal{J}=\{\{i_1,j_1\},\ldots,\{i_k,j_k\}\}$,
and for $\bu\in \bbR_+^2$,
\begin{itemize}
\item  $\cA_\bu$ denotes the set of points in $\bbR_+^2$ aligned with $\bu$, \textit{i.e.}\ 
$\cA_\bu:=(\bbR_+\times \{u_2\})\cup \big (\{u_1\}\times\bbR_+)$;
\item $\gl_\bu$ denotes the (one-dimensional) Lebesgue measure on $\cA_\bu$, \textit{i.e.}\ for $f:\bbR_+^2\to\R_+$, $\int_{\cA_\bu} f(\bv)\dd \gl_\bu(\bv) = \int_0^{\infty}f(x,u_2)\dd x + \int_0^{\infty}f(u_1,y)\dd y$.
\end{itemize}
\end{proposition}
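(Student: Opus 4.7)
The plan is to combine the Isserlis--Wick formula for moments of centered Gaussian vectors with the covariance measure of $\cM$ identified in Proposition~\ref{prop:covariancemeasure}, and then to invoke the same extension criterion that was used there (Proposition~\ref{prop:dominationmeasure}).

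First I would set up notation: for $A = A_1 \times \cdots \times A_k$ and $B = B_1 \times \cdots \times B_k$ in $\cS_2^k$, relabel $C_1 := A_1, \ldots, C_k := A_k, C_{k+1} := B_1, \ldots, C_{2k} := B_k$, so that
\[
\cM^{\otimes k}(A)\, \cM^{\otimes k}(B) \;=\; \prod_{\ell=1}^{2k} \cM(C_\ell)\,.
\]
Each $\cM(C_\ell)$ is a centered Gaussian variable (it is a signed sum of four values of the centered Gaussian field $\cM$, cf.~\eqref{eq:rectangleadditivity}), so $(\cM(C_\ell))_{\ell=1}^{2k}$ is jointly Gaussian and centered. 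Isserlis' formula then gives
\[
\bbE\big[\cM^{\otimes k}(A)\, \cM^{\otimes k}(B)\big] \;=\; \sum_{\cJ \in \mathscr{P}_{2k}} \prod_{\{i,j\} \in \cJ} \bbE\big[\cM(C_i)\cM(C_j)\big] \;=\; \sum_{\cJ \in \mathscr{P}_{2k}} \prod_{\{i,j\} \in \cJ} \nu_\cM(C_i, C_j) \,,
\]
where the sum runs over the pairings of $\{1,\ldots,2k\}$ and each factor $\nu_\cM(C_i, C_j)$ is given by Proposition~\ref{prop:covariancemeasure}.

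Next, for each fixed pairing $\cJ = \{\{i_1,j_1\},\ldots,\{i_k,j_k\}\}$, the map sending a product rectangle in $\cS_2^{2k}$ to the corresponding product $\prod_{\ell=1}^k \nu_\cM(\cdot,\cdot)$ (placed on the coordinate pairs dictated by $\cJ$) extends uniquely to a $\sigma$-finite, non-negative product Borel measure on $(\R_+^2)^{2k}$, since by Proposition~\ref{prop:covariancemeasure} each factor $\nu_\cM$ is already such a measure on $(\R_+^2)^2$. Summing over the finitely many pairings, one obtains a $\sigma$-finite non-negative Borel measure $\nu_{\cM^{\otimes k}}$ on $(\R_+^2)^{2k}$ whose value on $A \times B$ matches $\bbE[\cM^{\otimes k}(A)\, \cM^{\otimes k}(B)]$. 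Uniqueness follows from the extension criterion of Proposition~\ref{prop:dominationmeasure}, since the product rectangles generate $\Bor((\R_+^2)^{2k})$. The explicit formula~\eqref{eq:stocint:exprint:k>1} is then read off by substituting the expression~\eqref{eq:stocint:exprint2} of $\nu_\cM$ into each pair factor: on the two coordinates $(\bu_i, \bu_j)$ of a pair $\{i,j\}\in\cJ$, $\nu_\cM$ integrates one variable over the aligned set $\cA_{\bu_i}$ against $\dd \gl_{\bu_i}$ and the other against Lebesgue measure. Performing this simultaneously for all $k$ pairs of a fixed pairing, and then summing over pairings, yields~\eqref{eq:stocint:exprint:k>1} for indicators of product rectangles; the general case follows by a standard monotone class / monotone convergence argument.

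The main obstacle lies in the combinatorial bookkeeping: one must track carefully which $2k$ coordinates are paired and verify that the sum of $(2k-1)!!$ product measures is \emph{the} unique Borel extension of the set function on rectangles (as opposed to a merely formal decomposition). Once the pairing structure is made explicit via Isserlis' formula and the $k=1$ case is invoked factor by factor, the rest is routine measure-theoretic extension and Fubini-type manipulation.
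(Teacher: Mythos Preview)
Your proposal is correct and follows precisely the route the paper indicates: apply the Isserlis--Wick formula to reduce $\bbE[\cM^{\otimes k}(A)\,\cM^{\otimes k}(B)]$ to a sum over pairings of products of $\nu_\cM$, use Proposition~\ref{prop:covariancemeasure} on each factor, and then invoke the extension/uniqueness criterion (Proposition~\ref{prop:dominationmeasure}). The paper also mentions an alternative derivation via the moment convergence of $\ol M_n$ from Proposition~\ref{prop:lim:M}, but your Wick-based argument is its primary suggestion.
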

This result is a direct analogue of Proposition~\ref{prop:covariancemeasure} for generic $k\geq1$. The formula \eqref{eq:stocint:exprint:k>1} can be obtained as an application of Wick's formula. Alternatively, one can obtain~\eqref{eq:stocint:exprint:k>1} for simple functions $g=\ind_A$, $A\in\cS_2^{k}$
thanks to Proposition~\ref{prop:lim:M}, \textit{i.e.}\ with the convergence of moments of $\ol M_n$ to those of $\cM$, and then extend it to all functions (indeed, recall that in the asymptotics of the moments of $\ol M_n$, we proved that the contributing configurations contain only pairs of aligned points, see~\eqref{eq:elleven-0}). In order not to overburden the presentation of this paper, we leave the details to the reader.

\begin{remark}
More generally, in the case of a Gaussian field $X$ with covariance measure $\nu_X$ (recall Remark~\ref{rem:whitenoise}), the covariance measure of $X^{\otimes k}$ can be obtained via Wick's formula:
\[
\dd \nu_{X^{\otimes k}} (\bu_1, \ldots, \bu_{2k}) = \sum_{\mathcal{J} \in  \mathscr{P}_{2k} } \prod_{\{a,b\}\in \mathcal J} \dd \nu_X(\bu_a,\bu_b) \,.
\]
\end{remark}

\subsubsection*{Integrability of $\psi_\bt$ against $\cM^{\otimes k}$}
For any $k\in \bbN$, we define $\psi_{\bt,k}:=\psi_\bt$ as in \eqref{def:psit}: 
\begin{equation*}
\psi_{\bt,k}(\bs_1,\ldots, \bs_k) := \gp(\bs_1) \gp(\bs_2 -\bs_1) \ldots \gp(\bs_k -\bs_{k_1}) \gp(\bt-\bs_k) \ind_{\{\bzero \prec \bs_1 \prec \cdots \prec \bs_k \prec \bt\}} \,.
\end{equation*}
To lighten notation, let us write $\nu_{k}:=\nu_{\cM^{\otimes k}}$ henceforth. Similarly to Section~\ref{sec:covarM}, we prove here that the integral of $\psi_{\bt,k}$ against $\nu_{k}$ is well defined when $\ga \in (\tfrac12, 1)$ and we give a bound on its dependence on $k$.

\begin{proposition}\label{prop:ub:psikk}
If $\ga \in (\tfrac12 ,1)$, then   $\psi_{\bt,k}$, $\psi_{\bt,k}^\free$ and $\psi_{\bt,k}^{\cond}$ are in $L^2_{\nu_{k}}$ for all $k\geq1$.
More precisely, there is a constant $C_\alpha>0$ such that for $k\in \bbN$, we have  
\[
\|\psi_{\bt,k}\|_{\nu_{k}}^2 \leq \frac{ (C_{\alpha})^{k+1}  C_{\bt,k,\alpha} }{\Gamma( k (\alpha-\frac12))} \qquad \text{with } C_{\bt,k,\alpha} := \frac{ (\bt^{(1)}\wedge \bt^{(2)})^{2(k+1)(\alpha-2)}}{ (\bt^{(1)} \bt^{(2)} (\bt^{(1)} \vee \bt^{(2)}) )^{k}}\,.
\] 
In particular $\psi_{\bt,k} \overset{k}{\diamond} \cM := \psi_{\bt,k}\diamond \cM^{\otimes k}$ is a well-defined $L^2(\bbP)$-random variable.
\end{proposition}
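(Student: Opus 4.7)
The plan is to apply formula~\eqref{eq:stocint:exprint:k>1} to
\begin{equation*}
g(\bu_1,\ldots,\bu_{2k}) := \psi_{\bt,k}(\bu_1,\ldots,\bu_k)\,\psi_{\bt,k}(\bu_{k+1},\ldots,\bu_{2k})\,,
\end{equation*}
which expresses $\|\psi_{\bt,k}\|_{\nu_k}^2$ as a finite sum over pairings $\mathcal{J}\in\mathscr{P}_{2k}$ of iterated integrals $I_{\mathcal{J}}$. In each $I_{\mathcal{J}}$, for every pair $\{i,j\}\in\mathcal{J}$, one variable (say $\bu_i$) ranges freely over $\R_+^2$ with 2D Lebesgue measure and the other ($\bu_j$) ranges along the aligned set $\cA_{\bu_i}$ with its 1D Lebesgue measure. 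Since the two copies of $\psi_{\bt,k}$ enforce the orderings $\bzero\prec\bu_1\prec\cdots\prec\bu_k\prec\bt$ and $\bzero\prec\bu_{k+1}\prec\cdots\prec\bu_{2k}\prec\bt$, each $I_{\mathcal{J}}$ is integrated over a compact simplicial region.

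The basic ingredients to use throughout are: the uniform bound $\gp(\bs)\leq C_\alpha(s_1+s_2)^{\alpha-2}$ (from Proposition~\ref{thm:renouv} and the boundedness of $a$ on $[0,\pi/2]$); the 1D line estimate
\begin{equation*}
\int_0^{A} \gp(x,y)\dd x \,\leq\, \frac{C_\alpha}{1-\alpha}\,y^{\alpha-1}\,,
\end{equation*}
which shows that an integration of $\gp$ along an aligned line produces a factor decaying like the univariate $(\alpha-1)$-stable renewal kernel; and the 2D convolution-type identity
\begin{equation*}
\int_{\bzero\prec\bs\prec\bt}\gp(\bs)\,\gp(\bt-\bs)\dd\bs \,\leq\, C_\alpha\,B(\alpha,\alpha)\,\gp(\bt)\,\big(t_1 t_2(t_1\vee t_2)\big)\,,
\end{equation*}
obtainable via the change of variables $\bs=r\bt$, $r\in[0,1]$, combined with the polar form of $\gp$. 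Starting from the extremities $\bzero$ and $\bt$ of each chain and integrating inwards, these estimates allow me to bound each $I_{\mathcal{J}}$ iteratively: every 2D free integration absorbs two $\gp$-factors and produces a Beta constant of the form $B(\cdot,\alpha)$, while every 1D aligned integration absorbs one $\gp$-factor and produces a Beta constant of the form $B(\cdot,\alpha-\tfrac12)$.

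Collecting roughly $k$ such Beta factors via the telescoping identity $\prod_{j=1}^k B(j(\alpha-\tfrac12),\alpha-\tfrac12)=\Gamma(\alpha-\tfrac12)^k/\Gamma(k(\alpha-\tfrac12))$ produces the prefactor $1/\Gamma(k(\alpha-1/2))$ stated in the proposition, and explains why $\alpha>1/2$ is the threshold of integrability. Summing over the $|\mathscr{P}_{2k}|=(2k-1)!!$ pairings contributes only an exponential-in-$k$ combinatorial factor, which is absorbed into $C_\alpha^{k+1}$. The powers of $\bt^{(1)}\wedge\bt^{(2)}$, $\bt^{(1)}\vee\bt^{(2)}$ and $\bt^{(1)}\bt^{(2)}$ appearing in $C_{\bt,k,\alpha}$ track how each iterated integration contributes either a $\gp(\bt)$-factor (coming from the 2D identity above) or a boundary term (coming from line integrations hitting the sides of the simplex). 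The bounds for $\psi_{\bt,k}^{\free}$ and $\psi_{\bt,k}^{\cond}$ follow at once from their definitions, since both functions are dominated by $\psi_{\bt,k}$ up to a $\bt$-dependent constant. Finally, Theorem~\ref{thm:stocint} ensures that $\psi_{\bt,k}\overset{k}{\diamond}\cM$ is a well-defined element of $L^2(\bbP)$, and the super-exponential decay of $\|\psi_{\bt,k}\|_{\nu_k}$ in $k$ yields convergence in $L^2(\bbP)$ of the series defining $\mathbf{Z}$ (the different-order terms being orthogonal thanks to the Wick-type structure of $\nu_k$).

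The main obstacle is the combinatorial control of the \emph{cross-pairings}, namely those $\mathcal{J}$ pairing indices of the first copy with indices of the second copy. Pairings internal to one copy integrate naturally within its ordered simplex and essentially reproduce the univariate renewal convolution with well-understood Beta asymptotics, but cross-pairings force integrations along aligned lines that connect the two chains, and this can break the ordered structure of either copy in non-obvious ways. Obtaining a uniform bound over all pairings with the right $\bt$-dependence thus requires, for each $\mathcal{J}$ separately, a convenient order of integration that respects the support constraints coming from both copies of $\psi_{\bt,k}$; the worst-case pairing dictates the specific powers of $\bt^{(1)}\wedge\bt^{(2)}$ and $\bt^{(1)}\vee\bt^{(2)}$ appearing in $C_{\bt,k,\alpha}$.
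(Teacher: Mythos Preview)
Your framework is right---Wick formula plus iterative integration---but two steps fail as written.

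The combinatorial claim is wrong: $(2k-1)!!\sim\sqrt{2}\,(2k/e)^k$ grows super-exponentially in $k$, so it cannot be absorbed into $C_\alpha^{k+1}$. In fact the pairings you call ``internal'' contribute \emph{zero}: if both $i,j\le k$ are paired, then on the support of $\psi_{\bt,k}$ one has $\bu_i\prec\bu_j$ strictly in both coordinates, hence $\bu_j\notin\cA_{\bu_i}$ and the line integral vanishes. Only cross-pairings survive, and these correspond to permutations $\sigma$ of $\{1,\ldots,k\}$; the paper then observes that the two simultaneous orderings $\bu_1\prec\cdots\prec\bu_k$ and $\bv_1\prec\cdots\prec\bv_k$ force $\sigma$ to avoid the pattern $(3\,2\,1)$, leaving only a Catalan number $C_k\le 4^k$ of terms. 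This reduction from $(2k-1)!!$ to $4^k$ is what makes the combinatorial factor genuinely exponential.

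Second, your iterative integration scheme is too vague, and the ``2D convolution identity'' is dimensionally inconsistent (the change of variables $\bs=r\bt$, $r\in[0,1]$, parametrizes a segment, not the rectangle $\{\bzero\prec\bs\prec\bt\}$). The technical core of the paper's argument is Proposition~\ref{prop:induction}: integrating one aligned pair $(\bu,\bv)$ with $\bu\in[\bu_0,\bu_1]$, $\bv\in[\bv_0,\bv_1]$ against $g(\bu-\bu_0)g(\bv-\bv_0)g(\bu_1-\bu)g(\bv_1-\bv)$ together with an accumulated weight $\|\bu-\bu_0\|^{k(\alpha-1/2)}$ reproduces the same structure one level up, with the Gamma ratio $\Gamma(k(\alpha-\tfrac12))/\Gamma((k+1)(\alpha-\tfrac12))$. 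The key device there is the elementary bound $x\wedge y\le\sqrt{xy}$, applied to the length of the interval on which the alignment constraint between the two boxes is active; this turns a minimum into a product of square roots that combines with the $(\alpha-2)$-powers from $g$ to give an integrable $(\alpha-\tfrac32)$-power, explaining the threshold $\alpha>\tfrac12$. Your 1D line estimate does not capture this interplay between $[\bu_0,\bu_1]$ and $[\bv_0,\bv_1]$. (As a minor point, the orthogonality of different-order terms is false---these are not Wiener--It\^o chaoses---but this is harmless: summability of $\|\psi_{\bt,k}\|_{\nu_k}$ already gives Corollary~\ref{corol:ub:psikk} via the triangle inequality.)
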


Notice that this proposition and the completeness of $L^2(\bbP)$ immediately imply that the series $\mathbf{Z}$ from~\eqref{eq:conjchaosexpansion} is well-posed (at least for $\hat h=0$,  we will see in Section~\ref{sec:reduc_k=0} that we can always reduce to this case).

\begin{corollary}\label{corol:ub:psikk}
For $\hat\gb\geq0$, one has $\sum_{k\geq1} \hat\gb^k \|\psi_{\bt,k}\|_{\nu_{k}}<\infty$. In particular $\sum_{k\geq1} \hat\gb^k (\psi_{\bt,k} \overset{k}{\diamond} \cM)$ is a well-posed random variable in $L^2(\bbP)$.
\end{corollary}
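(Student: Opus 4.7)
The strategy is to control $\|\psi_{\bt,k}\|^2_{\nu_k}$ by unfolding the covariance measure $\nu_k$ via the pairing formula~\eqref{eq:stocint:exprint:k>1} and then estimating each resulting iterated integral via Dirichlet--Liouville beta integrals. First I will establish a convenient pointwise upper bound on $\phi$: since $\phi(\bs) = \|\bs\|^{\alpha-2}a(\theta)$ with $a$ bounded on $[0,\pi/2]$ by Proposition~\ref{thm:renouv}, and since $\|\bs\|\geq \sqrt{s^{(1)}s^{(2)}}$ while $\alpha-2<0$, one obtains
\[
\phi(\bs) \;\leq\; C_0\, \|\bs\|^{\alpha-2} \;\leq\; C_0\, (s^{(1)} s^{(2)})^{(\alpha-2)/2}\,.
\]
The exponent $(\alpha-2)/2 \in (-1,0)$ for $\alpha \in (0,1)$, so this bound is integrable along axis-aligned lines; crucially, it \emph{separates} the two coordinates, which is what allows Fubini-style factorizations below.

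Next I unfold $\|\psi_{\bt,k}\|^2_{\nu_k} = \int \psi_{\bt,k}^{\otimes 2}(\bu_1,\ldots,\bu_{2k})\dd\nu_k$ via Proposition~\ref{propbis:covariancemeasure:k>1}: it becomes a sum over pairings $\mathcal{J}$ of $\{1,\ldots,2k\}$ of integrals that mix $2k$-dimensional Lebesgue measure with $k$ one-dimensional Lebesgue measures along aligned lines. A key observation is that the strict monotonicity constraints $\bzero \prec \bu_1 \prec \cdots \prec \bu_k \prec \bt$ and $\bzero \prec \bu_{k+1} \prec \cdots \prec \bu_{2k} \prec \bt$ prevent any pairing from matching two indices belonging to the same copy (if $i<j$ both lie in $\{1,\ldots,k\}$ then $\bu_i \prec \bu_j$, which forbids $\bu_j \in \cA_{\bu_i}$). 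Hence only the $k!$ cross-pairings survive, each corresponding to a bijection $\sigma : \{1,\ldots,k\} \to \{1,\ldots,k\}$ pairing $\bu_i$ with $\bv_{\sigma(i)} := \bu_{k+\sigma(i)}$; and each of these further splits into $2^k$ orientation choices (for each $i$, $\bv_{\sigma(i)}$ is either horizontally or vertically aligned with $\bu_i$). Plugging in the bound from the previous paragraph, the integrand becomes a product of power functions of differences of the first- and second-coordinate sequences, so by Fubini the integral collapses into a product of one-dimensional iterated integrals.

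To each such one-dimensional integral I apply the classical Dirichlet--Liouville identity
\[
\int_{0 < y_1 < \cdots < y_m < t} \prod_{i=0}^{m} (y_{i+1} - y_i)^{c-1}\, \dd y_1 \cdots \dd y_m \;=\; \frac{\Gamma(c)^{m+1}}{\Gamma((m+1) c)}\, t^{(m+1) c - 1}\,,
\]
with $c = \alpha/2$. The number of chain steps in each coordinate direction depends on the orientation pattern, and careful bookkeeping (distinguishing the ``anchored'' 2D integrations from the ``dependent'' 1D integrations along lines) shows that the resulting Gamma denominator is, after combining the contributions from both coordinate directions, of the form $\Gamma(k \alpha) / \Gamma(k/2) \asymp \Gamma(k(\alpha - 1/2))$ up to $C^k$ factors (using Stirling and the duplication-type formula for $\Gamma$). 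The $\bt$-dependence is tracked along the way and yields exactly the prefactor $C_{\bt,k,\alpha}$. The combinatorial sum over the $k! \cdot 2^k$ effective configurations is absorbed into $C^{k+1}$ via Stirling, which is the point at which the assumption $\alpha > 1/2$ becomes essential: $\Gamma(k(\alpha-1/2))$ grows super-factorially only when $\alpha - 1/2 > 0$, so that $k! \cdot 2^k / \Gamma(k(\alpha-1/2))$ remains bounded (and in fact decays). The variants $\psi_{\bt,k}^{\cond} = \psi_{\bt,k}/\phi(\bt)$ and $\psi_{\bt,k}^{\free}$ (one fewer $\phi$-factor) are handled by the same scheme with minor adjustments of the final exponents and constants. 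The $L^2(\bbP)$ well-posedness of $\psi_{\bt,k} \overset{k}{\diamond} \cM$ then follows directly from Theorem~\ref{thm:stocint} applied to $\cM^{\otimes k}$.

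The main obstacle is the combinatorial bookkeeping: tracking how the $k! \cdot 2^k$ cross-pairings and orientation choices interact with the ordering constraints on $(\bu_i)$ and $(\bv_j)$, and identifying which chain structure on the 1D Beta integrals yields the precise denominator $\Gamma(k(\alpha-1/2))$ rather than the weaker $\Gamma(k \alpha/2)$ or $\Gamma(k\alpha)$ one would obtain from a naive bound. This is where the delicate interplay between the ``half-dimensionality'' of the line integrations (contributing the $-1/2$ in $\alpha - 1/2$) and the ``full-dimensionality'' of the 2D integrations (contributing the $\alpha$) becomes visible; handling it sharply is exactly what makes $\alpha > 1/2$ appear as the sharp threshold, matching the $k = 1$ condition of Proposition~\ref{prop:integregp}.
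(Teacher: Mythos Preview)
Your proposal contains a genuine gap in the combinatorial step. You correctly observe that only cross-pairings between the two copies survive, and you count these as $k!$ permutations $\sigma$ (times $2^k$ orientation choices). You then assert that ``$\Gamma(k(\alpha-\tfrac12))$ grows super-factorially'' when $\alpha>\tfrac12$, so that $k!\cdot 2^k/\Gamma(k(\alpha-\tfrac12))$ stays bounded. This is false: since $\alpha-\tfrac12\in(0,\tfrac12)$, Stirling gives $\Gamma(k(\alpha-\tfrac12))\sim (k(\alpha-\tfrac12)/e)^{k(\alpha-1/2)}$, which grows \emph{slower} than $k!\sim(k/e)^k$. The ratio $k!/\Gamma(k(\alpha-\tfrac12))$ blows up super-exponentially, so your per-pairing estimate, even if correct, cannot be summed over $k!$ pairings to give a summable bound in $k$.

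The paper closes this gap with an observation you missed: the ordering constraints $\bu_1\prec\cdots\prec\bu_k$ and $\bv_1\prec\cdots\prec\bv_k$, together with the alignment requirement $\bv_i\in\cA_{\bu_{\sigma(i)}}$, force $\sigma$ to avoid the pattern $(3\,2\,1)$. The number of such permutations is the Catalan number $C_k\le 4^k$, not $k!$. This exponential bound is exactly what allows absorption into $C^k$. Moreover, the paper does not use your separable bound $\phi(\bs)\le C(s^{(1)}s^{(2)})^{(\alpha-2)/2}$ with Dirichlet--Liouville; instead it keeps the non-separable bound $\|\bs\|^{\alpha-2}$ and proves an inductive two-point estimate (Proposition~\ref{prop:induction}) that integrates out one pair $(\bu,\bv)$ at a time, accumulating a factor $\Gamma(k(\alpha-\tfrac12))/\Gamma((k+1)(\alpha-\tfrac12))$ at each step. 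Your separable bound would in any case lose the $\ell^1$-norm structure that makes the per-pairing exponent come out as $\alpha-\tfrac12$ rather than something weaker; the ``careful bookkeeping'' you allude to is not spelled out and would need to recover exactly this, which is not obvious from a coordinate-wise factorization.
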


\begin{remark}\label{rem:freecond2} In the remainder of this paper we focus on the \emph{constrained} partition function, \textit{i.e.}\ on the integration of $\psi_{\bt,k}$ defined in~\eqref{def:psit}. We claim that the same results for $\psi_{\bt,k}^{\cond}$ and $\psi_{\bt,k}^{\free}$ follow naturally. Indeed, we have $\psi_{\bt,k}^{\cond}=\gp(\bt)^{-1} \psi_{\bt,k}$ so $\|\psi_{\bt,k}^{\cond}\|_{\nu_{k}}^2 \leq \gp(\bt)^{-2} \|\psi_{\bt,k}\|_{\nu_{k}}^2$. Using that $\gp(2\bt-\bs) \geq C \gp(\bt)$ uniformly for $s\in [\bzero,\bt)$ we also get that
\[
\psi_{\bt,k}^{\free}(\bs_1,\ldots, \bs_k) \leq   \frac1C \gp(\bt)^{-1}  \, \gp(\bs_1) \gp(\bs_2 -\bs_1) \ldots \gp(\bs_k -\bs_{k_1}) \gp(2 \bt-\bs_k) \ind_{\{\bzero \prec \bs_1 \prec \cdots \prec \bs_k \prec \bt\}}\,,
\]
hence $\psi_{\bt,k}^{\free}  \leq   \frac1C \psi_{2\bt,k}^{\cond}$.
\end{remark}

\begin{proof} 
%
%
Let us define $g_k$ similarly to $\psi_{\bt,k}$, but with $g(\bs) = \| \bs\|^{\ga-2}$ in place of $\gp(\bs)$, where $\|\cdot\|:=\|\cdot\|_1$. 
We show the proposition for $g_k$, which will imply the result for $\psi_{\bt,k}$ (recall Proposition~\ref{thm:renouv}).
Let us warn the reader that the proof is more technical than in the case $k=1$, due to the richer combinatorics in the correlation structure of $\nu_k$, see~\eqref{eq:stocint:exprint:k>1}.

We have
\begin{equation}\label{kintegral}
\|g_k\|_{\nu_k}^2=\, \idotsint \limits_{ \substack{ \bzero \prec \bu_1 \prec \cdots \prec \bu_k \prec \bt \\ \bzero \prec \bv_1 \prec \cdots \prec \bv_k \prec \bt}} \!
 g_k(\bu_1, \ldots, \bu_k)  g_k (\bv_1, \ldots, \bv_k) \dd\nu_k( \bu_1, \ldots, \bu_k, \bv_1, \ldots, \bv_k).
\end{equation}
Note that by a change of variable, we can reduce to the case where $\bt =\bone$, at the cost of a factor at most
\[
C_{\bt,k,\alpha} = \frac{ (\bt^{(1)}\wedge \bt^{(2)})^{2(k+1)(\alpha-2)}}{ (\bt^{(1)} \bt^{(2)} (\bt^{(1)} \vee \bt^{(2)}) )^{k}}\,,
\]
using that $\bt^{(1)} |x| + \bt^{(2)} |y| \geq (\bt^{(1)}\wedge \bt^{(2)}) (|x|+|y|)$.
Note that we can bound $C_{\bt,k,\alpha}\leq (\bt^{(1)}\wedge \bt^{(2)})^{(k+1)(2\alpha-7) +3}$.

Now, in view of the expression of $\nu_k$ (recall~\eqref{eq:stocint:exprint:k>1}), for any fixed $\bzero \prec \bu_1 \prec \cdots \prec \bu_k \prec \bt $, the integral over $\bv_1,\ldots, \bv_k$ is concentrated on the ``grid'' set
\begin{align*}
\cG(\bu_1,\ldots, \bu_k)  = \bigcup_{i=1}^k \cA_{\bu_i} \, ,
\end{align*}
where we recall that for $\bu \in [\bzero,\bone]$, $\cA_{\bu}$ is the set of points aligned with $\bu$, that we write as $\cA_{\bu}=\cL_\bu^{(1)} \cup \cL_{\bu}^{(2)}$ with $\cL_\bu^{(1)} = [0,1] \times \{u_2\}$ and $\cL_\bu^{(2)} = \{u_1\} \times [0,1]$.
Moreover, the integral \eqref{kintegral} is concentrated on $(\bv_1,\ldots, \bv_k) \in \cG(\bu_1,\ldots, \bu_k)$ where there must be some  $\bv_i$ in  $\cA_{\bu_j} $ for every $1\leq j \leq k$ (so that all points $\bu_j$ are aligned with one $\bv_i$): since $\bv_1 \prec \cdots \prec \bv_k$, there is a permutation $\sigma$ of $\{1,\ldots, k\}$ such that $\bv_{i} \in \cA_{\bu_{\sigma(i)}}$ for all~$i$ (using also that the Lebesgue measure of points in $\cA_{\bu_j} \cap \cA_{\bu_{i}}$ is equal to $0$). We refer to Figure~\ref{figGrid} for an illustration.

\begin{figure}[htbp]
\begin{center}
\includegraphics[scale=0.7]{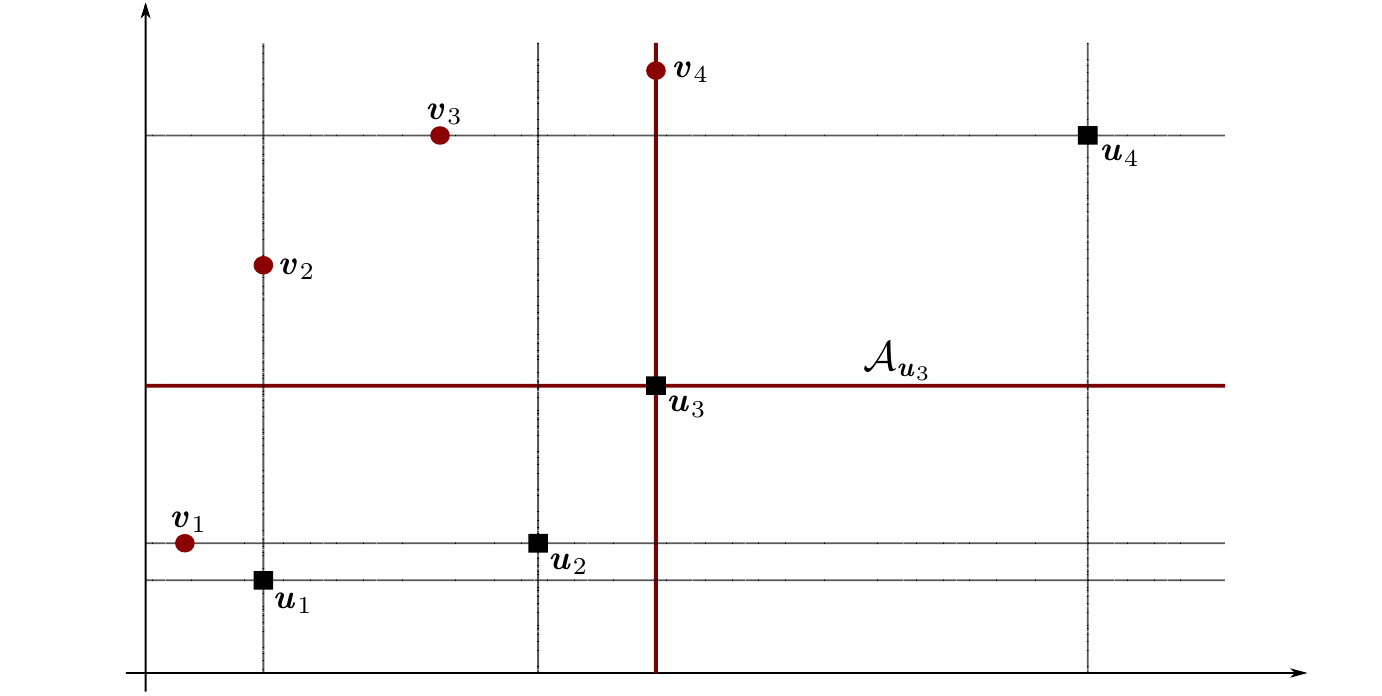}
\caption{\label{figGrid}
\footnotesize
Illustration of the grid set $\cG(\bu_1,\ldots, \bu_k)$, with $k=4$ points. The black squares represent the positions of $\bu_1, \ldots, \bu_4$, and the set $\cA_{\bu_3}$ is represented in a red solid line. The red dots represent the positions of $\bv_1, \ldots, \bv_4$: here we have that $\bv_1\in \cA_{\bu_2}$ ($\sigma(1)=2$), $\bv_2\in \cA_{\bu_1}$ ($\sigma(2)=1$), $\bv_3\in \cA_{\bu_4}$ ($\sigma(3)=4$)  and $\bv_4\in \cA_{\bu_3}$ ($\sigma(4)=3$). 
}
\end{center}
\end{figure}

Let us denote $\bar S_k$ the set of all permutations $\sigma$ of $\{1,\ldots, k\}$ that are admissible pairings of the $\bv_i$'s with $\bu_j$'s, in the sense that they are compatible with the condition $\bu_1\prec \cdots \prec \bu_k$ and $\bv_1\prec \cdots \prec \bv_k$.
All together, we have
\[
\|g_k\|_{\nu_k}^2= \!\!
\idotsint \limits_{ \substack{ \bzero \prec \bu_1 \prec \cdots \prec \bu_k \prec \bone \\ \bzero \prec \bv_1 \prec \cdots \prec \bv_k \prec \bone}} \!\!
 \ind_{\{\exists \sigma\in \bar S_k\,, \bv_{\sigma(i)} \in \cA_{\bu_{i}} \forall i \}} g_k(\bu_1, \ldots, \bu_k)  g_k (\bv_1, \ldots, \bv_k) \dd\nu_k( \bu_1, \ldots, \bu_k, \bv_1, \ldots, \bv_k).
\]
Let us stress that $\bar{S}_k$ does not contain all permutations: indeed, we cannot have $\bu_1\prec \bu_2 \prec \bu_3$ and $\bv_1\prec \bv_2\prec \bv_3$ with the following ``alignment pattern'':  $\bv_3 \leftrightarrow \bu_1$, $\bv_2\leftrightarrow \bu_2$, $\bv_1\leftrightarrow \bu_3$. Hence, admissible permutations  must avoid the pattern $(3\,2\,1)$.
Since there are $C_n = \frac{1}{n+1} \binom{2n}{n} \leq 4^n$ such permutations (see \textit{e.g.} \cite{Janson20} for a recent account on permutations avoiding patterns of length $3$), we get that 
\begin{equation}\label{eq:barSk}
|\bar S_k|\;\leq\; 4^k\;.
\end{equation}
Therefore, recalling that $g_k(\bu) = \prod_{i=1}^{k+1} g(\bu_i-\bu_{i-1})  $
with by convention $\bu_0=\bzero$, $\bu_{k+1}=\bone$, the proof then consists in showing that, for any $\sigma \in \bar S_k$,
\begin{multline}
\label{integralongrid2}
\idotsint \limits_{ \substack{ \bzero \prec \bu_1 \prec \cdots \prec \bu_k \prec \bone \\ \bzero \prec \bv_1 \prec \cdots \prec \bv_k \prec \bone}} 
 \prod_{i=1}^{k+1}\Big(  g(\bu_{\sigma(i)} -\bu_{\sigma(i)-1}) g(\bv_{i} -\bv_{i-1}) \ind_{\{\bv_{i} \in \cA_{\bu_{\sigma(i)}} \}} \Big)  \dd\nu_k ( \bu_1, \ldots, \bu_k, \bv_1, \ldots, \bv_k)\\[-15pt]
  \leq \frac{C^k}{\Gamma\big( k(\alpha-\frac12)\big)} \,.
\end{multline}
This is a consequence of the following proposition.

\begin{proposition}\label{prop:induction}
There exists some (explicit) $C_\ga>0$ such that for $\bzero\preceq\bu_0\prec\bu_1\preceq\bone$, $\bzero\preceq\bv_0\prec\bv_1\preceq\bone$ and $k\in\N\cup\{0\}$,
\begin{equation}\label{eq:prop:induction}\begin{aligned}
&\int_{[\bu_0,\bu_1]\times[\bv_0,\bv_1]} \|\bu-\bu_0\|^{k(\ga-\frac12)}g(\bu-\bu_0)g(\bv-\bv_0)g(\bu_1-\bu)g(\bv_1-\bv) \dd \nu_\cM(\bu,\bv) \\&\hspace{3cm}\leq\; C_\ga \,\tilde\gGa(k) \, \|\bu_1-\bu_0\|^{(k+1)(\ga-\frac12)} g(\bu_1-\bu_0)g(\bv_1-\bv_0)\,,\end{aligned}
\end{equation}
where $\tilde\gGa(k):=\frac{\gGa(k(\ga-\frac12))}{\gGa((k+1)(\ga-\frac12))}$ if $k\in\N$, and $\tilde\gGa(0):=1$.
\end{proposition}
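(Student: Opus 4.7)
The strategy is to unfold the measure $\nu_\cM$ via the explicit formula~\eqref{eq:stocint:exprint2} from Proposition~\ref{prop:covariancemeasure}, which reduces the left-hand side of~\eqref{eq:prop:induction} to a deterministic Lebesgue integral over a 3-dimensional alignment set. By the formula $\dd \nu_\cM$ is supported on $\{(\bu,\bv):\bu\aligne\bv\}$, and splits into two pieces according to whether $\bv\in \cL_\bu^{(1)}=\R_+\times\{u_2\}$ or $\bv\in \cL_\bu^{(2)}=\{u_1\}\times\R_+$. By symmetry under the coordinate swap $(s_1,s_2)\mapsto(s_2,s_1)$ (which preserves both $g$ and the $\ell^1$-norm), it suffices to bound one of them, say the horizontal-alignment term, with $\bv=(x,u_2)$ for $x\in[v_0^{(1)},v_1^{(1)}]$ and the additional constraint $v_0^{(2)}\leq u_2\leq v_1^{(2)}$.

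Next, the inner 1-dimensional integral over $x$ takes the form
\[
I_1(u_2)\;:=\;\int_{v_0^{(1)}}^{v_1^{(1)}} (x-v_0^{(1)}+a)^{\alpha-2}(v_1^{(1)}-x+b)^{\alpha-2}\dd x\,,\qquad a:=u_2-v_0^{(2)},\ b:=v_1^{(2)}-u_2\,.
\]
Splitting at the midpoint $\tfrac{1}{2}(v_0^{(1)}+v_1^{(1)})$ and using that $\int_0^{L/2}(y+a)^{\alpha-2}\dd y\leq (1-\alpha)^{-1}a^{\alpha-1}$ (and similarly for the symmetric half), one obtains a bound of the form $I_1(u_2)\leq C_\alpha (a^{\alpha-1}+b^{\alpha-1})\|\bv_1-\bv_0\|^{\alpha-2}$; integrating the pre-factor $(a^{\alpha-1}+b^{\alpha-1})$ over $u_2\in[v_0^{(2)},v_1^{(2)}]$ contributes a bounded constant (this is again a beta-type estimate, finite because $\alpha>\tfrac12$), and yields a clean bound of the contribution of the $\bv$-integration by $C_\alpha\, g(\bv_1-\bv_0)$ times a factor supported on $\bu\in[\bu_0,\bu_1]$.

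The remaining integral is then a purely 2-dimensional integral in $\bu$,
\[
I_2\;:=\;\int_{[\bu_0,\bu_1]}\|\bu-\bu_0\|^{k(\alpha-\frac12)}g(\bu-\bu_0)g(\bu_1-\bu)\dd \bu\,.
\]
Since $g(\bs)=\|\bs\|^{\alpha-2}$ depends only on the $\ell^1$-norm, one parametrizes $\bu-\bu_0$ in terms of $r=\|\bu-\bu_0\|\in[0,\|\bu_1-\bu_0\|]$ and an angular variable whose integration contributes a uniformly bounded Jacobian (here one crucially uses $\alpha>\tfrac12$ to absorb the angular singularities at the axes, together with the $\ell^1$-triangle inequality $\|\bu_1-\bu\|\geq\|\bu_1-\bu_0\|-r$). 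One is then left with a 1-dimensional beta integral of the form
\[
\int_0^L r^{k(\alpha-\frac12)}r^{\alpha-2}(L-r)^{\alpha-2}\dd r\;=\;L^{(k+1)(\alpha-\frac12)+\alpha-2}\,B\big(k(\alpha-\tfrac12)+\alpha-1,\alpha-1\big)\,,
\]
with $L=\|\bu_1-\bu_0\|$. Rewriting via $B(p,q)=\Gamma(p)\Gamma(q)/\Gamma(p+q)$ gives the factor $\tilde\gGa(k)$ together with $L^{(k+1)(\alpha-\frac12)}g(\bu_1-\bu_0)$, yielding exactly the right-hand side of~\eqref{eq:prop:induction}.

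The main obstacle in this scheme will be the fine handling of the angular integration for $I_2$ and of the boundary singularities when $\bu$ approaches $\bu_0$ or $\bu_1$ (respectively $\bv$ approaches $\bv_0$ or $\bv_1$). Both corners are sources of non-integrable singularities of $g$ taken alone; they must be controlled jointly with the alignment constraint (which forces one coordinate of $\bv$ to coincide with that of $\bu$), and the pre-factor $\|\bu-\bu_0\|^{k(\alpha-\tfrac12)}$ must be tracked through to produce the exact Gamma ratio $\tilde\gGa(k)$, so that upon iterating the proposition $k$ times in the derivation of~\eqref{integralongrid2} the product of these ratios telescopes into the desired $1/\Gamma(k(\alpha-\tfrac12))$ bound.
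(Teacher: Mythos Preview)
Your plan has two genuine gaps that prevent it from going through as written.

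\textbf{(1) The $\bv$- and $\bu$-integrals do not decouple.} After integrating over the free coordinate $x$ of $\bv$ you correctly get a bound of order $(a^{\alpha-1}+b^{\alpha-1})\,g(\bv_1-\bv_0)$ with $a=u_2-v_0^{(2)}$, $b=v_1^{(2)}-u_2$. But $u_2$ is one of the two coordinates of $\bu$: the factor $(a^{\alpha-1}+b^{\alpha-1})$ is \emph{not} separable from the $\bu$-integrand $\|\bu-\bu_0\|^{k(\alpha-\frac12)}g(\bu-\bu_0)g(\bu_1-\bu)$. You cannot ``integrate the pre-factor over $u_2$'' to a bounded constant and then perform an independent 2-D integral $I_2$ over $\bu$; that is the fallacy $\int f(u_2)h(u_2)\,\dd u_2 \leq (\int f)(\sup h)$ applied where $h$ is unbounded. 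The singularities of $a^{\alpha-1},b^{\alpha-1}$ sit at $u_2=v_0^{(2)},v_1^{(2)}$, which are unrelated to the singularities of $g(\bu-\bu_0),g(\bu_1-\bu)$, so no pointwise absorption is available either.

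\textbf{(2) The beta integral for $I_2$ diverges.} The integral
\[
\int_0^L r^{k(\alpha-\frac12)+\alpha-2}(L-r)^{\alpha-2}\,\dd r
\]
is infinite at both endpoints since $\alpha-2<-1$; the Beta identity $B(k(\alpha-\tfrac12)+\alpha-1,\alpha-1)$ you quote requires both arguments positive, and here $\alpha-1<0$. What saves the genuine 2-D integral $I_2$ is precisely the angular Jacobian, which is \emph{not} uniformly bounded but rather vanishes like $\min(r,L-r)$ near the endpoints. Even after inserting this (e.g.\ via $\min(r,L-r)\leq\sqrt{r(L-r)}$) the resulting Gamma ratio is $\Gamma((k+1)(\alpha-\tfrac12))/\Gamma((k+2)(\alpha-\tfrac12))=\tilde\Gamma(k+1)$, off by one from the required $\tilde\Gamma(k)$, and the power of $L$ is $L^{1/2}$ too large.

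The paper's proof avoids both issues by keeping the three integration variables coupled. After a change of variables to $s=\|\bu-\bu_0\|$, $t=\|\bv-\bv_0\|$ and the shared column coordinate $x$, the alignment constraint is encoded in the Lebesgue measure $|A_{s,t}|$ of a one-dimensional set. The key inequality $x\wedge y\leq\sqrt{xy}$ applied to $|A_{s,t}|$ produces a factor $\sqrt{\bar s\,\bar t}$ times $\sqrt{st}$ (or $\sqrt{(2-s)(2-t)}$, etc.), which simultaneously regularises the $(st(2-s)(2-t))^{\alpha-2}$ singularity on \emph{both} the $s$- and $t$-sides by exactly a half-power each. This balanced splitting is what yields the correct $\tilde\Gamma(k)$ via the genuine Beta identity $\int_0^2 s^{k(\alpha-\frac12)-1}(2-s)^{\alpha-\frac32}\,\dd s$, and the correct homogeneity in $\|\bu_1-\bu_0\|$ and $\|\bv_1-\bv_0\|$. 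Your decoupling attempts to put the full regularisation on the $\bv$-side, which is why the $\bu$-integral ends up divergent and the Gamma index comes out wrong.
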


Applying Proposition~\ref{prop:induction} iteratively, we obtain~\eqref{integralongrid2}, which concludes the proof.
\end{proof}

\begin{proof}[Proof of Proposition~\ref{prop:induction}]
%
%
The way to estimate the integral in~\eqref{eq:prop:induction} depends on the respective locations of $\bu_0,\bu_1,\bv_0,\bv_1$. We only treat the case $\bu_0^{(1)}<\bv^{(1)}_0<\bu^{(1)}_1<\bv^{(1)}_1$ and $\bv^{(2)}_0<\bu^{(2)}_0<\bv^{(2)}_1<\bu^{(2)}_1$, see Figure~\ref{fig:induction}; other cases are analogous (or easier) and can be treated with similar techniques. 
We will actually estimate the integral in~\eqref{eq:prop:induction} restricted to $\bu,\bv$ being on the same column, \textit{i.e.}\ to $\bu^{(1)}=\bv^{(1)}$; the case where $\bu,\bv$ are on the same line is similar.

We introduce the following notation, which will be used throughout the proof (we refer to Figure~\ref{fig:induction} for a graphical representation):
\begin{equation}
\begin{aligned}
&x := \bu^{(1)} - \bv_0^{(1)} = \bv^{(1)} -\bv_0^{(1)} \,, & \quad y := \bu^{(2)} - \bu_0^{(2)} \,, & \qquad z  := \bv^{(2)} - \bv^{(2)}_0  \,, \\
&\ol x := \bu^{(1)}_1 - \bv^{(1)}_0\,, & \quad \ol y := \bu^{(2)}_1 - \bu^{(2)}_0\,, &\qquad \ol z  := \bv^{(2)}_1 - \bv^{(2)}_0   \,,
\end{aligned}
\end{equation}
and also $a:= \bv^{(1)}_0 - \bu^{(1)}_0$ and $ b:= \bv^{(1)}_1 - \bu^{(1)}_1$.

\begin{figure}[ht]\begin{center}\vspace{-7mm}
\includegraphics[scale=0.9]{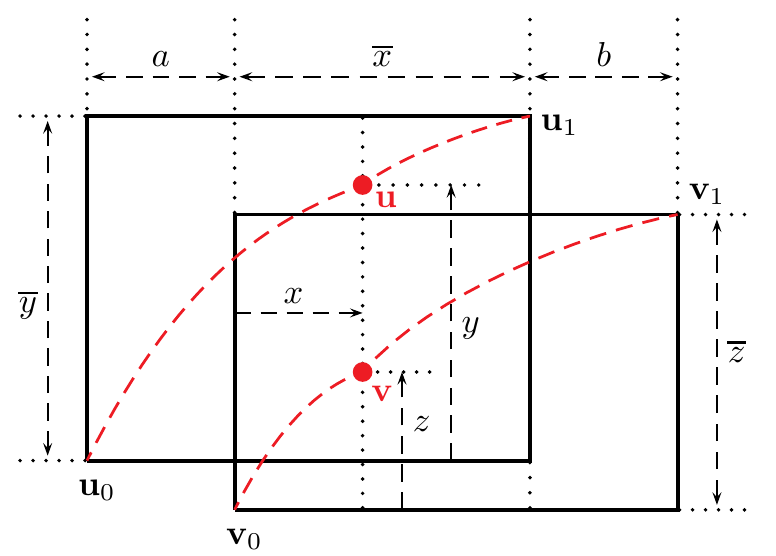}\vspace{-5mm}
\caption{\footnotesize Illustration of the relative position of the points in the integral over $\bu,\bv$ on the same column (in red). $\bu$ (resp. $\bv$) is constrained to remain in a rectangle of size $\ol x \times \ol y$ (resp. $\ol x \times \ol z$), and their distances to $\bu_0,\bu_1,\bv_0$ and $\bv_1$ may be expressed in terms of $x,y,z$ and the constants $a,b,\ol x,\ol y$ and $\ol z$.}
\label{fig:induction}
\end{center}
\end{figure}

With these notation, the integral in~\eqref{eq:prop:induction} restricted to $\bu,\bv$  on the same column can be written as
\begin{align*}
\int_0^{\ol x}\int_0^{\ol y}\int_0^{\ol z} (x+y+a)^{k(\ga-\frac12)}\big[(x+y+a)(x+z)(\ol x + \ol y - x - y)(\ol x  + b+ \ol z - x - z)\big]^{\ga-2} \dd x \,\dd y \,\dd z\,. 
\end{align*}
Using a change of variable $(x,s,t)=(x,x+y+a,x+z)$, we get
\begin{align*}
 &\int_0^{\ol x} \int_{x+a}^{x + \ol y + a} \int_{x}^{x+\ol z}  s^{k(\ga-\frac12)} \big[s\,t\,(\ol s - s)(\ol t - t)\big]^{\ga-2}\,\dd x \,\dd s \,\dd t\\
&\qquad =\; \Big(\frac{\ol s}2\Big)^{k(\ga-\frac12)}\Big(\frac{\ol s \, \ol t}{4}\Big)^{2\ga-3} \int_0^{\ol x} \int_{2(x+a)/\ol s}^{2(x + \ol y + a)/\ol s} \int_{2x/\ol t}^{2(x+\ol z)/\ol t} s^{k(\ga-\frac12)} \big[s\,t\,(2 - s)(2 - t)\big]^{\ga-2}\,\dd x \,\dd s \,\dd t
\,,
\end{align*}
where we have set 
$\ol s := \ol x + \ol y + a = \|\bu_1-\bu_0\|$, $\ol t := \ol x + \ol z + b \;=\; \|\bv_1-\bv_0\|$,
and then rescaled the last two integrals in $\ol s/2$, $\ol t/2$ respectively. Exchanging the integrals to first integrate with respect to $x$, this is equal to
\begin{equation}\label{eq:prop:induction:1}
\Big(\frac{\ol s}2\Big)^{k(\ga-\frac12)}\Big(\frac{\ol s \, \ol t}{4}\Big)^{2\ga-3} \int_0^2\int_0^2 s^{k(\ga-\frac12)} \big[s\,t\,(2 - s)(2 - t)\big]^{\ga-2} |A_{s,t}| \,\dd s\,\dd t\,,
\end{equation}
where $A_{s,t}\subset[0,1]$ is defined by
\begin{equation}\label{eq:prop:induction:setA}
A_{s,t}\;:=\;\left\{x\in[0,\ol x]\;;\; x+a \in[\tfrac{s}{2} \ol s - \ol y, \tfrac{s}{2} \ol s]\quad \text{and} \quad x\in[\tfrac{t}{2} \ol t - \ol z, \tfrac{t}{2} \ol t]\right\} \,,
\end{equation}
and $|A_{s,t}|$ denotes its Lebesgue measure.
Therefore, the proof will be over once we show
\begin{equation}\label{eq:prop:induction:2}
\int_0^2\int_0^2 s^{k(\ga-\frac12)} \big[s\,t\,(2 - s)(2 - t)\big]^{\ga-2} |A_{s,t}| \,\dd s\,\dd t \;\leq\; C_\ga\, 2^{k(\ga-\frac12)} \,\tilde\gGa(k) \, \Big(\frac{\ol s \,\ol t}{4}\Big)^{\frac12}\,,
\end{equation}
for some $C_\ga>0$. Indeed, plugging~\eqref{eq:prop:induction:2} into~\eqref{eq:prop:induction:1},
we get that the integral in~\eqref{eq:prop:induction} is bounded by
\[
4^{\frac{5}{2} -2\alpha} C_\ga\, \tilde\gGa(k) \, 
\bar s^{(k+1) (\alpha-\frac12)+(\alpha-2)} \, \bar t^{(\alpha-2)+(\alpha-\frac12)}\,,
\] 
which concludes the proof of~\eqref{eq:prop:induction} since $\ol t^{(\ga-\frac12)} \leq 2^{\ga-\frac12}$ (recall $\bv_1,\bv_0\in [\bzero,\bone]$).

\medskip
\noindent
\textit{Proof of~\eqref{eq:prop:induction:2}.}
Let us first state an inequality which will prove useful henceforth:
\begin{equation}\label{eq:prop:induction:ineqwedge}
x \wedge y \;\leq\; \sqrt{(x\wedge y)(x\vee y)} \;=\; \sqrt{x\,y}\;,\qquad \forall\, x,y>0\,.
\end{equation}
We split the l.h.s. of~\eqref{eq:prop:induction:2} into four integrals over the sets $I_1=\{s, t\leq1\}$, $I_2=\{1\leq s, t\}$, $I_3=\{t\leq 1\leq s\}$ and $I_4=\{s\leq 1\leq t\}$ respectively, and we compute an upper bound for each term.

\smallskip
\textit{Integral over $I_1$.} Since $A_{s,t}\subset[0,\frac{s}2 \ol s]\cap[0,\frac{t}2 \ol t]$, we have
$|A_{s,t}| \leq (\tfrac{\ol s}2s)\wedge(\tfrac{\ol t}2t) \leq (\frac{\ol s\ol t}{4})^{1/2} (st)^{1/2}$
thanks to~\eqref{eq:prop:induction:ineqwedge}.
Therefore, the integral over $I_1$ verifies
\begin{align*}
\int_0^1\int_0^1 s^{k(\ga-\frac12)}  \big[s\,t\,(2 - s)(2 - t)\big]^{\ga-2} |A_{s,t}| \,\dd s\,\dd t  &\leq\;\Big(\frac{\ol s\ol t}{4}\Big)^{\frac12} \int_0^1 s^{k(\ga-\frac12) + \ga-\frac32} \,\dd s \int_0^1 t^{\ga-\frac32} \,\dd t  \\
&=  \frac1{\ga-\frac12} \frac{1}{(k+1)(\ga-\frac12)} \Big(\frac{\ol s\ol t}{4}\Big)^{\frac12}
\end{align*}
where the last identity holds because $\ga>1/2$ and $k\geq0$. Then, we observe that there is $k_\ga\in\N$ such that $\gGa$ is increasing on $[k_\ga(\ga-\frac12),\infty)$, hence for $k\geq k_\ga$, 
\begin{equation}\label{eq:prop:induction:ineqgGa}
\frac1{(k+1)(\ga-\frac12)}\leq \frac1{k(\ga-\frac12)} = \frac{\gGa(k(\ga-\frac12))}{\gGa(k(\ga-\frac12) + 1)} \;\leq\; \tilde\gGa(k) \;\leq\; 2^{k(\ga-\frac12)}\, \tilde\gGa(k) \;.
\end{equation}
Fixing a suitable $C_\ga>0$, this proves the upper bound~\eqref{eq:prop:induction:2} for the integral restricted on $I_1=\{s,t\leq1\}$.

\smallskip
\textit{Integral over $I_2$.} Recalling~\eqref{eq:prop:induction:setA}, we have $A_{s,t}\subset [\frac{s}2 \ol s-\ol y -a, \ol x] \cap [\frac{t}2 \ol t-\ol z, \ol x]$, hence
\[
|A_{s,t}|\;\leq\; \big(\ol x+\ol y+a - \tfrac{\ol s}{2}s\big) \wedge \big(\ol x+\ol z - \tfrac{\ol t}{2}t\big) \;\leq\; \big(\ol s(1-\tfrac s2)\big) \wedge \big(\ol t(1-\tfrac t2)\big) \;\leq\; \Big(\frac{\ol s \ol t}{4}\Big)^{\frac12} \big((2-s)(2-t)\big)^{\frac12}\,,
\]
where we used~\eqref{eq:prop:induction:ineqwedge}. Therefore,
\begin{align}
\notag
\int_1^2\int_1^2 s^{k(\ga-\frac12)} \big[s\,t\,(2 - s)(2 - t)\big]^{\ga-2} |A_{s,t}| \,\dd s\,\dd t 
 & \leq  \Big(\frac{\ol s\ol t}{4}\Big)^{\frac12} \int_1^2 s^{k(\ga-\frac12)-1}(2-s)^{\ga-\frac32} \,\dd s \int_1^2 (2-t)^{\ga-\frac32} \,\dd t \\
 & = \frac1{\ga-\frac12}  \Big(\frac{\ol s\ol t}{4}\Big)^{\frac12} \int_1^2 s^{k(\ga-\frac12)-1}(2-s)^{\ga-\frac32}  \,\dd s
 \label{eq:prop:induction:3}
 \end{align}
where we also used that $s^{\ga-1}\leq 1$ for $s\in[1,2]$ in the first inequality. 
For $k=0$ the integral above is finite, which proves~\eqref{eq:prop:induction:2};  for $k\geq 1$, we recall that $\int_0^1 x^{a-1}(1-x)^{b-1} \dd x = \frac{\gGa(a)\gGa(b)}{\gGa(a+b)}$, $a,b>0$, which yields (since $\ga>1/2$)
\begin{equation}\label{eq:prop:induction:4}
\int_1^2 s^{k(\ga-\frac12)-1}(2-s)^{\ga-\frac32}  \,\dd s \;\leq\; \int_0^2 s^{k(\ga-\frac12)-1}(2-s)^{\ga-\frac32}  \,\dd s\;\leq\; C_\ga \,2^{k(\ga-\frac12)} \, \frac{\gGa(k(\ga-\tfrac12))\gGa(\ga-\frac12)}{\gGa((k+1)(\ga-\tfrac12))}\,.
\end{equation}
We conclude by recognizing the definition of $\tilde\gGa(k)$.

\smallskip
\textit{Integral over $I_3$.}
Similarly to the previous cases, we have $A_{s,t}\subset [\frac{s}2 \ol s-\ol y -a, \ol x] \cap [0,\frac{t}2 \ol t]$, so~\eqref{eq:prop:induction:ineqwedge} implies
\[
|A_{s,t}|\leq \big( \ol s (1-\tfrac{s}{2})\big) \wedge\big(\tfrac{t}{2} \ol t\big) \;\leq\; \Big(\frac{\ol s \ol t}{4}\Big)^{\frac12} \big((2-s)t\big)^{\frac12}\,. 
\]
Thus,
\begin{align*}
\int_1^2\int_0^1 s^{k(\ga-\frac12)} \big[s\,t\,(2 - s)(2 - t)\big]^{\ga-2} |A_{s,t}| \,\dd s\,\dd t  \leq \Big(\frac{\ol s\ol t}{4}\Big)^{\frac12} \int_1^2 s^{k(\ga-\frac12)-1}(2-s)^{\ga-\frac32} \,\dd s \int_0^1 t^{\ga-\frac32} \,\dd t\,,
\end{align*}
and a straightforward change of variable $t\to (2-t)$ yields the same upper bound as in~\eqref{eq:prop:induction:3}-\eqref{eq:prop:induction:4}.

\smallskip
\textit{Integral over $I_4$.}
With the same argument as before we have $|A_{s,t}| \leq (\tfrac{\ol s \ol t}4)^{\frac12} (s(2-t))^{\frac12}$, so we get
\begin{align*}
\int_0^1\int_1^2 s^{k(\ga-\frac12)} \big[s\,t\,(2 - s)(2 - t)\big]^{\ga-2} \gl(A_{s,t}) \,\dd s\,\dd t 
 & \leq \Big( \frac{\ol s\ol t}{4}\Big)^{\frac12} \int_0^1 s^{k(\ga-\frac12) + \ga-\frac32} \,\dd s \int_1^2 (2-t)^{\ga-\frac32} \,\dd t\\
& = \frac1{\ga-\frac12} \frac{1}{(k+1)(\ga-\frac12)} \Big(\frac{\ol s\ol t}{4}\Big)^{\frac12}\;.
\end{align*}
Then we conclude the proof as for the term $I_1$, with~\eqref{eq:prop:induction:ineqgGa}.
\end{proof}

\section{Convergence of the polynomial chaos expansion}\label{sec:proofthm}

In this section we consider the polynomial expansion of the partition function.
Similarly to Proposition~\ref{prop:ub:psikk} we focus on the constrained partition function (the cases of the conditioned or free partition function are analogous,  recall Remark~\ref{rem:freecond2}):
similarly to~\eqref{expansion}, we have 
\begin{equation}
\label{expansion_constrained}
Z_{\lfloor n\bt \rfloor,h_n}^{\gb_n,\go, \quen} 
= e^{\gb_n \go_{\lfloor n\bt \rfloor} -\lambda(\gb_n) +h_n}\sum_{k=0}^{ \lfloor nt_1 \rfloor\wedge \lfloor n t_2 \rfloor} \sum_{ \bzero=\bi_0 \prec \bi_1 \prec \ldots \prec   \bi_{k+1}
= \lfloor n\bt \rfloor } \prod_{l=1}^{k}  (e^{h_n}\zeta_{\bi_l}  +e^{h_n}-1)\, \prod_{l=1}^{k+1} u(\bi_l-\bi_{l-1})  \,.
\end{equation}

Let us highlight the different steps of the proof.
First, we show that one can reduce to treating the case $h_n=0$, simply by expanding the product $\prod_{l=1}^{k} (e^{h_n}\zeta_{\bi_l}  +e^{h_n}-1)$ and factorizing in the homogeneous partition function.
Then, we show the $L^2$ convergence of the $k$-th term of the expansion to a (multivariate) integral against $\cM$, 
for each $k\geq 1$ separately: this is the purpose of Proposition~\ref{prop:convk>1}.
To conclude the proof, we control the $L^2$ norm of each term of the expansion: we show that the $L^2$ norm of the $k$-th term is bounded by some constant $c_k$ uniformly in $n$, with $c_k$ summable---this is the content of Proposition~\ref{lemma:tronck>1}.

\smallskip
Before starting the proof, let us recall the assumptions of Theorem~\ref{conj:scalinggPS} we work with in this section: 
we have $\ga\in(\frac12,1)$, $\bbP\in\Pfk_r$ for some $r\in\N$, and the scaling relations~\eqref{def:scalings} for $h_n,\gb_n$.
Recall also the definition~\eqref{def:psit} of $\psi:=\psi_{\bt}$, where we drop the index $\bt$ to lighten notation---we work with a fixed $\bt$.
Also, to lighten notation, from now on we write $n\bt, nt_1,nt_2$ omitting the integer part.

\subsection{Reducing to the case $h_n=0$}\label{sec:reduc_k=0}
Let us first explain how to reduce to the case $h_n=0$.

At the continuous level, note that expanding the product $\prod_{j=1}^k ( \sigma_r \hat\gb^r \,\dd \cM(\bs_j) + \hat h \, \dd \bs_j)$ in~\eqref{eq:conjchaosexpansion} and summing over points between indices where $\dd \cM(\bs_j)$ appears, we get that
the continuum partition function~\eqref{eq:conjchaosexpansion} can be rewritten as
\begin{align}
\bZ_{\bt, \hat h}^{\hat\gb, \cM, \quen} & =  \sum_{\ell=0}^{+\infty} (\sigma_r \hat \gb^r)^\ell \idotsint \limits_{ \bzero \prec \bs_1 \prec \cdots \prec \bs_\ell \prec \bt } \ 
 \prod_{j=1}^{\ell +1}  \bigg( \sum_{k_j=0}^{+\infty} \hat h^{k_j} \idotsint \limits_{ \bs_{j-1} \prec \bs'_1 \prec \cdots \prec \bs'_{k_j} \prec \bs_j } \prod_{i=1}^{k_j+1} \gp(\bs'_i -\bs'_{i-1}) \prod_{i=1}^{k_j} \dd \bs'_i  \bigg)  \prod_{j=1}^{\ell}   \dd \cM(\bs_j) \notag\\
 &  =  \sum_{\ell=0}^{+\infty} (\sigma_r \hat \gb^r)^\ell \idotsint \limits_{ \bzero \prec \bs_1 \prec \cdots \prec \bs_\ell \prec \bt }  \psi^{(\hat h)}_{\bt} \big( \bs_1,\ldots, \bs_\ell  \big)  \prod_{j=1}^{\ell}  \dd \cM(\bs_j) \,.
 \label{eq:factorisationZ}
\end{align}
where, analogously to~\eqref{def:psit}, we have defined
\begin{equation}
\label{def:psith}
 \psi^{(\hat h)}_{\bt} \big( \bs_1,\ldots, \bs_k  \big) := \ind_{\{\bzero =:\bs_0 \prec \bs_1 \prec \cdots \prec \bs_k \prec \bs_{k+1}:=\bt \}}  \prod_{j=1}^{k+1}  \bZ_{\bs_j-\bs_{j-1}, \hat h} \,,
\end{equation}
recalling the definition of $\bZ_{\bs,\hat h}$ in Proposition~\ref{prop:scalinghom}.

At the discrete level, analogously to what is done in~\eqref{eq:factorisationZ},
expanding the product $\prod_{l=1}^{k+1}(e^{h_n}\zeta_{\bi_l}  +e^{h_n}-1)$ and rearranging the terms,
we can rewrite~\eqref{expansion_constrained} as
\begin{multline*}
e^{ -(\gb_n \go_{n\bt} -\lambda(\gb_n))} Z_{\lfloor n\bt \rfloor,h_n}^{\gb_n,\go, \quen} \\
 = \sum_{\ell=0}^{\infty} \sum_{ \bzero=\bi_0 \prec \bi_1 \prec \ldots \prec \bi_\ell \prec  \bi_{\ell+1} = n\bt } \prod_{j=1}^{\ell}  \zeta_{\bi_j}  
 \prod_{j=1}^{\ell+1} e^{h_n} \bigg( \sum_{k_j=1}^{\infty}  (e^{h_n}-1)^{k_j}\sum_{\bi_{j-1}=\bi'_0 \prec \bi'_1 \prec \ldots \prec \bi'_{k_j} \prec  \bi'_{k_j+1} = \bi_{j} } \prod_{a=1}^{k_j} u(\bi_{a}-\bi_{a-1}) \bigg)  \\
 = \sum_{\ell=0}^{\infty} \sum_{ \bzero=\bj_0 \prec \bj_1 \prec \ldots \prec \bj_\ell \prec  \bj_{\ell+1} = n\bt } \prod_{j=1}^{\ell}  \zeta_{\bi_j}  \prod_{j=1}^{\ell+1} Z_{\bi_{j}-\bi_{j-1},h_n} \,,
\end{multline*}
where we recognized the expansion of the homogeneous partition function $Z_{\bi, h_n}$ to get the last identity, see~\eqref{eq:expandhomogeneous}.

Then, we can set $u^{(h_n)} (\bi) := Z_{\bi,h_n}$, and observe that thanks to Proposition~\ref{prop:scalinghom} (which is proven in Section~\ref{sec:homogeneous} by a standard Riemann-sum approximation) we have, for any $\bs\succ \bzero$,
\[
\lim_{n\to\infty} n^{2-\alpha} L(n) u^{(h_n)} ( \lfloor n \bs \rfloor ) = \bZ_{\bs,\hat h} =: \gp^{(\hat h)} (s) \,.
\]
Note also that Lemma~\ref{lem:homogene} which provides the uniform bound $ u^{(h_n)} (\bi)\leq C_{\hat h} L(\|\bi\|_1)^{-1} \|\bi\|_1^{\alpha-2}$.

These are the two key properties that allow us to adapt the proof of Theorem~\ref{conj:scalinggPS}, performed in the case $h_n\equiv 0$, to a general sequence $(h_n)_{n\geq 1}$ (satisfying~\eqref{def:scalings}).
Indeed, we simply need to replace the renewal mass function $u(\cdot)$ with $u^{(h_n)} (\cdot)$ and use Lemma~\ref{lem:homogene} instead of $u (\bi)\leq C L(\|\bi\|_1)^{-1} \|\bi\|_1^{\alpha-2}$, which comes from~\cite[Thm.~4.1]{B18}.
In the limit, the $k$-points correlation function $\psi(\bs_1,\ldots, \bs_k) = \prod_{i=1}^{k+1} \gp(\bs_i-\bs_{i-1}) $ for $\bzero\prec \bs_0\prec \bs_1 \prec \cdots \prec \bs_{k+1}=\bt$ from~\eqref{def:psit} is simply replaced by $\psi^{(\hat h)} =  \prod_{i=1}^k \gp^{(\hat h)}(\bs_i-\bs_{i-1}) $ defined in~\eqref{def:psith}, as appears in~\eqref{eq:factorisationZ}.

\subsection{Rewriting of the $k$-th term as a discrete integral}\label{sec:proofthm_discrint}
We now focus on the following expansion, for $h_n=0$:
\begin{equation*}
  n^{2-\alpha} L(n) Z_{n\bt,h_n=0}^{\gb_n,\go, \quen}  =
e^{ \gb_n \go_{ n\bt} -\lambda(\gb_n)}\sum_{k=0}^{\infty} \sum_{ \bzero=\bi_0 \prec \bi_1 \prec \ldots \prec \bi_k \prec  \bi_{k+1} = n\bt} n^{2-\alpha} L(n) \prod_{l=1}^{k}  \zeta_{\bi_l}  \prod_{l=1}^{k+1}u(\bi_{l}-\bi_{l-1}) \,,
\end{equation*}
which leads us to define
\begin{equation}
\label{def:tildeZ}
\tilde Z_{n,k} := \sum_{ \bzero=\bi_0 \prec \bi_1 \prec \ldots \prec \bi_k \prec  \bi_{k+1} = n\bt} n^{2-\alpha} L(n) \prod_{l=1}^{k}  \zeta_{\bi_l} \prod_{l=1}^{k+1}u(\bi_{l}-\bi_{l-1})\,.
\end{equation}
Note that the pre-factor $e^{-(\gb_n \go_{n\bt}- \gl(\gb_n))}$ is irrelevant since it converges to 1 in $L^2$.
The main idea is to rewrite the $k$-th term $\tilde Z_{n,k}$ as some ``integral'' of a discrete approximation $\psi_m$ of $\psi$ against the product discrete field $\ol M_n^{\otimes k}$. Note that we use different indices $m,n$ for the approximation of the correlation function $\psi$ and for the approximation of the field $\cM$: in the proof, the idea is to first let $n\to\infty$ and then $m\to\infty$.

Let us introduce some notation. For $m\in\N$, let $\Delta_m:=[\bzero,\frac1m\bone)$ and $\cD_m:=\frac1m\bbZ^2\cap[\bzero,\bt]$. 
For $k,m,n\in\N$ and a function $g_{m}:[\bzero,\bt]^k\to\R$ constant on each $\prod_{l=1}^k(\bu_l + \Delta_m)$, $\bu_1,\ldots,\bu_k\in \cD_m$, we define the $k$-iterated ``discrete integral'' of $g_m$ against $\ol M_n$,
\begin{equation}\label{eq:intdiscrete:k>1}
g_{m} \overset{k}\cdot \ol M_n \;:=\; \sum_{\bu_1,\ldots,\bu_k \in \cD_m} g_{m}(\bu_1,\ldots,\bu_k)\pt \prod_{l=1}^k \ol M_n \big(\bu_l - \tfrac1n\bone + \Delta_m\big)\,,
\end{equation}
where we refer to~\eqref{def:Mbar} for the definition of $\ol M_n$ and to~\eqref{eq:stocint:defvariation} for the definition of increments of a field that appear in the last product.
Note that the term $(- \tfrac1n\bone)$ is added to ensure that, if $n=m$ and $\frac1n\bi \in \cD_n=\cD_m$, then we get $\ol M_n (\frac1n(\bi - \bone) + \Delta_n) = \frac1{\sigma_rn^{3/2}\gb_n^r} \gz_{\bi,n}$, where we write $\gz_{\bi,n} := \gz_{\bi} = e^{\gb_n \go_{\bi} -\lambda(\gb_n)}-1$ for $\bi\in\N^2$ to keep track of the dependence on $n$.

Now, define
\begin{equation}\label{defphin}
\phi_m(\bu) \;:=\; m^{2-\ga} L(m) \bP \big(  \lfloor m \bu \rfloor \in \btau \big)\;,\qquad \bu\in[\bzero,\bt]\;,
\end{equation}
which is piecewise constant on each $(\bu+\Delta_m)\cap[\bzero,\bt]$, $\bu\in \cD_m$. By Proposition~\ref{thm:renouv} (from \cite{Will68}), $\phi_m$ converges simply to $\phi$ as $m\to\infty$.  
With this at hand we define the piecewise constant approximation $\psi_m$ of $\psi$ by replacing $\phi$ with $\phi_m$ in its definition~\eqref{def:psit}:
\[
\psi_m (\bu_1,\ldots, \bu_k) = \gp_m(\bu_1) \, \gp_m( \bu_2 - \bu_1) \, \cdots\, \gp_m( \bt - \bu_k ) \ind_{\{\bzero \prec \bu_1 \prec \cdots \prec \bu_k \prec \bt \}}\,.
\]
With this notation, we can rewrite the $k$-th term $\tilde Z_{n,k}$, see~\eqref{def:tildeZ}, as
\begin{equation}\label{eq:intdiscrete:k>1:bis}
\tilde Z_{n,k} = \bigg( \frac{\sigma_r n^{3/2} \gb_n^r}{n^{2-\ga} L(n)} \bigg)^k \, \psi_n\overset{k}\cdot \ol M_n \;.
\end{equation}
One of our main goals is to prove the following.

\begin{proposition}\label{prop:convk>1}
Let $k\geq1$. Under the assumptions of Theorem~\ref{thm:cvgcM}, one has
\begin{equation*}
\psi_n\overset{k}\cdot \ol M_n \quad \xrightarrow[n\to\infty]{(d)}\quad \psi \overset{k}\diamond \cM\;.
\end{equation*}
This convergence holds in $L^2(\hat\bbP)$ on a convenient probability space $(\hat\gO, \hat\cF,\hat\bbP)$.
\end{proposition}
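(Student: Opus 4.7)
\emph{Strategy.} The plan is to work on the probability space $(\hat\gO,\hat\cF,\hat\bbP)$ from Claim~\ref{claim:cvgps}, where copies $\hat M_n$ of $\ol M_n$ and $\hat\cM$ of $\cM$ satisfy $\hat M_n(\bs)\to\hat\cM(\bs)$ in $L^p(\hat\bbP)$ for every $p<\infty$. Using the piecewise-constant approximation $\psi_m$ of $\psi$ as an intermediate object (with $m\leq n$), we rely on the triangle inequality
\begin{align*}
\big\|\psi_n\overset{k}\cdot\hat M_n \,-\, \psi\overset{k}\diamond\hat\cM\big\|_{L^2(\hat\bbP)}
\;\leq\; A_{n,m}+B_{n,m}+C_m,
\end{align*}
where $A_{n,m}:=\|(\psi_n-\psi_m)\overset{k}\cdot\hat M_n\|_{L^2}$, $B_{n,m}:=\|\psi_m\overset{k}\cdot\hat M_n - \psi_m\overset{k}\diamond\hat\cM\|_{L^2}$, and $C_m:=\|(\psi-\psi_m)\overset{k}\diamond\hat\cM\|_{L^2}$. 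I would first send $n\to\infty$ at fixed $m$ to eliminate $B_{n,m}$ and control $\limsup_n A_{n,m}$, then let $m\to\infty$ to kill the remaining terms.

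\emph{Easy terms.} For $C_m$, the isometry~\eqref{eq:thmstocint:covarbis} applied to $\cM^{\otimes k}$ gives $C_m^2=\|\psi-\psi_m\|_{\nu_k}^2$; Proposition~\ref{thm:renouv} yields $\phi_m\to\phi$ pointwise a.e., and uniform Potter-type bounds of the form $\phi_m(\bu)\leq C\|\bu\|^{\alpha-2}$ provide an integrable majorant along the lines of Proposition~\ref{prop:ub:psikk}, so dominated convergence gives $C_m\to 0$. For $B_{n,m}$, since $\psi_m$ is constant on cells $\bw+\Delta_m$, additivity of 2D increments yields
\begin{align*}
\psi_m\overset{k}\cdot\hat M_n \;=\; \sum_{\bw\in\cD_m^k}\psi_m(\bw)\prod_{l=1}^k\hat M_n\big(\bw_l-\tfrac{1}{n}\bone+\Delta_m\big),
\end{align*}
a finite linear combination of products of rectangular increments of $\hat M_n$. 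The analogous expression with increments of $\hat\cM$ represents $\psi_m\overset{k}\diamond\hat\cM$ thanks to the Wick-type structure of $\nu_{\cM^{\otimes k}}$ from Proposition~\ref{propbis:covariancemeasure:k>1}. Hölder's inequality combined with the $L^p(\hat\bbP)$-convergence of Claim~\ref{claim:cvgps} then yields $B_{n,m}\to 0$ as $n\to\infty$ for each fixed $m$.

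\emph{Main obstacle: controlling $\limsup_n A_{n,m}$.} Expanding the square,
\begin{align*}
A_{n,m}^2 \;=\; \sum_{\bu,\bv\in\cD_n^k}(\psi_n-\psi_m)(\bu)(\psi_n-\psi_m)(\bv)\,\bbE\Big[\prod_{l=1}^k\hat M_n(A_{\bu_l})\hat M_n(A_{\bv_l})\Big],
\end{align*}
with $A_{\bw}:=\bw-\tfrac{1}{n}\bone+\Delta_n$. The combinatorial analysis of Section~\ref{sec:convM} (particularly Lemma~\ref{lem:moment:Mn} and the arguments around~\eqref{eq:m>3end}--\eqref{eq:sumA'}) shows that only pairings of aligned indices survive: clusters of three or more aligned indices, as well as configurations with repeated indices, yield $o_n(1)$ contributions thanks to Lemma~\ref{lem:multicorrel} and the hypothesis $n\gb_n^{2r}\to\infty$. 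The surviving pair contributions reproduce the Wick decomposition of $\nu_{\cM^{\otimes k}}$ given by Proposition~\ref{propbis:covariancemeasure:k>1}, so that
\begin{align*}
\limsup_{n\to\infty}A_{n,m}^2 \;\leq\; \limsup_{n\to\infty}\|\psi_n-\psi_m\|_{\nu_k}^2 \;=\; \|\psi-\psi_m\|_{\nu_k}^2,
\end{align*}
which tends to zero as $m\to\infty$ by the same dominated convergence argument as for $C_m$. The hardest technical step will be the uniform-in-$n$ control of the non-pair contributions when weighted by the sign-changing integrand $\psi_n-\psi_m$: it requires iterating the alignment combinatorics of Section~\ref{sec:convM} while carefully exploiting the $\Gamma$-type majorants of Proposition~\ref{prop:induction} to get a dominated-convergence-type bound that survives the discrete-to-continuous transition.
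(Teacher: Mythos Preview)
Your decomposition into $A_{n,m}$, $B_{n,m}$, $C_m$ is a reasonable alternative to the paper's five-term split, and your arguments for $B_{n,m}$ and $C_m$ are correct (the paper indeed confirms that its $I_3$ step works without truncation, and $C_m$ is handled exactly as you say). The genuine gap is in $A_{n,m}$.

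The estimates you invoke from Section~\ref{sec:convM} (Lemma~\ref{lem:multicorrel} and the argument around~\eqref{eq:m>3}--\eqref{eq:sumA'}) show that non-pair clusters contribute $o_n(1)$ to moments of $\ol M_n$, i.e.\ when the integrand is an indicator of a fixed box. But in $A_{n,m}^2$ the sums are weighted by $(\psi_n-\psi_m)(\bu)(\psi_n-\psi_m)(\bv)$, and $\psi_n(\bu)$ is \emph{not} bounded uniformly in~$n$: near the ``diagonal'' $\bu_l\approx\bu_{l-1}$ one has $\phi_n(\bu_l-\bu_{l-1})\asymp n^{2-\alpha}L(n)$, so $\psi_n$ can be of order $n^{(k+1)(2-\alpha)}$. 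This extra polynomial growth in $n$ destroys the $o_n(1)$ coming from the cluster combinatorics of Section~\ref{sec:convM}. Your appeal to Proposition~\ref{prop:induction} does not help here either: that result bounds \emph{continuous} integrals against $\nu_{\cM^{\otimes k}}$, not discrete sums weighted by $\zeta$-correlations.

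This is precisely why the paper introduces the $\delta$-truncation. With $\psi_n^\delta$ the weights are uniformly bounded (by $C_\delta$), so the analogue of your $A_{n,m}$ with $\psi_n^\delta,\psi_m^\delta$ (the paper's $I_2$) reduces to a trivial $\|\psi_n^\delta-\psi_m^\delta\|_\infty$ estimate times $\bbE[\ol M_n(\bt)^{2k}]$, using only non-negativity of correlations. The singularity is then isolated in $I_1=\|\psi_n\overset{k}\cdot\ol M_n-\psi_n^\delta\overset{k}\cdot\ol M_n\|_{L^2}$, handled by Proposition~\ref{lemma:tronck>1}(ii). The proof of that proposition (Section~\ref{sec:prooftronck>1}) is the real work: it rewrites the second moment via two independent renewals $\btau,\btau'$, decomposes $\bI\cup\bJ$ into isolated points, intersection points and chains, and controls the non-pair contributions through a delicate analysis involving Poissonization over chain lengths and renewal-intersection tail estimates (Lemmas~\ref{lem:tailandmoments}--\ref{lem:interesctcond}). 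None of this is captured by the Section~\ref{sec:convM} combinatorics or by Proposition~\ref{prop:induction}; without it, your control of $\limsup_n A_{n,m}$ does not go through.
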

Recall from Proposition~\ref{prop:ub:psikk} that the stochastic integral $\psi \overset{k}\diamond \cM$ is well defined.

\begin{remark}
The ``integral'' $\psi_n\overset{k}\cdot \ol M_n$ that we introduced does not fall under the definition from  Theorem~\ref{thm:stocint}. Even though one could define the covariance measure of $\ol M_n^{\otimes k}$ with a collection of Dirac masses, one cannot use a direct argument (such as dominated convergence)  to get that
$\psi_n\overset{k}\diamond \ol M_n^{\otimes k}$ converges towards $\psi\overset{k}\diamond\cM$.  Nonetheless, the development of such arguments would be an interesting expansion of our results towards a general methodology to study the influence of (correlated) disorder on physical systems.
\end{remark}

\subsection{Proof of Proposition~\ref{prop:convk>1}}\label{sec:proofthm_k}

Recall from Claim~\ref{claim:cvgps} that, up to a change of probability space, we may assume  that $(\ol M_n)_{n\ge 1}$ converges $\bbP$-a.s.\ to $\ol\cM$ and that all pointwise convergences hold in $L^p(\bbP)$ for $p\geq1$ (we do not change notation for simplicity's sake). Hence we simply need to prove the $L^2(\mathbb P)$ convergence on that probability space.

In order to deal with the fact that $\phi$ blows up around $\bzero$, let us introduce a truncated version of $\gp_m,\psi_m,\gp,\psi$. For $\delta\geq 0$, let $\phi_m^\delta(\bu):=\phi_m(\bu) \ind_{\{\|\bu\| \geq \delta\}}$ (note that $\phi_m^0=\phi_m$)and
\[
\psi_m^{\delta}(\bu_1,\ldots, \bu_k) :=\gp_m^{\delta}(\bu_1) \, \gp_m^{\delta}( \bu_2 - \bu_1) \, \cdots\, \gp_m^{\delta}( \bt - \bu_k ) \ind_{\{\bzero \prec \bu_1 \prec \cdots \prec \bu_k \prec \bt \}} \,.
\]
We define similarly $\phi^\delta(\bu):=\phi(\bu)  \ind_{\{\|\bu\| \geq \delta\}}$ and $\psi^\delta$.

We write for $n\geq m$, $\delta > 0$,
\begin{equation}\label{eq:lem:controlunifk>1:decompo}
\| \psi_n\overset{k}\cdot \ol M_n - \psi \overset{k}\diamond \cM\|_{L^2} \;\leq\; I_1 + I_2 + I_3 + I_4 + I_5\;,
\end{equation}
where
\begin{equation}
\begin{array}{llll}
I_1 \;:=\; & \| \psi_n\overset{k}\cdot \ol M_n - \psi_n^\delta \overset{k}\cdot \ol M_n\|_{L^2}\,,\quad &
I_2 \;:=\; & \| \psi_n^\delta\overset{k}\cdot \ol M_n - \psi_m^\delta \overset{k}\cdot \ol M_n\|_{L^2}\,, \smallskip\\
I_3 \;:=\; & \| \psi_m^\delta \overset{k}\cdot \ol M_n - \psi_m^\delta \overset{k}\diamond \cM\|_{L^2}\,,\quad &
I_4 \;:=\; & \| \psi_m^\delta \overset{k}\diamond \cM - \psi^\delta \overset{k}\diamond \cM\|_{L^2}\,, \smallskip\\
I_5 \;:=\; & \| \psi^\delta \overset{k}\diamond \cM - \psi \overset{k}\diamond \cM \|_{L^2}\,. &&
\end{array}
\end{equation}
We now need to control each term separately.
We show that we can fix $\delta$ sufficiently small so that the terms~$I_1$ and $I_5$ are small, uniformly in $n$ for~$I_1$.
Then, for a fixed $\delta$, we show that $I_2, I_4$ can be made arbitrarily small by choosing $m$ large, uniformly in $n\geq m$ for~$I_2$. It then remains to see that, for any fixed $m$ and $\delta$, $I_3$ vanishes as~$n\to\infty$.


\subsubsection{Term $I_5$}
First of all, notice that we have
$\lim_{\delta\to 0 }\psi^\delta =\psi$, so by Proposition~\ref{prop:ub:psikk}, the isometry property~\eqref{eq:thmstocint:covarbis} and dominated (or monotone) convergence, we get that we can chose $\delta$ sufficiently small to make~$I_5$ arbitrarily small.

\subsubsection{Term $I_1$}
We use the following key result, which shows that $I_1$ is arbitrarily small for small $\delta$, uniformly in $n$ large. Its proof is postponed to Section~\ref{sec:prooftronck>1} below; it can be viewed as the core of the proof, and contains some of the most technical part of the paper.
As a first part, we also include a bound on the  $L^2$ norm of the discrete integral for the non-truncated $\psi_n$: this part is the discrete analogue of Proposition~\ref{prop:ub:psikk} and will prove useful later.

\begin{proposition}\label{lemma:tronck>1}
$(i)$ There exist constants
$C,c>0$ and some $n_0\geq 1$ such that for any $k\in\N$, $n\geq n_0$, one has
\begin{equation}\label{eq:tronck>1:bound1}
\|\psi_{n} \overset{k}\cdot \ol M_n \|_{L^2}^2 \;\leq\; \frac{C^k}{\gGa(k(\ga-\frac12))} + C^k \gb_n^{ck}\;.
\end{equation}
$(ii)$ For  every $k\in \mathbb N$, we have
\begin{equation}\label{eq:tronck>1:bound2}
\lim_{\delta \downarrow 0} \limsup_{n\to\infty} \|\psi_{n} \overset{k}\cdot \ol M_n - \psi_{n}^\delta \overset{k}\cdot \ol M_n\|_{L^2}^2 =0 \;.
\end{equation}
\end{proposition}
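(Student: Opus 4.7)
The plan is to develop the squared $L^2$ norm
\[
\|\psi_n \overset{k}{\cdot} \ol M_n\|_{L^2}^2
= \frac{1}{(\sigma_r n^{3/2}\gb_n^r)^{2k}}
\sum_{\bI,\bJ} \psi_n\big(\tfrac{\bI}{n}\big)\psi_n\big(\tfrac{\bJ}{n}\big)
\bbE\Big[\prod_{l=1}^k \zeta_{\bi_l}\zeta_{\bj_l}\Big],
\]
where $\bI=(\bi_1,\ldots,\bi_k)$ with $\bzero\prec \bi_1\prec\cdots\prec\bi_k\prec n\bt$ and similarly for $\bJ$, and then mimic at the discrete level the argument used in Proposition~\ref{prop:ub:psikk}. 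Following the strategy of Lemma~\ref{lem:moment:Mn}, I would partition the $2k$-tuple $(\bI,\bJ)$ into its maximal ``aligned'' equivalence classes, and invoke Lemma~\ref{lem:multicorrel} to bound the factorised expectation. The dominant configurations are exactly the pair configurations, \textit{i.e.}\ those partitions $\mathcal J \in \mathscr P_{2k}$ in which each class consists of two distinct aligned points; for such a configuration $\bbE[\prod_l \zeta_{\bi_l}\zeta_{\bj_l}] \sim (\sigma_r^2 \gb_n^{2r})^k$ by Lemma~\ref{lem:correl}, so the pre-factor $(\sigma_r n^{3/2}\gb_n^r)^{-2k}$ gets balanced by $(\sigma_r^2\gb_n^{2r})^k$ times the number of admissible geometric configurations, which is of order $n^{3k}$.

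Every other partition has at least one class of size $\geq 3$ or contains a coincidence $\bi_l=\bj_{l'}$. In both cases Lemma~\ref{lem:multicorrel} gives an extra factor $\gb_n^{c}$ for some explicit $c=c(r)>0$ (going either through the $\lceil r/2\rceil$ bound of~\eqref{eq:multicorrel} when a class has size $\geq 3$, or through $\bbE[\zeta_\bone^2]=O(\gb_n^2)$ when two indices coincide), yielding a total contribution bounded by $C^k \gb_n^{ck}$ after a crude enumeration of the partitions. This produces the additive $C^k\gb_n^{ck}$ term in~\eqref{eq:tronck>1:bound1}.

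For the dominant (pair) configurations, the admissible alignments are parametrised by permutations $\sigma\in\overline S_k$ avoiding the pattern $(3\,2\,1)$, recall~\eqref{eq:barSk}, so that $|\overline S_k|\leq 4^k$. For each such $\sigma$, the geometric constraint $\bj_i \leftrightarrow \bi_{\sigma(i)}$ reduces the sum to a $3k$-fold Riemann sum that converges, as $n\to\infty$, to the continuous integral already estimated in the proof of Proposition~\ref{prop:ub:psikk}. The asymptotic $\phi_n(\cdot)\to \phi(\cdot)$ from Proposition~\ref{thm:renouv}, combined with the uniform upper bound $\phi_n(\bu)\leq C\|\bu\|^{\alpha-2}$ (see~\cite[Thm.~4.1]{B18}, rescaled), allows the proof of Proposition~\ref{prop:induction} to be transposed verbatim to the discrete setting: iterating this one-step estimate $k$ times yields the bound $C^k/\Gamma(k(\alpha-\tfrac12))$, hence~\eqref{eq:tronck>1:bound1}.

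Part~(ii) follows the same scheme, but with the additional observation that $\psi_n-\psi_n^\delta$ is supported on configurations where at least one consecutive increment $\|\bu_l-\bu_{l-1}\|<\delta$. In the $L^2$ expansion, this forces in every pair configuration the presence of at least one ``short'' step of size less than $\delta$, so one of the $k$ iterations of (the discrete analog of) Proposition~\ref{prop:induction} can be localised to a region of diameter $O(\delta)$, producing an extra factor $\delta^{2\alpha-1}$ by an explicit integration. The remaining $(k-1)$ iterations still yield $C^{k-1}/\Gamma((k-1)(\alpha-\tfrac12))$, so taking $n\to\infty$ first and then $\delta\downarrow 0$ gives~\eqref{eq:tronck>1:bound2}. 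The main obstacle is the bookkeeping in Step~2: one must control the number of partitions $\mathcal J$ leading to a non-pair contribution uniformly in $k$, so that the cost $C^k$ from that enumeration is swallowed by the geometric factor $\gb_n^{ck}$ and the pair-configuration analysis keeps producing the sharp $1/\Gamma(k(\alpha-\tfrac12))$ decay; this is precisely what allows the series $\sum_k (\sigma_r\hat\gb^r)^k \|\psi_n\overset{k}{\cdot}\ol M_n\|_{L^2}$ to converge uniformly in $n$, and it will be crucial to conclude the proof of Theorem~\ref{conj:scalinggPS} via Corollary~\ref{corol:ub:psikk}.
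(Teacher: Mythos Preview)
Your high-level decomposition into ``pair'' and ``non-pair'' configurations is correct, but the paper takes a very different route for both pieces, and your handling of the non-pair configurations has a genuine gap.

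For the pair configurations, the paper does \emph{not} transpose Proposition~\ref{prop:induction} to a discrete Riemann sum. Instead it rewrites $u_n(\bI)=\bP(\bI\subset\btau,\,n\bt\in\btau)$ and reinterprets the double sum over $(\bI,\bJ)$ as an expectation over two independent bivariate renewals $\btau,\btau'$ conditioned on $n\bt\in\btau\cap\btau'$ (see~\eqref{eq:rewritingreplicas}). The number of pair configurations is then bounded by $\binom{\cC_2(\btau,\btau')}{k}\leq \tfrac{1}{k!}\cC_2(\btau,\btau')^k$, and since $\cC_2\leq |\rho^{(1)}\cap[0,nt_1]|+|\rho^{(2)}\cap[0,nt_2]|$ with $\rho^{(a)}=\btau^{(a)}\cap\btau'^{(a)}$, the moment estimate of Lemma~\ref{lem:tailandmoments} on the one-dimensional intersection renewal delivers $C^k/\Gamma(k(\alpha-\tfrac12))$ directly. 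This bypasses entirely any uniform-in-$n$ discrete analogue of Proposition~\ref{prop:induction}; your approach might work but would require that analogue to be proven with $n$-independent constants.

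For the non-pair configurations, your claim that ``a crude enumeration'' yields $C^k\gb_n^{ck}$ is the gap. A configuration with $k-1$ genuine $2$-chains and a single defect (one coincidence $\bi_l=\bj_{l'}$, or one $3$-chain) gains only an extra $\gb_n^{c}$, not $\gb_n^{ck}$; the resulting bound $C^k\gb_n^{c}$ is not summable in $k$. The paper's Lemma~\ref{lemma:L2rest} handles this by parametrising the chain structure via $(q_p)_{p\ge1}$ with $2q_1+\sum_p pq_p=2k$, rewriting the constrained sum as $e^{\sum_p\blamb_p}\,\bP_{\blamb}(2Q_1+\sum_p pQ_p=2k)$ with Poisson variables $Q_p$, and then splitting into three regimes: (i)~$\tilde Q_3=0$, $Q_1\ge1$; (ii)~$Q_1+Q_2\ge(1-\eta)k$, $\tilde Q_3\ge1$; (iii)~$\tilde Q_3\ge 2\eta k$. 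Regimes (i)--(ii) \emph{keep} the $1/\Gamma$ decay coming from the $\sim k$ surviving pairs \emph{and} gain a factor $\gb_n$ from the defect, giving $\tfrac{C^k\gb_n}{\Gamma(k(\alpha-\frac12))}$, which is absorbed into the first term of~\eqref{eq:tronck>1:bound1}. Only regime (iii), where a positive fraction of indices lie in long chains, produces the $C^k\gb_n^{ck}$ term. This trichotomy is exactly the ``bookkeeping'' you flag as the main obstacle, and it is not a matter of enumeration: near-pair configurations must be shown to inherit the Gamma decay, not just a power of $\gb_n$.

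For (ii), the paper again stays in the probabilistic picture: the constraint $\|\bi_l-\bi_{l-1}\|<\delta n$ becomes a constraint on $\btau$, and the bound is closed via a Riemann-sum approximation and dominated convergence on the continuous $\nu_{\cM^{\otimes 4}}$-integral, rather than by localising one step of a discrete induction.
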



\subsubsection{Term $I_4$}
It is already clear that $\lim_{m\to\infty} I_4 =0$: this follows from the fact that $\psi_m^\delta(\bu)\to\psi^\delta(\bu)$ as $m\to\infty$ for all $\bu\in[\bzero,\bt]^k$, together with Proposition~\ref{prop:ub:psikk} and dominated convergence. 

\subsubsection{Term $I_2$}
Let us show that the convergence $\lim_{m\to\infty} \psi_m^\delta =\psi^\delta$ actually holds for the $\|\cdot\|_\infty$ norm on $[\bzero,\bt]^k$. 
First, the convergence $\lim_{m\to\infty}\phi_m(\bu) =\phi(\bu)$, $m\to\infty$ holds uniformly in $\bu\in[\bzero,\bt]\setminus[0,\gd)^2$ for $\gd>0$ (see \cite{Will68}). We therefore get that for $\delta>0$ fixed, $\lim_{m\to\infty} \|\phi_m^\delta-\phi^\delta\|_\infty = 0$. 
Now, let $\bu:=(\bu_1,\ldots,\bu_k)$ with $\bzero=:\bu_0\preceq\bu_1\prec\ldots\prec \bu_k\preceq\bu_{k+1}:=\bt$. Then, by the simple fact (proven e.g.\ by recurrence) that 
\begin{equation}
\label{prodsum}
\bigg| \prod_{i=1}^k a_i -\prod_{i=1}^k b_i\bigg| \leq \Big(\max_{1\leq i\leq k}\{|a_i|,|b_i|\}\Big)^{k-1} \sum_{i=1}^k |a_i-b_i| \,,
\end{equation}
we get that
\begin{align*}
\bigg|\prod_{l=1}^{k+1} \phi_m^\delta(\bu_l-\bu_{l-1}) - \prod_{l=1}^{k+1} \phi^\delta(\bu_l-\bu_{l-1})\bigg| 
\leq\; (k+1)  \Big(\sup_{[\bzero, \bt]\setminus [0,\delta)^2} (|\gp_m(\bu)| +|\gp(\bu)|)   \Big)^{k} \;  \big\| \phi_m^\delta-\phi^\delta\big\|_\infty\,.
\end{align*}
Since, $|\gp_m(\bu)|,|\gp(\bu)|$ are bounded by a constant $C_{\delta}$ uniformly for $\|\bu\| \geq \delta$,
this indeed shows that $\lim_{m\to\infty} \|\psi_m^\delta-\psi^\delta\|_\infty = 0$.


Let us now prove that we can choose $m_1\in\N$ to make $I_2$ arbitrarily small uniformly in $n\geq m\geq m_1$. Notice that for $n\geq m$ and $\bu\in \cD_p$, $p\in\{n,m\}$, we may rewrite
\[\ol M_n(\bu-\tfrac1n + \Delta_p) \;=\; \sum_{\bw\in (\bu+\Delta_p)\cap \cD_{nm}} \ol M_n(\bw-\tfrac1n+\Delta_{nm})\;,
\]
which gives, with the definition \eqref{eq:intdiscrete:k>1},
\begin{equation}\label{eq:lem:controlunifk>1:2}
\psi^\delta_m\overset{k}\cdot \ol M_n - \psi^\delta_n\overset{k}\cdot \ol M_n \;=\; \sum_{\bw\in (\cD_{nm})^k} \big(\psi^\delta_m(\bw)-\psi^\delta_n(\bw)\big) \prod_{l=1}^k \ol M_n(\bw_l-\tfrac1n+\Delta_{nm})\;.
\end{equation}
Therefore, recalling that the correlations of the field $\ol M_n$ are non-negative, we have
\[\begin{aligned}
\| \psi^\delta_m\overset{k}\cdot \ol M_n - \psi^\delta_n\overset{k}\cdot \ol M_n \|_{L^2}^2 \,&\leq\, \big(\|\psi_m^\delta-\psi_n^\delta\|_\infty\big)^2 \!\!\sum_{\bw\in (\cD_{nm})^{2k}}\! \bbE\Big[\prod_{l=1}^{2k}\ol M_n(\bw_l-\tfrac1n+\Delta_{nm})\Big]\\
&\leq\, \big(\|\psi_m^\delta-\psi_n^\delta\|_\infty\big)^2 \pt \bbE\big[\ol M_n([\bzero,\bt))^{2k}\big]\;=\; \big(\|\psi_m^\delta-\psi_n^\delta\|_\infty\big)^2 \pt \bbE\big[\ol M_n(\bt)^{2k}\big]\;.
\end{aligned}\]

Recall that $(\bbE[\ol M_n(\bt)^{2k}])_{n\geq1}$ converges, so it is bounded. Thus, we conclude the proof by choosing $m_1\in\N$ such that, for $n\geq m\geq m_1$, $\|\psi_m^\delta-\psi_n^\delta\|_\infty$ is sufficiently small.

\subsubsection{Term $I_3$} 
Let $m\in\N$ be fixed and $\delta \geq 0$ (we allow $\delta=0$). Since $\psi_m^\delta$ (or $\psi^0_m=\psi_m$) is constant on each $\prod_{l=1}^k(\frac1m\bi_l + \Delta_m)$, $\bi_1,\ldots,\bi_k\in\N_0^2$, there exists a family $\{ a_{\bw}\in\R ; \bw =(\bw_1,\ldots,\bw_k)\in (\cD_m)^k \}$ such that for $(\bu_1,\ldots,\bu_k)\in[\bzero,\bt)^k$,
\begin{equation}
\psi_m^\delta(\bu_1,\ldots,\bu_k)\;=\;\sum_{\bw \in (\cD_m)^k} a_{\bw} \prod_{l=1}^k \ind_{[\bzero,\bw_l)}(\bu_l)\;.
\end{equation}
Thus, starting from the definition~\eqref{eq:intdiscrete:k>1}, we get 
\begin{equation}
\label{eq:convk>1:ipp}
\begin{aligned}
\psi_m^\delta \overset{k}\cdot \ol M_n \;&=\; 
\sum_{\bu\in (\cD_m)^k} \bigg(\sum_{\bw\in (\cD_m)^k} a_\bw  \prod_{l=1}^k \ind_{[\bzero,\bw_l)}(\bu_l) \bigg) \prod_{l=1}^k \ol M_n \big(\bu_l - \tfrac1n\bone + \Delta_m\big) \\
&=\; \sum_{\bw\in (\cD_m)^k} a_\bw \sum_{\substack{\bu\in (\cD_m)^k \\ \bu\in \prod_{l=1}^k [\bzero,\bw_l)}} \prod_{l=1}^k \ol M_n \big(\bu_l - \tfrac1n\bone + \Delta_m\big) =\;\sum_{\bw\in (\cD_m)^k} a_\bw \pt\prod_{l=1}^k \pt \ol M_n(\bw_l -\tfrac1n\bone)\;.
\end{aligned}
\end{equation}
Recalling the definition of the integral $\overset{k}\diamond$, we have $(\prod_{l=1}^k \ind_{[\bzero,\bw_l)}) \overset{k}\diamond \cM =  \prod_{l=1}^k \cM(\bw_l)$. Hence we may also write with a similar computation
\begin{equation}\label{eq:convk>1:ippcM}
\psi_m^\delta\overset{k}\diamond \cM \;=\; \sum_{\bw\in (\cD_m)^k} a_\bw \pt \prod_{l=1}^k \pt \cM(\bw_l)\;. 
\end{equation}
Using Proposition~\ref{prop:lim:M}, it is clear that for fixed $\delta>0, m\in \mathbb N$ we have the convergence
\begin{equation}\label{eq:convk>1:ippcM2}
\sum_{\bw\in (\cD_m)^k} a_\bw \pt\prod_{l=1}^k \pt \ol M_n(\bw_l) \,\;\;\xrightarrow[n\to\infty]{L^2}\,\;\; \sum_{\bw\in (\cD_m)^k} a_\bw \pt \prod_{l=1}^k \pt \cM(\bw_l)\,. 
\end{equation}
It remains to replace $\ol M_n(\bw_l -\tfrac1n\bone)$ with  $\ol M_n(\bw_l)$ in~\eqref{eq:convk>1:ipp}.
For $\bw_1,\ldots,\bw_k\in[\bzero,\bt)$, we have
\begin{align*}
&\Big\|\prod_{l=1}^k\ol M_n(\bw_l)-\prod_{l=1}^k\ol M_n(\bw_l -\tfrac1n\bone)\Big\|_{L^2}\\
& \quad\leq \; \sum_{l=1}^k\Big\|\big(\ol M_n(\bw_l)-\ol M_n(\bw_l -\tfrac1n\bone)\big)\prod_{j=1}^{l-1}\ol M_n(\bw_l)\prod_{j=l+1}^k\ol M_n(\bw_l-\tfrac1n\bone)\Big\|_{L^2}\\
& \quad\leq \;\sum_{l=1}^k\Bigg(\Big\|\ol M_n(\bw_l)-\ol M_n(\bw_l -\tfrac1n\bone)\Big\|_{L^{2k}} \prod_{j=1}^{l-1}\Big\|\ol M_n(\bw_l)\Big\|_{L^{2k}}\prod_{j=l+1}^k\Big\|\ol M_n(\bw_l -\tfrac1n\bone)\Big\|_{L^{2k}}\Bigg)\;,
\end{align*}
where we used H\"older's inequality. In each term of the sum, the $k-1$ last factors are all uniformly bounded by $\|\ol M_n(\bt)\|_{L^{2k}}\leq C\|\cM(\bt)\|_{L^{2k}}<\infty$ (recall Claim~\ref{claim:cvgps}).
Then, the first factor goes to 0, thanks to Lemma~\ref{lem:controlM}, which proves that $\|\prod_{l=1}^k\ol M_n(\bw_l)-\prod_{l=1}^k\ol M_n(\bw_l -\tfrac1n\bone)\|_{L^2}$ goes to~$0$ as $n\to\infty$. 
Recollecting (\ref{eq:convk>1:ipp}--\ref{eq:convk>1:ippcM2}), this concludes the proof that $\lim_{n\to\infty} I_3 =0$.
Note that all the proof was also valid in the case $\delta=0$, so we also have that $\lim_{n\to\infty}\|\psi_m \overset{k}\cdot \ol M_n - \psi_m \overset{k}\diamond \cM \|_{L^2} =0$.\qed

\subsection{Conclusion of the proof of Theorem~\ref{conj:scalinggPS}}\label{sec:proofthm_mainproof}
With the help of Proposition~\ref{prop:convk>1} and thanks to the first item of Proposition~\ref{lemma:tronck>1}, we are able to conclude the proof of Theorem~\ref{conj:scalinggPS}.

Indeed, in view of \eqref{eq:intdiscrete:k>1:bis} and using the fact that $\lim_{n\to\infty} \frac{\beta_n^r}{n^{\frac12-\alpha} L(n)}= \hat \beta$ by~\eqref{def:scalings}, Proposition~\ref{prop:convk>1} shows that, for any fixed $k$,
\[
\tilde Z_{n,k} \xrightarrow[n\to\infty]{L^2} (\sigma_r \hat \gb)^k \, \psi \overset{k}\diamond \cM = \idotsint \limits_{ \bzero \prec \bs_1 \prec \cdots \prec \bs_k \prec \bt }  \psi_{\bt} \big( \bs_1,\ldots, \bs_k  \big) \prod_{j=1}^k \sigma_r \hat\gb^r \,\dd \cM(\bs_j)  \,.
\]
Then, item (i) of Proposition~\ref{lemma:tronck>1} shows that there is a constant $C>0$ and some $n_0\geq 0$ such that, for all $k\in \mathbb N$ and $n\geq n_0$,
\[
\| \tilde Z_{n,k}\|_{L^2} \leq \frac{C^k}{\Gamma( k(\alpha-\frac12))^{1/2}} + (C \gb_n^{c/2})^k 
\leq \frac{C^k}{\Gamma( k(\alpha-\frac12))^{1/2}}  +2^{-k}\,.
\]
Hence, we have the following bound on the $L^2$ norm of rest of the series, valid uniformly for $n\geq n_0$:
\[
\Big\| \sum_{k\geq k_0} \tilde Z_{n,k} \Big\|_{L^2}
\leq  \sum_{k\geq k_0} \| \tilde Z_{n,k}\|_{L^2} \leq \sum_{k\geq k_0} \Big(\frac{C^k}{\Gamma( k(\alpha-\frac12))^{1/2}}  +2^{-k} \Big)\,,
\]
which can be made arbitrarily small by choosing $k_0$ large.
Hence, fixing $k_0>0$, letting $n\to\infty$ and then $k_0\to\infty$,  we get that
\[
\sum_{k =0}^{\infty} \tilde Z_{n,k} \xrightarrow[n\to\infty]{L^2} \sum_{k=0}^{\infty} (\sigma_r \hat \gb)^k \, \psi \overset{k}\diamond \cM  = \bZ_{\bt, \hat h=0}^{\hat \gb, \cM,\quen} \,,
\]
using also Corollary~\ref{corol:ub:psikk} to ensure that the r.h.s.\ is well-defined.\qed

\subsection{Proof of Proposition~\ref{lemma:tronck>1}}\label{sec:prooftronck>1}

The only thing that remains to be proven is now  Proposition~\ref{lemma:tronck>1}.
We start with the proof of the first item and then we build on that proof to deal with the second item.

\subsubsection{Proof of item (i) of Proposition~\ref{lemma:tronck>1}}
Notice that from~\eqref{eq:intdiscrete:k>1:bis},
$\|\psi_n  \overset{k}\cdot \ol M_n\|_{L^2} \leq C^k \|\tilde Z_{n,k}\|_{L^2}$.
We therefore need to control the $L^2$ norm of $\tilde Z_{n,k}$ defined in~\eqref{def:tildeZ}, that can be written more compactly as
\begin{equation}
\label{L2start1}
\tilde Z_{n,k} = n^{2-\alpha} L(n) \sum_{\bI \in \cI_k } u_n(\bI)  \prod_{\bi\in \bI}  \zeta_{\bi} \,,
\end{equation}
where we have denoted 
\[
\mathcal I_k = \mathcal I_k (\bt) = \Big\{\bI=(\bi_1, \ldots, \bi_k) , \bzero =: \bi_0 \prec \bi_1 \prec \cdots \prec \bi_k \prec n\bt =:\bi_{k+1}\Big\}
\]
the set of increasing subsets of $\llbracket \bone, n \bt \rrbracket$ with cardinality $k$, and defined $u_n(\bI) := \prod_{l=1}^{k+1} u(\bi_l-\bi_{l-1})$ (with by convention $
bi_0=\bzero$, $\bi_{k+1}=n\bt$).
With these notation, we need to control
\begin{equation}
\label{L2start}
 \|\tilde Z_{n,k}\|_{L^2}^2
 = (n^{2-\alpha} L(n))^2 \sum_{\bI,\bJ \in \cI_k} u_n(\bI) u_n(\bJ)   \bbE\Big[  \prod_{\bi\in \bI, \bj \in \bJ}  \zeta_{\bi} \zeta_{\bj} \Big] \,.
\end{equation}

\subsubsection*{Step 1. Controlling the correlation term}
Since the indices in $\bI,\bJ$ are increasing, the set $\bI\cup \bJ$ can be uniquely partitioned (as done in \cite[Prop.~3.4]{Leg21}) into disjoint sets
\begin{equation}
\label{eq:decompset}
 \biota \cup \bnu  \cup \bigcup_{m\in \bbN} \bsigma_m \,,
\end{equation}
where:
\begin{itemize}[leftmargin=20pt,topsep=1pt]
\item $\biota$ is the set of \emph{isolated} points, \textit{i.e.}\ indices $\bi \in \bI$ (resp.\ in $\bJ$) such that $\bj \not \aligne \bi$ for any $\bj \in \bJ$ (resp.\ in~$\bI$);

\item $\bnu$ is the set of \emph{intersection} points, \textit{i.e.}\ indices that are both in $\bI$ and $\bJ$;

\item $\sigma_m$ are \textit{chains} of indices, \textit{i.e.}\ indices $\bi_1, \ldots, \bi_p$ (necessarily alternating between $\bI$ and $\bJ$) such that $\bi_{l+1} \aligne \bi_l$ for all $1\leq l\leq p-1$ and such that any other index in $\bI\cup \bJ$ is not aligned with any of the~$\bi_l$; the integer $p\geq 2$ is called the length of the chain. 
\end{itemize}
By independence of the $\zeta_{\bi}$ for non-aligned sets, we get that
\begin{align*}
\bbE\Big[  \prod_{\bi\in I, \bj \in J}  \zeta_{\bi} \zeta_{\bj} \Big]
& = \bbE[ \zeta_{\bone}^2 ]^{|\bnu|} \bbE[ \zeta_{\bone} ]^{|\biota|}
\prod_{m\in \bbN} \bbE\Big[ \prod_{\bi \in \bsigma_m}  \zeta_{\bi} \Big] 
\end{align*}
For fixed $\bI,\bJ$, let us denote $N_0(\bI,\bJ) =|\biota|$ the number of isolated points, $N_1(\bI,\bJ) = |\bnu|$ the number of intersection points and $N_{p}(\bI,\bJ) = |\{m, |\bsigma_m|=p\}|$ the number of chains of length $p$.
The correlation is equal to~$0$ when $N_0(\bI,\bJ) \geq 1$, and in the case $N_{0}(\bI,\bJ)=0$ we get, thanks to Lemma~\ref{lem:multicorrel}-\eqref{chaincorrel},
\begin{equation}
\label{correl:structure}
\bbE\Big[  \prod_{\bi\in \bI, \bj \in \bJ}  \zeta_{\bi} \zeta_{\bj} \Big]\leq  C^k  \gb_n^{2 N_1(\bI,\bJ)}   \prod_{p=2}^k \big( \gb_n^{2r + (p-2)  \lceil \frac r2 \rceil}\big)^{N_p(\bI,\bJ)} 
\end{equation}
where we also used that $|\bI|=|\bJ|=k$ to get that the power of the constant is $2|\bnu|+|\biota|+ \sum_{m\geq 1} |\bsigma_m| = 2k$.

Going back to~\eqref{L2start}, we can decompose $ \|\tilde Z_{n,k}\|_{L^2}^2$ into two parts.
The first part contains the main contribution, which comes from sets of indices $\bI,\bJ$ such that $\bI\cup \bJ$ contains only chains of length $2$, \textit{i.e.}\ such that $N_2(\bI,\bJ)=k$ (and necessarily $N_p(\bI,\bJ)=0$ for all $p\neq 2$):
using~\eqref{correl:structure}, this is bounded by $C^k$ times
\begin{equation}
\label{L2main}
\cK_1 = \cK_1(k,n) := (n^{2-\alpha} L(n))^2\, (\gb_n^{2r})^k \sum_{\bI,\bJ \in \cI_k,\, N_2(\bI,\bJ)=k} u_n(\bI) u_n(\bJ) \,.
\end{equation}
The  second part, containing contributions from all other sets of indices $\bI,\bJ$, will be negligible: decomposing over the value of $N_p(\bI,\bJ)$ and using~\eqref{correl:structure}, we get that it is bounded by 
$C^k$ times
\begin{equation}
\label{L2rest}
\cK_2 = \cK_2 (k,n) := (n^{2-\alpha} L(n))^2 \!\!\! \sumtwo{q_1,\ldots, q_{2k}\geq 0, q_2<k}{2q_1+ \sum_{p=2}^{2k} pq_p = 2k}
\sumtwo{\bI,\bJ \in \cI_k}{N_p(\bI,\bJ)= q_p } u_n(\bI) u_n(\bJ)  \gb_n^{2q_1 + \sum_{p=2}^{k} q_p (2r+(p-2) \lceil \frac r2 \rceil)}\,.
\end{equation}

\begin{lemma}
\label{lemma:L2main}
There exists a constant $C>0$ and $n_0\geq 1$ such that 
for all $n\geq n_0$ and $k\in \mathbb N$,
\[
\cK_1  \leq \frac{ C^k  }{\Gamma(1+ k(\alpha-\frac12))} \,.
\]
\end{lemma}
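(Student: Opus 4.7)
The plan is to interpret $\cK_1$ as a Riemann-sum approximation of the continuum integral $\|\psi_{\bt,k}\|_{\nu_k}^2$, whose analogue was already estimated in Proposition~\ref{prop:ub:psikk}. First, identify the pairing structure: writing $\bI=\{\bi_1\prec\cdots\prec\bi_k\}$ and $\bJ=\{\bj_1\prec\cdots\prec\bj_k\}$, the condition $N_2(\bI,\bJ)=k$ is equivalent to the existence of a unique permutation $\sigma$ of $\{1,\ldots,k\}$ such that $\bi_a\aligne\bj_{\sigma(a)}$ and $\bi_a\neq\bj_{\sigma(a)}$ for every $a$. As in the argument following~\eqref{integralongrid2}, the simultaneous monotonicity of $(\bi_a)_a$ and $(\bj_a)_a$ forces $\sigma$ to avoid the pattern $(3\,2\,1)$, so at most $4^k$ such permutations are admissible (recall~\eqref{eq:barSk}). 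It therefore suffices to bound the contribution of each fixed admissible $\sigma$ by $C^k/\Gamma(1+k(\alpha-\tfrac12))$.

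For a fixed admissible $\sigma$, I use the uniform estimate $u(\bi)\leq c\,L(\|\bi\|)^{-1}\|\bi\|^{\alpha-2}$ from~\cite{B18} (as invoked after Corollary~\ref{corol:ub:psikk}), combined with Potter's bound to absorb all ratios $L(\|\bi\|)/L(n)$ into generic constants $C^k$. Thanks to the scaling relation~\eqref{def:scalings}, which gives $\gb_n^{2r}\sim n^{1-2\alpha}L(n)^2$, the prefactor in~\eqref{L2main} combined with $u_n(\bI)u_n(\bJ)$ is bounded by
\[
\frac{C^k}{n^{3k}}\prod_{l=1}^{k+1}\phi\big(\tfrac{\bi_l-\bi_{l-1}}{n}\big)\,\phi\big(\tfrac{\bj_l-\bj_{l-1}}{n}\big)\,.
\]
The normalization $n^{-3k}$ is exactly the Riemann-sum step over the $3k$-dimensional set of admissible configurations: each aligned pair $(\bi_a,\bj_{\sigma(a)})$ varies on a $3$-dimensional subset of $(\bbZ^2)^2$ (two dimensions for one index, plus one dimension along the aligned row or column for the other), which discretizes the measure $\dd\bu_a\,\dd\gl_{\bu_a}(\bv_a)$ in the covariance-measure formula~\eqref{eq:stocint:exprint:k>1}. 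Summing over all $\bI,\bJ$ associated to $\sigma$ therefore produces a Riemann-sum approximation of the corresponding term in~\eqref{kintegral}, which the inductive estimate of Proposition~\ref{prop:induction} bounds by $C^k/\Gamma(1+k(\alpha-\tfrac12))$ (up to absorbing finite constants into the base of the power; the extra factor of order $(k(\alpha-\tfrac12))^{-1}$ compared with~Proposition~\ref{prop:ub:psikk} comes from retaining one more iteration in~\eqref{eq:prop:induction:ineqgGa}).

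The main obstacle is making this Riemann approximation uniform in both $k$ and $n$ near the singularity of $\phi$ on the boundary of $[\bzero,\bt]$: the upper bound $u(\bi)\lesssim \|\bi\|^{\alpha-2}L(\|\bi\|)^{-1}$ is not of the right order when $\|\bi\|$ is of order $1$, and $\phi_n\to\phi$ does not hold uniformly near the origin. To overcome this, I would split the configurations according to whether every increment $\bi_l-\bi_{l-1}$ and $\bj_l-\bj_{l-1}$ exceeds a threshold $\delta n$: the \emph{long-range} part is a genuine Riemann-sum approximation of the integrals estimated in Proposition~\ref{prop:ub:psikk}, while the \emph{short-range} part is bounded using direct estimates on $u$ near the origin together with an integrated form of Proposition~\ref{prop:induction} over boxes of side $\delta$, yielding a contribution of at most $(C\delta^{\alpha-1/2})^k$ that is negligible for $\delta$ small. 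This truncation argument also lays the ground for the proof of item~(ii) of Proposition~\ref{lemma:tronck>1}.
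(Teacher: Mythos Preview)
Your strategy is analytically natural but differs substantially from the paper's, and as written it has a real gap.

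The paper does \emph{not} pass through the continuum estimate of Proposition~\ref{prop:ub:psikk}. Instead it rewrites $u_n(\bI)=\bP(\bI\subset\btau,\,n\bt\in\btau)$, introduces two independent copies $\btau,\btau'$, and bounds
\[
\cK_1 \;\le\; C_\bt\,(\gb_n^{2r})^k\,\bE_n^{\otimes 2}\!\bigg[\binom{\cC_2(\btau,\btau')}{k}\bigg]
\;\le\; \frac{C^k}{k!}\,\bE_n^{\otimes 2}\!\big[(\gb_n^{2r}\cC_2(\btau,\btau'))^k\big],
\]
where $\cC_2$ counts aligned pairs in $\btau\cup\btau'$. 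Since $\cC_2\le|\rho^{(1)}\cap[0,nt_1]|+|\rho^{(2)}\cap[0,nt_2]|$ with $\rho^{(a)}=\btau^{(a)}\cap\btau'^{(a)}$, and $\gb_n^{2r}\asymp U_n^{-1}$ for these one-dimensional intersection renewals, a direct moment bound (Lemma~\ref{lem:tailandmoments}) gives $\bE[(\gb_n^{2r}|\rho^{(a)}\cap[0,nt_a]|)^k]\le C^k\Gamma(k(2-2\alpha+\delta)+1)$, and Stirling converts $\Gamma(\cdot)/k!$ into $C^k/\Gamma(k(\alpha-\tfrac12)+1)$. The $k$-uniformity is thus obtained from renewal moment estimates, not from any Riemann comparison.

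Your approach could in principle be made to work, but two steps are not justified. First, for the long-range part you invoke ``Riemann-sum approximation'' of $\|\psi_{\bt,k}\|_{\nu_k}^2$: convergence of Riemann sums is not a bound, and you need the discrete sum to be dominated by $C^k$ times the continuum integral \emph{uniformly in $k$}. That requires a discrete analogue of Proposition~\ref{prop:induction}, which you have not supplied; Potter's bound on $L$ shifts the exponent but does not by itself deliver the domination. Second, your short-range claim that configurations with at least one increment below $\delta n$ contribute at most $(C\delta^{\alpha-1/2})^k$ is not correct as stated: a single short increment yields only one factor $\delta^{\alpha-1/2}$-type gain, while the remaining $k$ increments give no smallness in $\delta$. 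Indeed, in the paper's own treatment of the truncation (item~(ii) of Proposition~\ref{lemma:tronck>1}) the resulting bound is $C_k(\delta^{2(2\alpha-1)}+I_\delta)$ with a $k$-dependent constant, precisely because uniformity in $k$ for that piece is not available by this route. The probabilistic representation bypasses both issues at once.
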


\begin{lemma}
\label{lemma:L2rest}
There exist constants $C,c>0$ and $n_0\geq 1$ such that 
for all $n\geq n_0$ and $k\in \mathbb N$,
\[
\cK_2  \leq 
\frac{  C^k \gb_n}{\Gamma(1+ k(\alpha-\frac12))} + C^k \gb_n^{c k} \,.
\]
\end{lemma}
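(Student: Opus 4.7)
The plan is to refine the splitting used for $\cK_1$ in Lemma~\ref{lemma:L2main}, organizing $\cK_2$ according to the defect pattern $\mathbf{q} = (q_1, q_2, \ldots, q_{2k})$ and comparing each contribution to the reference pattern $\mathbf{q}^\ast = (0, k, 0, \ldots)$ that produces $\cK_1$. I would write $\cK_2 = \cK_2^{A} + \cK_2^{B}$, where $\cK_2^{A}$ collects the patterns with $q_1 \geq 1$ (at least one intersection point) and $\cK_2^{B}$ those with $q_1 = 0$ and $q_2 < k$ (at least one chain of length $\geq 3$). The goal is then to prove $\cK_2^{A} \leq C^k \gb_n / \Gamma(1 + k(\alpha - \tfrac12))$ and $\cK_2^{B} \leq C^k \gb_n^{ck}$. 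The total number of patterns is controlled by the partition function $p(2k)$, which is sub-exponential in $k$ and can be absorbed in the constant $C^k$.

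For $\cK_2^{A}$, I would estimate the contribution of each intersection by comparing it to a reference pair of length two. An intersection index contributes two free coordinates with weight $\sim \gb_n^2$ (from $\bbE[\gz_\bi^2]$, see Lemma~\ref{lem:correl}), whereas a pair of distinct aligned indices contributes three free coordinates with weight $\sim \gb_n^{2r}$. The relative factor per intersection is therefore of order $\gb_n^{2-2r}/n$, which under the scaling~\eqref{def:scalings} and $\alpha > \tfrac12$ is bounded by $C\gb_n$ for $n$ large (since $n^{-1}\gb_n^{1-2r}\to 0$). Iterating this comparison over the $q_1 \geq 1$ intersections, and treating the remaining $k - q_1$ pair-structure by the same Riemann-sum argument as for $\cK_1$, should yield the announced estimate.

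For $\cK_2^{B}$, I would perform an analogous per-chain comparison. A chain of length $p \geq 3$ involves $p + 1$ free coordinates (two for the starting point plus one for each subsequent alignment) and weight $\gb_n^{2r + (p-2)\lceil r/2\rceil}$, while any reference grouping of the same points into pairs of length two (possibly supplemented by an intersection when $p$ is odd) involves more free coordinates but a less favourable $\gb_n$-power. Setting $a := k - q_2 \geq 1$ and using the constraint $\sum_{p\geq 3}(p-2)q_p = 2a - 2\sum_{p\geq 3} q_p \geq \tfrac{2a}{3}$, each such pattern should contribute at most $C^k \gb_n^{c a}$ relative to the reference, for some $c>0$ depending on $r$ and $\alpha$ (this crucially uses that $\gb_n$ vanishes polynomially in $n$ under~\eqref{def:scalings}). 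Summing over $a \in \{1, \ldots, k\}$ then produces contributions of order $C^k \gb_n^{c}/\Gamma(1+k(\alpha-\tfrac12))$ for small $a$ (which are absorbed in the first term of the lemma) and of order $C^k \gb_n^{c k}$ for $a$ of order $k$.

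The main obstacle will be to turn these heuristic ``per-defect'' comparisons into rigorous upper bounds, because the configuration sum does not factor cleanly when chains of various lengths and intersections are interleaved along $\bI$ and $\bJ$. The plan is to integrate out the points of each chain one at a time, using a discrete analogue of the alignment estimate in Proposition~\ref{prop:induction} at each step to handle the new aligned coordinate together with its associated $u$-factor (using the uniform bound $u(\bi) \leq C L(\|\bi\|_1)^{-1}\|\bi\|_1^{\alpha-2}$ from~\cite{B18}). The combinatorial bookkeeping for the ways to interleave chains and intersections across $\bI$ and $\bJ$ is bounded by a Catalan-type count analogous to~\eqref{eq:barSk}, which grows at most as $4^k$ and is absorbed in $C^k$.
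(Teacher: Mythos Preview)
Your approach is genuinely different from the paper's, and the gap you yourself flag is real and not resolved by your proposed fix.

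The paper does \emph{not} attempt a per-defect comparison to the reference pattern. Instead it rewrites $u_n(\bI)u_n(\bJ)$ as a two-replica probability $\bP^{\otimes 2}(\bI\subset\btau,\bJ\subset\btau'\mid n\bt\in\btau\cap\btau')$, bounds the number of configurations with a given pattern by products of binomials in $|\btau\cap\btau'|$ and $\cC_p(\btau,\btau')$, and then uses a Poisson representation: the constrained sum over patterns $(q_p)$ is written as $e^{\sum_p\blamb_p}\bP_{\blamb}(2Q_1+\sum_{p\geq 2}pQ_p=2k,\,Q_2<k)$ with $\blamb_p$ random (functions of $\btau,\btau'$). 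A H\"older step decouples the exponential moments of $\blamb_1,\sum_{p\geq 2}\blamb_p$ (controlled by Lemma~\ref{lem:interesctcond} and Remark~\ref{rem:finiteexpmoment}) from the Poisson probability, and the latter is split into three regimes in $\tilde Q_3:=\sum_{p\geq 3}pQ_p$: (i) $\tilde Q_3=0,\,Q_1\geq 1$; (ii) $Q_1+Q_2\geq(1-\eta)k,\,\tilde Q_3\geq 1$; (iii) $\tilde Q_3\geq 2\eta k$. Case~(iii), which produces the $C^k\gb_n^{ck}$ term, is handled by a Chernoff bound on the Poisson variables, not by any comparison with $\cK_1$.

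Your scheme runs into two concrete problems. First, the ``integrate out one chain point at a time via a discrete Proposition~\ref{prop:induction}'' plan does not address the core difficulty: the $u$-factors in $u_n(\bI)u_n(\bJ)$ are products along the \emph{ordered} sequences $\bI,\bJ$, while the defect structure is indexed by the \emph{alignment} partition, and these two decompositions are transverse. Proposition~\ref{prop:induction} in the continuum works because the covariance measure $\nu_\cM$ factorizes over the pairing; there is no analogous factorization for the discrete configuration sum with interleaved chains and intersections, and you would effectively need to redo the moment estimates of Section~\ref{sec:convM} with uniform-in-$k$ constants. Second, your split $\cK_2^A\to$ first term, $\cK_2^B\to$ second term is not quite right: already in the pure-intersection case $q_1=k$ (which lies in your $\cK_2^A$), the absence of any pair kills the $1/\Gamma(k(\alpha-\tfrac12))$ gain, and the resulting $\gb_n^{2k}$ contribution belongs to the \emph{second} term of the lemma (cf.\ the paper's \eqref{vgkCase1}). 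More generally, the interpolation you sketch for $\cK_2^B$ between ``small $a$'' and ``$a\sim k$'' requires a uniform lower bound on the exponent $c$ that is not established by your counting $\sum_{p\geq 3}(p-2)q_p\geq 2a/3$ alone, because the loss of free coordinates and the change in $\gb_n$-weight partially cancel and the net exponent depends on $r$ and $\alpha$ in a way you have not controlled.
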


\noindent
These two lemmas readily conclude the proof of item (i) in Proposition~\ref{lemma:tronck>1}, so let us now prove them.

\begin{proof}[Step 2. Proof of Lemma~\ref{lemma:L2main}]
Note that one can rewrite $u_n(\bI) = \bP(\bI \subset \btau , n \bt \in \btau)$.
Therefore, letting  $\btau, \btau'$ be two independent bivariate renewals with joint law denoted $\bP^{\otimes 2}$, 
we have that
\begin{equation}
\label{eq:rewritingreplicas}
\begin{split}
\cK_1 & = (\gb_n^{2r})^k   \sum_{\bI,\bJ \in \cI_k,\, N_2(\bI,\bJ)=k} (n^{2-\alpha} L(n))^2 \bP^{\otimes 2} (\bI\subset \btau, \bJ\subset \btau', n \bt \in \btau\cap \btau')  \\
    & \leq  C_{\bt} (\gb_n^{2r})^k  \sum_{\bI,\bJ \in \cI_k,\, N_2(\bI,\bJ)=k} \bP^{\otimes 2} \big( \bI\subset \btau, \bJ\subset \btau' \mid n \bt \in \btau\cap \btau' \big) \\
    & =C_{\bt} (\gb_n^{2r})^k \bE^{\otimes 2}_n \Big[ \big| \big\{ (\bI, \bJ)\in \cI_k^2\,,N_2(\bI,\bJ)=k,  \bI\subset \btau, \bJ\subset \btau' \big\}\big| \Big] \,,
\end{split}
\end{equation}
where we have used that $n^{2-\alpha} L(n) \leq C'_{\bt} \bP( n\bt \in \btau)^{-1}$ for some constant $C'_{\bt}$, see Proposition~\ref{thm:renouv}. We also introduced the notation $\bP^{\otimes 2}_n (\cdot) = \bP^{\otimes 2} ( \;\cdot \mid n \bt \in \btau\cap \btau' )$.
Now, we denote
\[
\cC_2(\btau,\btau') = \big|\big\{(\bi,\bj) \in \llbracket \bone, n\bt\rrbracket^2\,, \bi\in \btau, \bj\in \btau', \bi\aligne \bj \big\}\big|\,,
\]
\textit{i.e.}\ the number of pairs of aligned points in $(\btau\cup \btau')\cap \llbracket \bone, n\bt \rrbracket$, so we have that
\[
\big| \big\{ (\bI, \bJ)\in \cI_k^2\,, \bI\subset \btau, \bJ\subset \btau' \big\}\big| \leq \binom{\cC_2(\btau,\btau')}{k} \leq \frac{1}{k!} \cC_2(\btau,\btau')^k \,.
\]
We end up with
\[
\cK_1\leq  \frac{ C^k}{k!} \bE^{\otimes 2}_n \Big[ \big( \gb_n^{2r} \cC_2(\btau,\btau')\big)^k \Big] \,.
\]
Recall that the projection of $\btau$ on its $a$-th coordinate is denoted $\btau^{(a)}$, $a\in\{1,2\}$.  Now, notice that $\cC_2(\btau,\btau') \leq |\rho^{(1)}\cap [0,nt_1]| + |\rho^{(2)} \cap [0,nt_2]| $, 
where we have denoted $\rho^{(1)} = \btau^{(1)}\cap \btau'^{(1)}$ and $\rho^{(2)}=\btau^{(2)}\cap \btau'^{(2)}$ for simplicity. Using that $(x+y)^k\leq 2^k (x^k+y^k)$, 
and that the law of $\rho^{(a)}$ conditionally on $nt_a\in\rho^{(a)}$ is symmetric in $\frac12 nt_a$, we have for $a\in\{1,2\}$ the upper bound
\[
\bE^{\otimes 2} \bigg[ \bigg( \gb_n^{2r} \sum_{i=1}^{ nt_a} \ind_{\{i \in \rho^{(a)}\}}  \bigg)^k \;\bigg|\; n t_a\in \rho^{(a)} \bigg] \,\leq\, C \, 2^{k+1} \, \bE^{\otimes 2} \bigg[ \bigg( \gb_n^{2r} \sum_{i=1}^{ \frac12 nt_a} \ind_{\{i \in \rho^{(a)}\}}  \bigg)^k  \bigg] \,,
\]
where we used \cite[Lem.~A.2]{GLT10} to remove the conditioning, at the cost of a constant factor.

Then, we can  bound the term above thanks to Lemma~\ref{lem:tailandmoments}. 
Indeed, from Remark~\ref{rem:appliclemmatail}, we have that
\[
U_{\frac12 n t_a} =\sum_{i=1}^{\frac12 nt_a} \bP ( i \in \rho^{(a)}) \sim c_{\alpha,t_a}  n^{2\alpha-1} L(n)^{-2}\sim C_{\alpha,\hat \gb, t_a} \gb_n^{ -2r}  \qquad \text{ as } n\to\infty \,,
\]
where we plugged in the definition~\eqref{def:scalings} of $\gb_n$ for the last identity.
Therefore, letting $\gamma:=2\alpha-1$ and $0<\delta < \gamma$ in Lemma~\ref{lem:tailandmoments}, 
we have that there exists a constant $C>0$ such that 
\[
\bE^{\otimes 2} \bigg[ \bigg( \gb_n^{2r} \sum_{i=1}^{ \frac12 nt_a} \ind_{\{i \in \rho^{(a)}\}}  \bigg)^k  \bigg] \leq C^k  \, \Gamma\big( k(1-\gamma+\delta) +1 \big) \,.
\]
This concludes the proof, since $\Gamma( k(1-\gamma+\delta)+1 ) \leq C'^k \frac{k!}{ \Gamma(k(\gamma-\delta)+1)}$ thanks to Stirling's formula,
then taking $\delta =\frac12\gamma = \alpha-\frac12$.
\end{proof}

\begin{remark}
\label{rem:finiteexpmoment}
Let us note for future use that we have proven above that for any $\delta\in(0,2\ga-1)$ there is a constant $C>0$ such that for any $k\geq 1$
\begin{equation*}
\frac{1}{k!}\bE^{\otimes 2}_n \bigg[ \bigg( \gb_n^{2r}  \sum_{i=1}^{ nt_a} \ind_{\{i \in \rho^{(a)}\}} \bigg)^k  \bigg] \leq \frac{C^k}{k!} \Gamma\big( k( 2(1-\alpha)+\delta) +1 \big)  \leq  \frac{C'^k}{ \Gamma(k(2\alpha-1-\delta)+1 ) }\,.
\end{equation*}
Using that $\sum_{j\geq 0} \frac{u^j}{\Gamma(bj+1)}\leq C e^{2 u^{1/b}}$, see e.g.~\cite[Thm.~1]{Ger12}, we get that there is a constant $c>0$ such that for any $u>0$, 
\begin{equation}
\label{finiteexpmoment}
\bE^{\otimes 2}_n \bigg[ \exp \bigg( u \gb_n^{2r}  \sum_{i=1}^{ nt_a} \ind_{\{i \in \rho^{(a)}\}} \bigg)   \bigg] 
\leq  c^{-1} e^{ c u^{1/(2\alpha-1+\delta)}} <+\infty \,.
\end{equation}
\end{remark}

\begin{proof}[Step 3. Proof of Lemma~\ref{lemma:L2rest}]
We proceed similarly as above, the proof being more involved.
Rewriting $u_n(\bI) = \bP(\bI \subset \btau , n \bt  \in \btau)$, we get as in~\eqref{eq:rewritingreplicas}
that $\cK_2$ can be bounded by a constant $C_{\bt}$ times
\[
\sumtwo{q_1,\ldots, q_{2k}\geq 0, q_2<k}{2q_1+ \sum_{p=2}^{2k} pq_p = 2k}
 \gb_n^{2q_1 + \sum_{p=2}^{k} q_p(2 r + (p-2) \lceil \frac r2 \rceil)} \bE^{\otimes 2}_n \Big[ \big| \big\{ (\bI, \bJ)\in \cI_k^2\,,N_p(\bI,\bJ)=q_p \, \forall 1\leq p \leq 2k,  \bI\subset \btau, \bJ\subset \btau' \big\}\big|  \Big] \,.
\]
Now, similarly as above, we easily get that
\[
\big| \big\{ (\bI, \bJ)\in \cI_k^2\,,N_p(\bI,\bJ)=q_p \, \forall 1\leq p \leq 2k,  \bI\subset \btau, \bJ\subset \btau' \big\}\big| 
\leq \binom{|\btau\cap \btau' \cap \llbracket \bone, n\bt \rrbracket |}{q_1} \prod_{p=2}^{2k} \binom{\cC_p(\btau,\btau')}{q_p} \,,
\]
where $\cC_p(\btau,\btau')$ is the number of chains of length $p$ contained in $(\btau \cup \btau')\cap \llbracket \bone,n\bt\rrbracket$. In the end,
we need to bound
\begin{equation}
\label{vgk22}
\bE^{\otimes 2}_n \Bigg[  \sumtwo{q_1,\ldots, q_{2k}\geq 0, q_2<k}{2q_1+ \sum_{p=2}^{2k} pq_p = 2k} \frac{1}{ \prod_{p=1}^k q_p!}  \Big( \gb_n^2  |\btau\cap \btau' \cap \llbracket \bone, n\bt\rrbracket| \Big)^{q_1} \prod_{p=2}^{2k} \Big( \gb_n^{2r + (p-2) \lceil \frac r2 \rceil} \cC_p(\btau,\btau')\Big)^{q_p}  \Bigg] \,.
\end{equation}

Let us now make an observation: for any $\lambda =(\lambda_1, \ldots, \lambda_{2k})$ with $\lambda_i\geq 0$, we can write
\[
\sumtwo{q_1,\ldots, q_{2k}\geq 0, q_2<k}{2q_1+ \sum_{p=2}^{2k} pq_p = 2k}  \prod_{p=1}^{2k} \frac{\lambda_p^{q_p}}{ q_p!}  
= e^{\sum_{p=1}^{2k} \lambda_p} 
\bP_{\lambda} \Big( 2Q_1+ \sum_{p=2}^{2k} pQ_p =2k \Big) \,,
\]
where $\bP_\lambda$ is the law of independent Poisson random variables $(Q_p)_{1\leq p \leq 2k}$ with respective parameters $(\lambda_p)_{1\leq p \leq 2k}$ (by convention $Q_p=0$ if $\lambda_p=0$).
We can therefore rewrite \eqref{vgk22} as
\begin{equation}
\label{vgk23}
\bE^{\otimes 2}_n \bigg[ e^{\sum_{p=1}^{2k} \blamb_p}  \bP_{\blamb} \Big( 2Q_1+ \sum_{p=2}^{2k} pQ_p =2k, Q_2<k \Big)  \bigg] \,,
\end{equation}
where $\blamb = \blamb^{(n)}(\btau,\btau')$ is defined by 
\begin{equation}
\label{def:blamb}
\blamb_1 = \gb_n^2  \,|\btau\cap \btau' \cap \llbracket \bone, n\bt \rrbracket|\,,
\qquad 
\blamb_p= \gb_n^{2r + (p-2) \lceil \frac r2 \rceil} \cC_p(\btau,\btau') \text{ for }p\geq 2 \,.
\end{equation}
Using H\"older's inequality and Cauchy--Schwarz's inequality,  for any $\gep>0$ (fixed small enough), we have that~\eqref{vgk23} is bounded by
\begin{equation}
\label{vgk24}
\bE^{\otimes 2}_n \bigg[ e^{2\frac{1+\gep}{\gep} \blamb_1  }\bigg]^{\frac{\gep}{2(1+\gep)}} 
\bE^{\otimes 2}_n \bigg[ e^{2\frac{1+\gep}{\gep} \sum_{p=2}^{2k} \blamb_p}    \bigg]^{\frac{\gep}{2(1+\gep)}} 
\bE^{\otimes 2}_n \bigg[  \bP_{\blamb} \Big( 2Q_1+ \sum_{p=2}^{2k} pQ_p =2k , Q_2<k \Big)^{1+\gep}\bigg]^{\frac1{1+\gep}} \,.
\end{equation}

\medskip
\noindent
{\it First term in~\eqref{vgk24}.}
From \cite[Prop.~A.3]{BGK20}, we know that when $\alpha <1$ the intersection $\btau\cap \btau'$ is terminating, so $|\btau\cap \btau'|$ is a geometric random variable. Lemma~\ref{lem:interesctcond} below states that the conditioning does not change this very much: it gives that 
\[
\bE^{\otimes 2}_n \bigg[ e^{2\frac{1+\gep}{\gep} \blamb_1}\bigg]
\leq \bE^{\otimes 2}_n \bigg[ e^{2\frac{1+\gep}{\gep} \gb_n^2 |\btau\cap \btau'|}\bigg]
\leq \sum_{k\geq 0} e^{2\frac{1+\gep}{\gep} \gb_n^2 k} \bP_n^{\otimes 2} \big( |\btau\cap \btau'| > k \big)
\leq  C \,,
\]
where the last line holds for $n$ large enough (so that $2\frac{1+\gep}{\gep}\gb_n^2$ is smaller than half the constant $c$ appearing in Lemma~\ref{lem:interesctcond}).

\medskip
\noindent
{\it Second term in~\eqref{vgk24}.} Since  a chain in $\btau\cup \btau'$ of length $p'\geq p$ contains exactly $p'-p+1$ chains of length $p$, we get that the number of $p$-chains included in $\btau\cup \btau'$ is
\begin{equation}
\label{relCN}
\cC_p(\btau,\btau') = \sum_{p'\geq p} (p'-p+1) N_{p'}(\btau,\btau')\,,
\end{equation}
where we recall that $N_p(\btau,\btau')$ is the number of (maximal) chains of length $p$ in the decomposition~\eqref{eq:decompset} of $(\btau\cup \btau')\cap \llbracket \bone, n\bt \rrbracket$. 
We therefore get that,
\[
\sum_{p=2}^{2k} \blamb_p = \sum_{p=2}^{2k} \gb_n^{2r + (p-2) \lceil \frac r2 \rceil} \cC_p(\btau,\btau')
 =  \gb_n^{2r} \sum_{p'=2}^{2k} N_{p'}(\btau,\btau')  \sum_{p=2}^{p'} (p'-p  +1) \gb_n^{(p-2)\lceil \frac r2 \rceil } 
 \leq  2 \gb_n^{2r}  \sum_{p'=2}^{2k} p'  N_{p'}(\btau,\btau')  \,,
\]
where we have used that  $ \sum_{p=2}^{p'} (p'-p+1) \gb_n^{ (p-2) \lceil \frac r2 \rceil} \leq 2 p'$ provided that $n$ is large enough so that $\gb_n^{ \lceil \frac r2 \rceil}\leq 1/2$.
Notice also that we have
\begin{equation}
\label{relCN2}
\sum_{p'=2}^{2k} p' N_{p'}(\btau,\btau') \leq 2\big( |\rho^{(1)}\cap [0,nt_1]| +|\rho^{(2)} \cap [0,nt_2]| \big) \,,
\end{equation}
 where we recall that $\rho^{(a)} = \btau^{(a)}\cap \btau'^{(a)}$, $a\in\{1,2\}$.
Indeed, the left-hand side is the total length of all the chains of length larger than $2$ in $\btau\cup \btau'$
and point in a chain belongs either to $\rho^{(1)}=\btau^{(1)}\cap \btau'^{(1)}$, to $\rho^{(2)}=\btau^{(2)}\cap \btau'^{(2)}$ or to both (one may also refer to~\cite[Eq.~(3.22)]{Leg21}).
Hence, we get that
\[
\bE^{\otimes 2}_n \bigg[ e^{2\frac{1+\gep}{ \gep} \sum_{p=2}^n \blamb_p}    \bigg]\leq 
\bE^{\otimes 2}_n \bigg[ \exp\bigg( 8\frac{1+\gep}{\gep} \gb_n^{2r} \Big( \sum_{i=1}^{ nt_1} \ind_{\{i \in \rho^{(1)}\}} + \sum_{i=1}^{ nt_2} \ind_{\{i \in \rho^{(2)}\}}\Big) \bigg)\bigg]  \,,
\]
which is finite thanks to Remark~\ref{rem:finiteexpmoment}, see~\eqref{finiteexpmoment} (after using Cauchy--Schwarz inequality to deal with $\rho^{(1)}$ and $\rho^{(2)}$ separately).

\medskip
\noindent
{\it Third term in~\eqref{vgk24}.} Denoting $\tilde Q_3 := \sum_{p\geq 3} p Q_p$, we want to show that there is a constant $C,c>0$ such that, for $n$ large enough (how large must not depend on $k$)
\begin{equation}
\label{vgk25}
\bE^{\otimes 2}_n \bigg[  \bP_{\blamb} \Big( 2Q_1+2Q_2 +\tilde Q_3 =2k , Q_2<k \Big)^{1+\gep}\bigg]^{\frac1{1+\gep}} 
\leq  \frac{  C^k \gb_n }{ \Gamma(k (\alpha-\frac12)+1)}   + C^k \gb_n^{ck}\,.
\end{equation}
We separate the estimate into three parts, according to the three following events: we fix some $\gh\in(0,1)$ (its precise value is given below), (i)~$\tilde Q_3 =0$ and $Q_1\geq 1$; (ii)~$Q_1+Q_2  \geq (1-\eta) k, \tilde Q_3 \geq 1$;
(iii) $\tilde Q_3 \geq 2\eta k$.

\smallskip
{\it Case (i).}
On the event that $\tilde Q_3=0$, we get that
\[
\begin{split}
\bP_{\blamb} \Big( 2Q_1+ 2Q_2 +\tilde Q_3 =2k,  Q_2<k , \tilde Q_3 =0\Big) & \le \bP_{\blamb} \Big( Q_1+ Q_2 =k , Q_2<k\Big) \\
&= \frac{e^{-( \blamb_1+\blamb_2)}}{k!} \sum_{\ell=1}^{k} \binom{n}{k} \blamb_1^\ell \blamb_2^{k-\ell}
 \leq \frac{2^k}{k!}  \big( \blamb_1 \blamb_2^{k-1} + \blamb_1^{k} \big) \,.
\end{split}
\]
where we have bounded $(\blamb_1/\blamb_2)^{\ell}$ by the maximum of $\blamb_1 /\blamb_2$ and $(\blamb_1/ \blamb_2)^{k}$ and bounded the sum of the binomial factors by $2^k$.
Plugging this in the l.h.s.\ of ~\eqref{vgk25}, and using that $(x+y)^\gamma\leq 2^\gamma (x^\gamma+y^\gamma)$ for $\gamma\geq 1$ and $(x+y)^{\gamma'}\leq x^{\gamma'}+y^{\gamma'}$ for $\gamma'<1$ so that
\begin{equation}
\label{usefulineq}
\bE\big[ (A+B)^{\gamma} \big]^{\gamma'} \leq 2^{\gamma\gamma'} \big( \bE\big[ A^{\gamma} \big]^{\gamma'} + \bE\big[ B^{\gamma} \big]^{\gamma'}  \big) \,,
\end{equation}
we get that
\begin{equation}
\label{eq:casei-0}
\bE^{\otimes 2}_n \bigg[  \bP_{\blamb} \Big( 2Q_1+2Q_2 +\tilde Q_3 =2k , Q_2<k , \tilde Q_3=0\Big)^{1+\gep}\bigg]^{\frac1{1+\gep}} 
\!\!\leq \frac{2^{k+1}}{k!} \bigg( \bE^{\otimes 2}_n \Big[  (\blamb_1 \blamb_2^{k-1})^{1+\gep}\Big]^{\frac{1}{1+\gep}} 
+ \bE^{\otimes 2}_n \Big[  \blamb_1^{(1+\gep)k}\Big]^{\frac{1}{1+\gep}}  \bigg) .
\end{equation}
For the first term, we use H\"older's inequality to get
\[
\bE^{\otimes 2}_n \Big[  (\blamb_1 \blamb_2^{k-1})^{1+\gep}  \Big]^{\frac{1}{1+\gep}}  
\leq \bE^{\otimes 2}_n \Big[  (\blamb_1)^{ \frac{(1+\gep)^2}{\gep}}   \Big]^{\frac{\gep}{(1+\gep)^2}}   \bE^{\otimes 2}_n \Big[   \blamb_2^{(1+\gep)^2 (k-1)}   \Big]^{\frac{1}{(1+\gep)^2}}  \,.
\]
Now, recalling that $\blamb_1 = \gb_n^2  \,|\btau\cap \btau' \cap \llbracket \bone, n\bt \rrbracket|$, 
we get that 
\[
\bE^{\otimes 2}_n \Big[  (\blamb_1)^{ \frac{(1+\gep)^2}{\gep}}  \Big]^{\frac{\gep}{(1+\gep)^2}} = \gb_n^{2} \bE^{\otimes 2}_n \Big[  |\btau\cap \btau'\cap  \llbracket \bone, n\bt \rrbracket|^{ \frac{(1+\gep)^2}{\gep}} \Big]^{\frac{\gep}{(1+\gep)^2}} 
\leq C_{\gep} \gb_n^{2} \,,
\]
using Lemma~\ref{lem:interesctcond} for the last inequality.
Using that $\blamb_2 \leq \gb_n^{2r} ( |\rho^{(1)}\cap [0,nt_1]| +|\rho^{(2)} \cap [0,nt_2]|)$ and applying~\eqref{usefulineq} with $\gamma = (1+\gep)^2(k-1), \gamma'=\frac{1}{(1+\gep)^2}$, we also get that for $\delta \in (0,2\alpha-1)$,
\[
\begin{split}
 \bE^{\otimes 2}_n \Big[   \blamb_2^{(1+\gep)^2 (k-1)}   \Big]^{\frac{1}{(1+\gep)^2}} 
 &\leq 2^{k-1} \sum_{a=1,2} \bE^{\otimes 2}_n \bigg[   \Big(\gb_n^{2r} \sum_{i=1}^{nt_a} \ind_{\{i \in \rho^{(a)}\}} \Big)^{(1+\gep)^2 (k-1)}   \bigg]^{\frac{1}{(1+\gep)^2}}  \\
& \leq C^k \Gamma\big( (1+\gep)^2k(2(1-\alpha)+\delta) +1 \big)^{1/(1+\gep)^2}
 \leq (C')^k \Gamma( k(2(1-\alpha)+\delta) +1) \,,
 \end{split}
\]
where first we have used  Remark~\ref{rem:finiteexpmoment} and then Stirling's asymptotics for Gamma functions.
Using again Stirling's formula, setting $\delta = \alpha-\frac12$, we have $\frac{1}{k!}\Gamma( k(1-(\alpha-\frac12)) +1)\leq C^k \Gamma(k(\alpha-\frac12) +1)^{-1}$.

The second term we need to estimate in~\eqref{eq:casei-0} is 
\[
\bE^{\otimes 2}_n \Big[   (\blamb_1)^{(1+\gep)^2 k}   \Big]^{\frac{1}{1+\gep}} = \gb_n^{2k } \bE^{\otimes 2}_n \Big[ |\btau\cap \btau'\cap  \llbracket \bone, n\bt \rrbracket |^{(1+\gep)^2k}  \Big]^{\frac{1}{1+\gep}} \leq C^k \gb_n^{2k} \,,
\]
using again Lemma~\ref{lem:interesctcond} for the last inequality. 

All together, we conclude that
\begin{equation}
\label{vgkCase1}
\bE^{\otimes 2}_n \bigg[  \bP_{\blamb} \Big( 2Q_1+2Q_2 +\tilde Q_3 =2k , Q_2<k , \tilde Q_3=0\Big)^{1+\gep}\bigg]^{\frac1{1+\gep}} 
\leq  \frac{ C^{k}\gb_n^2  }{\Gamma(k(\alpha-\frac12) +1)}   + \frac{C^k}{k!} \gb_n^{2k}\,.
\end{equation}

\smallskip
{\it Case (ii).} 
Let $\eta>0$ (fixed below) and consider the event $Q_1+Q_2 \geq (1-\eta) k$,
$\tilde Q_3 \geq 1$.
We have
\[
\begin{split}
\bP_{\blamb} \Big( 2Q_1+ 2Q_2 +\tilde Q_3 & =2k   ,    Q_1+Q_2 \geq (1-\eta) k, \tilde Q_3 \geq 1\Big) \\
&\leq 
\sum_{\ell=(1-\eta) k}^{k-1} \bP_{\blamb} \Big(\tilde Q_3 =2(k-\ell),  Q_1+ Q_2 =\ell \Big)\\
& \leq \max_{(1-\eta) k \leq \ell \leq k-1} \bP_{\blamb} \Big(Q_1+ Q_2 =\ell \Big) \bP_{\blamb} \Big(\tilde Q_3 \geq 1 \Big) 
\leq  \frac{ (1 \vee (\blamb_1+\blamb_2)^{k-1})}{ ((1-\eta)k)!} \sum_{p\geq 3}p \blamb_p\,,
\end{split}
\]
where we have used Markov's inequality for the last line; recall that $\tilde Q_3 = \sum_{p\geq 3} pQ_p$, with $(Q_p)_{p}$ independent Poisson random variables of respective parameter $\blamb_p$ given in~\eqref{def:blamb}. (We also omitted the integer part of $(1-\eta)k$ for simplicity.)
Now, notice that 
\[
\begin{split}
\sum_{p\geq 3} p \blamb_p = \sum_{p = 3}^{2k} \gb_n^{2r + (p-2) \lceil \frac r2 \rceil} p\pt\cC_p(\btau,\btau')
& \leq 2k   \gb_n^{2r+\lceil \frac r2 \rceil}   \sum_{p= 2}^{2k} \cC_p(\btau,\btau') \\
& \leq 8  k^2  \gb_n \times \gb_n^{2r} (|\rho^{(1)}\cap[0,nt_1]| +|\rho^{(2)}\cap [0,nt_2]|) \,,
\end{split}
\]
where we have also used~\eqref{relCN}-\eqref{relCN2} to get that $ \cC_p(\btau,\btau') \leq 2 (|\rho^{(1)}\cap[0,nt_1]| +|\rho^{(2)}\cap [0,nt_2]|)$ for all $p$.
All together,  bounding $(1 \vee (\blamb_1+\blamb_2)^{k-1}) \leq 1+ 2^k \blamb_1^{k-1} + 2^k \blamb_2^{k-1}$ and recalling also that 
$\blamb_1 = \gb_n^2  \,|\btau\cap \btau' \cap \llbracket \bone, n\bt \rrbracket|$, $\blamb_2 \leq \gb_n^{2r} (|\rho^{(1)}\cap[0,nt_1]| +|\rho^{(2)}\cap [0,nt_2]|)$, we obtain that
\begin{align*} 
&\bE^{\otimes 2}_n\bigg[
\bP_{\blamb} \Big( 2Q_1+2Q_2+ \tilde Q_3 = 2k , Q_1+Q_2 \geq (1-\eta) k, \tilde Q_3 \geq 1\Big)^{1+\gep} \bigg]^{\frac{1}{1+\gep}}
\\
& \leq \frac{ C^k \gb_n }{ ((1-\eta)k)!} \sum_{a=1,2}
\bigg(
 \bE^{\otimes 2}_n\bigg[  \Big( \gb_n^{2r} \sum_{i=1}^{nt_a} \ind_{\{i\in \rho^{(a)}\}} \Big)^{1+\gep}\bigg]^{\frac{1}{1+\gep}} \\
& \quad\quad +\bE^{\otimes 2}_n\bigg[  \Big( \big( \gb_n^2 |\btau\cap \btau'\cap  \llbracket \bone, n\bt \rrbracket| \big)^{k-1} \,  \gb_n^{2r} \sum_{i=1}^{nt_a} \ind_{\{i\in \rho^{(a)}\}} \Big)^{1+\gep} \bigg]^{\frac{1}{1+\gep}} 
+ \bE^{\otimes 2}_n\bigg[  \Big( \gb_n^{2r} \sum_{i=1}^{nt_a} \ind_{\{i\in \rho^{(a)}\}} \Big)^{k(1+\gep)} \bigg]^{\frac{1}{1+\gep}}
\bigg) \,,
\end{align*}
where we have again used~\eqref{usefulineq} (with $\gamma=1+\gep$, $\gamma' =\frac{1}{1+\gep}$).
The first expectation is bounded by a constant
thanks to Remark~\ref{rem:finiteexpmoment}.
The second expectation is also bounded by a constant. Indeed, using Cauchy--Schwarz inequality to treat both quantities separately, we use Lemma~\ref{lem:interesctcond} to show that $\bE^{\otimes 2}_n[  \big( \gb_n^2 |\btau\cap \btau'\cap \cap  \llbracket \bone, n\bt \rrbracket| \big)^{b} ]$ is bounded by some universal constant $C$ (in fact, the constant goes to $0$ as $b\to \infty$, provided that $\gb_n$ is small enough),
and then we use Remark~\ref{rem:finiteexpmoment} for the other term.
For the last term, we can again use  Remark~\ref{rem:finiteexpmoment} to get that
it is bounded by $C^k \Gamma( (1+\gep)k(2(1-\alpha)+\delta) +1)^{1/(1+\gep)} \leq C'^k \Gamma( k(2(1-\alpha)+\delta) +1)$, the last inequality following from Stirling's asymptotics.

Again by Stirling's formula, setting $\delta = \frac12 (\alpha-\frac12)$, $\eta =  \frac12 (\alpha-\frac12)$, we get that 
$\frac{1}{ ((1-\eta)k)!}\Gamma( k(2(1-\alpha)+\delta) +1) \leq C^k \Gamma( k(\alpha-\frac12)+1)^{-1}$.
All together, we have obtained that
\begin{equation}
\label{vgkCase2}
\bE^{\otimes 2}_n\bigg[
\bP_{\blamb} \Big( 2Q_1+2Q_2+ \tilde Q_3 = 2k,  Q_1+Q_2 \geq (1-\eta) k, \tilde Q_3 \geq 1\Big)^{1+\gep} \bigg]^{\frac{1}{1+\gep}}
\leq \frac{ C^k \gb_n}{\Gamma( k(\alpha-\frac12)+1)}\,.
\end{equation}

\smallskip
{\it Case (iii).}
It remains to control the case where $Q_1+Q_2 < (1-\eta) k$ and hence $\tilde Q_3 \geq 2\eta k$.
Let $a_n:= -\frac14 \log \gb_N $ and denote $\bA_n$ the event $\{ \sum_{p\geq 3} \blamb_p e^{pa_n} \leq k\}$.
We have that 
\[
\begin{split}
\bE^{\otimes 2}_n\bigg[
\bP_{\blamb} \Big( 2Q_1+2Q_2+ \tilde Q_3 = 2k, \tilde Q_3 \geq 2\eta k \Big)^{1+\gep} \bigg]^{\frac{1}{1+\gep}} \leq \bP^{\otimes 2}_n\big( \bA_n^c \big)^{\frac{1}{1+\gep}}
+ \bE^{\otimes 2}_n\Big[
\bP_{\blamb} \big( \tilde Q_3 \geq 2\eta k\big)^{1+\gep}  \ind_{\bA_n}\Big]^{\frac{1}{1+\gep}} \,.
\end{split}
\]

For the first term, notice that by the definition~\eqref{def:blamb} of $\blamb$ and recalling~\eqref{relCN}-\eqref{relCN2}, we get
\begin{align*}
\sum_{p=3}^{2k} \blamb_p e^{p a_n} & \leq 
\gb_n^{2r}\sum_{p'=3}^{2k} p' N_{p'}(\btau,\btau') 
\sum_{p=3}^{p'} \gb_n^{(p-2) \lceil \frac r2\rceil} e^{pa_n} \\
& \leq 2 \gb_n^{2r} \big( |\rho^{(1)}\cap [0,nt_1]| +|\rho^{(2)}\cap [0,nt_2]| \big)  \sum_{p=3}^{\infty} \gb_n^{p/3} e^{pa_n} \,,
\end{align*}
where we have used that $p-2\geq p/3$ for $p\geq 3$.
Now, using the definition of $a_n$ we get that $\gb_n^{p/3} e^{pa_n} = \gb_n^{p/12}$ so the last sum is bounded by a constant times $\gb_n^{1/4}$.
Hence, we get that for $\delta \in (0,2\alpha-1)$,
\[
\bP^{\otimes 2}_n\big( \bA_n^c \big)
\leq \bP^{\otimes 2}_n\Big( \gb_n^{2r} \sum_{a=1,2} \sum_{i=1}^{nt_a} \ind_{\{i\in \rho^{(a)}\}} \geq  c k \gb_n^{-1/4} \Big)
\leq \exp\Big( -c_{\delta} (\gb_n^{-1/4} k)^{\frac{1}{1- (2\alpha-1) +\delta}} \Big) \,,
\]
where the last inequality comes from Lemma~\ref{lem:tailandmoments}; note that the conditioning in $\bP^{\otimes 2}_n$ can be removed by using \cite[Lem.~A.2]{GLT10}.
Since the power verifies $\frac{1}{1- (2\alpha-1) +\delta} >1$, this is clearly bounded by $\exp( -c_{\delta} \gb_n^{-1/4} k )\leq \gb_n^{k}$, at least for $n$ large enough.

For the second term, we use that
\[
\bP_{\blamb} \big( \tilde Q_3 \geq 2\eta k\big)
\leq e^{ -2\eta k a_n} \bE_{\blamb}\Big[ e^{a_n \tilde Q_3} \Big] = e^{ -2\eta k a_n} \exp\Big( \sum_{p\geq 3} \blamb_p (e^{pa_n} -1) \Big)
\]
where for the last identity we recalled the definition $\tilde Q_3 = \sum_{p\geq 3} pQ_p$, with $Q_p\sim \text{Poisson}(\blamb_p)$.
Hence, on the event $\bA_n$, we get that  the sum in the last exponential is bounded by $k$: we obtain
\[
\bE^{\otimes 2}_n\Big[
\bP_{\blamb} \big( \tilde Q_3 \geq 2\eta k\big)^{1+\gep}  \ind_{\bA_n}\Big]^{\frac{1}{1+\gep}}
\leq e^{ -k(2\eta a_n-1) } \leq e^{-k \eta a_n}  = (\gb_n)^{ \frac{\eta k}{4}}\,,
\]
where the last inequality is valid for $n$ large enough, using that $a_n\to\infty$; the last identity follows recalling that $e^{a_n} =(\beta_n)^{1/4}$, by definition of $a_n$.

All together, we have obtained that 
\begin{equation}
\label{vgkCase3}
\bE^{\otimes 2}_n\bigg[
\bP_{\blamb} \Big( 2Q_1+2Q_2+ \tilde Q_3 = 2k, \tilde Q_3 \geq 2\eta k \Big)^{1+\gep} \bigg]^{\frac{1}{1+\gep}} 
\leq \gb_n^k+(\gb_n)^{ \frac{\eta k}{4}} \,.
\end{equation}

\medskip
\noindent
{\it Conclusion.}
We now simply need to collect~\eqref{vgkCase1}-\eqref{vgkCase2}-\eqref{vgkCase3} to conclude the proof of~\eqref{vgk25} and hence of Lemma~\ref{lemma:L2rest}; with the constant $c= \frac{\eta}{4} = \frac18 (\alpha-\frac12)$.
\end{proof}

\subsubsection{Proof of item (ii) of Proposition~\ref{lemma:tronck>1}}

Similarly to~\eqref{eq:intdiscrete:k>1:bis} and~\eqref{L2start1},
we can write
\[
\Big( \frac{\sigma_r \gb_n^r}{n^{\frac12 -\alpha} L(n)}  \Big)^{k} \, \big( \psi_{n}^{\delta} \overset{k}\cdot \ol M_n - \psi_{n} \overset{k}\cdot \ol M_n \big)
 = n^{2-\alpha} L(n) \sum_{\bI\in \cI_k } \big( u_n^{\delta}(\bI) -u_n(\bI) \big) \prod_{\bi\in \bI}  \zeta_{\bi} \,,
\]
where for  $\bI=(\bi_1, \ldots, \bi_k)$ increasing, we have set $u_n(\bI) := \prod_{l=1}^{k+1} u(\bi_l-\bi_{l-1})$ 
and $u_n^\delta(\bI) := \prod_{l=1}^{k+1} u^\delta(\bi_l-\bi_{l-1})$ with $u^\delta( \bi) := u(\bi) \ind_{\{\|\bi\| \geq \delta n\}}$ (recall that by convention $\bi_0 =\bzero$ and $\bi_{k+1} = n \bt$).
Using that $\sigma_r \gb_n^r \geq c n^{\frac12-\alpha} L(n)$, we simply need to bound the $L^2$ norm of the right-hand side, which is equal to
\[
(n^{2-\alpha} L(n))^2
\sum_{\bI,\bJ  \in \cI_k }  \big( u_n^{\delta}(\bI) -u_n(\bI) \big) \big( u_n^{\delta}(\bJ) -u_n(\bJ) \big)   \bbE\Big[  \prod_{\bi\in \bI, \bj \in \bJ}  \zeta_{\bi} \zeta_{\bj} \Big] \,,
\]

Note that if all indices in $\bI$ are such that $\|\bi_l-\bi_{l-1} \| \geq \delta n$, then $u_n^{\delta}(\bI) = u_n(\bI)$, and similarly for $\bJ$; if one has $\|\bi_l-\bi_{l-1} \| < \delta n$ for one $l$, then we have $u_n^{\delta}(\bI)=0$.
Hence, we get that 
\[
\|\psi_{n} \overset{k}\cdot \ol M_n - \psi_{n}^\delta \overset{k}\cdot \ol M_n\|_{L^2}^2 \leq C^k (n^{2-\alpha} L(n))^2
\sum_{\bI,\bJ  \in \cI_{k}^{\delta}  }  u_n(\bI) u_n(\bJ)   \bbE\Big[  \prod_{\bi\in \bI, \bj \in \bJ}  \zeta_{\bi} \zeta_{\bj} \Big] \,,
\]
where we have defined
$\cI_{k}^{\delta}= \{ \bI  \in \cI_k, \, \exists 1\leq l\leq k+1 , \|\bi_l-\bi_{l-1} \| <\delta n  \}$.
Then, as in the proof of item (i) of Proposition~\ref{lemma:tronck>1}, we can decompose over the structure of $\bI\cup\bJ$ (see~\eqref{eq:decompset} and~\eqref{correl:structure}): we obtain that
\[
\|\psi_{n} \overset{k}\cdot \ol M_n - \psi_{n}^\delta \overset{k}\cdot \ol M_n\|_{L^2}^2  \leq C^k \big( \cK_1^{\delta} +\cK_2^{\delta} \big)\,,
\]
where $\cK_1^{\delta}, \cK_2^\delta$ are defined exactly as $\cK_1,\cK_2$, see~\eqref{L2main}-\eqref{L2rest}, with the sum restricted to $\bI,\bJ\in \cI_k^{\delta}$ instead of~$\cI_k$.
Now, we can bound $\cK_2^{\delta} \leq \cK_2$, and directly use Lemma~\ref{lemma:L2rest} to deal with this term. Therefore,
\[
\limsup_{n\to\infty} \|\psi_{n} \overset{k}\cdot \ol M_n - \psi_{n}^\delta \overset{k}\cdot \ol M_n\|_{L^2}^2 \leq C^k \limsup_{n\to\infty} \cK_1^{\delta} \,, 
\]
and it remains to deal with $\cK_1^{\delta}$: as in~\eqref{eq:rewritingreplicas}, we can write
\begin{equation}
\label{cK1delta}
\cK_1^{\delta} \leq  C (\gb_n^{2r})^{k} \bE^{\otimes 2}_n \Big[ \big| \big\{ (\bI, \bJ)\in \cI_k^{\delta} \times \cI_k^{\delta}\,,N_2(\bI,\bJ)=k,  \bI\subset \btau, \bJ\subset \btau' \big\}\big| \Big] \,,
\end{equation}
where we recall that $N_2(\bI,\bJ) =k$ means that  $\bI\cup \bJ$ can be written as a union of $k$ disjoint pairs of aligned indices.

Let us denote $\cA(\btau,\btau') = \{ (\bi,\bj) \in \llbracket \bone,n\bt -\bone\rrbracket^2 , \bi\in \btau,\bj\in \btau', \bi\leftrightarrow \bj \}$ the set of pairs of aligned points in $\btau \cup \btau'$;
note that on the event $\{N_2(\bI,\bJ)=k\}$, indices $(\bi,\bj) \in \bI\times \bJ$ form $k$ distinct pairs in $\cA(\btau,\btau')$.
Since $\bI\in \cI_k^{\delta}$, there must be some index $l\in\{1,\ldots, k+1\}$ such that $\|\bi_l-\bi_{l-1}\| \leq \delta n$: decomposing according to whether $l=1$, $l=k+1$ or $2\leq l \leq k$, we get (using some symmetry)
\[
 \big| \big\{ (\bI, \bJ)\in \cI_k^\gd\times \cI_k^{\gd}\,,N_2(\bI,\bJ)=k,  \bI\subset \btau, \bJ\subset \btau' \big\}\big| 
\leq  2  \cC_2^{\delta}(\btau,\btau') \; \binom{\cC_2(\btau,\btau')}{k-1} + \; \tilde C_2^{\delta} (\btau,\btau') \; \binom{\cC_2(\btau,\btau')}{k-2} \,,
\]
where we recall that $\cC_2(\btau,\btau') = |\cA(\btau,\btau')|$ and we defined 
$\cC_2^{\delta} (\btau,\btau') = |\{ (\bi,\bj) \in \cA(\btau,\btau'), \|\bi\|\leq \delta n\}|$ and
\[
\tilde \cC_{2}^{\delta} (\btau,\btau') = \big| \big\{   (\bi,\bj) , (\bi',\bj') \in \cA(\btau,\btau')  ,  \|\bi'-\bi\| \leq  \delta n \big\} \big| \,.
\]
In other words, $\tilde \cC_2^{\delta}$ counts how many couples of aligned pairs of points in $\btau \cup \btau'$  have indices $\bi$ at distance smaller than $\delta n$.
 Using that $\binom b a \leq \frac1{a!}b^a$ and recalling~\eqref{cK1delta}, we therefore need to control (we only need to control the first term if $k=1$)
\[
\begin{split}
\bE^{\otimes 2}_n\Big[  \gb_n^{2r}\cC_2^{\delta}(\btau,\btau') \big( \gb_n^{2r} \cC_2(\btau,\btau') \big)^{k-1} \Big]
&\leq \bE^{\otimes 2}_n\Big[ \big(\gb_n^{2r} \cC_2^{\delta}(\btau,\btau') \big)^2\Big]^{1/2} \bE^{\otimes 2}_n \Big[ \big(\gb_n^{2r} \cC_2(\btau,\btau') \big)^{2(k-1)} \Big]^{1/2} \,;\\
\bE^{\otimes 2}_n\Big[  \gb_n^{4r}\tilde \cC_2^{\delta}(\btau,\btau') \big( \gb_n^{2r} \cC_2(\btau,\btau') \big)^{k-2} \Big]
&\leq \bE^{\otimes 2}_n\Big[ \big(\gb_n^{4r} \tilde \cC_2^{\delta}(\btau,\btau') \big)^2\Big]^{1/2} \bE^{\otimes 2}_n \Big[ \big(\gb_n^{2r} \cC_2(\btau,\btau') \big)^{2(k-2)} \Big]^{1/2}\,.
\end{split}
\]

In both cases, the second term is bounded by a constant (which depends on $k$), see Remark~\ref{rem:finiteexpmoment}. On the first line, for the first term one easily gets that
$\cC_2^{\delta}(\btau,\btau')\leq |\rho^{(1)} \cap [0,\delta n]| +|\rho^{(2)} \cap [0,\delta n]|$.
It is then straightforward to get that 
\[
\bE^{\otimes 2}_n\Big[  \big(\gb_n^{2r}\cC_2^{\delta}(\btau,\btau') \big)^2\Big]
\leq C \bE^{\otimes 2}\bigg[ \Big( \gb_n^{2r} \sum_{i=1}^{\delta n} \ind_{\{i\in \rho^{(1)}\}} \Big)^2\bigg]
\leq C' \Big( \gb_n^{2r}  L(n)^{-2} (\delta n)^{2\alpha-1}\Big)^2 \leq C \delta^{2(2\alpha-1)} \,,
\]
using first~\cite[Lem.~A.2]{GLT10} to remove the conditioning, then expanding the square and using that $\bP(i\in \rho^{(1)}) =\bP(i\in \tau^{(1)})^2 \sim c L(i)^{-2} i^{2\alpha-2}$ as $i\to \infty$. Details are left to the reader.

It only remains to estimate
\[
\begin{split}
& \bE^{\otimes 2}_n  \Big[ \tilde \cC_2^{\delta}(\btau,\btau') ^2\Big]
 = \bE^{\otimes 2}_n\bigg[\bigg(\sumtwo{ \bzero \prec \bi\prec \bi' \prec n\bt }{ \bj,\bj' \in \llbracket \bzero, n \bt \rrbracket } \ind_{\{\|\bi'-\bi\|\leq \delta n\}}\ind_{\{ (\bi,\bj)\in \cA (\btau,\btau') \}}\ind_{\{ (\bi',\bj')\in \cA (\btau,\btau') \}} \bigg)^2\bigg] \\
&\quad \leq \frac{1}{\bP(n \bt \in \btau)^2}  \sumtwo{ \bzero \prec \bi_1\preceq \bi_2 \preceq \bi_3\preceq \bi_4 \preceq n\bt}{\|\bi_2-\bi_1\|\leq \delta n} 
\sumtwo{ \bzero \prec \bj_1\preceq \bj_2 \preceq \bj_3\preceq \bj_4 \prec n\bt}{ \exists \, \sigma\in \mathfrak{S}_4, \, \bj_l \aligne \bi_{\sigma(l)} \text{ for } 1\leq l\leq4 } \bP\big( \bi_1,\bi_2,\bi_3,\bi_4, n\bt \in \btau\big) \bP\big( \bj_1,\bj_2,\bj_3,\bj_4,n\bt\in \btau'\big) \,.
\end{split}
\]
Note that the indices must be in non-decreasing order, otherwise we cannot have $\bi_1,\bi_2,\bi_3,\bi_4, n\bt \in \btau$. Then, we can use the following uniform bound from \cite[Thm.~4.1, see also Eq.~(4.2)]{B18}: there exists $c>0$ such that
\begin{equation*}
\bP(\bi \in \bt) \, \leq \,c\,L(\|\bi\|)^{-1}\,\|\bi\|^{\ga-2} \;,\qquad \forall\; \bi\in\N^2\;.
\end{equation*}
Note that Lemma~\ref{lem:homogene} is the analogous of this inequality in the case $h_n \not \equiv 0$.
Using this, if all indices are distinct, we bound $\bP( \bi_1,\bi_2,\bi_3,\bi_4, n\bt \in \btau')$ by a constant times $\prod_{l=1}^{5} L(\|\bi_l-\bi_{l-1}\|)^{-1} \|\bi_l-\bi_{l-1}\|^{\alpha-2}$
(with the convention $\bi_0=\bj_0=\bzero$, $\bi_5=\bj_5:=n\bt$), and similarly for $\bP( \bj_1,\bj_2,\bj_3,\bj_4, n\bt \in \btau')$: by a Riemann sum approximation, we have that for any $\sigma\in \mathfrak{S}_4$
\[
\begin{split}
  \sumthree{ \bzero \prec \bi_1\prec \bi_2 \prec \bi_3\prec \bi_4 \prec n\bt}{ \bzero \prec \bj_1\prec \bj_2 \prec \bj_3\prec \bj_4 \prec n\bt}{\bj_l \aligne \bi_{\sigma(l)} \text{ for } 1\leq l\leq4 } 
&  \!\! \ind_{\{{\|\bi_2-\bi_1\|\leq \delta n}\}} \bP\big( \bi_1,\bi_2,\bi_3,\bi_4, n\bt \in \btau'\big) \bP\big( \bj_1,\bj_2,\bj_3,\bj_4,n\bt\in \btau'\big) \leq C \big(  L(n)^{-1} n^{\alpha-2}  \big)^{10}  (n^{3})^4  \\[-\baselineskip]
 & \quad     \times \int_{[\bzero, \bt]^4} \bigg( \int_{\cA_{\bu_1}\times \cdots \times \cA_{\bu_4}}   g(\bv_1,\ldots, \bv_4)  \prod_{l=1}^4 \lambda_{\bu_{\sigma(l)}}(\dd \bv_l) \bigg)    g(\bu_1,\ldots, \bu_4)  \ind_{\{\|\bu_2-\bu_1\|\leq \delta\}}  \prod_{l=1}^4\dd \bu_l \,,
 \end{split}
\]
where $g(\bs_1,\ldots,\bs_4) = \prod_{l=1}^5 \|\bs_l-\bs_{l-1}\|^{\alpha-2}$
with the convention $\bs_0=\bzero$ and $\bs_{5}=\bt$.
Here, we have used the notation of Proposition~\ref{propbis:covariancemeasure:k>1}: $\cA_{\bu}$ denotes the points in~$\bbR_+^2$ aligned with $\bu$ and $\lambda_{\bu}$ is the (one-dimensional) Lebesgue measure on $\cA_{\bu}$. 
In particular, the integral in the right-hand side is bounded by 
\[
\int_{\bbR_+^{16}}  g(\bu_1,\ldots, \bu_4) g(\bv_1,\ldots, \bv_4) \dd \nu_{\cM^{\otimes 8}} (\bu_1,\ldots, \bu_4, \bv_1,\ldots, \bv_4) = \|g\|_{\nu_{\cM}^{(4)}}^2,
\]
which is known to be finite by Proposition~\ref{prop:ub:psikk}. The case where some of the indices $\bi_a$ (or $\bj_b$) are equal in the sum is treated exactly in the same manner: there are simply fewer terms and the sum is smaller (details are left to the reader).

Recalling that $\gb_n^{2r} \sim c L(n)^{2} n^{1-2\alpha}$ and that $\bP(n\bt \in \btau) \sim \gp(\bt) L(n)^{-2} n^{\alpha-2}$, we therefore end up with 
\[
\bE^{\otimes 2}_n  \big[ \big( \gb_n^{4r} \tilde \cC_2^{\delta}(\btau,\btau') \big)^2\big] \leq C I_{\delta} ,
\]
where
\[
 I_{\delta} := \int_{\bbR_+^{16}}  \ind_{\{\|\bu_2-\bu_1\|\leq \delta\}}   g(\bu_1,\ldots, \bu_4) g(\bv_1,\ldots, \bv_4) \dd \nu_{\cM^{\otimes 8}} (\bu_1,\ldots, \bu_4, \bv_1,\ldots, \bv_4) \,.
\]

We therefore end up with $\limsup_{n\to\infty} \cK_1^{\delta} \leq C_{k,\bt} (\delta^{2(2\alpha-1)} +I_{\delta})$.
Since $\lim_{\delta\downarrow 0}I_{\delta} =0$ by dominated convergence, this concludes the proof.
\qed

\section{Homogeneous and degenerate disordered case: proof of Propositions~\ref{prop:scalinghom} and~\ref{prop:degenerate}}\label{sec:homogeneous}
\subsection{Homogeneous case: proof of Proposition~\ref{prop:scalinghom}}

Let us prove the result, which essentially comes from Riemann-sum convergence.
We focus here on the constrained partition function; the free case is identical.
First of all, we expand the partition function as
\begin{equation}
\label{eq:expandhomogeneous}
\begin{split}
Z_{ n\bt, h_n } & = \bE \Big[ \Big(\prod_{\bi \in \llbracket 1, n \bt \rrbracket } (1+ (e^{h_n}-1) \ind_{\{\bi\in \btau\}} ) \Big) \ind_{\{ n\bt \in \btau \}} \Big] \\
& = e^{h_n}  \sum_{k=0}^{ (nt_1) \wedge (nt_2)} (e^{h_n}-1)^k   \sum_{ \bzero=\bi_0 \prec \bi_1 \prec \ldots \prec \bi_k \prec \bi_{k+1} = n\bt }   \prod_{l=1}^{k+1} u(\bi_l-\bi_{l-1}) \,.
 \end{split}
\end{equation}
Since $h_n \sim \hat h  L(n) n^{-\alpha}$ by assumption, we have that for any fixed $\gep>0$, for $n$ large enough,
$(1-\gep) \hat h   \leq \frac{e^{h_n}-1}{L(n) n^{-\alpha}} \leq (1-\gep) \hat h $.
We now define for $ \check h \in \mathbb R$,
\[
\check Z_{n\bt, \check h} = \sum_{k=0}^{\infty} \check h^k  \frac{1}{n^{2k}} \sum_{ \bzero=\bi_0 \prec \bi_1 \prec \ldots \prec \bi_k \prec  \bi_{k+1} = n\bt }   \prod_{l=1}^{k+1} \Big( L(n) n^{2-\alpha}  u(\bi_l-\bi_{l-1}) \Big) \,,
\]
so that  $ \check Z_{n\bt, (1-\gep) \hat h} \leq   L(n) n^{2-\alpha} Z_{ n\bt, h_n }  \leq  \check Z_{n\bt, (1+\gep) \hat h} $ for large enough $n$.
We show below that
\begin{equation}
\label{eq:convergencehomogeneous}
\lim_{n\to\infty}  \check Z_{n\bt, \check h} = \bZ_{\bt, \check h} = \sum_{k=0}^{+\infty}  \check h^k 
\idotsint \limits_{ \bzero \prec \bs_1 \prec \cdots \prec \bs_k \prec \bt }  \psi_{\bt} \big( \bs_1,\ldots, \bs_k  \big)  \dd \bs_1 \cdots \dd \bs_k 
\end{equation}
and that $\check h \mapsto \bZ_{\bt ,\check h}$ is continuous. Together with the above bounds, this will conclude the proof.

For each $k$, the convergence of the $k$-th term in $ \check Z_{n\bt, \check h}$ to the $k$-th term of $\bZ_{\bt, \check h}$ is a simple consequence of Riemann-sum convergence, since $L(n) n^{2-\alpha}  u( \lfloor n \bu \rfloor -\lfloor n\bv \rfloor)$ converges to $\gp(\bi_l -\bi_{l-1})$, see Proposition~\ref{thm:renouv}---the convergence is actually uniform on compacts. One can also use the uniform bound
$L(n) n^{2-\alpha}  u( \bi) \leq  C \| \frac1n \bi  \|_1^{\alpha-2}$,
which comes from~\cite[Thm.~4.1]{B18}, to bound all sums uniformly:  there exists $C>0$ such that for all $k \geq 1$
\[
\begin{split}
\frac{1}{n^{2k}} & \sum_{ \bzero=\bi_0 \prec \bi_1 \prec \ldots \prec \bi_k \prec  \bi_{k+1} = n\bt }   \prod_{l=1}^{k+1} \Big( L(n) n^{2-\alpha}  u(\bi_l-\bi_{l-1}) \Big) \\
& \leq C^k \idotsint \limits_{ \bzero = \bs_0 \prec \bs_1 \prec \cdots \prec \bs_k \prec \bs_{k+1} = \bt }    \prod_{i=1}^{k+1} \| \bs_i - \bs_{i-1}\|^{\alpha-2}  \dd \bs_1 \cdots \dd \bs_k  \\
& \leq (C')^k  \idotsint \limits_{  0=t_0 <t_1 <\cdots < t_k <t_{k+1}= \|\bt\|_1 }    \prod_{i=1}^{k+1} ( t_i - t_{i-1})^{\alpha-1}  \dd t_1 \cdots \dd t_k = \frac{ (C' \Gamma(\alpha))^k}{\Gamma(k\alpha)} \|\bt\|_1^{(k+1)\alpha-1} \,,
\end{split}
\]
where we have used a standard calculation for the last identity (see e.g.\ \cite[Lem.~A.3]{BL21_scaling}).
Therefore, this shows that 
for any $k_0 \geq 1$
\[
\sum_{k\geq k_0} \check h^k  \frac{1}{n^{2k}} \sum_{ \bzero=\bi_0 \prec \bi_1 \prec \ldots \prec \bi_k \prec  \bi_{k+1} = n\bt }   \prod_{l=1}^{k+1} \Big( L(n) n^{2-\alpha}  u(\bi_l-\bi_{l-1}) \Big) \leq  \sum_{k\geq k_0} \frac{ (C' \Gamma(\alpha) \check h)^k}{\Gamma(k\alpha)} \|\bt\|_1^{(k+1)\alpha-1} \,,
\]
which can be made arbitrarily small by taking $k_0$ large, uniformly for $\check h$ in a bounded interval.
This concludes the proof of~\eqref{eq:convergencehomogeneous} and shows that the convergence is uniform on compacts. Hence, this also shows that the limit $ \bZ_{\bt ,\check h}$ is continuous in $\check h$.
\qed

\smallskip
Notice that Proposition~\ref{prop:scalinghom} shows that $\lim_{n\to\infty} n^{2-\alpha} L(n) Z_{ n\bu, n\bv, h_n } = \bZ_{\bu-\bv,\hat h}$, where we have set
\[
Z_{ \ba, \bb, h_n } = \bE\bigg[  \exp\Big( h_n \sum_{\bi \in \llbracket \ba+\bone,\bb \rrbracket} \ind_{\{\bi\in \btau \}} \Big)  \ind_{\{\bb \in \btau \}}\, \bigg| \, \ba \in \btau \bigg] \,.
\]
Let us state a lemma which will be useful in the following: it can be found in~\cite[Lem.~5.2]{Leg21}
\begin{lemma}
\label{lem:homogene}
If $h_n \sim \hat h  L(n) n^{-\alpha}$ for some $\hat h \in\mathbb R$, there exists a constant $C=C_{\hat h,\bt}$ such that for any $n\in\N$, $\bi \in \llbracket \bone, n \bt \rrbracket$,
we have
\[
Z_{\bi, h_n} \leq C L(\|\bi\|_1)^{-1} \|\bi\|_1^{\alpha-2} \,.
\]
As a by-product, this proves that
$\bZ_{\bs,\hat h} \leq C \|\bs\|_1^{\alpha-2}$ for all $ 0\prec \bs \prec \bt$.
\end{lemma}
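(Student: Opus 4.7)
The proof relies on the polynomial expansion~\eqref{eq:expandhomogeneous}. The key observation is that, by the renewal property, the quantity $S_k(\bi):=\sum_{\bzero = \bi_0 \prec \ldots \prec \bi_{k+1} = \bi}\prod_{l=1}^{k+1} u(\bi_l - \bi_{l-1})$ admits the combinatorial identity $S_k(\bi) = \bE\big[\binom{X-1}{k}\ind_{\{\bi\in\btau\}}\big]$, where $X:=|\btau\cap\llb\bone,\bi\rrb|$: indeed, $\prod_{l=1}^{k+1} u(\bi_l-\bi_{l-1})$ equals $\bP(\bi_1, \ldots, \bi_{k+1} \in \btau)$, and summing over increasing sequences $\bi_1 \prec \ldots \prec \bi_k \prec \bi$ amounts to counting $k$-subsets of $\btau \cap \llb\bone, \bi-\bone\rrb$. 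The binomial theorem then telescopes the series into
\[
Z_{\bi, h_n} \;=\; \bE\big[e^{h_n X}\, \ind_{\{\bi \in \btau\}}\big] \;=\; u(\bi)\cdot \bE\big[e^{h_n X} \,\big|\, \bi \in \btau\big].
\]
Combined with the uniform estimate $u(\bi) \leq C_0 L(\|\bi\|_1)^{-1} \|\bi\|_1^{\alpha-2}$ from~\cite[Thm.~4.1]{B18} (already invoked just above in the body of the paper), the problem reduces to proving that $\bE\big[e^{h_n X} \mid \bi \in \btau\big] \leq C_{\hat h, \bt}$ uniformly for $\bi \in \llb \bone, n\bt\rrb$.

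For this uniform control of the conditional exponential moment, I would expand $e^{h_n X}$ in a Taylor series and bound the conditional binomial moments through $\bE[\binom{X-1}{k}\mid \bi \in \btau] = S_k(\bi)/u(\bi)$. Decomposing the sum defining $S_k(\bi)$ according to the diagonal values $p_l := \|\bi_l-\bi_{l-1}\|_1$ (there are at most $p_l - 1$ lattice points $\bj \in \N^2$ with $\|\bj\|_1 = p_l$, each contributing $u$-mass at most $C_0 L(p_l)^{-1} p_l^{\alpha-2}$), one reduces to controlling the $(k+1)$-fold convolution of $p \mapsto L(p)^{-1} p^{\alpha-1}$ at $N := \|\bi\|_1$. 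Classical iterated-convolution estimates for slowly varying sequences (Karamata-type, \textit{cf.}\ Lemma~\ref{lem:tailandmoments}) then give
\[
S_k(\bi)/u(\bi) \;\leq\; C^k\,(L(N)^{-1} N^{\alpha})^k\,\big/\,\Gamma(k\alpha+1),
\]
so that the exponential series is majorized by a Mittag--Leffler-type function in $z := c\, h_n L(N)^{-1} N^\alpha$. The essential point is that $z$ is uniformly bounded on $\llb\bone, n\bt\rrb$: using $h_n \sim \hat h L(n) n^{-\alpha}$ one has $z \sim c\hat h (N/n)^\alpha L(n)/L(N)$, and by Potter bounds with any $\epsilon < \alpha$, $L(n)/L(N) \leq C_\epsilon \max\{(n/N)^\epsilon, (N/n)^\epsilon\}$, which combined with $(N/n)^\alpha$ and $N \leq n\|\bt\|_1$ yields the sought constant $C_{\epsilon, \hat h, \bt}$.

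The by-product statement $\bZ_{\bs, \hat h} \leq C \|\bs\|_1^{\alpha-2}$ for $\bzero \prec \bs \prec \bt$ follows by applying the main estimate to $\bi = \lfloor n \bs \rfloor$, using the asymptotics $L(\|\lfloor n\bs\rfloor\|_1)^{-1} \|\lfloor n\bs \rfloor\|_1^{\alpha-2} \sim \|\bs\|_1^{\alpha-2} n^{\alpha-2} L(n)^{-1}$, and passing to the limit $n \to \infty$ via the convergence $n^{2-\alpha} L(n) Z_{\lfloor n\bs\rfloor, h_n} \to \bZ_{\bs, \hat h}$ from Proposition~\ref{prop:scalinghom}. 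The main technical obstacle is the iterated-convolution estimate producing the $\Gamma(k\alpha+1)$ denominator: it requires careful use of slow variation together with the 2D lattice combinatorics, and ultimately reduces to a classical 1D renewal estimate on the auxiliary univariate renewal $(\|\btau_k\|_1)_{k\geq 0}$, whose inter-arrival distribution has tail exponent $\alpha$. As indicated in the text, this lemma coincides with~\cite[Lem.~5.2]{Leg21}, whose proof follows along these lines.
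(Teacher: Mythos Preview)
The paper does not give its own proof of this lemma; it simply cites \cite[Lem.~5.2]{Leg21}. So there is nothing to compare against directly, and your sketch is a reasonable reconstruction of the argument, with the right ingredients: the expansion~\eqref{eq:expandhomogeneous}, the uniform upper bound $u(\bj)\le C_0 L(\|\bj\|_1)^{-1}\|\bj\|_1^{\alpha-2}$ from~\cite{B18}, the reduction to a one-dimensional iterated convolution along diagonals, Potter bounds to control the slowly varying factor, and a Mittag--Leffler-type summation. The by-product part is also handled correctly.

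There is however one genuine weak point. You factor $Z_{\bi,h_n}=u(\bi)\cdot\bE[e^{h_nX}\mid\bi\in\btau]$ and then aim to bound the conditional expectation uniformly via $S_k(\bi)/u(\bi)$. But this ratio approach implicitly requires a matching \emph{lower} bound $u(\bi)\ge c\,L(\|\bi\|_1)^{-1}\|\bi\|_1^{\alpha-2}$, which fails badly near the boundary of the cone: for instance $u((1,m))=K(m+1)\asymp L(m)\,m^{-(2+\alpha)}\ll m^{\alpha-2}$. In such cases the conditional expectation is trivially bounded (since $X$ is forced to be small), but making this rigorous uniformly in $\bi$ requires a separate boundary/bulk argument that you do not provide. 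The cleaner route --- and the one actually carried out in the proof of Proposition~\ref{prop:scalinghom} just above the lemma --- is to bypass the factorisation entirely: bound each $S_k(\bi)$ directly by the 1D convolution estimate (which yields the $\Gamma(k\alpha)^{-1}$ factor), and then sum $Z_{\bi,h_n}=e^{h_n}\sum_k(e^{h_n}-1)^k S_k(\bi)$. This produces $C\,L(N)^{-1}N^{\alpha-2}$ times a Mittag--Leffler series in $z\asymp h_n L(N)^{-1}N^{\alpha}$, and your Potter-bound argument for the uniform boundedness of $z$ then finishes the job without ever dividing by $u(\bi)$.
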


\subsection{Degenerate disordered case: proof of Proposition~\ref{prop:degenerate}}
\label{sec:secondmomentbound}

Here, we focus on the free partition function.
First of all, let us notice that we can write
\[
Z_{n\bt, h_n}^{\gb_n , \free} 
 = Z_{n\bt, h_n}^{\free} \bE_{h_n} \Big[  \exp\Big( \sum_{\bi \in \llbracket \bone, n \bt \rrbracket} (\gb_n \go_{\bi} - \lambda(\gb_n)) \ind_{\{\bi \in \btau\}} \Big) \Big] \,,
\]
where we have used the short-hand notation $\bP_{h_n} = \bP_{n\bt,h_n}^{\hat \gb =0,\free}$.
We have seen in Proposition~\ref{prop:scalinghom} that $Z_{n\bt, h_n}^{\free}$ converges to $\bZ_{\bt,\hat h}^{\free}$. We therefore simply need to prove that the second term above converges to $1$ in $L^2(\mathbb P)$, which is the purpose of the following lemma.

\begin{lemma}
\label{lem:scalinghom}
Assume that $\alpha \in (0,\frac12)$ or that $\alpha \in (0,1)$ and $\mathbb P \in \mathfrak P_{\infty}$.
Then if $h_n \sim \hat h  L(n) n^{-\alpha}$, for any vanishing sequence $(\gb_n)_{n\geq 1}$ we have
\[
\lim_{n\to\infty} \bE_{h_n} \Big[  \exp\Big( \sum_{i \in \llbracket \bone, n \bt \rrbracket} (\gb_n \go_{\bi} - \lambda(\gb_n)) \ind_{\{\bi \in \btau\}} \Big) \Big] =1 \quad \text{ in } L^2(\mathbb P) \,.
\]
\end{lemma}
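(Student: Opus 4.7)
The strategy is a second-moment argument for $W_n := \bE_{h_n}[e^{\sum_{\bi}(\gb_n \go_\bi - \lambda(\gb_n))\ind_{\bi \in \btau}}]$; set $\zeta_\bi := e^{\gb_n \go_\bi - \lambda(\gb_n)}-1$ (of mean zero). I would first establish that $\bbE[W_n]=1$ exactly: since $\btau$ is strictly increasing for $\prec$, the points of $\btau \cap \llb\bone,n\bt\rrb$ have pairwise distinct first and second coordinates, so $(\zeta_\bi)_{\bi \in \btau}$ depend on disjoint families of $\hat\go$'s and $\bar\go$'s and are mutually independent with mean zero. Expanding $\prod_{\bi \in \btau}(1+\zeta_\bi)$ and taking $\bbE$, only the empty-product term survives, giving $1$ pointwise in $\btau$; taking $\bE_{h_n}$ then $\bbE$ yields $\bbE[W_n]=1$. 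Therefore $\bbE[(W_n-1)^2] = \bbE[W_n^2]-1$, and it suffices to show $\bbE[W_n^2] \to 1$.

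Introducing two independent renewal copies $\btau, \btau'$ under $\bE_{h_n}^{\otimes 2}$, with $A := \btau \cap \llb\bone,n\bt\rrb$, $B := \btau' \cap \llb\bone,n\bt\rrb$, and $\zeta_\bi^{(2)} := e^{2\gb_n \go_\bi - 2\lambda(\gb_n)} - 1$ (of mean $\sigma_n^2 := e^{\lambda(2\gb_n) - 2\lambda(\gb_n)} - 1 \to 0$), I would group factors into
\[
\bbE[W_n^2] \;=\; \bE_{h_n}^{\otimes 2}\bbE\Big[\prod_{\bi \in A \cap B}(1+\zeta_\bi^{(2)}) \prod_{\bi \in A \triangle B}(1+\zeta_\bi)\Big],
\]
expand as $\sum_{S \subset A \cap B,\, T \subset A \triangle B}\bbE[\prod_S \zeta_\bi^{(2)} \prod_T \zeta_\bi]$, and bound each multi-point correlation via Lemma~\ref{lem:multicorrel}.

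\textit{Case $\bbP \in \Pfk_\infty$.} The conditional mean-zero properties $\bbE[\zeta_\bi \mid \hat\go_{i_1}] = \bbE[\zeta_\bi \mid \bar\go_{i_2}] = 0$ make every term with a $\zeta_\bj$ factor whose first (resp.\ second) coordinate is unique among the involved coordinates vanish. Because $S \subset A \cap B$ cannot share a coordinate with any distinct element of $A$ or $B$ (this would force two points of $A$ or of $B$ to be aligned), the shared coordinates of a $\bj \in T$ must come from within $T$; a short combinatorial check---using that $T \cap (A \setminus B)$ and $T \cap (B \setminus A)$ are each strictly $\prec$-increasing---forces any bijective first-coord and second-coord matching between them to coincide with the sorted-index identity, which is impossible since the matched points lie in disjoint sets. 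Hence $T = \emptyset$, and by independence of the $\zeta_\bi^{(2)}$'s at distinct points of $A \cap B$,
\[
\bbE[W_n^2] \;=\; \bE_{h_n}^{\otimes 2}\big[(1 + \sigma_n^2)^{|\btau \cap \btau' \cap \llb\bone,n\bt\rrb|}\big] \;\longrightarrow\; 1,
\]
by dominated convergence using Lemma~\ref{lem:interesctcond} (and \cite[Lem.~A.2]{GLT10} to absorb the mild tilt $h_n$), which yields uniformly bounded exponential moments of $|\btau \cap \btau'|$.

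\textit{Case $\alpha \in (0, \frac12)$.} Here $\bbP \in \Pfk_r$ for some $r \in \bbN$, so aligned pairs $\bi \neq \bj$ contribute $\bbE[\zeta_\bi \zeta_\bj] = O(\gb_n^{2r})$; the rigid combinatorial argument above fails and extra terms survive. Grouping them by alignment structure, as in the proof of Lemma~\ref{lemma:L2rest}, I would derive an exponential-form upper bound
\[
\bbE[W_n^2] \;\leq\; \bE_{h_n}^{\otimes 2}\Big[\exp\big(C \sigma_n^2 \,|\btau \cap \btau'| + C \gb_n^{2r}\, \cC_2(\btau, \btau')\big)\Big],
\]
where $\cC_2(\btau, \btau')$ counts aligned pairs of distinct points from $\btau \times \btau'$. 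The key input is that $\sum_{i \geq 1} \bP(i \in \btau^{(a)})^2 \sim \sum i^{2\alpha-2} <+\infty$ when $\alpha < \frac12$, so $\cC_2(\btau, \btau')$ and $|\btau \cap \btau'|$ have uniformly bounded finite exponential moments in $n$ (a simpler analogue of Lemma~\ref{lem:tailandmoments}, cf.\ Remark~\ref{rem:finiteexpmoment}). Since $\sigma_n^2, \gb_n^{2r} \to 0$, dominated convergence gives $\bbE[W_n^2] \to 1$. The main obstacle is the combinatorial bookkeeping absorbing higher-order chains of alignments into the exponential form above; this follows the scheme of Lemma~\ref{lemma:L2rest}, but in the considerably simpler regime where every renewal sum involved is already summable uniformly in $n$.
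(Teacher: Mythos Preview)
Your approach is correct and matches the paper's: a second-moment argument showing $\bbE[W_n]=1$ and then $\bbE[W_n^2]\to 1$. In the $\Pfk_\infty$ case your combinatorial argument forcing $T=\emptyset$ is equivalent to (and a bit more explicit than) the paper's one-line use of $\bbE[\zeta_\bi\mid(\zeta_\bj)_{\bj\neq\bi}]=0$, and both arrive at the identity $\bbE[W_n^2]=\bE^{\otimes 2}[(1+\sigma_n^2)^{|\btau\cap\btau'\cap\llb\bone,n\bt\rrb|}]$. For $\alpha\in(0,\tfrac12)$ the paper short-circuits your chain-combinatorics sketch by invoking \cite[Prop.~3.3]{Leg21}, which directly yields
\[
\bbE[W_n^2]\;\leq\;\bE^{\otimes 2}\Big[\exp\Big(\tfrac32\big(\lambda(2\gb_n)-2\lambda(\gb_n)\big)\big(|\btau^{(1)}\cap\btau'^{(1)}|+|\btau^{(2)}\cap\btau'^{(2)}|\big)\Big)\Big],
\]
so the exponential-form bound you outline is already available off the shelf; both routes close via the same fact that the coordinate-wise intersections $\btau^{(a)}\cap\btau'^{(a)}$ are terminating when $\alpha<\tfrac12$. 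One small correction: for the dominated convergence in the $\Pfk_\infty$ case you want the unconditioned statement that $\btau\cap\btau'$ is terminating for $\alpha<1$ (hence $|\btau\cap\btau'|$ is geometric), which is \cite[Prop.~A.3]{BGK20}; Lemma~\ref{lem:interesctcond} concerns the conditioned law and is not what is needed here.
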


\begin{proof}
We focus on the proof in the case $h_n\equiv 0$, that is when $\bP_{h_n} = \bP$.
Let 
\[
Z_{n\bt, \gb_n}^{\go}
= \bE \Big[  \exp\Big( \sum_{\bi \in \llbracket \bone, n \bt \rrbracket} (\gb_n \go_{\bi} - \lambda(\gb_n)) \ind_{\{\bi \in \btau\}} \Big) \Big] \,.
\]
Since $\bbE[Z_{n\bt, \gb_n}^{\go}] =1$, we simply need to show that $\lim_{n\to\infty} \bbE[(Z_{n\bt, \gb_n}^{\go})^2]=1$.

\medskip
\noindent
{\it Case $\alpha\in (0,\frac12)$.} In that case, one can use \cite[Prop.~3.3]{Leg21} (whose proof uses only that $\go_{\bi}$ is correlated via horizontal and vertical lines, but not the specific definition of $\go_{\bi}$): it gives that 
\begin{equation}
\label{upperboundsecondmoment1}
1\leq \bbE[(Z_{n\bt, \gb_n}^{\go})^2] \leq \bE^{\otimes 2} \Big[   e^{  \frac32 (\lambda(2\gb_n)  - 2\lambda(\gb_n))  \big( |\btau^{(1)}\cap \btau'^{(1)}|  + | \btau^{(2)}\cap \btau'^{(2)}| \big) }\Big] \,,
\end{equation}
where $\btau,\btau'$ are two independent bivariate renewals with the same distribution, and $\btau^{(i)},\btau'^{(i)}$ are their projections on the $i$-th coordinate.
Notice that $\btau^{(i)},\btau'^{(i)}$ are two independent recurrent renewal processes, with inter-arrival distribution verifying $\bP(\btau^{(i)}=n) \sim c_{\alpha} L(n) n^{-(1+\alpha)}$ as $n\to\infty$, so we get $\bP(n\in \btau^{(i)}) \sim c'_{\alpha} L(n)^{-1} n^{1-\alpha}$ thanks to~\cite{Doney97}. Hence, $\btau^{(i)}\cap \btau'^{(i)}$ is a renewal process which is terminating if $\alpha\in (0,\frac12)$, 
because $\sum_{n=1}^{\infty}\bP(n \in \btau^{(i)}\cap \btau'^{(i)}) =\sum_{n=1}^{\infty} \bP(n\in \btau^{(i)})^2 <+\infty$.
We therefore have that
$|\btau^{(1)}\cap \btau'^{(1)}|$, $|\btau^{(2)}\cap \btau'^{(2)}|$ are two (correlated)
geometric random variable, so the upper bound in~\eqref{upperboundsecondmoment1} goes to $0$ as $\gb_n \to 0$ (for instance using Cauchy--Schwarz inequality to treat the two geometric random variables separately).

\medskip
\noindent
{\it Case $\alpha\in (0,1)$, $\bbP \in \mathfrak P_{\infty}$.}
In that case, we can compute exactly the second moment of the partition function.
Writing $\zeta_{\bi} := e^{\gb_n\go_\bi -\lambda(\gb_n)}-1$ and expanding the product as in~\eqref{expansion}, we get
\[
Z_{n\bt, \gb_n}^{\go} = 1+\sum_{k=1}^{(nt_1)\wedge (n t_2)} \sum_{ \bzero=\bi_0 \prec \bi_1 \prec \ldots \prec \bi_k \preceq  n\bt } \prod_{l=1}^k \zeta_{\bi_l} \,u(\bi_l-\bi_{l-1}).
\]
By Lemma~\ref{lem:correl}, if $\bbP \in \mathfrak{P}_{\infty}$ then we have $\bbE[\zeta_{\bi} \mid (\zeta_{\bj})_{\bj\neq \bi}] =0$.
We therefore get that, for any $\bi_1 \prec \cdots \prec \bi_{k}$ and $\bi'_1 \prec \cdots \prec \bi'_{k'}$,
\[
\bbE\Big[ \prod_{l=1}^k \zeta_{\bi_l} \prod_{l=1}^{k'} \zeta_{\bi'_l}\Big] =
\begin{cases}
 0& \qquad \text{ if } \bi_l\neq \bi'_l \text{ for some }l \\
 \bbE[\zeta_{\bone}^2]^k & \qquad \text{ if } k=k'\ \text{ and } \bi_l= \bi'_l \text{ for all }l \,,
\end{cases}
\]
where the second line comes from the fact that $(\zeta_{i_l}^2)_{1\leq l \leq k}$ are independent, because $\bi_1 \prec \cdots \prec \bi_{k}$.
Since $\bbE[\zeta_{\bone}^2] = e^{\lambda(2\gb) -2 \lambda(\gb)}-1$, we therefore end up with
\begin{equation}
\label{upperboundsecondmoment2}
\begin{split}
\bbE[ (Z_{n\bt, \gb_n}^{\gb,\go})^2] 
& = 1+ \sum_{k=1}^{(nt_1)\wedge (n t_2)} \sum_{ \bzero=\bi_0 \prec \bi_1 \prec \ldots \prec \bi_k \preceq  n\bt } \prod_{l=1}^k (e^{\lambda(2\gb_n) -2 \lambda(\gb_n)}-1) u(\bi_l-\bi_{l-1})^2 \\
& = \bE^{\otimes 2} \bigg[ \exp\bigg( (\lambda(2\gb_n) -2 \lambda(\gb_n)) \sum_{\bi\in \llbracket \bone, n \bt\rrbracket} \ind_{\{\bi \in \btau\cap \btau'\}}\bigg) \bigg]  \leq\bE^{\otimes 2} \Big[ e^{ (\lambda(2\gb_n) -2 \lambda(\gb_n)) | \btau\cap \btau'|}  \Big] \,,
\end{split}
\end{equation}
where again $\btau,\btau'$ are two independent bivariate renewals with the same distribution.
From \cite[Prop~A.3]{BGK20}, $\btau\cap \btau'$ is terminating when $\alpha <1$, so $|\btau \cap \btau'|$ is a geometric random variable, and the upper bound in~\eqref{upperboundsecondmoment2} goes to $1$ as $\gb_n \to 0$.

\smallskip
The case of a general sequence $(h_n)_{n\geq 1}$ satisfying $h_n \sim \hat h  L(n) n^{-\alpha}$ can easily be adapted, using for instance that $\bP_{h_n}(\bi\in \btau) = Z_{\bi,h_n} Z_{n\bt-\bi ,h_n}^{\free}$, together with Proposition~\ref{prop:scalinghom} (and the help of Lemma~\ref{lem:homogene})---or analogous results for the one-dimensional pinning model in the case $\alpha \in (\frac12,1)$.
\end{proof}

\begin{remark}[Proof of Corollary~\ref{cor:boundcriticalpoint2}]
\label{rem:boundedL2}
Let us stress that the bounds~\eqref{upperboundsecondmoment1}-\eqref{upperboundsecondmoment2} provide uniform bounds on the  second moment $\bbE[ (Z_{n\bt, \gb}^{\gb,\free})^2]$, also for a non-vanishing $\gb>0$.
We therefore get that if $\gb>0$ is fixed small enough, the upper bounds~\eqref{upperboundsecondmoment1}-\eqref{upperboundsecondmoment2} are finite, so that $Z_{n\bt, \gb}^{\gb,\free}$ is bounded in $L^2(\bbP)$.
Applying Proposition~\ref{prop:secondmomentcriticalpoint}, this gives Corollary~\ref{cor:boundcriticalpoint2}.
\end{remark}

\begin{appendix}

\section{Technical results on renewal processes}\label{app:renewal}

We give in this section some technical estimates on the intersection of two independent copies $\btau,\btau'$ of a bivariate renewal satisfying~\eqref{def:tau}.
We start with a lemma that gives estimates on $\btau^{(1)} \cap \btau'^{(1)}$, the intersection of the projection of $\btau,\btau'$.

\smallskip
Let $\tau = (\tau_i)_{i\geq 1}$ be a recurrent \emph{one-dimensional} renewal process on $\bbN$ starting from $\tau_0=0$ and inter-arrival distribution verifying $\bP(\tau_1 >n) \sim \ell(n) n^{-\gamma}$ as $n\to\infty$, for some $\gamma \in (0,1)$ and some slowly varying function~$\ell(\cdot)$.
Define 
\[
U_n := \sum_{i=1}^n \bP(i\in \tau) \,.
\]
Then, $\bP(\tau_1 >n) \sim \ell(n) n^{-\gamma}$ is equivalent to the fact that $U_n \sim c_{\gamma} n^{\gamma} \ell(n)^{-1}$ with $c_{\gamma}=\frac{\sin(\pi \gamma)}{\pi \gamma}$, see \cite[Thm.~8.7.3]{BGT87}.

\begin{remark}
\label{rem:appliclemmatail}
If $\btau,\btau' $ are two independent copies of a bivariate renewal satisfying~\eqref{def:tau},
then $\tau = \btau^{(1)} \cap \btau'^{(1)}$ is a one-dimensional renewal process: if $\alpha\in (\frac12,1)$, then $\tau$ is recurrent and verifies the above tail assumption, with $\gamma=2\alpha-1 \in (0,1)$ and $\ell(n) =c_{\alpha} L(n)^2$ for some explicit constant $c_{\alpha}$.
Indeed, thanks to~\eqref{def:tau}, we have $\bP(\btau^{(1)}=n) \sim c L(n) n^{-(1+\alpha)}$ as $n\to\infty$, so Doney's result~\cite{Doney97} gives that $\bP(n\in \btau^{(1)}) \sim c_{\alpha} n^{\alpha-1} L(n)^{-1}$. Then, if $\alpha\in (\frac12,1)$,
\[
U_n = \sum_{i=1}^n \bP ( i \in \btau^{(1)}\cap \btau'^{(1)}) = \sum_{i=1}^n \bP ( i \in \btau^{(1)})^2 \sim c_{\gamma} n^{2\alpha-1} L(n)^{-2} \,
\]
and one concludes thanks to \cite[Thm.~8.7.3]{BGT87}.
\end{remark}

The following large deviation estimate is standard but we include it here since most of the literature treats more general cases (or with less optimal bounds, as in~\cite[Lem.~A.3]{AB18}).

\begin{lemma}
\label{lem:tailandmoments}
For any $\delta \in (0,\gamma)$ there is a constant $c_{\delta}$ such that for any $t\geq 1$ and any $n\geq 1$
\[
\bP\Big(  \sum_{i=1}^n \ind_{\{i\in \tau\}} \geq t U_n \Big) \leq  \exp\Big( - c_{\delta}\, t^{  \frac{1}{1-\gamma+\delta} } \Big) \,.
\]
As a consequence, for any $\delta \in (0,\gamma)$, there is a constant $C_\delta>0$ such that for any $k\geq 1$
\[
\bE\Big[ \Big( \frac{1}{U_n} \sum_{i=1}^n \ind_{\{i\in \tau\}}\Big)^k \Big] \leq   (C_{\delta})^k \Gamma\big( k(1-\gamma +\delta) \big) \,.
\]
\end{lemma}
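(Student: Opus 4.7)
The plan is to get the tail bound via a standard Cram\'er--Chernoff argument on $P(\tau_k \leq n)$ and then derive the moment bound by integrating the tail. First, observe that $\{N_n \geq k\} = \{\tau_k \leq n\}$ where $N_n := \sum_{i=1}^n \ind_{i \in \tau}$, so for any $\lambda > 0$,
\[
\bP(N_n \geq k) \;\leq\; e^{\lambda n}\,\bE[e^{-\lambda \tau_1}]^k \;\leq\; \exp\!\big(\lambda n - k(1 - \bE[e^{-\lambda \tau_1}])\big),
\]
using $\log x \leq x - 1$. Next, the Tauberian-type lower bound $1 - \bE[e^{-\lambda \tau_1}] \geq c\,\ell(1/\lambda)\lambda^\gamma$ (valid for all $\lambda$ below some $\lambda_0$, and obtained for instance from $1-e^{-\lambda m} \geq (1-e^{-1})\ind_{m\geq 1/\lambda}$ combined with $\bP(\tau_1 \geq 1/\lambda) \sim \ell(1/\lambda)\lambda^\gamma$) gives control of the moment generating function near the origin.

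The key step is the optimization. Set $\lambda = A/n$ with $A \geq 1$ to be chosen, so that $\lambda n = A$. Using Potter's bounds, for any $\delta > 0$ there exists $c_\delta > 0$ such that $\ell(n/A)/\ell(n) \geq c_\delta^{-1} A^{-\delta}$ for $n$ large and $1 \leq A \leq n$, hence $1 - \bE[e^{-\lambda \tau_1}] \geq c'\, \ell(n)\, A^{\gamma-\delta}/n^\gamma$. Combined with $k = t U_n \sim t\, c_\gamma n^\gamma/\ell(n)$, this yields
\[
\bP(N_n \geq t U_n) \;\leq\; \exp\!\big(A - c'' t\, A^{\gamma-\delta}\big).
\]
Since $\delta < \gamma$, one can minimize in $A$, the optimum lying at $A^* = (c'' t(\gamma-\delta))^{1/(1-\gamma+\delta)}$, and the minimum value equals $-c_\delta'\, t^{1/(1-\gamma+\delta)}$, which is exactly the announced exponent.

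There are two subtleties to address in the execution. The first is the validity range of the optimization: one must check that $\lambda^* = A^*/n$ stays in the regime where the Tauberian lower bound applies. This is done by splitting into cases: for $t$ large enough, the constraint $tU_n \leq n$ forces $n$ to be much larger than $t^{1/(1-\gamma+\delta')}$ for any $\delta' < \delta$ (since $n/U_n \leq C n^{1-\gamma+\delta'}$ by slow variation), so $\lambda^*$ is automatically small; for $t$ bounded, the bound $\exp(-c_\delta t^{1/(1-\gamma+\delta)})$ is bounded away from $0$ and holds trivially after adjusting the constant. The second task is the moment bound, which follows routinely from
\[
\bE\Big[\Big(\tfrac{N_n}{U_n}\Big)^k\Big] \;\leq\; 1 + k\int_1^\infty t^{k-1}\, e^{-c_\delta t^{1/(1-\gamma+\delta)}}\,\dd t
\]
and the change of variable $s = c_\delta t^{1/(1-\gamma+\delta)}$, which turns the integral into $C_\delta^k\, \Gamma(k(1-\gamma+\delta))$ (up to a multiplicative $k$ absorbed in the constant). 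The main obstacle is the careful handling of the slow variation in the optimization step: one must ensure that the $\delta$-loss coming from Potter's bounds does not depend on $n$, which forces a case distinction on the size of $t$ relative to $n$ as described above.
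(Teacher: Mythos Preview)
Your proposal is correct and follows essentially the same route as the paper: both reduce to $\bP(\tau_k \le n)$, apply a Chernoff bound, use the Tauberian lower bound $1-\bE[e^{-\lambda\tau_1}]\ge c\,\ell(1/\lambda)\lambda^\gamma$, absorb the slowly varying function via Potter's bound at cost $\delta$, and optimize in $\lambda n$; the moment bound is then obtained by integrating the tail. The only cosmetic differences are that the paper skips the intermediate $\log x\le x-1$ step and writes the moment integral as $\int_0^\infty \bP(N_n\ge t^{1/k}U_n)\,\dd t$ rather than $k\int t^{k-1}\bP(N_n\ge t)\,\dd t$, and it handles the validity of the optimal $\lambda$ more tersely (noting $\lambda^*\ge 1/n$ for $t$ large, with small $t$ absorbed in the constant) where you spell out the case distinction more explicitly.
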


\begin{proof}
Denote $t_n := \lceil t  U_n \rceil$, so that, for any $\lambda\in (0,1)$,
\[
\bP\Big(  \sum_{i=1}^n \ind_{\{i\in \tau\}} \geq t U_n \Big) = \bP(\tau_{t_n} \leq n)  \leq e^{\lambda n} \bE[e^{-\lambda \tau_1}]^{t_n} \,.
\]
Then, one can use that there is a constant $c$ such that $\bE[e^{-\lambda \tau_1}] \leq  1- c\ell(1/\lambda) \lambda^{\gamma}$ for all $\lambda\in (0,1)$, by standard properties of the Laplace transform (see e.g.\ \cite[Thm.~1.7.1]{BGT87}).
Hence, using that  $U_n \sim c_{\gamma} n^{\gamma} \ell(n)^{-1}$, we get that 
\[
\bP\Big(  \sum_{i=1}^n \ind_{\{i\in \tau\}} \geq t U_n \Big)  \leq \exp\Big( \lambda n  - c' t n^{\gamma}  \ell(n)^{-1} \lambda^{\gamma} \ell(1/\lambda)  \Big) 
\leq  \exp\Big( \lambda n  - c'_{\delta} t (\lambda n)^{\gamma -\delta}   \Big) 
\,,
\]
where we have used Potter's bound (\cite[Thm.~1.5.6]{BGT87}), to get that for any $\delta \in (0,\gamma)$ there is a constant $c_{\delta}>0$ such that $\frac{\ell(1/\lambda)}{\ell(n)} \geq c_{\delta} (\lambda n)^{-\delta}$ for any $\lambda \geq 1/n$.
Optimizing over $\lambda$, we choose $\lambda = c_{\delta}'' t^{1/(1-\gamma+\delta)}/n$ (which is greater than $1/n$, at least for $t$ large). This completes the upper bound.

For the second term, using the first part of the result, we bound
\[
\begin{split}
\bE\Big[ \Big( \frac{1}{U_n} \sum_{i=1}^n \ind_{\{i\in \tau\}}\Big)^k \Big] = \int_0^{\infty} \bP\Big(  \sum_{i=1}^n \ind_{\{i\in \tau\}} \geq t^{1/k} U_n \Big) \dd t & \leq 1+ \int_0^{\infty}  e^{- c_{\delta} t^{\frac{1}{k(1-\gamma+\delta)} }}\dd t \\
& = 1+ (c_{\delta})^{k(1-\gamma+\delta)} k(1-\gamma-\delta)  \Gamma(k(1-\gamma+\delta))
\end{split}
\]
where we simply used a change of variable $t= (u/c_{\delta})^{k(1-\gamma +\delta)}$ for the last identity.
This concludes the proof, using also that $z\Gamma(z) =\Gamma(z+1)$.
\end{proof}

\begin{lemma}
\label{lem:interesctcond}
Let $\btau,\btau'$ be two independent copies of a bivariate renewal satisfying~\eqref{def:tau} with $\alpha\in (0,1)$.
For any $\bt \succ \bzero$, there exist constants $c,c_{\bt}>0$ such that  for any $k\geq 1$
\[
\bP^{\otimes 2} \Big( \big|\btau\cap \btau' \cap \llbracket \bzero , n \bt \rrbracket \big| > k  \,\Big|\, n\bt \in \btau\cap \btau'\Big) \leq c_{\bt}  \bP^{\otimes 2} \Big( \big| \btau\cap \btau' \cap \llbracket \bzero , n \bt \rrbracket \big| > k \Big) \leq c_{\bt} e^{-c k} \,.
\]
\end{lemma}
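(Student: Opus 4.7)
Set $\rho := \btau\cap\btau'$, viewed as a bivariate renewal on $\N^2$. Since $\ga\in(0,1)$, \cite[Prop.~A.3]{BGK20} ensures $\rho$ is terminating: its first positive step $X_1^\rho$ satisfies $q := \bP^{\otimes 2}(X_1^\rho<+\infty)\in(0,1)$, and the total cardinality $|\rho|$ is geometric of parameter $1-q$. The second inequality then follows immediately, with $c:=-\log q>0$:
\[
\bP^{\otimes 2}\big(|\rho\cap\llb\bzero,n\bt\rrb|>k\big) \;\leq\; \bP^{\otimes 2}\big(|\rho|>k\big) \;=\; q^k \;=\; e^{-ck}\,.
\]

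For the first inequality, let $R_k$ denote the $k$-th positive point of $\rho$ (with $R_k:=+\infty$ on $\{|\rho|<k\}$). The strong Markov property of $\rho$ at $R_k$, together with the identity $\bP^{\otimes 2}(\bm\in\rho)=u(\bm)^2$ (where $u(\bm):=\bP(\bm\in\btau)$, by independence of $\btau,\btau'$), yields
\[
\bP^{\otimes 2}\big(|\rho\cap\llb\bzero,n\bt\rrb|>k,\,n\bt\in\rho\big) \;=\; \sum_{\bj\preceq n\bt}\bP^{\otimes 2}(R_k=\bj)\,u(n\bt-\bj)^2\,.
\]
I would split this sum at a threshold $\gh=\gh_\bt>0$ into a ``far'' piece where $\|n\bt-\bj\|\geq \gh n$ and a ``close'' piece where $\|n\bt-\bj\|<\gh n$. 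The far piece is the easy one: Proposition~\ref{thm:renouv} combined with Potter bounds for $L$ gives $u(n\bt-\bj)^2\leq C_\bt u(n\bt)^2$ uniformly over such $\bj$, and summing produces an upper bound of the form $C_\bt \bP^{\otimes 2}(n\bt\in\rho)\,\bP^{\otimes 2}(|\rho\cap\llb\bzero,n\bt\rrb|>k)$, which is exactly the target.

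The main obstacle is the close piece, where $u(n\bt-\bj)^2$ may be arbitrarily larger than $u(n\bt)^2$. The crude bound $\bP^{\otimes 2}(R_k=\bj)\leq u(\bj)^2\leq C_\bt u(n\bt)^2$ (valid for $\bj$ near $n\bt$), combined with the summability $\sum_{\bd}u(\bd)^2=1+\bE^{\otimes 2}[|\rho|]<+\infty$ (itself a consequence of $\ga<1$), produces only a bound of order $C_\bt u(n\bt)^2$, losing the geometric decay in $k$. To restore it, I would iterate the Markov decomposition at $R_{\lceil k/2 \rceil}$: by the Markov property at this intermediate point, both the portion of $\rho$ up to $R_{\lceil k/2\rceil}$ and the independent portion from $R_{\lceil k/2\rceil}$ onwards must each perform $\sim k/2$ positive steps, each contributing a $q^{\lceil k/2\rceil}$ factor from the terminating nature of $\rho$. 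This adds an extra $q^{k/2}$ multiplier to the close contribution, bringing it down to $\leq C_\bt u(n\bt)^2 q^k$, which may then be absorbed into $c_\bt \bP^{\otimes 2}(n\bt\in\rho)\,\bP^{\otimes 2}(|\rho\cap\llb\bzero,n\bt\rrb|>k)$ using a complementary lower bound $\bP^{\otimes 2}(|\rho\cap\llb\bzero,n\bt\rrb|>k)\geq c_\bt q^k$ in the regime where such a close configuration can be realized. Conceptually, this step reflects the big-jump structure (cf.~\cite{BGK18}) of the bivariate bridge $\bzero \to n\bt$ transferred to the iterated renewal decomposition; making it rigorous is the technical heart of the argument.
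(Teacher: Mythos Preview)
Your second inequality is fine, and your far-piece treatment of the first inequality is correct: the uniform renewal bound $u(\bi)\leq cL(\|\bi\|)^{-1}\|\bi\|^{\ga-2}$ from~\cite{B18} together with the lower bound $u(n\bt)\geq c_\bt L(n)^{-1}n^{\ga-2}$ from Proposition~\ref{thm:renouv} gives $u(n\bt-\bj)^2\leq C_\bt\,u(n\bt)^2$ on $\{\|n\bt-\bj\|\geq \gh n\}$, and summing over $\bj$ yields the desired bound for that piece.

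The gap is in the close piece. Two issues. First, the iteration at $R_{\lceil k/2\rceil}$ does not deliver a full factor $q^k$: decomposing $\bP^{\otimes 2}(R_k=\bj)=\sum_{\bi}\bP^{\otimes 2}(R_m=\bi)\bP^{\otimes 2}(R_{k-m}=\bj-\bi)$ and summing over $\bi$ gives $q^m$, while bounding the other factor by $u(\bj-\bi)^2$ loses the remaining $q^{k-m}$. You cannot exploit both sums simultaneously when you also need to retain the pointwise weight $u(n\bt-\bj)^2$. Second, even granting $C_\bt\,u(n\bt)^2 q^k$ for the close piece, the absorption step requires the lower bound $\bP^{\otimes 2}(|\rho\cap\llb\bzero,n\bt\rrb|>k)\geq c_\bt q^k$ uniformly in $k\leq \min(nt_1,nt_2)$ and $n$. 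This is not obvious: a crude lower bound forcing $\rho$ through $\bone,2\bone,\ldots,k\bone$ only gives $u(\bone)^{2k}$, which is strictly smaller than $q^k$; and for $k$ of order $n$, the conditional displacement $R_{k+1}$ may overshoot $n\bt$ with non-negligible probability. This lower bound is at least as hard as the lemma itself.

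The paper avoids both issues with a different decomposition. It first uses time-reversal of the conditioned renewal to reduce to counting intersection points in the ``first half'' $\bT=\{\bi\in\llb\bzero,n\bt\rrb:\|\bi\|\leq\|n\bt-\bi\|\}$. Then, instead of decomposing on $R_k$, it conditions on $\bX:=\sup\{\bi\in\btau:\bi\in\bT\}$ and $\bX':=\sup\{\bi\in\btau':\bi\in\bT\}$, the last renewal points in the first half. The key input is \cite[Lem.~A.1]{Leg21}, which gives $\bP(\bX=\bi\mid n\bt\in\btau)\leq C'_\bt\,\bP(\bX=\bi)$ uniformly in $\bi\in\bT$: this directly compares the conditioned and unconditioned laws of the single random variable $\bX$, so summing over $(\bi,\bj)$ removes the conditioning at the cost of a single constant $(C'_\bt)^2$, with no close/far split and no need for a matching lower bound.
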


\begin{proof}
Let $\bT := \{ \bi \in \llbracket \bzero , n \bt \rrbracket, \|\bi\| \leq \|n\bt -\bi\|  \}$
be the set of points that are closer to $\bzero$ than $n\bt$. 
Since conditionally on $n\bt \in \btau$, the time-reversed process  $\tilde \btau$ in $\llbracket \bzero , n \bt \rrbracket \setminus \bT $ starting from
$n\bt$ has the same law in  as $\btau$ in $\bT$, we get by sub-additivity that
\[
\bP^{\otimes 2} \big( |\btau\cap \btau' \cap \llbracket \bzero , n \bt \rrbracket | > k  \mid n\bt \in \btau\cap \btau'\big)
\leq 2 \bP^{\otimes 2} \big( |\btau\cap \btau' \cap \bT | > \tfrac k2  \mid n\bt \in \btau\cap \btau'\big) \,.
\]
Therefore, it suffices to compute an upper bound for the r.h.s.\ above. 
Let $\bX := \sup\{ \bi\in \btau , \bi \in \bT\}$ and $\bX':=\sup\{\bi \in \btau' , \bi \in \bT\}$ be the up-right most point (\textit{i.e.}\ the sup is taken for the order $\preceq$) of $\btau$, resp.\ $\btau'$ in $\bT$. Then \cite[Lem.~A.1]{Leg21} (which is proven in the symmetric case $\bt =\bone$ but remains valid for any $\bt\succ \bzero$) proves that there exists a constant $C_\bt$ such that, for all $\bi \in \bT$,
\[
\bP(\bX= \bi \mid n\bt \in \btau ) \leq  \frac{C_{\bt}}{\bP(n\bt \in \btau)}   L(\|\bn\|)^{-1} \|\bn\|^{-(2-\alpha)} \bP(\bX=\bi)
\leq C'_{\bt}  \bP(\bX=\bi) \,,
\]
where the last inequality comes from Proposition~\ref{thm:renouv} (taken from~\cite{Will68}).
Using this, we obtain
\[
\begin{split}
\bP^{\otimes 2} &\big( |\btau\cap \btau' \cap \bT | > \tfrac k2  \,\big|\, n\bt \in \btau\cap \btau'\big)  \\
&  = \sum_{\bi,\bj \in \bT} \bP^{\otimes 2} \big( |\btau\cap \btau' \cap \bT | > \tfrac k2  \,\big|\,  \bX=\bi, \bX'=\bj\big) 
 \bP(\bX =\bi \mid n\bt \in \btau)  \bP(\bX' =\bj \mid n\bt \in \btau') \\
 & \leq  (C'_{\bt})^2 \sum_{\bi,\bj \in \bT} \bP^{\otimes 2} \big( |\btau\cap \btau' \cap \bT | > \tfrac k2  \mid  \bX=\bi, \bX'=\bj\big) 
 \bP(\bX =\bi )  \bP(\bX' =\bj ) \\
 & \leq (C'_{\bt})^2\bP^{\otimes 2} \big( |\btau\cap \btau' \cap \bT | > \tfrac k2 \big) \,.
\end{split}
\]
This proves the first inequality in the lemma.
Since it is proven in~\cite[Prop.~A.3]{BGK20} that the bivariate renewal $\btau\cap \btau'$ is terminating for $\alpha\in (0,1)$, 
we get that $|\btau\cap \btau'|$ is a geometric random variable, which concludes the proof.
\end{proof}

\section{Integration with a covariance measure: general theory}\label{app:intstoch}
This appendix provides more details on covariance measures and how they define stochastic integrals in Section~\ref{sec:integration}. 
Recall briefly the definitions of $\cS_d$, $L^2_\nu$ and $\la \cdot,\cdot\ra_\nu$ from Section~\ref{sec:integration}, where $\nu$ is the covariance measure of some field $X$. 
In this section we establish some of their properties, and use them to define the integral against $X$. 
We divide this appendix into three parts: 
\begin{itemize}
\item First, we claim that any $L^2(\bbP)$ random field on $\cS_d$ can be uniquely extended to bounded sets of $\Bor(\R^d)$ (in particular this defines the integrals of indicator functions of bounded Borel sets against $X$);
\item Second, we claim in Proposition~\ref{prop:propcovmeasure} that $L^2_\nu$ is (almost) an inner product vector space; from this, the definition of the integral against $X$ can be extended to $L^2_\nu$, yielding Theorem~\ref{thm:stocint};
\item Finally, we provide sufficient conditions for a generic correlation function on $\cS_{k}$, $k\in\N$ to be extended to a full $\gs$-additive measure on $\Bor(\R^{k})$. In particular this implies the well-posedness of $\nu_\cM$ in Proposition~\ref{prop:covariancemeasure}.
\end{itemize}

\subsection{Extension to indicator functions  of bounded Borel sets}
Recall the definition of the collection of rectangles of $\bbR^d$:
\[
\cS_d\;:=\; \big\{[\bu,\bv) \subset \bbR^d\,;\, \bu\preceq\bv\big\}\cup\{\emptyset\}\,,
\]
Notice that $\cS_d$ is a semi-ring: it is non-empty, stable by finite intersection and for $A,B\in\cS_d$, $A\setminus B$ is a finite union of disjoint elements of $\cS_d$. Also, we have $\gs(\cS_d) = \Bor(\bbR^d)$.

\begin{definition}
We call \emph{(additive) $L^2(\bbP)$-random field} on $\cS_d$ any family $X$ of random variables $X(A)\in L^2(\bbP)$, $A\in\cS_d$, such that for $A,B\in \cS_d$ with $A\cup B\in\cS_d$, $A\cap B=\emptyset$, one has $X(A\cup B)=X(A)+X(B)$ $\bbP$-a.s. 
With a slight abuse of terminology we also call it a random field on $\R^d$. 
\end{definition}

\begin{remark}
Any random function generates an additive random field via its increments, recall~\eqref{eq:stocint:defvariation}; but some fields are not constructed by pointwise-defined functions, as for instance some white noises. Most of the upcoming statements hold for generic random fields, thus we do not distinguish notation between increments of functions and fields.
\end{remark}

%
%

Let $\cR_d$ be the ring of sets generated by $\cS_d$: it is given by all finite unions of rectangles in $\cS_d$ (recall that a ring of sets is a non-empty class of sets, stable by finite union and difference). Since any set $\cR_d$ can be written as a disjoint union of elements of $\cS_d$, then any additive field $X$ on $\cS_d$ can naturally be extended to $\cR_d$ in a $\bbP$-a.s. unique way.

\begin{proposition}
Let $X:\cS_d\to L^2(\bbP)$ be an additive $L^2(\bbP)$-random field, which admits a $\sigma$-finite, non-negative covariance measure $\nu$ on $\Bor(\bbR^{d}\times\bbR^{d})\simeq \Bor(\bbR^{2d})$. Then $X$ admits a unique ``regular'' extension to bounded sets of $\Bor(\R^d)$.
\end{proposition}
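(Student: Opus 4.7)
The strategy is to extend $X$ by $L^2(\bbP)$-approximation, using the covariance measure $\nu$ as a control. The guiding identity is that, for $A,B\in\cR_d$ (the ring generated by $\cS_d$, to which $X$ has already been extended $\bbP$-a.s.\ uniquely by additivity), one has
\[
\|X(A)-X(B)\|_{L^2(\bbP)}^2 \;=\; \nu\big((A\setminus B)^2\big)+\nu\big((B\setminus A)^2\big)-2\nu\big((A\setminus B)\times(B\setminus A)\big)\;,
\]
which combined with Cauchy--Schwarz yields the cruder bound
\[
\|X(A)-X(B)\|_{L^2(\bbP)}^2 \;\leq\; 2\bigl(\nu\bigl((A\triangle B)\times K\bigr)+\nu\bigl(K\times (A\triangle B)\bigr)\bigr)
\]
whenever $A,B$ are contained in some fixed rectangle $K\in\cS_d$.

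The first step is then to fix a bounded rectangle $K\in\cS_d$ and introduce the two finite Borel measures $\mu_K^{(1)}(\cdot):=\nu(\cdot\times K)$ and $\mu_K^{(2)}(\cdot):=\nu(K\times\cdot)$ on $\Bor(K)$; they are finite because $\nu(K\times K)=\bbE[X(K)^2]<\infty$. The above inequality shows that the map $A\mapsto X(A)$, viewed from $\cR_d\cap \cP(K)$ equipped with the pseudo-metric $d_K(A,B):=\mu_K^{(1)}(A\triangle B)+\mu_K^{(2)}(A\triangle B)$, into $L^2(\bbP)$, is uniformly continuous. This is the crucial estimate which will allow us to pass from the ring to the generated $\sigma$-algebra.

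The second step is the density of $\cR_d\cap\cP(K)$ inside $\Bor(K)$ for $d_K$: this is a standard consequence of the $\pi$--$\lambda$ (or monotone class) theorem applied to the finite measure $\mu_K^{(1)}+\mu_K^{(2)}$, using that $\cR_d\cap\cP(K)$ generates $\Bor(K)$. Given $B\in\Bor(K)$, one picks a sequence $(A_n)\subset\cR_d\cap\cP(K)$ with $d_K(A_n,B)\to 0$; then $(X(A_n))$ is Cauchy in $L^2(\bbP)$ by the estimate above, and we set $X(B):=\lim_n X(A_n)$ in $L^2(\bbP)$. Standard arguments show that this limit does not depend on the approximating sequence, and that $X$ so extended is additive on $\Bor(K)$, agrees with the previous definition on $\cR_d\cap\cP(K)$, and satisfies $\bbE[X(A)X(B)]=\nu(A\times B)$ for $A,B\in\Bor(K)$ by continuity of $\nu$ as a bilinear form on the relevant completion. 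For a general bounded Borel set $B\subset\bbR^d$, we choose any bounded rectangle $K\in\cS_d$ with $B\subset K$ and define $X(B)$ by the above procedure; consistency between two choices $K\subset K'$ follows from additivity and the fact that $X(K'\setminus K)$ is already well-defined on $\cR_d$.

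Finally, uniqueness of the regular (\emph{i.e.}\ $L^2$-continuous under monotone limits, equivalently $\sigma$-additive in $L^2(\bbP)$) extension is inherited from the classical uniqueness theorem for finite measures: two such extensions must agree on $\cR_d$ by hypothesis, hence on the $L^2$-closure of $\{X(A);\,A\in\cR_d\cap\cP(K)\}$, which by the first step is exactly $\{X(B);\,B\in\Bor(K)\}$. The expected main obstacle is verifying the density of $\cR_d\cap\cP(K)$ in $(\Bor(K),d_K)$ together with the checking that the limit is independent of the approximation, all the more since the two control measures $\mu_K^{(1)},\mu_K^{(2)}$ need not coincide and the quadratic form $A\mapsto \nu(A\times A)$ is not additive; however, once the bilinear bound above is in hand, this reduces to a standard monotone-class argument and the remainder of the construction follows a classical Daniell-type scheme.
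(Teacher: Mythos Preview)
Your proposal is correct and follows essentially the same route as the paper's proof: control $\|X(A)-X(B)\|_{L^2}$ by a finite measure of $A\triangle B$ obtained by ``slicing'' $\nu$ against a fixed box, invoke density of the ring $\cR_d$ in $\Bor(K)$ for the symmetric-difference pseudo-metric (the paper isolates this as a separate lemma proved by a Dynkin-system argument), and define $X(B)$ as the $L^2$-limit of $X(A_n)$. The only cosmetic difference is that the paper uses a single control measure $\mu_m(A):=\nu(A\times\Lambda_m)$ and the sharper bound $\|X(A_p)-X(A_q)\|_{L^2}^2\leq \mu_m(A_p\triangle A_q)$ (obtained directly from $|\ind_{A_p}-\ind_{A_q}|=\ind_{A_p\triangle A_q}\leq 1$, no Cauchy--Schwarz needed), whereas you carry two measures $\mu_K^{(1)},\mu_K^{(2)}$ and a factor~$2$; this redundancy does no harm.
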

Here, ``regular'' means that for a given bounded $B\in\Bor(\R^d)$ and a sequence $A_n\in\cR_d$, $n\geq1$ that converges to $B$ in the sense that
\[ \limsup_{n\to\infty} B\triangle A_n := \bigcap_{n\geq1}\bigcup_{k\geq n} (B\triangle A_k) = \emptyset, \]
(where $\triangle$ denotes the symmetric difference), then one has $\lim_{n\to\infty}X(A_n)=X(B)$ in $L^2(\bbP)$. Also, ``unique'' means that for any two such extensions $\tilde X$, $\hat X$ then for any bounded $B\in\Bor(\R^d)$ one has $\tilde X(B)=\hat X(B)$ $\bbP$-a.s. 

\begin{proof}The core of the proof relies on the following two observations.

\noindent\emph{---Fact 1.}  
Notice that any $C\in\cR_d$ can be written as a disjoint union of rectangles $\cup_{i=1}^kC_i$, so we can define
\begin{equation}\label{eq:thmstocint:extring}
\ind_C\diamond X \;:=\; \sum_{i=1}^k(\ind_{C_i}\diamond X) \;=\; \sum_{i=1}^k X(C_i)\;=:\; X(C)\;,
\end{equation}
which does not depend on the chosen decomposition (recall \eqref{eq:stocint:rectdecomp}). Moreover, the isometry relation~\eqref{eq:thmstocint:covarbis} clearly applies to $\ind_A, \ind_{B}$ with $A,B\in\cR_d$ by bilinearity, that is
\[
\bbE[(\ind_A\diamond X)(\ind_B\diamond X)]\;=\;\la \ind_A,\ind_B\ra_\nu\,,\qquad A,B\in\cR_d\,,
\]
(recall that the $\la\cdot,\cdot\ra_\nu$ is defined in~\eqref{eq:deflaranu}). Furthermore, the definition of $(\cdot)\diamond X$ can also be extended linearly to expressions $\sum_{i=1}^k a_i \ind_{A_i}$ with $A_i\in\cR_d$, $a_i\in\R$, $1\leq i\leq k$, and by bilinearity they still satisfy the isometry relation.

\noindent\emph{---Fact 2.} We have the following lemma, which is proven afterwards.
\begin{lemma}\label{lem:stocint:difsym}
Let $(E,\cE,\mu)$ be a measured space such that $\mu(E)<\infty$ and let $\cR\subset\cP(E)$ be a non-empty ring of sets such that $\sigma(\cR)=\cE$. Then for any $B\in\cE$ and $n\in\N$, there exists $A_n\in\cR$ such that
\begin{equation}\label{eq:lemstocint:difsym}
\mu(B\triangle A_n) \leq 2^{-n}\,,
\end{equation}
where $\triangle$ denotes the symmetric difference.
\end{lemma}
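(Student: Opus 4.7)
The plan is to use a classical measure-theoretic approximation argument. The key observation is that $\mu|_{\cR}$ is automatically a (finite) $\sigma$-additive pre-measure on the ring $\cR$, since it is the restriction of the $\sigma$-additive measure $\mu$ defined on $\cE \supset \sigma(\cR)$. Consequently, $\mu$ can be recovered on $\cE = \sigma(\cR)$ via the Carath\'eodory extension, \textit{i.e.}\ as the outer measure
\[
\mu^*(B) \;=\; \inf\Bigl\{\sum_{k\geq 1} \mu(C_k) \,:\, C_k \in \cR,\ B \subset \bigcup_{k\geq 1} C_k \Bigr\}\,,\qquad B\in \cE\,.
\]

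The approximation then proceeds in two standard steps. Fix $B\in\cE$ and $\eps = 2^{-n}$. By the definition of $\mu^*=\mu$, one can find a countable family $(C_k)_{k\geq 1}$ in $\cR$ such that $B \subset \bigcup_{k\geq 1} C_k$ and $\sum_{k\geq 1}\mu(C_k) \leq \mu(B) + \eps/2$. Next, since $\mu$ is finite and $\sigma$-additive, the tail $\mu\bigl(\bigcup_{k>N} C_k\bigr)$ vanishes as $N\to\infty$, so one can choose $N$ large enough that this tail is smaller than $\eps/2$; then set $A_n := \bigcup_{k=1}^{N} C_k$, which belongs to $\cR$ by the ring property. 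A direct bound on the symmetric difference yields
\[
\mu(B\setminus A_n) \,\leq\, \mu\Bigl(\bigcup_{k>N} C_k\Bigr) \,<\, \eps/2\,,\qquad
\mu(A_n\setminus B) \,\leq\, \sum_{k\geq 1}\mu(C_k) - \mu(B) \,\leq\, \eps/2\,,
\]
so that $\mu(B\triangle A_n)\leq \eps = 2^{-n}$, as required.

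The only potential subtlety is to justify that $\mu|_\cR$ is indeed a genuine pre-measure on $\cR$ (so that Carath\'eodory's extension applies) and that the resulting extension truly recovers $\mu$ on all of $\cE$; in our setting both facts are immediate because $\mu$ is already given on $\cE$ and is $\sigma$-additive there. If one prefers to avoid invoking Carath\'eodory directly, an alternative route is to define
\[
\cD \,:=\, \bigl\{ B\in\cE \,:\, \forall \eps>0,\ \exists A\in\cR,\ \mu(B\triangle A)<\eps\bigr\}
\]
and verify that $\cD$ is a monotone class containing the ring $\cR$, using the elementary bounds $\mu(B\triangle A)\leq \mu(B\triangle B') + \mu(B'\triangle A)$ and continuity of $\mu$ along monotone sequences (which relies on $\mu(E)<\infty$); the monotone class theorem then yields $\cD = \sigma(\cR) = \cE$. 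Both approaches come down to the same symmetric-difference manipulations, and I would choose whichever is more convenient to cite from a standard reference.
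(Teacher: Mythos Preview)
Your proposal is correct. The paper follows exactly your second route: it sets $\cA := \bigcap_{n\geq 1}\{B\in\cE : \exists A\in\cR,\ \mu(B\triangle A)\leq 2^{-n}\}$, checks that $\cA$ contains $\cR$, is stable under relative complement and increasing union (the latter using $\mu(E)<\infty$), and concludes by Dynkin's $\pi$-$\lambda$ theorem, $\cR$ being a $\pi$-system. This is your monotone-class alternative in slightly different clothing.

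Your primary approach via the Carath\'eodory outer measure is a genuine alternative and arguably more constructive: it produces the approximating set $A_n$ explicitly as a finite truncation of a near-optimal countable cover of $B$ by elements of $\cR$, whereas the Dynkin argument is existential. The Carath\'eodory route does rely on one extra identification, namely that $\mu=\mu^*$ on $\cE$, which in turn needs that $E$ be covered by countably many sets of $\cR$ (so that $\mu^*$ is $\sigma$-finite and uniqueness of extension applies); this is automatic in the paper's application, where $E=[-m,m)^d\in\cR$. The Dynkin argument formally needs the same thing to ensure $E\in\cA$. Either way the substance is the same symmetric-difference calculus plus continuity of a finite measure, and both proofs are standard.
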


With these two observations at hand the proof is direct. Let $B$ be a bounded set of $\Bor(\bbR^d)$: in particular, setting $\Lambda_m := [-m,m)^d$,
we have that $B\in\Bor( \Lambda_m)$ for some $m>0$.
Let $(A_n)_{n\geq1}$ be a sequence of elements of $\cR_d\cap \Bor( \Lambda_m)$ satisfying \eqref{eq:lemstocint:difsym} for the finite measure $\mu_m(A):=\nu(A\times \Lambda_m)$ on $\Bor(\Lambda_m)$. Then $X(A_n)=\ind_{A_n}\diamond X\in L^2(\bbP)$, $n\geq1$ is well defined, and for $p,q\geq1$,
\begin{align*}
\bbE\big[(X(A_p)-X(A_q))^2\big] \;&=\; \|\ind_{A_p}\diamond X - \ind_{A_q}\diamond X\|^2_{L^2} \;=\; \|\ind_{A_p}- \ind_{A_q}\|^2_{\nu} \\
&=\; \int_{\Lambda_m^2} (\ind_{A_p}-\ind_{A_q})(\bu)\times(\ind_{A_p}-\ind_{A_q})(\bv) \,\dd\nu(\bu,\bv)\\
&\leq\; \int_{\Lambda_m^2} \ind_{A_p\triangle A_q}(\bu) \,\dd\nu(\bu,\bv) \;=\; \mu_m(A_p\triangle A_q)\;,
\end{align*}
where we used $|\ind_{A_p}-\ind_{A_q}|=\ind_{A_p\triangle A_q} \leq 1$. Since $A_p\triangle A_q\subset (A_p\triangle B) \cup (B\triangle A_q)$, it follows that $\mu(A_p\triangle A_q)\leq 2^{-p}+2^{-q}$, thus $(X(A_n))_{n\geq1}$ is a Cauchy sequence in $L^2(\bbP)$. By completeness, it therefore has a limit that we denote $X(B)$ (or $\ind_B\diamond X$), which does not depend on the chosen sequence $(A_n)_{n\geq1}$ that verifies $\lim_{n\to\infty}\mu(B\triangle A_n)=0$ (this is obtained with the same computation as above).
\end{proof}

\begin{proof}[Proof of Lemma~\ref{lem:stocint:difsym}]
Let us define for $n\in\N$,
\[\cA_n\;:=\; \big\{B\in \cE\,;\, \exists A\in\cR,\, \mu(B\triangle A)\leq 2^{-n} \big\}\,,\]
and $\cA=\cap_{n\geq1}\cA_n$. It is clear that $\cR\subset\cA_n$ for all $n\in\N$, so $\cR\subset\cA$. Let us prove that $\cA$ is a Dynkin system, which will conclude the proof by Dynkin's $\pi$-$\gl$ theorem.

First, we clearly have $\cR\subset\cA$, so $\cA$ is non-empty. Let $B_1,B_2\in\cA$ such that $B_1\subset B_2$, and $n\in\N$. By assumption there exist $A_1,A_2\in\cR$ such that $\mu(B_a\triangle A_a)\leq 2^{-n-1}$, $a\in\{1,2\}$. Since $\cR$ is a ring, we have $A_2\setminus A_1\in\cR$, and we also notice
\[(B_2\setminus B_1) \,\triangle\, (A_2\setminus A_1) \;\subset\; (B_2 \triangle A_2)\cup (B_1 \triangle A_1)\,.\]
Hence $\mu((B_2\setminus B_1) \triangle (A_2\setminus A_1))\leq 2^{-n}$ and $B_2\setminus B_1\in\cA_n$ for all $n\in\N$, thus $\cA$ is stable by difference.

Let $B_k\in\cA$ such that $B_k\subset B_{k+1}$, $k\geq1$, and let $B=\cup_{k\geq1}B_k$. Let $n\in\N$. 
Notice that, since $\mu(B)<\infty$, there exists $k_0\in\N$ such that $\mu(B\setminus B_{k_0})\leq 2^{-n-1}$. Moreover $B_{k_0}\in\cA$, so there exists $A\in\cR$ such that $\mu(B_{k_0}\triangle A)\leq 2^{-n-1}$. Thus,
\[\mu(B\triangle A) \;\leq\; \mu(B\setminus B_{k_0}) + \mu(B_{k_0}\triangle A) \;\leq\; 2^{-n}\,, \]
which finishes the proof.
\end{proof}

Therefore, for $X:\cS_d\to L^2(\bbP)$ an additive $L^2(\bbP)$-random field and $E\in\Bor(\R^d)$, the integral,
\[\int \ind_E \,\dd X\;=\; \ind_E \diamond X\;:=\; X(E)\;,\]
is well-defined. Moreover, the ``regularity'' of the extension also implies that for any $E,F\in\Bor(\R^d)$, their indicator functions satisfy the isometry relation~\eqref{eq:thmstocint:covarbis}, by dominated convergence and bilinearity.

\subsection{Properties of the vector space $L^2_\nu$} We have the following
\begin{proposition}\label{prop:propcovmeasure}
Let $X:\cS_d\to L^2(\bbP)$ be a random field which admits a $\sigma$-finite, non-negative covariance measure $\nu$ on $\Bor(\bbR^d \times \bbR^d)$.\begin{enumerate}[label=(\roman*)]
\item The set $L^2_\nu$ is a vector space. Moreover $\la g,h\ra_\nu$ is well-posed for $g,h\in L^2_\nu$.
\item The application $\la \cdot,\cdot\ra_\nu$ is bilinear, symmetric and semi-definite positive. In particular $\| g\|_\nu$ is well-posed for $g\in L^2_\nu$.
\item \emph{(Cauchy-Schwarz)} For $g,h\in L^2_\nu$, one has $\la g,h\ra_\nu \leq \| g\|_\nu\| h\|_\nu$.
\item \emph{(Triangular inequality)} For $g,h\in L^2_\nu$, one has $ \| g+h\|_\nu \leq \| g\|_\nu + \| h\|_\nu$.
\end{enumerate}
\end{proposition}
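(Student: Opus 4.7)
The strategy is to first establish all four items on the subspace $V_0\subset L^2_\nu$ of simple functions (finite linear combinations of indicators of rectangles in $\cR_d$), and then extend to $L^2_\nu$ by approximation. For $g,h\in V_0$, the random variables $g\diamond X,h\diamond X$ are well-defined in $L^2(\bbP)$ by~\eqref{eq:thmstocint:extring}, and the bilinearity of $X$ together with~\eqref{eq:thmstocint:covar} yields $\la g,h\ra_\nu = \bbE[(g\diamond X)(h\diamond X)]$. Properties~(i)--(iv) restricted to $V_0$ are therefore immediate consequences of the corresponding properties of the inner product on $L^2(\bbP)$.

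The heart of the argument is the following approximation lemma: for any non-negative $G\in L^2_\nu$, there exist non-negative simple functions $G_n\in V_0$ with $G_n\leq G$, $G_n\to G$ pointwise, and $\int(G-G_n)(\bu)(G-G_n)(\bv)\,\dd\nu(\bu,\bv)\to 0$ as $n\to\infty$. The construction proceeds in two stages: first truncate $G$ to $(G\wedge n)\ind_{[-n,n]^d}$, for which convergence in the $\nu$-seminorm follows from dominated convergence with dominating function $G(\bu)G(\bv)\in L^1(\nu)$; then approximate the resulting bounded compactly-supported function by elements of $V_0$ via a dyadic decomposition of its level sets, combined with Lemma~\ref{lem:stocint:difsym} to approximate indicators of Borel sets by indicators of $\cR_d$. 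This is valid because $\nu(B\times B)\leq\nu(A\times A)=\bbE[X(A)^2]<\infty$ for any bounded Borel set $B\subset A\in\cS_d$. A diagonal extraction then yields the desired sequence.

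Passing to the limit in the Cauchy--Schwarz inequality valid on $V_0$ yields, for any non-negative $G,H\in L^2_\nu$,
\begin{equation*}
\int G(\bu)H(\bv)\,\dd\nu(\bu,\bv) \;\leq\; \Bigl(\textstyle\int G(\bu)G(\bv)\,\dd\nu\Bigr)^{\!1/2}\Bigl(\textstyle\int H(\bu)H(\bv)\,\dd\nu\Bigr)^{\!1/2}\!,
\end{equation*}
together with the corresponding triangular inequality for $G\mapsto(\int G(\bu)G(\bv)\dd\nu)^{1/2}$ on non-negative functions. From these, for any $g,h\in L^2_\nu$ one has $\int|g(\bu)h(\bv)|\dd\nu\leq\||g|\|_\nu\||h|\|_\nu<\infty$, so $\la g,h\ra_\nu$ is well-defined, and $g+h\in L^2_\nu$ follows from $|g+h|\leq|g|+|h|$; this gives~(i). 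Bilinearity and symmetry of $\la\cdot,\cdot\ra_\nu$ on $L^2_\nu$ are then clear from linearity of the integral and symmetry of $\nu$ (itself inherited from $\bbE[X(A)X(B)]=\bbE[X(B)X(A)]$ on rectangles and extended to all Borel sets by monotone class arguments). Positivity of $\la g,g\ra_\nu$ for a general $g\in L^2_\nu$ is obtained by applying the approximation lemma to $g^+$ and $g^-$ and passing to the limit in $\la s_n,s_n\ra_\nu\geq 0$ using the non-negative Cauchy--Schwarz above; this proves~(ii). The classical discriminant argument on $t\mapsto\la g-th,g-th\ra_\nu\geq 0$ then yields~(iii), and~(iv) follows from~(iii) by expanding $\|g+h\|_\nu^2$.

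The main obstacle is establishing the non-negative Cauchy--Schwarz inequality displayed above. One cannot deduce it from the standard Cauchy--Schwarz on the product measure space $(\bbR^{2d},\nu)$: for $G\geq 0$, that route would require $\int G(\bu)^2\dd\nu(\bu,\bv)<\infty$, but for $\nu=\nu_\cM$ as in~\eqref{eq:stocint:exprint2} this integral is always infinite unless $G\equiv 0$. Moreover, the simple example $\nu=\delta_{(0,1)}+\delta_{(1,0)}$ with $G=\ind_{\{0\}}$, $H=\ind_{\{1\}}$ shows that the inequality fails for a generic symmetric non-negative $\nu$. The positive semi-definite structure of $\nu$ as a \emph{covariance} measure, accessed through $\la g,g\ra_\nu=\bbE[(g\diamond X)^2]\geq 0$ for simple $g$, is thus indispensable, which is why the reduction to $V_0$ and the careful approximation by simple functions lie at the core of the argument.
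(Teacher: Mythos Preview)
Your proposal is correct and follows essentially the same strategy as the paper: establish the isometry and hence Cauchy--Schwarz on a class of simple functions via $\bbE[(g\diamond X)(h\diamond X)]$, pass to the limit to obtain Cauchy--Schwarz for non-negative functions in $L^2_\nu$, deduce that $L^2_\nu$ is a vector space, and finish with the discriminant argument for~(iii) and the standard expansion for~(iv). The paper's route is slightly more direct: having already extended $X$ to indicators of bounded Borel sets in the preceding subsection, it takes as its class $\cG$ the simple functions over bounded \emph{Borel} sets rather than over $\cR_d$; any non-negative measurable $g$ is then an increasing limit of elements of $\cG_+$ (via the standard dyadic decomposition), and monotone convergence yields the non-negative Cauchy--Schwarz in one line, bypassing the second stage of your approximation lemma entirely. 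Your choice of $V_0$ (simple over $\cR_d$) forces the detour through Lemma~\ref{lem:stocint:difsym}, and there is a minor slip: approximating Borel level sets by $\cR_d$ sets via that lemma gives approximation in measure of the symmetric difference but does not in general preserve the pointwise inequality $G_n\leq G$, so your approximation lemma as stated needs a small adjustment (e.g., drop the requirement $G_n\leq G$ and control $\|\,|G-G_n|\,\|_\nu$ directly via the uniform bound $|G-G_n|\leq C\,\ind_K$ in the second stage). This does not affect the validity of the overall scheme.
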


\begin{remark}\label{rem:notscalarprod}
Let us stress that $\la \cdot,\cdot\ra_\nu$ is not, in general, a scalar product on $L^2_\nu$ (in particular $\| \cdot\|_\nu$ is not a norm). Recall the expression of $\nu_\cM$ from \eqref{eq:stocint:exprint2}
and let $h:\bbR_+\to\R$ be defined by
\[h=\ind_{[0,1/2)\times[0,1/2)} - \ind_{[1/2,1)\times[0,1/2)} - \ind_{[0,1/2)\times[1/2,1)} + \ind_{[1/2,1)\times[1/2,1)}\,,\]
we have $\|\pt|h|\pt\|_{\nu_\cM} =\|\ind_{[\bzero,\bone)}\|_{\nu_\cM}= 2<\infty$ so $h\in L^2_{\nu_\cM}$; however $\|h\|_{\nu_\cM}=0$, and more generally $\la g,h\ra_{\nu_\cM}=0$ for all $g\in L^2_{\nu_\cM}$.

On the other hand, with the Gaussian white noise $W$ on $\bbR^2$, for any functions $g,h:\bbR^2\to\R$ we have   $\la g,h\ra_{\nu_W}^2 \;=\; \int_{\bbR^2} g(\bu)h(\bu) \pt \dd\gl_2(\bu)$
(see Remark~\ref{rem:whitenoise}). This proves that $\|g\|_{\nu_W}=0$ if and only if $g=0$ $\gl_2$-a.e.
\end{remark}

\begin{proof}
Consider the application
\[(g,h) \;\mapsto\; \la g,h\ra_\nu \;=\;\int_{\bbR^d} g(\bu)h(\bv)\pt\dd\nu(\bu,\bv)\;,\]
which is well-defined (possibly infinite) on non-negative, measurable functions. We claim that it is bilinear, symmetric and positive semi-definite: 
indeed, we have proven in the previous section that the isometry \eqref{eq:thmstocint:covarbis} is satisfied for indicator functions of bounded Borel sets, 
which implies those properties; and they can be extended to non-negative measurable functions on $\R^d$ by monotone convergence.

In order to prove that $L^2_\nu$ is a vector space (notice that it is not straightforward from the definition), 
we first have to prove a Cauchy-Schwarz inequality for non-negative functions: for $g,h$ non-negative measurable functions on $\bbR^d$, we claim that:
\begin{equation}\label{eq:CSstocint}
\la g,h\ra_\nu\;\leq\; \|g\|_\nu \|h\|_\nu\,, \qquad \forall \; g,h \geq 0\;.
\end{equation}
To show this, let us define $\cG$ (resp.\ $\cG_+$) the set of finite linear combinations of indicator functions $g=\sum_{i=1}^n c_i\ind_{A_i}$ of bounded Borel sets $A_i\in\Bor(\bbR^d)$, $c_i\in\R$, $1\leq i\leq n$ (resp.\ with $c_i\geq0$). Notice that, using the bilinearity of $\la \cdot,\cdot\ra_\nu$, the application $g\mapsto g\diamond X$ can be extended to $\cG$ and satisfies the isometry property \eqref{eq:thmstocint:covarbis} for all $g,h\in\cG$. With those observations in mind let $g, h$ be measurable non-negative functions, which we can write as $g=\sum_{i\geq1} c_i \ind_{A_i}$ and $h=\sum_{j\geq1} d_j \ind_{B_j}$ for some $c_i,d_j\geq0$, $A_i, B_j\in\Bor(\bbR^d)$ bounded, $i,j\geq1$. For $n\in\N$, define $g_n=\sum_{i=1}^n c_i \ind_{A_i}$ and $h_n=\sum_{j=1}^n d_j \ind_{B_j}$, so that $g_n,h_n\in\cG_+$. Therefore, using \eqref{eq:thmstocint:covarbis} on $\cG$, Cauchy--Schwarz inequality on $L^2(\bbP)$, then~\eqref{eq:thmstocint:covarbis} again, we obtain
\[ \la g_n,h_n\ra_\nu \,=\, \bbE\big[(g_n\diamond X)(h_n\diamond X)\big] \,\leq\, \bbE\big[(g_n\diamond X)^2\big]^{1/2} \bbE\big[(h_n\diamond X)^2\big]^{1/2}\,=\, \|g_n\|_\nu \|h_n\|_\nu \,. \]
We conclude the proof of \eqref{eq:CSstocint} by monotone convergence, letting $n\to\infty$.

Therefore, for $g,h\in L^2_\nu$, the inequality \eqref{eq:CSstocint} implies that $\la |g|,|h|\ra_\nu<\infty$ and thus $g+h\in L^2_\nu$, which proves that $L^2_\nu$ is a vector space. Moreover, the observation that $\la \cdot,\cdot\ra_\nu$ is bilinear, symmetric and positive semi-definite can also be extended on $L^2_\nu$ using \eqref{eq:thmstocint:covarbis} on $\cG$ and dominated convergence. This fully proves Proposition~\ref{prop:propcovmeasure}-($i$)-($ii$).

Regarding the Cauchy-Schwarz inequality, we may extend it from non-negative functions (recall \eqref{eq:CSstocint}) to~$L^2_\nu$:  
indeed, write for $g,h\in L^2_\nu$ and $\gl\in\R$,
\[0 \;\leq\; \| g+\gl h \|_\nu^2 \;=\; \|g\|_\nu^2 + 2\gl\la g,h\ra_\nu + \gl^2 \|h\|_\nu^2\;, \]
and since $\la g,h\ra_\nu\in\R$ is well-posed, we deduce that the quadratic polynomial in $\gl$ above has a non-positive discriminant, which concludes the proof of $(iii)$.

Finally, we deduce that a triangle inequality $(iv)$ holds for $\|\cdot\|_{\nu}$: for $g,h\in L^2_\nu$, write
\begin{equation}\label{eq:triangleineq:nu}
\|g+h\|_{\nu}^2 \;=\; \|g\|_\nu^2 + 2\la g,h\ra_\nu +  \|h\|_\nu^2 \;\leq\; \|g\|_\nu^2 + 2\|g\|_\nu\|h\|_\nu +  \|h\|_\nu^2 \;=\; (\|g\|_{\nu}+\|h\|_{\nu})^2, 
\end{equation}
which proves the inequality.
\end{proof}

With Proposition~\ref{prop:propcovmeasure} at hand, Theorem~\ref{thm:stocint} follows from a completeness argument. This finishes the construction of the stochastic integral against fields that admit a covariance measure.

\begin{proof}[Proof of Theorem~\ref{thm:stocint}]
Recall that $\cG$ is the set of finite linear combinations of indicator functions of bounded Borel sets, that $g\mapsto g\diamond X$ is well-posed on~$\cG$ and satisfies \eqref{eq:thmstocint:covarbis}. Finally, notice that $\cG$ is dense in $L^2_\nu$. Therefore, our goal is to extend the integral from~$\cG$ to $L^2_\nu$ by completeness; however this is not straightforward since $(L^2_\nu, \|\cdot\|_\nu)$ is not actually a normed space (because $\|\cdot\|_\nu$ is not a genuine norm, recall Remark~\ref{rem:notscalarprod}). We circumvent this difficulty with a quotient of vector spaces. 

Define (with an abuse of notation)
\begin{equation}\label{eq:defKerX}
\mathrm{Ker}(\nu) \;:=\; \{g\in L^2_\nu\,;\, \|g\|_\nu=0\}=\; \{g\in L^2_\nu\,;\, \forall \, h\in L^2_\nu,\, \la g,h\ra_\nu=0\} \;.
\end{equation}
The equality in \eqref{eq:defKerX} is a direct consequence of the Cauchy-Schwarz inequality: for $g\in L^2_\nu$ such that $\|g\|_\nu=0$ and $h\in L^2_\nu$, one has $\la g,h \ra_\nu \leq \|g\|_\nu\|h\|_\nu = 0$ and $-\la g,h \ra_\nu \leq \|g\|_\nu\pt\|-h\|_\nu = 0$, so $\la g,h \ra_\nu=0$. In particular, $\mathrm{Ker}(\nu)$ is a linear subspace of $L^2_\nu$.


For $g\in L^2_\nu$, let us denote $\ol g$ its equivalence class in $L^2_\nu/\mathrm{Ker}(\nu)$. It is clear that for $\ol g$, $\ol h\in L^2_\nu/\mathrm{Ker}(\nu)$ and any representatives $g_1$, $g_2\in \ol g$ and $h_1$, $h_2 \in \ol h$, then we have $\la g_1,h_1\ra_\nu=\la g_2,h_2\ra_\nu$. Therefore $\la\cdot,\cdot\ra_\nu$ can be defined on $L^2_\nu/\mathrm{Ker}(\nu)$, on which it is a scalar product; in particular $\|\cdot\|_\nu$ is a well-defined norm on $L^2_\nu/\mathrm{Ker}(\nu)$.

Recall that the integral is well-posed on $\cG$ and satisfies \eqref{eq:thmstocint:covarbis}. 
For any $g\in\cG$, we may define $\ol g \diamond X := g\diamond X$, which is well-defined almost everywhere on $(\gO,\cF,\bbP)$: indeed, for any two representatives $g_1$, $g_2\in\ol g$,
\[\|g_1\diamond X-g_2\diamond X\|^2_{L^2} \;=\; \|g_1-g_2\|^2_\nu \;=\; 0\;,\]
so $g_1\diamond X=g_2\diamond X$ almost surely. Therefore, the application $\ol g\mapsto \ol g\diamond X$ is an isometry from the normed space $(\cG/\mathrm{Ker}(\nu),\|\cdot\|_\nu)$ to $L^2(\bbP)$, hence it can be extended to the completion of $(\cG/\mathrm{Ker}(\nu),\|\cdot\|_\nu)$ which is $(L^2_\nu/\mathrm{Ker}(\nu),\|\cdot\|_\nu)$. Finally for $g\in L^2_\nu$, define $g\diamond X:= \ol g\diamond X$, which satisfies \eqref{eq:thmstocint:covarbis} and thus concludes the proof of Theorem~\ref{thm:stocint}.
\end{proof}

\subsection{A sufficient condition for the well-posedness of the covariance measure}
Let us stress that, if $\nu$ defines a non-negative, $\sigma$-additive function on $\cS^d$, then it does not always admit an extension into a measure on $\Bor(\bbR^d)$. In the following proposition, we provide a simple sufficient condition for the existence of such an extension.

On the one hand, if the field $X$ admits some negative correlations, then the construction has to be adapted to signed measures (which should not prove too difficult conceptually). On the other hand, it is easy to construct fields with non-$\sigma$-additive correlations.

\begin{example}\label{rem:discontinuousfield}
Consider the case $d=2$ and the deterministic field $X(u_1,u_2)=\ind_{\{u_1+u_2\geq0\}}$, $(u_1,u_2)\in\R^2$. For any point of the line $\{(x,-x),x\in\R\}$, say $\bzero$ for simplicity, one can construct sequences of sets such as $A_n=[-2/n,1/n]^2$ and $B_n=[-1/n,2/n]^2$ which satisfy
\[\limsup_{n\to\infty} A_n = \limsup_{n\to\infty} B_n = \{\bzero\},\quad \text{and}\quad X(A_n)=2=-X(B_n)\,,\;\forall\, n\geq1\,.\]
Hence it is clear that neither $X$ or its covariance function can be extended to bounded Borel sets consistently. 
\end{example}

In dimension 1 this issue around discontinuities can be circumvented by allowing atoms in the covariance measure, but this isn't enough in higher dimensions (in the above example, the whole diagonal $\{(x,-x),x\in\R\}$ is singular). An interesting question would therefore be to find sufficient and/or necessary conditions on a random function or field $X$ for its covariance function to be $\sigma$-additive and effectively define a measure on $\Bor(\bbR^d)$.
The following result is a step in this direction.
It will be applied to functions defined on $\cS_d\times\cS_d\simeq \cS_{2d}$, but for the sake of generality we prove it for functions on $\cS_k$, $k\geq1$.

\begin{proposition}
\label{prop:dominationmeasure}
Let $\nu : \cS_{k}\to\R$ satisfy the following:
\begin{enumerate}[label=(\roman*)]
\item $\nu$ is non-negative on $\cS_k$;
\item $\nu$ is additive on $\cS_k$;
\item $\nu$ is $\gs$-finite on $\cS_k$: that is $\nu([-m,m)^{k})<\infty$ for all $m>0$;
\item there exists a measure $\mu$ on $\Bor(\bbR^k)$, $\sigma$-finite on $\cS_k$, such that for $A\in\cS_k$, $\nu(A)\leq \mu(A)$.
\end{enumerate}
Then $\nu$ can be extended to a $\sigma$-finite measure on $\Bor(\bbR^k)$, which is unique. In particular, if $\nu(A)=\mu(A)$ for all $A\in\cS_{k}$, then $\nu(A)=\mu(A)$ for all $A\in\Bor(\R^k)$.
\end{proposition}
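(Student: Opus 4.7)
The plan is to prove Proposition~\ref{prop:dominationmeasure} by a standard Carath\'eodory-type extension argument, where the domination hypothesis $(iv)$ supplies the missing $\sigma$-additivity. More precisely, I would first extend $\nu$ from the semi-ring $\cS_k$ to the ring $\cR_k$ generated by $\cS_k$ (\emph{i.e.}\ finite unions of rectangles), then upgrade it to a $\sigma$-additive premeasure on $\cR_k$, and finally invoke Carath\'eodory's extension theorem to obtain a measure on $\sigma(\cR_k)=\Bor(\R^k)$, the uniqueness following from $\sigma$-finiteness.

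The extension to $\cR_k$ is routine: any $C\in\cR_k$ can be written as a finite disjoint union $C=\bigsqcup_{i=1}^p C_i$ with $C_i\in\cS_k$, and we set $\nu(C):=\sum_{i=1}^p \nu(C_i)$. Two such decompositions can always be refined by a common finer one (since $\cS_k$ is stable by intersection and set-difference produces elements of $\cR_k$), so the finite additivity $(ii)$ on $\cS_k$ ensures that this definition does not depend on the chosen decomposition. By construction $\nu$ remains non-negative, additive and dominated by $\mu$ on all of $\cR_k$.

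The main step, and the only non-trivial one, is proving $\sigma$-additivity of $\nu$ on $\cR_k$. Let $A\in \cR_k$ and $(A_n)_{n\geq 1}$ be a disjoint sequence in $\cR_k$ with $A=\bigsqcup_{n\geq 1} A_n$. Finite additivity already gives the inequality
\[
\nu(A) \;\geq\; \sum_{n=1}^N \nu(A_n)\,, \qquad \forall\, N\geq 1,
\]
hence $\nu(A)\geq \sum_{n\geq 1}\nu(A_n)$. For the reverse inequality, write $R_N := A\setminus \bigsqcup_{n=1}^N A_n \in \cR_k$, so that $\nu(A)= \sum_{n=1}^N \nu(A_n) + \nu(R_N)$. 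Since $A$ is contained in some cube $[-m,m)^k$, condition $(iii)$ together with $(iv)$ gives $\mu(A)\leq \mu([-m,m)^k)<\infty$; in particular $\mu$ is a \emph{finite} measure on $A$. The sequence $(R_N)_{N\geq 1}$ decreases to $\emptyset$, so by $\sigma$-additivity of the genuine measure $\mu$ we have $\mu(R_N)\downarrow 0$. Invoking the domination $\nu(R_N)\leq \mu(R_N)$ we conclude $\nu(R_N)\to 0$, whence $\nu(A) = \sum_{n\geq 1} \nu(A_n)$. I expect this passage, the upgrade of finite to countable additivity via domination, to be the only real content of the proof; it is precisely where the hypothesis $(iv)$ is used in an essential way.

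Finally, $\nu$ is a $\sigma$-finite premeasure on the ring $\cR_k$, and Carath\'eodory's extension theorem produces a unique $\sigma$-finite measure, still denoted $\nu$, on $\sigma(\cR_k)=\Bor(\R^k)$ extending it. For the last assertion, if $\nu(A)=\mu(A)$ for all $A\in\cS_k$, then both $\nu$ and $\mu$ are $\sigma$-finite measures on $\Bor(\R^k)$ agreeing on the $\pi$-system $\cS_k$ which generates $\Bor(\R^k)$; the uniqueness part of Carath\'eodory's theorem (equivalently, Dynkin's $\pi$-$\lambda$ lemma applied on each cube $[-m,m)^k$ and then letting $m\to\infty$) yields $\nu=\mu$ on all Borel sets, which concludes the proof.
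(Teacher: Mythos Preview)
Your proof is correct and follows essentially the same approach as the paper: both establish $\sigma$-additivity by using the domination $\nu\leq\mu$ to show that the tail/remainder has vanishing $\nu$-measure (since it has vanishing $\mu$-measure), and then invoke Carath\'eodory's extension theorem. The only cosmetic differences are that the paper works directly on the semi-ring $\cS_k$ rather than first passing to the ring $\cR_k$, and that the paper explicitly reduces to the finite-$\mu$ case via truncation to $[-m,m)^k$, whereas you (more directly) observe that any $A\in\cR_k$ is already bounded so $\mu(A)<\infty$; also, the finiteness $\mu([-m,m)^k)<\infty$ comes from assumption~(iv) alone, not~(iii).
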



Let us point out that for $X$ an additive $L^2(\bbP)$-random field on $\cS_d$ and $\nu(A\times B):=\bbE[X(A)X(B)]$, $A,B\in\cS_d$, then \emph{(ii)}, \emph{(iii)} automatically hold, and \emph{(i)} holds for fields $X$ with non-negative correlations. Finally, we prove in~\eqref{eq:stocint:exprint1}-\eqref{eq:stocint:nuleb} that $\nu_\cM$ satisfies assumption~\emph{(iv)}, so Proposition~\ref{prop:dominationmeasure} applies to $\nu_\cM$ and directly implies Proposition~\ref{prop:covariancemeasure}.

\begin{proof}[Proof of Proposition~\ref{prop:dominationmeasure}]
By assumption $\nu$ is additive on $\cS_k$ which is a semi-ring of sets and $\gs(\cS_k)=\Bor(\R^k)$, so in order to extend it into a measure with Carath\'eodory's theorem it only remains to show that it is $\gs$-additive on $\cS_k$ (and the extension will be unique because of assumption \emph{(iii)}). We do so under the additional assumption $\mu(\cS_k)<\infty$, then the general result follows by defining $\nu_m:=\nu(\cdot\cap[-m,m)^k)$, $\mu_m:=\mu(\cdot\cap[-m,m)^k)$ and letting $m\to\infty$.

First, let us prove that $\nu$ is non-decreasing on $\cS_k$: let $A_1,A_2\in\cS_k$ such that $A_1\subset A_2$. Since $\cS_k$ is a semi-ring, $A_2\setminus A_1=\cup_{i=1}^pB_i$ for some $p\in\N$ and disjoint $B_i\in\cS_k$, $1\leq i\leq p$. Using 
assumptions~\emph{(i)} and~\emph{(ii)}, we have
\[\nu(A_2)\;=\;\nu(A_1) + \sum_{i=1}^p\nu(B_i) \;\geq\;\nu(A_1)\;,\]
which proves the statement. Moreover, this can straightforwardly be extended to $A_1, A_2\in\cR(\cS_k)$ which is the ring generated by $\cS_k$ (\emph{i.e.}\ all finite unions of rectangles).

Now let $A_i\in\cS_k$, $i\in\N$, such that $A:=\bigcup_{i\geq1}A_i\in\cS_k$. We may assume that the $(A_i)_{i\geq1}$ are disjoint without loss of generality. For $n\in\N$, we have
\[
\nu\Big(\bigcup_{i\geq1} A_i \Big) \;\geq\; \nu\Big( \bigcup_{i=1}^{n} A_i \Big)\;=\; \sum_{i=1}^n \nu(A_i)\;,
\]
and taking the limit as $n\to\infty$, we obtain $\nu(\bigcup_{i\geq1} A_i) \geq \sum_{i\geq1} \nu(A_i)$. Let us now show that $\nu(\bigcup_{i\geq n}A_i)$ vanishes as $n\to\infty$, which will conclude the proof. For $n\in\N$, recall that $\bigcup_{i\geq1} A_i\in\cS_k$ and $A_i\in\cS_k$, $1\leq i\leq n-1$. Since $\cS_k$ is a semi-ring of sets, $\bigcup_{i\geq n}A_i$ can be written as a finite union $\bigcup_{j=1}^p B_j$ for some  disjoint $B_j\in\cS_k$, $1\leq j\leq p$. Thereby,
\[0\;\leq\;\nu\Big(\bigcup_{i\geq n} A_i \Big) \;=\; \sum_{j=1}^p\nu(B_j)\;\leq\; \sum_{j=1}^p\mu(B_j) \;=\; \mu\Big(\bigcup_{i\geq n} A_i \Big)\,,\]
where we used assumption~\emph{(iv)}. Since we assumed that $\mu$ is a finite measure,
\[
\lim_{n\to\infty} \mu\Big(\bigcup_{i\geq n} A_i \Big) \;=\; \mu\Big(\bigcap_{n\geq 1}\bigcup_{i\geq n} A_i \Big)\;=\; \mu(\emptyset)\;=\;0\;,
\]
which concludes the proof.
\end{proof}

\section{An example of distribution in \texorpdfstring{$\Pfk_4,\Pfk_8$}{P4,P8}}
\label{app:example}

Let us provide an example of distributions $\bbP$ and interaction function $V(x,y)$ tailored to obtain cases where $\Pfk_4,\Pfk_8$ are not empty. We let $V(x,y) =xf(y) +yf(x)$, where $f$ is determined below.
We choose $X:=\hat \go_i$ (and $Y:=\bar \go_i$) to be uniformly distributed in the set $E=\{\pm\sqrt{a},\pm\sqrt{b},\pm\sqrt{2-a},\pm\sqrt{2-b}\}$, where $0<a<b<1$ are two parameters we can play with.
Now, we choose a function $f:E\to E$ by setting 
\[
\begin{split}
f(\pm \sqrt{a}) &=\pm \sqrt{2-a}  \quad \text{ (the sign is the same)}, \\
f(\pm \sqrt{2-a}) &=\mp \sqrt{a} \quad \text{ (the sign is reversed)},\\ 
f(\pm \sqrt{b})& =\pm \sqrt{2-b} \quad \text{ (the sign is the same)}, \\
f(\pm \sqrt{2-b})& =\mp \sqrt{b} \quad \text{ (the sign is reversed)}.
\end{split}
\]
Observe then that for all $x\in E$ we have $x^2+f(x)^2=2$
and that $f(f(x))=-x$.
From this we get the following facts on $\go_{\bi} := V(X,Y) =X f(Y) +Yf(X)$:

\smallskip
\textbullet\ For any $k\geq 0$ we have $\bbE[ V(X,Y)^{2k+1} \,|\, X] =0$. This is due to the fact that all the terms $Y^{2k+1-j} f(Y)^{j}$, $0\leq j\leq 2k+1$ appearing in the binomial expansion of $V(X,Y)^k$ have mean zero. This is easily shown by induction on~$j$, using that that $\bbE[Y^{2k+1}]=0$, $\bbE[ Y^{2k} f(Y)]=0$ by a direct calculation and then reducing $j$ by using that $f(x)^2= 2-x^2$ when $j \geq 2$. 
This shows that $\bbP\notin\Pfk_r$ for any odd $r$.

\smallskip
\textbullet\ We have that $Y$ and $f(Y)$
have the same distribution and $\bbE[Y] =0$, $\bbE[Y^2]=1$. 
By a direct calculation, one finds that
\[
\bbE[V(X,Y)^2 \,|\, X] = X^2 + f(X)^2 =2 \,,
\]
where we also have used that  $\bbE[ Y f(Y)]=0$ (by a direct calculation or making use of the fact that $Y$ and $f(Y)$ have the same law and $f(f(Y))=-Y$).
We therefore get that $\bbP\notin \Pfk_2$.

\smallskip
\textbullet\
Now, by a direct calculation, we get that
\begin{equation*}
\bbE[V(X,Y)^4 \,|\, X] 
= \bbE[Y^4] \big( X^4 +f(X)^4 \big) + 6 X^2 f(X)^2 \bbE[Y^2 f(Y)^2] 
+4Xf(X) (f(X)^2 -X^2) \bbE[Y^3 f(Y)] \,,
\end{equation*}
using also that $\bbE[Y f(Y)^3] = - \bbE[Y^3 f(Y)]$,
since $Y$ and $f(Y)$ have the same law and $f(f(Y))=-Y$.
Note that $X^4+f(X)^4 = 4- 2X^2f(X)^2$ and $\bbE[Y^4]  = 2- \bbE[Y^2 f(Y)^2]$,
so that setting $\mu_0 =\bbE[Y^2 f(Y)^2]$ and $\mu_1 = \bbE[Y^3 f(Y)]$ we can write
\[
\bbE[V(X,Y)^4 \,|\, X] 
= 4(2-\mu_0)+ 4(2\mu_0-1) X^2f(X)^2 +4 \mu_1 Xf(X) (f(X)^2 -X^2)  \,.
\]
Then, one can notice that $X^2f(X)^2$ can only take two values,
$u:=a(2-a)$ and $v:=b(2-b)$, with equal probabilities (note that $0<u<v<1$).
In particular, we have 
\[
\mu_0:= \bbE[Y^2 f(Y)^2] = \frac12 (u+v) \,.
\]
Similarly, one can check that $Xf(X)(f(X)^2-X^2)$ takes only two values (with equal probabilities), $2\sqrt{a(2-a)} (1-a) = 2\sqrt{u(1-u)}$ and $2\sqrt{b(2-b)} (1-b) = 2\sqrt{v(1-v)}$.
Recalling that $\bbE[Y f(Y)^3] = - \bbE[Y^3 f(Y)]$ this gives 
that 
\[
\mu_1= \bbE[Y^3f(Y)] = -\frac12 (\sqrt{u(1-u)} +\sqrt{v(1-v)} ) \,.
\]
Overall, we find that $\bbE[V(X,Y)^4 \,|\, X]$
takes two values with equal probabilities:
\[
\begin{split}
A&= 4(2-\mu_0)+ 4(2\mu_0-1) u + 8 \mu_1 \sqrt{u(1-u)}  \,,\\
B&= 4(2-\mu_0)+ 4(2\mu_0-1) v + 8 \mu_1 \sqrt{v(1-v)}\,.
\end{split}
\]
It then remains to determine whether $A\neq B$.
We have
\[
\begin{split}
A-B& = 4 (2\mu_0-1) (u-v) + 8 \mu_1 (\sqrt{u(1-u)} - \sqrt{v(1-v)} ) \\
& =4(u+v-1) (u-v) - 4 (u(1-u) - v(1-v)) = 8(u-v)(u+v-1)
\,,
\end{split}
\]
where we have used the values of $\mu_0$ and $\mu_1$ above. 
If $u+v\neq 1$ then $A\neq B$,
showing that $\bbE[V(X,Y)^4 \,|\, X] $ is not a.s.\ constant and thus $\bbP\in \Pfk_4$.

\smallskip
\textbullet\ In the case where we have $u+v=1$ then the above shows that $\bbE[V(X,Y)^4 \,|\, X] $ is a.s.\ constant, so $\bbP\notin \Pfk_4$.
One can then carry on and determine whether $\bbP\in \Pfk_r$ for some $r\geq 6$.
We do not develop here the calculations, but one can actually check that in the case $u+v=1$ then
$\bbE[V(X,Y)^6 \,|\, X]$ is again a.s.\ constant: we have that $\bbP\notin\Pfk_6$.
On the other hand, 
$\bbE[V(X,Y)^8 \,|\, X]$ can be checked to be a.s.\ non-constant
(except for one value of $u$) 
showing that $\bbP\in \Pfk_8$ in that case.

\end{appendix}

\subsection*{Acknowledgements}
The authors are grateful for insightful comments from anonymous referees, which helped improve the presentation of the paper. 

The authors QB and AL acknowledge the support of the grant ANR-22-CE40-0012 (project Local). AL was also partially supported by the grant ANR-20-CE92-0010 (project Remeco).

%

\begin{thebibliography}{99}

\bibitem{AKQ14_cont}
T.~Alberts, K.~Khanin, and J.~Quastel.
\newblock The continuum directed random polymer.
\newblock {\em J. Stat. Phys.}, 154(1):305--326, 2014.

\bibitem{AKQ14}
T.~Alberts, K.~Khanin, and J.~Quastel.
\newblock {The intermediate disorder regime for directed polymers in dimension
  $1+1$}.
\newblock {\em Ann. Probab.}, 42(3):1212--1256, 2014.

\bibitem{Alex08}
K.~S. Alexander.
\newblock The effect of disorder on polymer depinning transitions.
\newblock {\em Commun. Math. Phys.}, 279(1):117--146, 2008.

\bibitem{AB18}
K.~S. Alexander and Q.~Berger.
\newblock Pinning of a renewal on a quenched renewal.
\newblock {\em Electron. J. Probab.}, 23:1--48, 2018.

\bibitem{AZ09}
K.~S. Alexander and N.~Zygouras.
\newblock Quenched and annealed critical points in polymer pinning models.
\newblock {\em Commun. Math. Phys.}, 291(3):659--689, 2009.

\bibitem{BFGMS20}
C. Bayer, P.K. Friz, P. Gassiat, J. Martin and B. Stemper.
\newblock A regularity structure for rough volatility. 
\newblock {\em Mathematical Finance}, 30(3), 782-832, 2020.

\bibitem{B13}
Q.~Berger.
\newblock Comments on the influence of disorder for pinning model in correlated
  {G}aussian environment.
\newblock {\em ALEA, Lat. Am. J. Probab. Math. Stat.}, 10(2):953--977, 2013.

\bibitem{Ber14}
Q.~Berger.
\newblock Pinning model in random correlated environment: Appearance of an
  infinite disorder regime.
\newblock {\em J. Stat. Phys.}, 155, 2014.

\bibitem{B18}
Q.~Berger.
\newblock Strong renewal theorems and local large deviations for multivariate
  random walks and renewals.
\newblock {\em Electron. J. Probab.}, 24(46):1--47, 2019.

\bibitem{BCPSZ14}
Q.~Berger, F.~Caravenna, J.~Poisat, R.~Sun, and N.~Zygouras.
\newblock The critical curve of the random pinning and copolymer models at weak
  coupling.
\newblock {\em Commun. Math. Phys.}, 326:507--530, 2014.

\bibitem{BGK18}
Q.~Berger, G.~Giacomin, and M.~Khatib.
\newblock {DNA} melting structures in the generalized {P}oland-{S}cheraga
  model.
\newblock {\em ALEA, Lat. Am. J. Probab. Math. Stat.}, 15:993--1025, 2018.

\bibitem{BGK20}
Q.~Berger, G.~Giacomin, and M.~Khatib.
\newblock Disorder and denaturation transition in the generalized
  {P}oland--{S}cheraga model.
\newblock {\em Ann. Henri Lebesgue}, 3:299--339, 2020.

\bibitem{BL12}
Q.~Berger and H.~Lacoin.
\newblock Sharp critical behavior for pinning models in a random correlated
  environment.
\newblock {\em Stoch. Process. Appl.}, 122(4):1397--1436, 2012.

\bibitem{BL18}
Q.~Berger and H.~Lacoin.
\newblock Pinning on a defect line: characterization of marginal disorder
  relevance and sharp asymptotics for the critical point shift.
\newblock {\em J. Inst. Math. Jussieu}, 17(2):305--346, 2018.

\bibitem{BL21_scaling}
Q.~Berger and H.~Lacoin.
\newblock The scaling limit of the directed polymer with power-law tail
  disorder.
\newblock {\em Commun. Math. Phys.}, pages 1--55, 2021.

\bibitem{BL20_conti}
Q.~Berger and H.~Lacoin.
\newblock The continuum directed polymer in {L}\'evy noise.
\newblock {\em J. \'Ec. Polytech.}, 2:213--280, 2022.


\bibitem{BP15}
Q.~Berger and J.~Poisat.
\newblock {On the critical curves of the pinning and copolymer models in
  correlated {G}aussian environment}.
\newblock {\em Electron. J. Probab.}, 20(none):1--35, 2015.

\bibitem{BT13}
Q.~Berger and F.~L. Toninelli.
\newblock Hierarchical pinning model in correlated random environment.
\newblock 49(3):781--816, 2013.

\bibitem{Bill68}
P.~Billingsley.
\newblock {\em Convergence of Probability Measures}.
\newblock Wiley, 1968.

\bibitem{BGT87}
N.~H. Bingham, C.~M. Goldie, and J.~L. Teugels.
\newblock {\em Regular Variation}.
\newblock Encyclopedia of Mathematics and its Applications. Cambridge
  University Press, 1987.

\bibitem{BBBDDKMS99}
R.~D. Blake, J.~W. Bizzaro, J.~D. Blake, G.~R. Day, S.~G. Delcourt, J.~Knowles,
  K.~A. Marx, and J.~J. Santalucia.
\newblock Statistical mechanical simulation of polymeric {DNA} melting with
  {MELTSIM}.
\newblock {\em Bioinformatics (Oxford, England)}, 15:370--5, 1999.

\bibitem{BD98}
R.~D. Blake and S.~G. Delcourt.
\newblock {Thermal stability of {DNA}}.
\newblock {\em Nucleic Acids Research}, 26(14):3323--3332, 1998.

\bibitem{BdH97}
E.~Bolthausen and F.~d. Hollander.
\newblock Localization transition for a polymer near an interface.
\newblock {\em Ann. Probab.}, 25:1334--1366, 1997.

\bibitem{BS21}
A.~Bowditch and R.~Sun.
\newblock The two-dimensional continuum random field {I}sing model.
\newblock {\em Ann. Probab.}, 50(2):419--454, 2022.

\bibitem{CG10}
F.~Caravenna and G.~Giacomin.
\newblock The weak coupling limit of disordered copolymer models.
\newblock {\em Ann. Probab.}, 38(6):2322--2378, 2010.

\bibitem{CSZ16_conti}
F.~Caravenna, R.~Sun, and N.~Zygouras.
\newblock The continuum disordered pinning model.
\newblock {\em Probab. Theory Relat. Fields}, 164(1-2):17--59, 2016.

\bibitem{CSZ16}
F.~Caravenna, R.~Sun, and N.~Zygouras.
\newblock Scaling limits of disordered systems and disorder relevance.
\newblock In {\em Proceedings of the XVIII International Congress of
  Mathematical Physics}, 2016.

\bibitem{CSZ13}
F.~Caravenna, R.~Sun, and N.~Zygouras.
\newblock Polynomial chaos and scaling limits of disordered systems.
\newblock {\em J. Eur. Math. Soc. (JEMS)}, 19:1--65, 2017.

\bibitem{CSZ17}
F.~Caravenna, R.~Sun, and N.~Zygouras.
\newblock Universality in marginally relevant disordered systems.
\newblock {\em Ann. Appl. Probab.}, 27(5):3050--3112, 2017.

\bibitem{CSZ21}
F.~Caravenna, R.~Sun, and N.~Zygouras.
\newblock The critical 2d stochastic heat flow.
\newblock {\em preprint arXiv:2109.03766}, 2021.

\bibitem{CTT17}
F.~Caravenna, F.~L. Toninelli, and N.~Torri.
\newblock {Universality for the pinning model in the weak coupling regime}.
\newblock {\em Ann. Probab.}, 45(4):2154--2209, 2017.

\bibitem{CCP19}
D.~Cheliotis, Y.~Chino, and J.~Poisat.
\newblock {The random pinning model with correlated disorder given by a renewal
  set}.
\newblock {\em Ann. Henri Lebesgue}, 2019.

\bibitem{CdH13}
D.~Cheliotis and F.~den Hollander.
\newblock Variational characterization of the critical curve for pinning of
  random polymers.
\newblock {\em Ann. Probab.}, 41(3B):1767--1805, 2013.

\bibitem{Com19}
G. Comi
\newblock Two Fractional Stochastic Problems: Semi-Linear Heat Equation and Singular Volterra Equation
\newblock
Ph.D. thesis, 2020.


\bibitem{CH97}
D.~Cule and T.~Hwa.
\newblock Denaturation of heterogeneous {DNA}.
\newblock {\em Phys. Rev. Letters}, 79(12):2375, 1997.

\bibitem{DGLT09}
B.~Derrida, G.~Giacomin, H.~Lacoin, and F.~L. Toninelli.
\newblock Fractional moment bounds and disorder relevance for pinning models.
\newblock {\em Comm. Math. Phys.}, 287:867--887, 2009.

\bibitem{DHV92}
B.~Derrida, V.~Hakim, and J.~Vannimenus.
\newblock Effect of disorder on two-dimensional wetting.
\newblock {\em J. Stat. Phys.}, 66(5):1189--1213, 1992.

\bibitem{DR14}
B.~Derrida and M.~Retaux.
\newblock The depinning transition in presence of disorder: a toy model.
\newblock {\em J. Stat. Phys.}, 156(2):268--290, 2014.

\bibitem{Doney97}
R.~A. Doney.
\newblock One-sided local large deviation and renewal theorems in the case of
  infinite mean.
\newblock {\em Probab. Theory Relat. Fields}, 107(4):451--465, 1997.

\bibitem{Fish84}
M.~E. Fisher.
\newblock Walks, walls, wetting, and melting.
\newblock {\em J. Stat. Phys.}, 34:667--729, 1984.

\bibitem{FLNO86}
G.~Forgacs, J.~Luck, T.~M. Nieuwenhuizen, and H.~Orland.
\newblock Wetting of a disordered substrate: exact critical behavior in two
  dimensions.
\newblock {\em Phys. Rev. Letters}, 57(17):2184, 1986.

\bibitem{GO04}
T.~Garel and H.~Orland.
\newblock Generalized {P}oland--{S}cheraga model for {DNA} hybridization.
\newblock {\em Biopolymers}, 75:453--467, 2004.

\bibitem{Ger12}
S.~Gerhold.
\newblock Asymptotics for a variant of the {M}ittag--{L}effler function.
\newblock {\em Integral Transfrom Spec. Functions.}, 23(6):397--403, 2012.

\bibitem{Giac07}
G.~Giacomin.
\newblock {\em Random Polymer Models}.
\newblock Imperial College Press, World Scientific, 2007.

\bibitem{Giac10}
G.~Giacomin.
\newblock {\em Disorder and Critical Phenomena Through Basic Probability
  Models}, volume 2025 of {\em Ecole d'Et{\'e} de probabilit{\'e}s de
  {S}aint-{F}lour}.
\newblock Springer-Verlag Berlin Heidelberg, 2010.

\bibitem{GH20}
G.~Giacomin and B.~Havret.
\newblock Localization, big-jump regime and the effect of disorder for a class
  of generalized pinning models.
\newblock {\em J. Stat. Phys.}, 2020.

\bibitem{GK17}
G.~Giacomin and M.~Khatib.
\newblock Generalized {P}oland-{S}cheraga denaturation model and
  two-dimensional renewal processes.
\newblock {\em Stoch. Proc. Appl.}, 127(526-573), 2017.

\bibitem{GLT10}
G.~Giacomin, H.~Lacoin, and F.~L. Toninelli.
\newblock Marginal relevance of disorder for pinning models.
\newblock {\em Commun. Pure Appl. Math.}, 63:233--265, 2010.

\bibitem{GLT11}
G.~Giacomin, H.~Lacoin, and F.~L. Toninelli.
\newblock Disorder relevance at marginality and critical point shift.
\newblock {\em Ann. Inst. H. Poincar\'e: Prob. Stat.}, 47:148--175, 2011.

\bibitem{GT06}
G.~Giacomin and F.~L. Toninelli.
\newblock Smoothing effect of quenched disorder on polymer depinning
  transitions.
\newblock {\em Comm. Math. Phys.}, 266:1--16, 2006.

\bibitem{GT09}
G.~Giacomin and F.~L. Toninelli.
\newblock On the irrelevant disorder regime of pinning models.
\newblock {\em Ann. Probab.}, 37(5):1841--1875, 2009.

\bibitem{H74}
A.~B. Harris.
\newblock Effect of random defects on the critical behaviour of {I}sing models.
\newblock {\em J. Phys.}, pages 1671--1692, 1974.

\bibitem{Janson20}
S.~Janson.
\newblock Patterns in random permutations avoiding some sets of multiple
  patterns.
\newblock {\em Algorithmica}, 82(3):616--641, 2020.

\bibitem{Kho02}
D.~Khoshnevisa.
\newblock {\em Multiparameter Processes: An Introduction to Random Fields}.
\newblock Springer Monographs in Mathematics. Springer-Verlag New York, 2002.

\bibitem{KL12}
H.~Kunz and R.~Livi.
\newblock {DNA} denaturation and wetting in the presence of disorder.
\newblock {\em EPL (Europhysics Letters)}, 99(3):30001, 2012.

\bibitem{Lac10ECP}
H.~Lacoin.
\newblock The martingale approach to disorder irrelevance for pinning models.
\newblock {\em Electron. Commun. Probab.}, 15:418--427, 2010.

\bibitem{Lac11}
H.~Lacoin.
\newblock Influence of spatial correlation for directed polymers.
\newblock {\em Ann. Probab.}, 39(1):139--175, 2011.

\bibitem{Leg21}
A.~Legrand.
\newblock {Influence of disorder on {DNA} denaturation: the disordered
  generalized Poland-Scheraga model}.
\newblock {\em Electron. J. Probab.}, 26:1--43, 2021.

\bibitem{Nak19}
M.~Nakashima.
\newblock Free energy of directed polymers in random environment in $1+
  1$-dimension at high temperature.
\newblock {\em Electron. J. Probab.}, 24:1--43, 2019.

\bibitem{NG06}
R.~A. Neher and U.~Gerland.
\newblock Intermediate phase in {DNA} melting.
\newblock {\em Phys. Rev. E}, 73, 2006.

\bibitem{Poi13}
J.~Poisat.
\newblock Ruelle-perron-frobenius operator approach to the annealed pinning
  model with {G}aussian long-range correlated disorder.
\newblock {\em Markov Process. Relat. Fields}, 19(3):577--606, 2013.

\bibitem{P13}
J.~Poisat.
\newblock Ruelle-perron-frobenius operator approach to the annealed pinning
  model with {G}aussian long-range correlated disorder.
\newblock {\em Markov Process. Relat. Fields.}, 19(3), 2013.

\bibitem{PS70}
D.~Poland and H.~A. Scheraga.
\newblock {\em Theory of helix-coil transitions in biopolymers; statistical
  mechanical theory of order-disorder transitions in biological
  macromolecules}.
\newblock Academic Press, 1970.

\bibitem{PT21}
D.J. Pr\"omel and M. Trabs.
\newblock 
Paracontrolled distribution approach to stochastic Volterra equations.
\newblock {\em Journal of Differential Equations}, 302, 222-272, 2021.

\bibitem{P74}
C.~J. Preston.
\newblock A generalization of the {FKG} inequalities.
\newblock {\em Comm. Math. Phys.}, 36(3):233--241, 1974.

\bibitem{Rang20}
G.~Rang.
\newblock From directed polymers in spatial-correlated environment to
  stochastic heat equations driven by fractional noise in 1+ 1 dimensions.
\newblock {\em Stoch. Process. Appl.}, 130(6):3408--3444, 2020.

\bibitem{Ros11}
N.~Ross.
\newblock Fundamentals of {S}tein's method.
\newblock {\em Probab. Surv.}, 8:210--293, 2011.

\bibitem{SSSX21}
H.~Shen, J.~Song, R.~Sun, and L.~Xu.
\newblock Scaling limit of a directed polymer among a poisson field of
  independent walks.
\newblock {\em J. Funct. Anal.}, 281(5):109066, 2021.

\bibitem{Soh09}
J.~Sohier.
\newblock Finite size scaling for homogeneous pinning models.
\newblock {\em ALEA, Lat. Am. J. Probab. Math. Stat.}, 6:163--177, 2009.

\bibitem{TN08}
M.~V. Tamm and S.~K. Nechaev.
\newblock Unzipping of two random heteropolymers: Ground-state energy and
  finite-size effects.
\newblock {\em Phys. Rev. E}, 78(1):011903, 2008.

\bibitem{TC01}
L.-H. Tang and H.~Chat{\'e}.
\newblock Rare-event induced binding transition of heteropolymers.
\newblock {\em Phys. Rev. Letters}, 86(5):830, 2001.

\bibitem{Ton08_AAP}
F.~L. Toninelli.
\newblock Disordered pinning models and copolymers: beyond annealed bounds.
\newblock {\em Ann. Appl. Probab.}, pages 1569--1587, 2008.

\bibitem{Ton08_CMP}
F.~L. Toninelli.
\newblock A replica-coupling approach to disordered pinning models.
\newblock {\em Commun. Math. Phys.}, 280(2):389--401, 2008.

\bibitem{W86}
J.~B. Walsh.
\newblock An introduction to stochastic partial differential equations.
\newblock In {\em \'{E}cole d'\'{e}t\'{e} de probabilit\'{e}s de
  {S}aint-{F}lour, {XIV}--1984}, volume 1180 of {\em Lecture Notes in Math.},
  pages 265--439. Springer, Berlin, 1986.

\bibitem{WY24}
R.~Wei and J.~Yu. 
\newblock The critical disordered pinning measure.
\newblock preprint arXiv:2402.17642, 2024.

\bibitem{WH83}
A.~Weinrib and B.~I. Halperin.
\newblock Critical phenomena in systems with long-range-correlated quenched disorder.
\newblock {\em Phys. Rev. B}, 27(1):413, 1983.

\bibitem{Will68}
J.~A. Williamson.
\newblock Random walks and {R}iesz kernels.
\newblock {\em Pacific J. Math.}, 25(2), 1968.

\end{thebibliography}

\end{document}